\numberwithin{equation}{section}
\newtheorem{thm}{Theorem}[section]
\newtheorem{cor}[thm]{Corollary}
\newtheorem{lem}[thm]{Lemma}
\newtheorem{prop}[thm]{Proposition}
\theoremstyle{definition}
\newtheorem{defn}[thm]{Definition}
\newtheorem{problem}[thm]{Problem}
\theoremstyle{remark}
\newtheorem{rem}[thm]{Remark}
\newtheorem{rems}[thm]{Remarks}
\newtheorem{example}[thm]{Example}
\newcommand\UD{\operatorname{UD}}
\newcommand\GD{\operatorname{UDP}}
\newcommand\DP{\operatorname{DP}}
\newcommand\Mot{\operatorname{Mot}}
\newcommand\VT{\operatorname{VT}}
\newcommand\Tr{\operatorname{tr}}
\newcommand\Par{\operatorname{Par}}
\newcommand\NNest{\operatorname{NN}}
\newcommand\CRST{\operatorname{CRST}}
\newcommand\CSSYT{\operatorname{CSSYT}}
\newcommand{\la}{\lambda}
\newcommand{\ka}{\kappa}
\newcommand{\SYT}{\operatorname{SYT}}
\newcommand{\CSYT}{\operatorname{CSYT}}
\newcommand{\NCNN}{\operatorname{NCNN}}
\newcommand{\NC}{\operatorname{NC}}
\newcommand{\RR}{\mathbb{R}}
\renewcommand\vec[1]{\mathbf{#1}}
\newcommand\vx{\mathbf{x}}
\DeclareMathOperator*{\Pf}{Pf}
\newcommand\flr[1]{\lfloor  #1 \rfloor}
\def\al{\alpha}
\def\be{\beta}
\def\ep{\varepsilon}
\def\ph{\varphi}
\def\rh{\rho}
\def\si{\sigma}
\def\Z{\mathbb Z}
\def\sgn{\operatorname{sgn}}
\def\coef#1{\left\langle#1\right\rangle}
\def\fl#1{\lfloor #1\rfloor}
\begin{document}

\title[Bounded Littlewood identities]{Bounded Littlewood identities for cylindric Schur functions}

\author{JiSun Huh}
\address{Department of Mathematics, University of Seoul, Seoul, Republic of Korea}
\email{hyunyjia@yonsei.ac.kr}

\author{Jang Soo Kim}
\address{Department of Mathematics, Sungkyunkwan University, Suwon, Republic of Korea}
\email{jangsookim@skku.edu}

\author{Christian Krattenthaler}
\address{Fakult\"at f\"ur Mathematik, Universit\"at Wien,
Oskar-Morgenstern-Platz~1, A-1090 Vienna, Austria.}
\email{Christian.Krattenthaler@univie.ac.at}

\author{Soichi Okada}
\address{Graduate School of Mathematics, Nagoya University,
Furo-cho, Chikusa-ku, Nagoya 464-8602, Japan}
\email{okada@math.nagoya-u.ac.jp}

\keywords{Cylindric tableau, vacillating tableau, oscillating tableau,
Schur function, set partition, matching,
lattice walk, nonintersecting lattice paths}
\subjclass[2020]{Primary 05E05; Secondary 05A15, 05A19}

\thanks{}

\begin{abstract}
The identities which are in the literature often called
``bounded Littlewood identities" are determinantal formulas for the sum of
Schur functions indexed by partitions with bounded height. They have interesting
combinatorial consequences such as connections between standard Young tableaux
of bounded height, lattice walks in a Weyl chamber, and noncrossing matchings.
In this paper we prove affine analogs of the bounded Littlewood
identities. These are determinantal formulas for sums of cylindric Schur
functions. We also study combinatorial aspects of these identities. As a
consequence we obtain an unexpected connection between cylindric standard
Young tableaux and \( r \)-noncrossing and \( s \)-nonnesting matchings.
\end{abstract}

\maketitle



\section{Introduction}

Schur functions \( s_\lambda(\vx) \), where \( \lambda \) is a partition and
$\vx=\{x_1,x_2,\dots\}$ is a set of variables, are an
extensively studied class of symmetric functions which play an important role in
many areas of mathematics including geometry, representation theory and
combinatorics.
(We refer the reader to~\cite{Macdonald} or~\cite[Ch.~7]{EC2}, respectively to
Subsection~\ref{sec:symm} for background on symmetric functions.)
They form a basis of the space of symmetric functions, and the
Jacobi--Trudi formulas
\begin{equation}
  \label{eq:JT}
  s_{\lambda}(\vx) = \det\big(h_{\lambda_i-i+j}(\vx)\big), \qquad
  s_{\lambda'}(\vx) = \det\big(e_{\lambda_i-i+j}(\vx)\big)
\end{equation}
give a way to express Schur functions
in terms of complete homogeneous symmetric functions \( h_k(\vx) \) and
elementary symmetric functions \( e_k(\vx) \), where \( \lambda' \) denotes the
conjugate of the partition \( \lambda \).

It is well known~\cite[Cor.~7.13.8]{EC2} that the sum of all Schur
functions has a simple product formula, namely
\[
  \sum_{\lambda} s_{\lambda}(\vx) =
  \frac{ 1 } { \prod_i ( 1 - x_i  ) \prod_{i<j} ( 1 - x_i x_j ) }.
\]
This identity can be found in Littlewood's book~\cite[p.~238]{Littlewood},
together with other identities of similar kind, where the summation index~$\la$ is
subject to certain restrictions. Altogether, these identities are commonly 
called ``Littlewood identities". The above identity however was stated earlier
in an equivalent form by Schur~\cite[p.~456]{Schur}. 

The term ``bounded Littlewood identity" is used for summation identities in which
the number of parts of the summation index~$\la$ (a partition), or the first part of~$\la$,
is bounded from above by a fixed positive integer. Motivated by
enumeration problems for plane partitions and tableaux, such identities were found
in the 1970s and 1980s. We state two prototypical ``bounded Littlewood
identities" in the theorem below. It is difficult to give a precise
attribution. In the form below, they have been first stated by
Stembridge~\cite[Th.~7.1]{Stembridge1990} (modulo the symmetric
function involution interchanging complete homogeneous and elementary
symmetric functions). However, due to known
properties of symplectic and orthogonal characters, they appear
in an equivalent form in Macdonald's book~\cite[Ch.~I, Sec.~5, Ex.~16]{Macdonald}.
To make the situation even more confusing, Stembridge makes it clear
that all the ingredients to prove the identities are already contained
in determinantal-Pfaffian formulas due to Gordon and Houten,
namely~\cite[Lemma~1]{Gordon2} and~\cite[Lemma~1]{Gordon5}.
We may restate~\cite[Th.~7.1]{Stembridge1990} as follows.

\begin{thm}[\sc Two bounded Littlewood identities]
  \label{thm:BK-intro}
  For a nonnegative integer \( h \), we have
  \begin{equation}
    \label{eq:BK_odd1}
    \sum_{\la:\ell(\lambda)\le 2h+1} s_{\lambda'}(\vx)
    =
    \sum_{k \ge 0} e_k(\vx)
    \det_{1 \le i,j \le h}
    \left(
      f_{-i+j}(\vx) - f_{i+j}(\vx)
    \right)
\end{equation}
and
\begin{equation}
    \label{eq:BK_even1}
    \sum_{\la:\ell(\lambda)\le 2h} s_{\lambda'}(\vx)
    =
    \det_{1 \le i,j \le h}
    \left(
      f_{-i+j}(\vx) + f_{i+j-1}(\vx)
    \right),
  \end{equation}
  where
  \begin{equation}\label{eq:f_k}
    f_r(\vx)=\sum_{n\in \Z}e_n(\vx) e_{n+r}(\vx).
  \end{equation}
\end{thm}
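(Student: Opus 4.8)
The plan is to reduce both identities to a single Pfaffian evaluation, following the determinant-to-Pfaffian philosophy of Gordon and Houten alluded to above. First I would apply the dual Jacobi--Trudi formula \eqref{eq:JT} to write, for $N\in\{2h,2h+1\}$ and with $\lambda$ padded by zero parts so that it has exactly $N$ parts, $s_{\lambda'}(\vx)=\det_{1\le i,j\le N}\big(e_{\lambda_i-i+j}(\vx)\big)$. Substituting $\mu_i=\lambda_i+N-i$ converts the condition $\ell(\lambda)\le N$ into a summation over strictly decreasing sequences $\mu_1>\mu_2>\dots>\mu_N\ge 0$, and the matrix entry becomes $e_{\mu_i-N+j}(\vx)=\psi_j(\mu_i)$, where $\psi_j(m):=e_{m-N+j}(\vx)$. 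Hence each left-hand side equals $\sum_{\mu_1>\dots>\mu_N\ge 0}\det_{1\le i,j\le N}\big(\psi_j(\mu_i)\big)$, and the two parities of $N$ account for the two identities.

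Next I would invoke the minor-summation (Pfaffian) formula for such sums of determinants over strictly decreasing index tuples. For even $N=2h$ it yields $\Pf_{1\le i<j\le 2h}(Q_{ij})$ with $Q_{ij}=\sum_{a>b\ge 0}\big(\psi_i(a)\psi_j(b)-\psi_i(b)\psi_j(a)\big)$; for odd $N=2h+1$ it yields a bordered Pfaffian $\Pf(\tilde Q)$ of size $2h+2$, where $\tilde Q_{ij}=Q_{ij}$ for $i,j\le 2h+1$ and the extra column carries the single sums $\tilde Q_{i,2h+2}=\sum_{a\ge 0}\psi_i(a)$. Since $e_k(\vx)=0$ for $k<0$ and $i-N<0$, every border entry collapses to $\sum_{a\ge 0}e_{a-N+i}(\vx)=\sum_{k\ge 0}e_k(\vx)$, which is precisely the scalar prefactor appearing in \eqref{eq:BK_odd1}. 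I would then evaluate $Q_{ij}$: because $e_k(\vx)=0$ for $k<0$ the constraint $b\ge 0$ is automatic, so the sum runs over all integers $a>b$; setting $u=a-N+i,\ v=b-N+j$ (and the same with $i,j$ exchanged) and collecting the diagonals $u-v=d$, each diagonal contributes $\sum_v e_{v+d}(\vx)e_v(\vx)=f_d(\vx)$, giving $Q_{ij}=\sum_{d=i-j+1}^{\,j-i}f_d(\vx)$, an expression depending only on $j-i$.

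The remaining step, which I expect to be the \emph{main obstacle}, is to show that the $2h\times 2h$ antisymmetric Toeplitz Pfaffian $\Pf\big(Q_{ij}\big)$ equals $\det_{1\le i,j\le h}\big(f_{-i+j}(\vx)+f_{i+j-1}(\vx)\big)$, and that the bordered Pfaffian equals $\big(\sum_{k\ge 0}e_k(\vx)\big)\,\det_{1\le i,j\le h}\big(f_{-i+j}(\vx)-f_{i+j}(\vx)\big)$. These are now purely formal identities in the quantities $f_r(\vx)$, subject only to the symmetry $f_{-r}(\vx)=f_r(\vx)$. I would establish them by producing a unimodular matrix $M$ that block-antidiagonalizes $Q$, so that $\Pf(MQM^{t})=\det(M)\Pf(Q)=\pm\det(B)$ with $B$ the target $h\times h$ matrix; concretely the row and column operations amount to taking successive differences of the partial sums $Q_{ij}$ and applying $f_{-r}(\vx)=f_r(\vx)$, while in the odd case the constant border is factored out to produce $\sum_{k\ge 0}e_k(\vx)$. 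The delicate point is that, although $Q$ itself is Toeplitz, these combinations (together with the border in the odd case) genuinely generate the $i+j$ dependence of the right-hand sides; I have verified the reduction directly for $h\le 2$, and carrying it out for general $h$ is the technical heart of the proof.
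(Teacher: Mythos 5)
Your plan coincides, step for step, with the recipe this paper follows (and attributes to Stembridge's proof of Theorem~7.1, with the key ingredients due to Gordon and Houten): dual Jacobi--Trudi, then a minor summation formula, then a Pfaffian-to-determinant reduction. The steps you actually execute are correct. Your bordered Pfaffian for odd $N$ is precisely Corollary~\ref{cor:IsWa}(2), and the collapse of the border entries to $e(\vx)=\sum_{k\ge0}e_k(\vx)$ is right. Your evaluation of the Pfaffian entries is also right: writing $\psi_j(m)=e_{m-N+j}(\vx)$ and summing over diagonals gives
\begin{equation*}
Q_{ij}\;=\;\sum_{d\ge i-j+1}f_d(\vx)\;-\;\sum_{d\le i-j-1}f_d(\vx)\;=\;\sum_{d=i-j+1}^{j-i}f_d(\vx)\;=\;z_{j-i},
\qquad z_{-m}=-z_m,
\end{equation*}
using $f_{-r}(\vx)=f_r(\vx)$; this is exactly the $w\to\infty$ shadow of the identities \eqref{eq:odd_id1}--\eqref{eq:odd_id2} and \eqref{eq:Pf_bar_d}--\eqref{eq:Pf-Pf} that Section~\ref{sec:first-proof} proves in the affine setting (where $f_r$ is replaced by $F_{r,N}$ or $\overline F_{r,N}$).

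The genuine gap is the step you yourself flag as the ``technical heart'': the identity $\Pf_{1\le i,j\le 2h}(z_{j-i})=\det_{1\le i,j\le h}\bigl(z_{|j-i|+1}+z_{|j-i|+3}+\cdots+z_{i+j-1}\bigr)$ for any sequence with $z_{-m}=-z_m$. This is not something to be rediscovered by ad hoc matrix manipulations: it is precisely Gordon's lemma, stated here as Lemma~\ref{lem:Gordon5} (\cite[Lem.~1]{Gordon5}), and the paper cites it rather than reproving it. Your proposed substitute --- producing a unimodular $M$ with $MQM^t$ in block-antidiagonal form $\begin{pmatrix}0&B\\-B^t&0\end{pmatrix}$ --- is not a proof as it stands: no such $M$ is exhibited, the existence of a congruence over the relevant ring realizing exactly the target matrix $B$ is the whole content of the lemma, and verification for $h\le2$ does not extend. (Gordon's proof proceeds by an induction/expansion argument, not by elementary operations on $Q$ alone.) Once Lemma~\ref{lem:Gordon5} is granted, the remaining bookkeeping is indeed just row/column operations, as in Corollary~\ref{cor:Gordon}: for \eqref{eq:BK_even1} take $z_m=\sum_{r=-m+1}^{m}f_r(\vx)$, note $z_r-z_{r-1}=f_r(\vx)+f_{r-1}(\vx)$, and apply \eqref{eq:Gordon_var1} to telescope the entries to $f_{-i+j}(\vx)+f_{i+j-1}(\vx)$; for \eqref{eq:BK_odd1} first subtract adjacent rows/columns of the bordered Pfaffian and expand along the border to extract $e(\vx)$, leaving entries $f_{j-i-1}(\vx)-f_{j-i+1}(\vx)$, and then \eqref{eq:Gordon} with $z_r=f_{r-1}(\vx)-f_{r+1}(\vx)$ telescopes to $f_{-i+j}(\vx)-f_{i+j}(\vx)$. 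So your proposal becomes a complete proof exactly when the missing Pfaffian--determinant identity is either cited as \cite[Lem.~1]{Gordon5} or proved by its known inductive argument.
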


We remark that the right-hand sides
of~\eqref{eq:BK_odd1} and~\eqref{eq:BK_even1} are (essentially)
irreducible characters indexed by a rectangular shape of the 
odd orthogonal group $\text{SO}_{2n+1}(\mathbb C)$
(cf.\ \cite[Cor.~7.4(a)]{Stembridge1990}). More bounded Littlewood identities
can be found in~\cite[Th.~2]{KratBC} and~\cite[Th.~2.3]{Okada1998} (to
some of which we will come back in Section~\ref{sec:symm-funct-ident}).

\medskip
The first main result of our paper is an affine analog of
Theorem~\ref{thm:BK-intro} (see Theorem~\ref{thm:affine_BK_intro} for
a more compact statement).

\begin{thm}[\sc Two affine bounded Littlewood identities]
\label{thm:affine_BK_intro2}
For positive integers \( h \) and \( w \), we have
\begin{multline}
\underset{ \la_1-\la_{2h+1}\le w}{\sum_{\la:\ell(\la)\le2h+1}}\
\underset{k_1+\dots+k_{2h+1}=0}{\sum_{ k_1,\dots,k_{2h+1}\in\Z}}
\det_{1\le i,j\le 2h+1}\left(
e_{\la_i-i+j+(2h+w+1)k_i}(\vx)\right)\\
=
\sum_{k\ge0}e_k(\vx)
\sum_{k_1,\dots,k_{h}\in\Z}
\det_{1\le i,j\le h}\left(
f_{-i+j+(2h+w+1)k_i}(\vx)-f_{i+j+(2h+w+1)k_i}(\vx)
\right)
\label{eq:ABL1}
\end{multline}
and
\begin{multline}
\underset{ \la_1-\la_{2h}\le w}{\sum_{\la:\ell(\la)\le 2h}}\
\underset{k_1+\dots+k_{2h}=0}{\sum_{ k_1,\dots,k_{2h}\in\mathbb Z}}
\det_{1\le i,j\le 2h}\left(
e_{\la_i-i+j+(2h+w)k_i}(\vx)\right)\\
=
\sum_{k_1,\dots,k_{h}\in\mathbb Z}
(-1)^{\sum_{i=1}^hk_i}
\det_{1\le i,j\le h}\big(
f_{-i+j+(2h+w)k_i}(\vx)+
f_{i+j-1+(2h+w)k_i}(\vx)
\big),
\label{eq:ABL2}
\end{multline}
with $f_r(\vx)$ being defined in \eqref{eq:f_k}.
\end{thm}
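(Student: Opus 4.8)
The plan is to follow the architecture of the proof of the classical identities in Theorem~\ref{thm:BK-intro}: express each summand through the dual Jacobi--Trudi formula, collapse the sum over partitions by a minor-summation (de~Bruijn--Gordon--Houten) step into a Pfaffian, and finally reduce that Pfaffian to a determinant of half the size. Each of these three ingredients has to be upgraded to a cylindric, or affine, counterpart.

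First I would put the left-hand sides into a symmetric shape. Writing $\mu_i=\la_i-i+m$ with $m=2h+1$ (resp.\ $m=2h$), the outer sum over partitions with $\ell(\la)\le m$ and $\la_1-\la_m\le w$ becomes a sum over strictly decreasing sequences $\mu_1>\dots>\mu_m\ge0$ confined to a window of width $N$, where $N=2h+w+1$ (resp.\ $N=2h+w$). The inner cylindric determinant then has $(i,j)$-entry $e_{\mu_i-m+j+Nk_i}(\vx)$, and the global constraint $\sum_i k_i=0$ ties the cyclic shifts together. The point of this rewriting is that the combined datum $(\mu,k)$ — a window sequence together with balanced cyclic shifts — runs over the affine-symmetric-group orbit of the integer vectors $r_i=\mu_i+Nk_i$, which is exactly the indexing set of the cylindric Schur function attached to $\la'$.

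The technical heart is an affine analog of the minor-summation formula. Classically one has, for the entries in question,
\[
\sum_{\mu_1>\dots>\mu_{2h}\ge0}\det_{1\le i,j\le 2h}\bigl(e_{\mu_i-2h+j}(\vx)\bigr)
=\Pf_{1\le i<j\le 2h}(\gamma_{ij}),\qquad
\gamma_{ij}=\sum_{a>b\ge0}\bigl(e_{a-2h+i}\,e_{b-2h+j}-e_{a-2h+j}\,e_{b-2h+i}\bigr),
\]
(all symmetric functions evaluated at $\vx$), and the pair-summation telescopes into combinations of the $f_r(\vx)$ of~\eqref{eq:f_k}. I would prove the cylindric version, in which the range $a>b\ge0$ of the pairing is replaced by its cyclic counterpart of width $N$ and the balance $\sum_i k_i=0$ is redistributed over the two factors of each pair; this should turn $\gamma_{ij}$ into the shifted expressions $f_{-i+j+Nk_i}(\vx)\mp f_{i+j+Nk_i}(\vx)$ summed over a single per-row index $k_i\in\Z$. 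For $m=2h$ this produces directly a $2h\times2h$ cylindric Pfaffian, with the signs $(-1)^{\sum_ik_i}$ of~\eqref{eq:ABL2} appearing from the orientation of the pairs; for $m=2h+1$ one first borders the array by an extra row and column with entries $\sum_{a}e_{a+i}(\vx)$, and the reduction of this single (non-cyclic) border is what isolates the external factor $\sum_{k\ge0}e_k(\vx)$ in~\eqref{eq:ABL1}. The remaining $2h\times2h$ core Pfaffian is then collapsed to the $h\times h$ determinant on the right-hand side by the standard Pfaffian-to-determinant identity for matrices built from a single chain of functions.

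The main obstacle is precisely the affine minor-summation step. In the classical case each selected index is summed independently over $\{0,1,2,\dots\}$, but here the window restriction on $\mu$ and the global balance $\sum_ik_i=0$ interact, so the summation cannot be decoupled row by row. The delicate part will be to verify that the cyclic pairing reproduces exactly the shifts $Nk_i$ inside the $f_r$, with the correct signs distinguishing~\eqref{eq:ABL1} from~\eqref{eq:ABL2}; I expect this to require a careful fundamental-domain analysis of the affine symmetric group action hidden in the cylindric determinant rather than the routine index manipulations that suffice in the non-affine setting.
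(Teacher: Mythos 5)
Your architecture coincides with that of the paper's first proof in Section~\ref{sec:first-proof}: dual Jacobi--Trudi rewriting, minor summation to a Pfaffian (bordered in the odd case, which is where the prefactor $\sum_{k\ge0}e_k(\vx)$ comes from), and Gordon's Pfaffian-to-determinant reduction (Lemma~\ref{lem:Gordon5} and Corollary~\ref{cor:Gordon}). However, the step you flag as the main obstacle --- and for which you only posit an ``affine minor-summation formula'' with a cyclic pairing --- is exactly where your plan has a hole, and the paper's resolution shows that your pessimism about decoupling is misplaced. Lemmas~\ref{lem:rem} and~\ref{lem:rem2} construct a bijection $(\mu,k)\mapsto\alpha$ between pairs (window sequence, balanced shift vector) and single vectors $\alpha\in\Z^m$ whose residues $R_N(\alpha_i)$ are distinct; after cyclically rotating the rows into strict residue order --- a rotation whose sign is trivial for odd $m$ and equals $(-1)^{\sum_i\flr{\alpha_i/N}}$ for even $m$ --- the window constraint and the balance condition $\sum_i k_i=0$ disappear \emph{simultaneously}, and the summation splits row by row into independent residues $N>r_1>\dots>r_m\ge0$ and independent shifts in $\Z$. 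Consequently no affine upgrade of the minor-summation formula is needed at all: the classical formula (Corollary~\ref{cor:IsWa}, i.e.\ the case of the all-ones skew-symmetric matrix) applies verbatim to the matrix with \emph{periodized} entries $M_{i,j}=\sum_{k\in\Z}e_{kN+i+j-1}(\vx)$ in the odd case, respectively $\sum_{k\in\Z}(-1)^k e_{kN+i+j-1}(\vx)$ in the even case, and the pair sums telescope to $F_{r,N}(\vx)$ and $\overline F_{r,N}(\vx)$, as carried out in Propositions~\ref{thm:odd2} and~\ref{thm:even2}; by multilinearity in the rows this is equivalent to the per-row $f_{\cdot+Nk_i}$ form of \eqref{eq:ABL1} and \eqref{eq:ABL2}.

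Two of your specific assertions are therefore off the mark. First, the claim that ``the summation cannot be decoupled row by row'' is true only before the rotation; the whole point of Lemmas~\ref{lem:rem}/\ref{lem:rem2} is that it \emph{can} be decoupled afterwards, at the modest price of a cyclic shift of determinant indices. Second, the sign $(-1)^{\sum_{i=1}^h k_i}$ in \eqref{eq:ABL2} does not arise from ``the orientation of the pairs'': it is the parity of that cyclic rotation when $m=2h$ is even (a cyclic shift of an even-order determinant reverses its sign), subsequently absorbed into the alternating period-$N$ sums $\overline F_{r,N}$. Your closing suggestion of a ``fundamental-domain analysis of the affine symmetric group action'' is indeed the right instinct --- it is essentially what Lemma~\ref{lem:bijection} formalizes in the paper's second, systematic approach via Proposition~\ref{prop:general} --- but as it stands your proposal is a plausible blueprint whose central lemma is missing, not a proof.
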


We explain the meaning of ``affine" in this context in Remark~\ref{rem:aff}(3). 
More affine bounded Littlewood identities are presented in
Section~\ref{sec:symm-funct-ident}; see Theorems~\ref{thm:C} and~\ref{thm:D}.

It is obvious that, in the limit $w\to\infty$, the
identities~\eqref{eq:ABL1} and~\eqref{eq:ABL2} reduce
to~\eqref{eq:BK_odd1} and~\eqref{eq:BK_even1}, respectively.
Nevertheless, the reader may wonder whether the identities in
Theorem~\ref{thm:affine_BK_intro2} would have any significance beyond
being extensions of the bounded Littlewood identities in
Theorem~\ref{thm:BK-intro}. Towards an answer to this question, we first point
out that the summand depending on~$\la$ on the left-hand sides
of~\eqref{eq:ABL1} and~\eqref{eq:ABL2}, 
\begin{equation} \label{eq:CSF} 
\underset{k_1+\dots+k_{h}=0}{\sum_{ k_1,\dots,k_{h}\in\Z}}
\det_{1\le i,j\le h}\left(
e_{\la_i-i+j+(h+w)k_i}(\vx)\right),
\end{equation}
is a ``cylindric" analog of the Schur function $s_{\la'}(\vx)$.
Namely, as the Schur function $s_{\la'}(\vx)$ is equal to a
generating function for semistandard Young tableaux of shape~$\la'$
(see~\eqref{eq:SF}),
by a result of Gessel and the third author~\cite{Gessel1997},
the expression in~\eqref{eq:CSF} is equal to a generating function for
semistandard Young tableaux of shape~$\la'$ that satisfy a
``cylindric" constraint. We call these tableaux {\it cylindric semistandard
  Young tableaux}; see Proposition~\ref{prop:cyl_sch} and
Theorem~\ref{thm:ssyt2} in Subsection~\ref{sec:symm}.
In fact, the {\it cylindric Schur
  functions} --- as we shall call the expressions in~\eqref{eq:CSF}
--- appeared for the first time
explicitly in a geometric context in~\cite[Sec.~6]{Postnikov2005}.
For further occurrences of cylindric semistandard Young tableaux and
(skew) cylindric Schur functions
see~\cite{Bertram1999,SeungJinLee2019,McNamara2006}. 

Returning to the question of the significance of the affine bounded
Littlewood identities~\eqref{eq:ABL1} and~\eqref{eq:ABL2}: as we
already mentioned, the origin of the bounded Littlewood identities
in Theorem~\ref{thm:BK-intro} lies in the enumeration of plane
partitions and tableaux.
Via the Robinson--Schensted algorithm and variations thereof, tableaux
are related to other combinatorial objects, such as
permutations, set partitions, and involutions, where the latter may
also be regarded as (partial) matchings; see  \cite{Chen2007,KratCE}.
Hence, the identities~\eqref{eq:ABL1} and~\eqref{eq:ABL2}
have more combinatorial applications.
One particularly interesting implication
of~\eqref{eq:BK_odd1} (which may be derived by extracting coefficients
of $x_1x_2\cdots x_n$ on both sides of~\eqref{eq:BK_odd1} and
combining the result with Gessel and Zeilberger's
random-walks-in-Weyl-chambers formula~\cite{Gessel1992} and Chen et
al.'s bijection in~\cite{Chen2007} between vacillating tableaux and
matchings) is\footnote{Alternatively, the identity~\eqref{eq:Chen2} can be proved
bijectively, essentially by a variant of the Robinson--Schensted
correspondence. We explain this in
Appendix~\ref{sec:growth},
using the growth
diagrams of Fomin. There, we also present a uniform treatment of the
related results on standard Young tableaux and walks in a Weyl chamber
of type~$A$ in~\cite{Eu2013,Zeilberger_lazy}.}
\begin{equation}
  \label{eq:Chen2}
  |\SYT_n(2h+1)| = |\NC_n(h+1)|,
\end{equation}
where $\SYT_n(2h+1)$
denotes the set of standard Young tableaux of size~$n$ with at
most $2h+1$~rows, and $\NC_n(h+1)$ is the set of (partial) matchings 
on
$\{1,2,\dots,n\}$ without an $(h+1)$-crossing; see
Subsection~\ref{sec:tableaux} for the definition of standard Young
tableaux, and Definition~\ref{def:NCNN} for the definition of crossings. 
Similarly, the affine bounded Littlewood
identities in Theorem~\ref{thm:affine_BK_intro2} are related to
several combinatorial objects, among which cylindric semistandard Young
tableaux, walks in type~$A$ alcoves, and (partial) matchings with
restrictions on their crossings {\it and\/} their nestings.
Consequently they have as well
combinatorial applications. One particular implication is 
\begin{equation}
  \label{eq:SYT-NCNN}
  |\CSYT_n(2h+1,2w+1)| = |\NCNN_n(h+1,w+1)|,
\end{equation}
where $\CSYT_n(2h+1,2w+1)$ denotes the set of
\( (2h+1,2w+1) \)-cylindric standard Young tableaux of size~\( n
\), and $\NCNN_n(h+1,w+1)$ denotes the set of (partial) matchings 
on
$\{1,2,\dots,n\}$
without an \( (h+1) \)-crossing and without a \( (w+1)
  \)-nesting; see Subsection~\ref{sec:tableaux} for the
  definition of cylindric standard Young tableaux, 
  Definition~\ref{def:NCNN} for the definition of crossings and nestings,
  and Equation~\eqref{eq:syt=ncnn1} in Corollary~\ref{cor:syt=ncnn}
  for the result. 

As a matter of fact, the
equality~\eqref{eq:SYT-NCNN} stood at the beginning of our
investigations that in the long run led to~\eqref{eq:ABL1} and the other
results reported in this article. More precisely, our original
(relatively modest) motivation was to find a bijective proof of a seemingly unrelated
result of Mortimer and Prellberg~\cite{Mortimer2015} on lattice walks in a
triangular region and bounded Motzkin paths. We
discovered~\eqref{eq:SYT-NCNN} --- {\it experimentally} ---
as a generalization of their result. How this --- at the time ---
conjecture, more or less ``inevitably", guided us to discover~\eqref{eq:ABL1}
and~\eqref{eq:ABL2} is explained in
Appendix~\ref{sec:motivation}.

\medskip
Our paper is organized as follows. In
Section~\ref{sec:preliminaries} we give basic definitions and some
preliminaries on tableaux and symmetric functions.
In Section~\ref{sec:symm-funct-ident}
we present our affine
bounded Littlewood identities. These include~\eqref{eq:ABL1}
and~\eqref{eq:ABL2} in more compact notation; see
Theorem~\ref{thm:affine_BK_intro}. The section contains moreover
also affine analogs of the known bounded Littlewood
identity in which the sum of Schur functions is restricted to
partitions all of whose parts (row lengths) are even (see \cite[Cor.~7.2]{Stembridge1990}),
and another in which the sum is over partitions all of whose column lengths are
even {\it or} all of them are odd (see \cite[Th.~2.3(3)]{Okada1998}).
Our corresponding results are presented in Theorems~\ref{thm:C} and~\ref{thm:D}.
We prove~\eqref{eq:ABL1} and~\eqref{eq:ABL2} in Section~\ref{sec:first-proof},
following the ``recipe" in \cite[Proof of Th.~7.1]{Stembridge1990},
essentially due to Gordon and Houten. In Section~\ref{sec:second-proof} we
introduce a systematic approach that provides an alternative proof of the affine
bounded Littlewood identities~\eqref{eq:ABL1} and~\eqref{eq:ABL2}. Moreover,
it also gives proofs of the additional identities in Theorems~\ref{thm:C} and~\ref{thm:D}.
The theme of Section~\ref{sec:cylindric_SSYT} is
combinatorial interpretations for the right-hand sides of the affine
bounded Littlewood identities~\eqref{eq:ABL1} and~\eqref{eq:ABL2} in terms of
up-down tableaux. These interpretations are then used in 
Section~\ref{sec:comb-ident} to derive several combinatorial
identities that put cylindric standard Young tableaux in relation with
so-called vacillating tableaux (which may also be considered as walks
in a Weyl chamber of type~$A$); see Corollary~\ref{cor:csyt=VT}.
If this is combined with the bijection in~\cite{Chen2007,KratCE},
further identities are obtained that relate cylindric standard Young
tableaux and matchings with restrictions on their crossings and
nestings; see Corollary~\ref{cor:syt=ncnn}.
In particular, the identity~\eqref{eq:SYT-NCNN} from above is proved
in~\eqref{eq:syt=ncnn1}. 
In the final section, Section~\ref{sec:h=1}, we discuss the special
case where $h=1$. In that case, the cylindric standard Young tableaux
may be related to certain walks in a triangle, and the matchings may
be related to Motzkin and Dyck paths and prefixes. This leads to the
identities in Theorem~\ref{thm:syt=ncnn_h=1}.
In particular,
there we come full circle and explain how the earlier mentioned result
of Mortimer and Prellberg fits into the picture. Courtiel, Elvey
Price and Marcovici~\cite{Courtiel2021} had found a bijective proof of
their result, while we failed to find a bijective proof but instead
found the much more general results presented in this article ---
with highly non-bijective proofs --- as we describe in Appendix~\ref{sec:motivation}.
We do however provide bijective proofs of the
variations of the result of Mortimer and Prellberg that are contained
in Theorem~\ref{thm:syt=ncnn_h=1}, see the second half of
Section~\ref{sec:h=1}.
As a bonus, in Appendix~\ref{sec:growth} we explain how to use Fomin's growth
diagrams to construct --- in a uniform manner --- bijections for the
``marginal" cases of our identities between numbers of standard Young
tableaux, matchings, and vacillating tableaux in
Section~\ref{sec:comb-ident}.

\section{Definitions and preliminaries}
\label{sec:preliminaries}

In this section, we give basic definitions for partitions,
tableaux, and symmetric functions, together with a few preliminary results.
In particular, in Definition~\ref{defn:cSchur} we introduce the
cylindric Schur functions which are central objects in our paper.

\subsection{Partitions and tableaux} \label{sec:tableaux}

A \emph{partition} of a nonnegative integer \( n \) is a weakly decreasing
sequence \( \lambda=(\lambda_1, \lambda_2, \dots, \lambda_k) \) of positive
integers, called \emph{parts}, such that \( \sum_{i\geq 1} \la_i = n \). This
also includes the empty partition \( () \), denoted by \( \emptyset \). If \(
\lambda \) is a partition of \( n \) into \( k \) parts, we write \( |\lambda|=n
\) and \( \ell(\lambda)=k \) and say that \( \lambda \) has \emph{size}~\( n \)
and \emph{height} (or \emph{length})~\( k \). We denote by \(\Par \) the set of all partitions. It
is often convenient to identify a partition \( (\lambda_1,\lambda_2,\dots,
\lambda_k) \) with the infinite sequence \( (\lambda_1,\lambda_2,\dots,
\lambda_k, 0, 0, \dots) \). Using this convention, we define \( \lambda_i=0 \)
for \( i >\ell(\lambda) \).

The \emph{Young diagram} of a partition \( \lambda=(\la_1,\la_2,\dots, \la_k) \)
is the set \( \{ (i,j) \in \Z^2: 1\le i\le k, 1\le j\le \lambda_i\} \). Each element \(
(i,j) \) in a Young diagram is called a \emph{cell}. The Young diagram of \(
\lambda \) is visualized as a left-justified array of unit square cells with \(
\lambda_i \) cells in the \( i \)-th row, \( i=1,2,\dots, k \), from top to
bottom. We identify \( \lambda \) with its Young diagram. The \emph{conjugate}
(or \emph{transpose}) \( \lambda' \) of a partition \( \lambda \) is the
partition whose Young diagram is given by \( \{(j,i): (i,j)\in \lambda\} \). For
two partitions \( \lambda \) and \( \mu \) we write \( \mu\subseteq\lambda \) to
mean that the Young diagram of \( \mu \) is contained in that of \( \lambda \).
If \( \mu\subseteq\lambda \), the \emph{skew shape} \( \lambda/\mu \) is the
set-theoretic difference \( \lambda-\mu \) of the Young diagrams of \( \lambda
\) and \( \mu \). Each partition \( \lambda \) is also considered as the skew
shape \( \lambda/\emptyset \).

A \emph{tableau} of shape \( \lambda/\mu \) is a filling of the cells in \(
\lambda/\mu \) with positive integers. For a tableau \( T \) of shape \(
\lambda=(\lambda_1, \lambda_2, \dots) \), the \emph{size}, \emph{height}, and
\emph{width} are defined to be \( |\lambda| \), \( \ell(\la) \), and \(
\lambda_1 \), respectively. A \emph{semistandard Young tableau} (SSYT) (respectively
\emph{row-strict tableau} (RST)) is a tableau in which the entries along rows
(respectively columns) are weakly increasing and the entries along columns (respectively rows)
are strictly increasing. A \emph{standard Young tableau} (SYT) is a semistandard
Young tableau (or equivalently, row-strict tableau) whose entries are the
integers \( 1,2,\dots,n \), where \( n \) is the size of the tableau.

\begin{defn}\label{defn:cylindric}
  An SSYT (respectively RST) \( T \) is \emph{\( (h,w) \)-cylindric} if \( T \) has at
  most \( h \) rows and \( T\cup (T+(h,-w)) \) is an SSYT (respectively RST) of a valid
  skew shape, where \( T+(h,-w) \) is the SSYT (respectively RST) obtained by
  shifting \( T \) by \( h \) units down and \( w \) units to the left (see
  Figure~\ref{fig:csyt}).
\end{defn}

\begin{figure}
{\small
\centering
\begin{tikzpicture}[scale=.44]
\node[right, color=black!80] at (-0.3,-1.85) {
\begin{ytableau}
1&1&3&5 \\
2&2&4 \\
3&4
\end{ytableau}};

\draw[thick] (0,0) -- (0,-3.69) -- (2.46,-3.69) -- (2.46,-2.46) -- (3.69,-2.46) -- (3.69,-1.23) -- (4.92,-1.23) -- (4.92,0) -- (0,0) -- (0,-1);
\node at (0,-7.4) {};

\end{tikzpicture}
\qquad \qquad
\begin{tikzpicture}[scale=.44]
\node[right, color=black!80] at (-0.3,0) {
\begin{ytableau}
\none&\none&\none&1&1&3&5 \\
\none&\none&\none&2&2&4 \\
\none&\none&\none&3&4 \\
1&1&3&5 \\
2&2&4 \\
3&4
\end{ytableau}};

\draw[thick] (0,0) -- (0,-3.69) -- (2.46,-3.69) -- (2.46,-2.46) -- (3.69,-2.46) -- (3.69,-1.23) -- (4.92,-1.23) -- (4.92,0) -- (0,0) -- (0,-1)
(3.69,3.69) -- (3.69,0) -- (6.15,0) -- (6.15,1.23) -- (7.38,1.23) -- (7.38,2.46) -- (8.61,2.46) -- (8.61,3.69) -- (3.69,3.69) -- (3.69,3);

\end{tikzpicture}
\qquad \qquad
\begin{tikzpicture}[scale=.44]
\node[right, color=black!80] at (-0.3,0) {
\begin{ytableau}
\none&\none&1&1&3&5 \\
\none&\none&2&2&4 \\
\none&\none&3&4 \\
1&1&3&5 \\
2&2&4 \\
3&4
\end{ytableau}};

\draw[thick] (0,0) -- (0,-3.69) -- (2.46,-3.69) -- (2.46,-2.46) -- (3.69,-2.46) -- (3.69,-1.23) -- (4.92,-1.23) -- (4.92,0) -- (0,0) -- (0,-1)
(2.46,3.69) -- (2.46,0) -- (4.92,0) -- (4.92,1.23) -- (6.15,1.23) -- (6.15,2.46) -- (7.38,2.46) -- (7.38,3.69) -- (2.46,3.69) -- (2.46,3);

\end{tikzpicture}

}
\caption{The SSYT \( T \) on the left is \( (3,3) \)-cylindric but not \( (3,2)
  \)-cylindric because the tableau in the middle is an SSYT but the tableau on
  the right is not an SSYT. Moreover, \( T \) is not \( (3,1) \)-cylindric
  because \( T\cup (T+(3,-1)) \) is not of a skew shape. }
  \label{fig:csyt}
\end{figure}
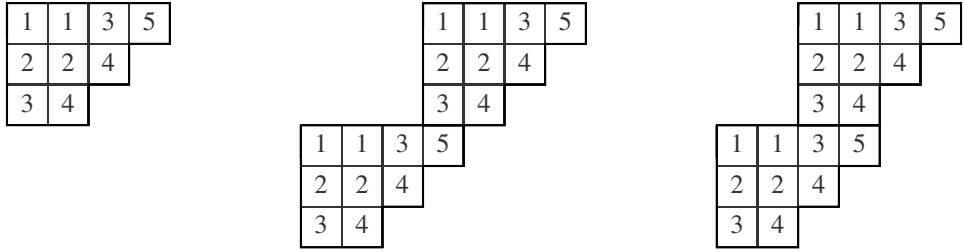

\begin{defn} \label{def:par}
We write $\Par(h)$ for the set of partitions \( \lambda \) with \(
  \ell(\lambda)\le h \), and
  \( \Par(h,w) \) for the subset of $\Par(h)$ consisting of the
  partitions that satisfy in addition \( \lambda_1-\lambda_h\le w \). For \(
  \lambda\in\Par(h,w) \), we denote by \( \CSSYT(\lambda;h,w) \) (respectively \(
  \CRST(\lambda;h,w) \)) the set of all \( (h,w) \)-cylindric SSYTs (respectively RSTs)
  of shape \( \lambda \). Let
  \[
    \CSSYT(h,w) = \bigcup_{\lambda\in \Par(h,w)} \CSSYT(\lambda;h,w),\qquad
    \CRST(h,w) = \bigcup_{\lambda\in \Par(h,w)} \CRST(\lambda;h,w).
  \]
  We also write \( \CSYT_n(h,w) \) for the set of \( (h,w) \)-cylindric standard
  Young tableaux of size~\( n \).
\end{defn}

Note that, for a partition \( \lambda \) with \( \ell(\lambda)\le h \), we have
\( \CSSYT(\lambda;h,w)=\emptyset \) unless \( \lambda_1-\lambda_h\le w \)
because if \( \lambda_1-\lambda_h>w \), then \( T\cup (T+(h,-w)) \) is not of a
skew shape.

\begin{defn}
  Let \( h \) and \( w \) be positive integers. For \( \lambda\in \Par(h,w) \),
  the \emph{(h,w)-cylindric shape} \( \lambda[h,w] \) is defined by
\[
  \lambda[h,w] = \{(i,j)+(kh,-kw): (i,j)\in \lambda, k\in \Z\}/{\sim}
\]
with the relation \( (i,j) \sim (i',j')\) if and only if \( (i,j) =
(i',j')+(kh,-kw) \) for some \( k\in\Z \). The \emph{transpose} \(
\lambda[h,w]' \) of \( \lambda[h,w] \) is defined by \( \lambda[h,w]' =
\{[(j,i)]: [(i,j)]\in \lambda[h,w]\} \), where \( [(i,j)] \) is the equivalence
class containing \( (i,j) \). See Figure~\ref{fig:cyl_sh}.
The \emph{\( (h,w) \)-transpose} \(
\Tr(\lambda;h,w) \) of \( \lambda \) is defined by
\[
  \Tr(\lambda;h,w) = \{(i,j): [(i,j)]\in \lambda[h,w]', 1\le i\le w\}.
\]
\end{defn}

\begin{figure}
{\small
\centering

\begin{tikzpicture}[scale=.44]

\node at (10.4,8.12) {\(\reflectbox{\(\ddots\)}\)};
\node at (-0.5,-4) {\(\reflectbox{\(\ddots\)}\)};
\node at (2.1,-0.65) {\( \lambda+(h,-w) \)};
\node at (3.14,-1.845) {\( \bullet \)};
\node at (3,3) {\( \lambda \)};
\node at (5.6,1.845) {\( \bullet \)};
\node at (7,6.7) {\( \lambda-(h,-w) \)};
\node at (8.06,5.535) {\( \bullet \)};

\draw[thick] (0,0) -- (0,-3.69) -- (2.46,-3.69) -- (2.46,-2.46) -- (3.69,-2.46) -- (3.69,-1.23) -- (4.92,-1.23) -- (4.92,0) -- (0,0) -- (0,-1)
(2.46,3.69) -- (2.46,0) -- (4.92,0) -- (4.92,1.23) -- (6.15,1.23) -- (6.15,2.46) -- (7.38,2.46) -- (7.38,3.69) -- (2.46,3.69) -- (2.46,3)
(4.92,7.38) -- (4.92,3.69) -- (7.38,3.69) -- (7.38,4.92) -- (8.61,4.92) -- (8.61,6.15) -- (9.84,6.15) -- (9.84,7.38) -- (4.92,7.38) -- (4.92,7);

\end{tikzpicture}
\qquad \qquad
\begin{tikzpicture}[scale=.44]

\node at (11.6,5.6) {\(\reflectbox{\(\ddots\)}\)};
\node at (-0.5,-5.2) {\(\reflectbox{\(\ddots\)}\)};
\node at (1.9,-0.65) {\footnotesize{\( \lambda'+(w,-h) \)}};
\node at (1.91,-3.125) {\( \bullet \)};
\node at (4.4,1.8) {\( \lambda' \)};
\node at (5.6,-0.665) {\( \bullet \)};
\node at (9.25,4.3) {\footnotesize{\(\lambda'-(w,-h) \)}};
\node at (9.29,1.795) {\( \bullet \)};

\draw[thick] (0,0) -- (0,-4.92) -- (1.23,-4.92) -- (1.23,-3.69) -- (2.46,-3.69) -- (2.46,-2.46) -- (3.69,-2.46) -- (3.69,0) -- (0,0) -- (0,-1)
(3.69,2.46) -- (3.69,-2.46) -- (4.92,-2.46) -- (4.92,-1.23) -- (6.15,-1.23) -- (6.15,0) -- (7.38,0) -- (7.38,2.46) -- (3.69,2.46) -- (3.69,2)
(7.38,4.92) -- (7.38,0) -- (8.61,0) -- (8.61,1.23) -- (9.84,1.23) -- (9.84,2.46) -- (11.07,2.46) -- (11.07,4.92) -- (7.38,4.92) -- (7.38,4);

\draw[ultra thick] (3.69,2.46) -- (3.69,0) -- (8.61,0) -- (8.61,1.23) -- (9.84,1.23) -- (9.84,2.46) -- (3.69,2.46) -- (3.69,2);

\end{tikzpicture}

}
\caption{The \( (3,2) \)-cylindric shape \( \lambda[3,2] \) of \(
  \lambda=(4,3,2) \) is shown on the left. Its transpose \( \lambda[3,2]' \) is
  shown on the right, where the \( (3,2) \)-transpose \( \Tr(\lambda;3,2) \) of
  \( \lambda \) is drawn with thick boundary. The cells with dots are identified
  by the relation \( \sim \).}
  \label{fig:cyl_sh}
\end{figure}
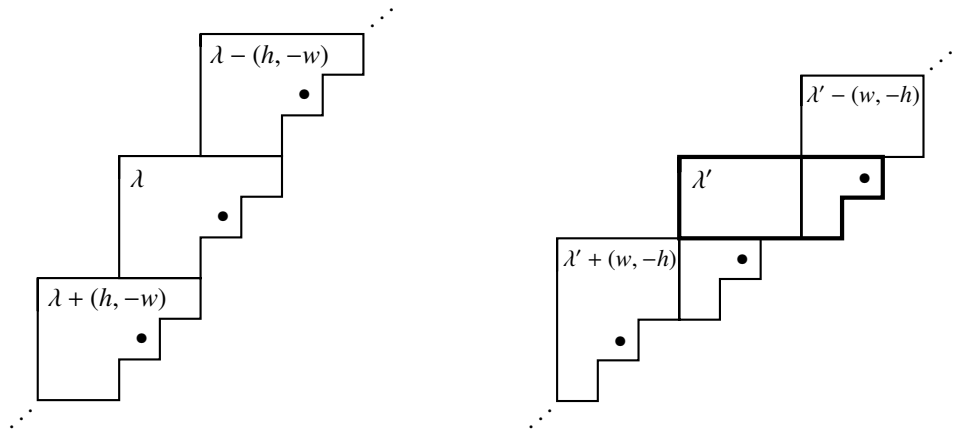

By definition one can easily see that \( \lambda[h,w]'=\Tr(\lambda;h,w)[w,h] \).
The map \( \lambda\mapsto \Tr(\lambda;h,w) \) is essentially the same as the map
\( \Psi \) due to Goodman and Wenzl~\cite[p.~253]{Goodman1990}.

One can naturally identify an \( (h,w) \)-cylindric SSYT (or RST) \( T \) of
shape \( \lambda \) with a filling of the cylindric shape \( \lambda[h,w] \) by
filling each cell \( (i',j')\in [(i,j)] \) with the same entry as the one in \( (i,j) \).
Then the transpose~\( T' \) and the \( (h,w) \)-transpose \( \Tr(T;h,w) \) are
defined in the obvious way as shown in Figure~\ref{fig:T-trans}.

\begin{figure}
{\small
\centering

\begin{tikzpicture}[scale=.44]
\node[right, color=black!80] at (-0.3,1.85) {
\begin{ytableau}
\none&\none&\none&\none&1&1&4&5 \\
\none&\none&\none&\none&2&2&5 \\
\none&\none&\none&\none&3&4 \\
\none&\none&1&1&4&5 \\
\none&\none&2&2&5 \\
\none&\none&3&4 \\
1&1&4&5 \\
2&2&5 \\
3&4
\end{ytableau}};
\node at (10.4,8.12) {\(\reflectbox{\(\ddots\)}\)};
\node at (-0.5,-4) {\(\reflectbox{\(\ddots\)}\)};

\draw[thick] (0,0) -- (0,-3.69) -- (2.46,-3.69) -- (2.46,-2.46) -- (3.69,-2.46) -- (3.69,-1.23) -- (4.92,-1.23) -- (4.92,0) -- (0,0) -- (0,-1)
(2.46,3.69) -- (2.46,0) -- (4.92,0) -- (4.92,1.23) -- (6.15,1.23) -- (6.15,2.46) -- (7.38,2.46) -- (7.38,3.69) -- (2.46,3.69) -- (2.46,3)
(4.92,7.38) -- (4.92,3.69) -- (7.38,3.69) -- (7.38,4.92) -- (8.61,4.92) -- (8.61,6.15) -- (9.84,6.15) -- (9.84,7.38) -- (4.92,7.38) -- (4.92,7);

\end{tikzpicture}
\qquad \qquad
\begin{tikzpicture}[scale=.44]
\node[right, color=black!80] at (-0.3,0) {
\begin{ytableau}
\none&\none&\none&\none&\none&\none&1&2&3 \\
\none&\none&\none&\none&\none&\none&1&2&4 \\
\none&\none&\none&1&2&3&4&5 \\
\none&\none&\none&1&2&4&5 \\
1&2&3&4&5 \\
1&2&4&5 \\
4&5\\
5
\end{ytableau}};
\node at (11.6,5.6) {\(\reflectbox{\(\ddots\)}\)};
\node at (-0.5,-5.2) {\(\reflectbox{\(\ddots\)}\)};

\draw[thick] (0,0) -- (0,-4.92) -- (1.23,-4.92) -- (1.23,-3.69) -- (2.46,-3.69) -- (2.46,-2.46) -- (3.69,-2.46) -- (3.69,0) -- (0,0) -- (0,-1)
(3.69,2.46) -- (3.69,-2.46) -- (4.92,-2.46) -- (4.92,-1.23) -- (6.15,-1.23) -- (6.15,0) -- (7.38,0) -- (7.38,2.46) -- (3.69,2.46) -- (3.69,2)
(7.38,4.92) -- (7.38,0) -- (8.61,0) -- (8.61,1.23) -- (9.84,1.23) -- (9.84,2.46) -- (11.07,2.46) -- (11.07,4.92) -- (7.38,4.92) -- (7.38,4);

\draw[ultra thick] (3.69,2.46) -- (3.69,0) -- (8.61,0) -- (8.61,1.23) -- (9.84,1.23) -- (9.84,2.46) -- (3.69,2.46) -- (3.69,2);

\end{tikzpicture}

}
\caption{The left diagram shows a \( (3,2) \)-cylindric SSYT \( T \) of
    shape \( \lambda=(4,3,2) \) identified with a filling of the cylindric shape
    \( \lambda[3,2] \). The right diagram shows the transpose \( T' \) as a
    filling of \( \lambda[3,2]' \) and the \( (3,2) \)-transpose \(
    \Tr(T;3,2) \) of \( T \) whose shape is \( \Tr(\lambda;3,2)=(5,4) \)
    drawn with thick boundary.}
  \label{fig:T-trans}
\end{figure}
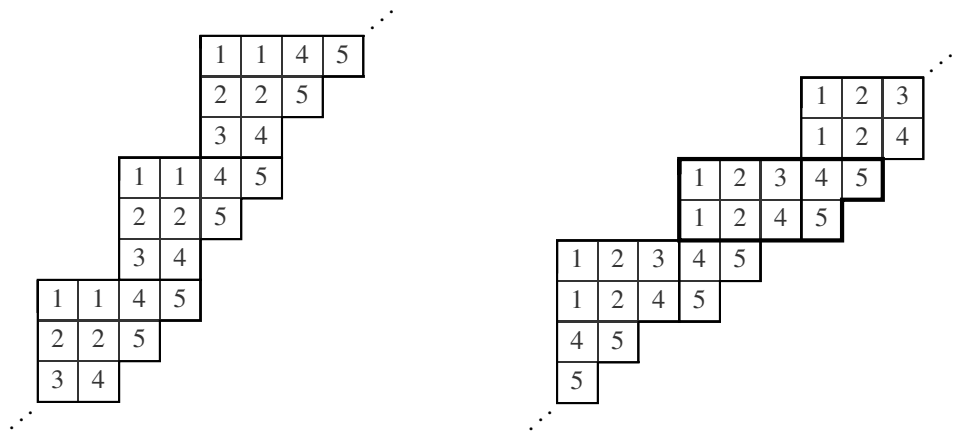

\begin{prop}\label{prop:cssyt=crst}
  Let \( h \) and \( w \) be positive integers. Then the map \( T\mapsto
  \Tr(T;h,w) \) is a bijection between \( \CRST(h,w) \) and \( \CSSYT(w,h) \) (and
  also between \( \CSSYT(h,w) \) and \( \CRST(w,h) \)). Moreover, for \(
  \lambda\in\Par(h,w) \), this map induces a bijection between \(
  \CRST(\lambda;h,w) \) and \( \CSSYT(\Tr(\lambda;h,w);w,h) \) and also a
  bijection between \( \CSYT_n(h,w) \) and \( \CSYT_n(w,h) \). In particular, we
  have
  \begin{align*}
    \label{eq:syt=syt}
    |\CSYT_n(h,w)| = |\CSYT_n(w,h)|.
  \end{align*}
\end{prop}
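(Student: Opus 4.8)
The plan is to translate the combinatorial definition of a cylindric tableau into a geometric statement on the cylinder, where the $(h,w)$-transpose becomes nothing more than the ordinary reflection across the main diagonal. First I would check that an $(h,w)$-cylindric SSYT $T$ of shape $\lambda$ is the same datum as a filling of the cylindric shape $\lambda[h,w]$, viewed as a diagram in $\Z^2/{\sim}$, whose entries are weakly increasing along rows and strictly increasing down columns, where both monotonicity conditions are read off the bi-infinite periodic lift $\bigcup_{k\in\Z}\bigl(T+k(h,-w)\bigr)$. The point requiring care is that the one-step requirement in Definition~\ref{defn:cylindric} --- that $T\cup(T+(h,-w))$ be an SSYT of a valid skew shape --- already forces validity of the whole stack: every interface between consecutive translates is itself a translate of the single interface between $T$ and $T+(h,-w)$, so the bi-infinite union is automatically a valid infinite SSYT and the induced filling of $\lambda[h,w]$ is well defined and monotone in the cylindric sense. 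The analogous statement holds for RSTs with the roles of rows and columns exchanged.

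Next I would transpose. Let $\tau\colon(i,j)\mapsto(j,i)$ be the reflection in the main diagonal. A direct computation gives $\tau\bigl((i,j)+k(h,-w)\bigr)=\tau(i,j)+k(-w,h)$, so $\tau$ conjugates translation by $(h,-w)$ to translation by $(-w,h)$ and hence descends to a bijection from the cylinder $\Z^2/\langle(h,-w)\rangle$ to the cylinder $\Z^2/\langle(w,-h)\rangle$ that carries $\lambda[h,w]$ onto $\lambda[h,w]'$. Pushing a cylindric filling through $\tau$ turns ``weakly increasing along rows, strictly increasing along columns'' into ``weakly increasing along columns, strictly increasing along rows''; that is, $\tau$ sends a cylindric SSYT on $\lambda[h,w]$ to a cylindric RST on $\lambda[h,w]'$ and vice versa. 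Invoking the identity $\lambda[h,w]'=\Tr(\lambda;h,w)[w,h]$ recorded above, this RST is precisely a $(w,h)$-cylindric RST of shape $\Tr(\lambda;h,w)$, i.e.\ it equals $\Tr(T;h,w)$. I would also verify that $\Tr(\lambda;h,w)\in\Par(w,h)$: it has at most $w$ rows by construction, and its width bound $\le h$ follows by applying the same reflection argument to the shape alone, equivalently from $\lambda_1-\lambda_h\le w$.

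For bijectivity I would use that $\tau$ is an involution interchanging the roles of $(h,w)$ and $(w,h)$, so applying the transpose twice is the identity: $\Tr(\Tr(T;h,w);w,h)=T$ and $\Tr(\Tr(\lambda;h,w);w,h)=\lambda$. Hence $\Tr(\cdot;h,w)$ is a bijection $\CRST(\lambda;h,w)\to\CSSYT(\Tr(\lambda;h,w);w,h)$ with inverse $\Tr(\cdot;w,h)$, and likewise $\CSSYT(\lambda;h,w)\to\CRST(\Tr(\lambda;h,w);w,h)$. Taking unions over $\lambda\in\Par(h,w)$, and using that $\lambda\mapsto\Tr(\lambda;h,w)$ is a bijection $\Par(h,w)\to\Par(w,h)$, yields the global bijections $\CRST(h,w)\to\CSSYT(w,h)$ and $\CSSYT(h,w)\to\CRST(w,h)$. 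For the standard case I would observe that a cylindric SYT is simultaneously a cylindric SSYT and a cylindric RST --- its entries $1,\dots,n$ are distinct, so it is strictly increasing along both rows and columns, a property preserved by $\tau$ --- and that $\tau$ preserves the number of cells per period, whence $|\Tr(\lambda;h,w)|=|\lambda|$. Therefore $\Tr(\cdot;h,w)$ restricts to a bijection $\CSYT_n(h,w)\to\CSYT_n(w,h)$, giving the displayed cardinality identity.

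The step I expect to be the main obstacle is the first one: showing that the one-step cylindric condition faithfully encodes a globally monotone, valid filling of the cylinder, and in particular that ``being of a valid skew shape'' is preserved under $\tau$, together with the verification of the width bound for $\Tr(\lambda;h,w)$. Once this cylinder picture is firmly in place, the transposition argument and the involution yielding the inverse are purely formal.
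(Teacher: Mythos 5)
Your proof is correct and takes essentially the same approach as the paper, whose entire proof is the one-line remark that the statement ``is immediate from the construction of the map \( T\mapsto \Tr(T;h,w) \)''. What you have done is supply the details the authors leave implicit --- the passage to the bi-infinite periodic lift, the check that the one-step condition propagates, the diagonal reflection conjugating translation by \( (h,-w) \) into translation by \( (-w,h) \), and involutivity furnishing the inverse --- all of which are sound.
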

\begin{proof}
  This is immediate from the construction of the map \( T\mapsto \Tr(T;h,w) \).
\end{proof}

\subsection{Symmetric functions} \label{sec:symm}

In this paper we shall be concerned with the following symmetric functions in
the variables \( \vx= \{ x_1,x_2,\dots \} \). 

The \emph{\( n \)-th complete homogeneous symmetric function} \( h_n(\vx) \) and the \emph{\( n \)-th elementary symmetric function} \( e_n(\vx) \) are defined by
\[
h_n(\vx) =\sum_{i_1\leq i_2 \leq \cdots \leq i_n} x_{i_1}x_{i_2}\cdots x_{i_n},
\qquad
e_n(\vx) =\sum_{i_1<i_2<\cdots<i_n} x_{i_1}x_{i_2}\cdots x_{i_n},
\]
respectively. We set \( h_0(\vx)=e_0(\vx)=1 \) and define \(
h_n(\vx)\) and \(e_n(\vx) \) to be zero for \( n <0 \).

For any tableau \( T\), let \( \vx^T=x_1^{\alpha_1}x_2^{\alpha_2}\cdots
\), where \( \alpha_i \) is the number of \( i \)'s in \( T \). For a partition
\( \lambda \), the \emph{Schur function} \( s_\lambda(\vx) \) is defined by
\begin{equation} \label{eq:SF}
s_{\lambda}(\vx)=\sum_{T} \vx^T,
\end{equation}
where the sum is over all semistandard Young tableaux \( T \) of shape \(
\lambda \). If instead in \eqref{eq:SF} we sum over {\it cylindric}
semistandard Young tableaux of a given shape, then we call the
resulting object ``cylindric Schur function".

\begin{defn}\label{defn:cSchur}
  Let \( h \) and \( w \) be positive integers and let \( \lambda\in \Par(h,w)
  \). The \emph{\( (h,w) \)-cylindric Schur function} \( s_{\lambda[h,w]}(\vx)
  \) of shape \( \lambda[h,w] \) is defined by
  \[
    s_{\lambda[h,w]}(\vx)  =  \sum_{T\in \CSSYT(\lambda;h,w)} \vx^T.
  \]
\end{defn}

By definition, we have
\begin{equation} \label{eq:s-inf}
 \lim_{w\to\infty} s_{\lambda[h,w]}(\vx) = s_{\lambda}(\vx).
\end{equation}

Recall that each tableau \( T\in \CSSYT(\lambda;h,w) \) can be understood as a
filling of the cylindric shape \( \lambda[h,w] \). Therefore our definition of a
cylindric Schur function is equivalent to that of Postnikov~\cite[Sec.~6]{Postnikov2005}.

For our purpose it is more convenient to deal with \( s_{\lambda[h,w]'}(\vx) \),
which is a \( (w,h) \)-cylindric Schur function. The following proposition shows
that \( s_{\lambda[h,w]'}(\vx) \) can also be seen as the generating function for cylindric
row-strict tableaux.

\begin{prop}\label{prop:cyl_sch}
  Let \( h \) and \( w \) be positive integers and \( \lambda\in \Par(h,w) \).
  Then we have
  \[
    s_{\lambda[h,w]'}(\vx) =  \sum_{T\in \CRST(\lambda;h,w)} \vx^T.
  \]
\end{prop}
\begin{proof}
  Since \(\lambda[h,w]'=\Tr(\lambda;h,w)[w,h] \), we have
  \[
    s_{\lambda[h,w]'}(\vx)  =  \sum_{T\in \CSSYT(\Tr(\lambda;h,w);w,h)} \vx^T.
  \]
  On the other hand, by the map in Proposition~\ref{prop:cssyt=crst},
  \[
    \sum_{T\in \CRST(\lambda;h,w)} \vx^T
    =\sum_{T\in \CSSYT(\Tr(\lambda;h,w);w,h)} \vx^T.
  \]
  Combining the above two equations, we obtain the assertion of the proposition.
\end{proof}

The \emph{quantum Kostka numbers} \( K_{\lambda[h,w]}^\alpha \)
are defined by
\[
  s_{\lambda[h,w]}(\vx) = \sum_{\rm\alpha} K_{\lambda[h,w]}^\alpha \vx^\alpha,
\]
where the sum is over all sequences \( \alpha=(\alpha_1,\alpha_2,\dots) \) of
nonnegative integers. By Proposition~\ref{prop:cyl_sch}, \(
K_{\lambda[h,w]'}^\alpha \) is the number of tableaux \( T\in
\CRST(\lambda;h,w) \) in which the number of \( i \)'s is equal to \( \alpha_i
\) for all \( i\ge1 \). There is another description of \(
K_{\lambda[h,w]'}^\alpha \) in terms of lattice paths.

\begin{prop}\label{prop:cssyt=path}
  Let \( h \) and \( w \) be positive integers, \( \lambda \) a partition in
  \( \Par(h,w) \), and \( \alpha=(\alpha_1,\alpha_2,\dots) \) a sequence of
  nonnegative integers. Then \( K_{\lambda[h,w]'}^\alpha \) equals the number of
  paths in $\Z^{h}$ from the origin to \( (\lambda_1,\dots,\lambda_h) \) and
  staying in the region
 \[
  \big\{(x_1,x_2,\dots,x_{h}):x_1\ge x_2\ge \dots\ge x_{h}\ge
  x_1-w\big\},
  \]
  where the $i$-th step is a vector with $\alpha_i$ coordinates equal to~$1$ and
  $h-\alpha_i$ coordinates equal to~$0$.
\end{prop}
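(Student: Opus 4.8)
The plan is to combine Proposition~\ref{prop:cyl_sch} with the classical encoding of a row-strict tableau as a chain of partition shapes, and then to translate the cylindric gluing condition into the stated lattice-path condition. By Proposition~\ref{prop:cyl_sch}, $K_{\lambda[h,w]'}^{\alpha}$ counts the tableaux $T\in\CRST(\lambda;h,w)$ in which the value $i$ occurs exactly $\alpha_i$ times for each $i\ge1$, so it suffices to set up a bijection between such $T$ and the paths described in the statement.

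I would first treat the underlying (non-cylindric) bijection. To a row-strict tableau $T$ of shape $\lambda$ (with at most $h$ rows) associate the chain $\emptyset=\mu^{(0)}\subseteq\mu^{(1)}\subseteq\cdots$, where $\mu^{(i)}$ is the shape consisting of the cells of $T$ with entry at most $i$. Since the rows of $T$ are strictly increasing, the cells holding a fixed value $i$ lie in distinct rows, so $\mu^{(i)}/\mu^{(i-1)}$ has at most one cell per row; writing $\mu^{(i)}=(\mu^{(i)}_1,\dots,\mu^{(i)}_h)\in\Z^h$, this means each coordinate grows by $0$ or $1$ from step $i-1$ to step $i$, with exactly $\alpha_i$ coordinates growing. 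Reading the points $\mu^{(i)}$ off in order thus identifies row-strict tableaux of shape $\lambda$ and content $\alpha$ with lattice paths from the origin to $(\lambda_1,\dots,\lambda_h)$ whose $i$-th step has $\alpha_i$ coordinates equal to $1$ and $h-\alpha_i$ equal to $0$, and which stay in the Weyl chamber $\{x_1\ge\cdots\ge x_h\ge0\}$, the chamber condition being precisely the requirement that each $\mu^{(i)}$ be a partition.

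Next I would isolate what the cylindric condition contributes. As $T$ occupies rows $1,\dots,h$ while the shifted copy $T+(h,-w)$ occupies rows $h+1,\dots,2h$, every row of $T\cup(T+(h,-w))$ lies entirely in one of the two pieces, so row-strictness produces no new constraint. In a fixed column $c$, the union consists of the cells of column $c$ of $T$ in rows $1,\dots,\lambda'_c$, followed by the cells of column $c+w$ of $T$ placed in rows $h+1,\dots,h+\lambda'_{c+w}$; these are contiguous, so that the union is a genuine skew shape, exactly when $\lambda'_c=h$ holds whenever column $c+w$ is nonempty, which unwinds to $\lambda_1-\lambda_h\le w$, i.e.\ to $\lambda\in\Par(h,w)$. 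Granting this, the single new constraint is the weak increase down such a full column across the junction, namely $T_{h,c}\le T_{1,c+w}$ for all $1\le c\le\lambda_1-w$.

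It remains to show that this family of entry inequalities is equivalent to the path staying in the region $R=\{x_1\ge\cdots\ge x_h\ge x_1-w\}$, that is, to $\mu^{(i)}_1-\mu^{(i)}_h\le w$ for all $i$; this equivalence is the technical heart of the proof. Here I would use that, by strict increase of rows, $\mu^{(i)}_r$ is the largest $c$ with $T_{r,c}\le i$. The two conditions are then contrapositive to each other: if $T_{h,c}>T_{1,c+w}$ for some $c$, then $i=T_{1,c+w}$ gives $\mu^{(i)}_1\ge c+w$ and $\mu^{(i)}_h\le c-1$, whence $\mu^{(i)}_1-\mu^{(i)}_h>w$; conversely, if $\mu^{(i)}_1-\mu^{(i)}_h>w$ for some $i$, then $b:=\mu^{(i)}_h$ satisfies $b<\lambda_h$ (using $\lambda\in\Par(h,w)$, which forces $\mu^{(i)}_1\le\lambda_1\le\lambda_h+w$), and $T_{h,b+1}>i\ge T_{1,b+1+w}$ violates the inequality at $c=b+1$. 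Consequently the shape-chain bijection of the second paragraph restricts to a bijection between the tableaux in $\CRST(\lambda;h,w)$ of content $\alpha$ and the paths in $R$ described in the statement, which proves the proposition. The main obstacle is exactly this last equivalence, together with the bookkeeping of the preceding paragraph showing that the cylindric gluing contributes only the single column-junction inequality and no hidden row or shape obstruction.
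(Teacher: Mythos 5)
Your proposal is correct and takes essentially the same route as the paper: the paper's own proof is exactly this bijection reading off the row-length vectors of the subtableaux of entries at most $i$, stated there only as a brief sketch with an example. You have merely supplied the details the paper leaves implicit --- that the cylindric gluing reduces to the single junction inequality $T_{h,c}\le T_{1,c+w}$ and that this is equivalent to $\mu^{(i)}_1-\mu^{(i)}_h\le w$ for all $i$ --- and both of these verifications are carried out correctly.
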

\begin{proof}
  We obtain the proposition by reading the vectors of row lengths of the
  subtableaux containing all entries at most \( i \) for $i=0,1,\dots$. For
  example, the \( (2,3) \)-cylindric row-strict tableau in the right diagram of
  Figure~\ref{fig:T-trans} corresponds to the lattice path
  \[
    (0,0)\to (1,1)\to (2,2) \to (3,2)\to (4,3)\to (5,4).
  \]
  The property that the tableau is \( (h,w) \)-cylindric translates into the property
  that the lattice walk is in the given region.
\end{proof}

As reported in~\cite[Eq.~(11)]{Postnikov2005}, the results on
cylindric tableaux of Gessel and the third author~\cite{Gessel1997} imply a
Jacobi--Trudi-type formula for the cylindric Schur functions.
For the sake of completeness, we also provide the corresponding proof.

\begin{thm} \label{thm:ssyt2}
For positive integers \( h \) and \( w \)
and a partition \( \lambda\in\Par(h,w) \), we have
\begin{equation} \label{eq:CJT}
s_{\lambda[h,w]'}(\vx)=
\underset{k_{1}+\cdots+k_{h}=0}{\sum_{k_{1}, \ldots, k_{h} \in \mathbb{Z}}}
    \det_{1 \leq i, j \leq h} \left(e_{\lambda_{i}-i+j+(h+w) k_{i}}(\vx)\right).
\end{equation}
\end{thm}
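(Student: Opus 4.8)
The plan is to prove~\eqref{eq:CJT} combinatorially, through the lattice-path model already established in Proposition~\ref{prop:cssyt=path} together with an affine reflection principle. By Proposition~\ref{prop:cyl_sch} we have $s_{\lambda[h,w]'}(\vx)=\sum_{T\in\CRST(\lambda;h,w)}\vx^T=\sum_\alpha K^\alpha_{\lambda[h,w]'}\,\vx^\alpha$, and Proposition~\ref{prop:cssyt=path} identifies $K^\alpha_{\lambda[h,w]'}$ with the number of walks in $\ZZ^h$ from the origin to $(\lambda_1,\dots,\lambda_h)$, the $i$-th step being a $0/1$-vector with $\alpha_i$ ones, confined to $x_1\ge x_2\ge\dots\ge x_h\ge x_1-w$. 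First I would record that this region is the closure of an alcove of the affine symmetric group $\tilde S_h$ of type $\tilde A_{h-1}$: the substitution $y_i=x_i-i$ turns the weak chain into the strict chain $y_1>y_2>\dots>y_h$ and turns $x_h\ge x_1-w$ into $y_1-y_h\le h+w-1$, so the walk is confined to the open band $y_1>y_2>\dots>y_h>y_1-(h+w)$, with reflecting walls $y_i=y_{i+1}$ ($1\le i\le h-1$) and $y_1=y_h+(h+w)$. Both the start $(-1,-2,\dots,-h)$ and the end $(\lambda_1-1,\dots,\lambda_h-h)$ lie strictly inside this alcove, the latter precisely because $\lambda\in\Par(h,w)$ forces $\lambda_1-\lambda_h\le w$.

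The weights factor over coordinates: summing over all trajectories of a single coordinate from height $a$ to height $b$ (each step $+0$ or $+1$, a $+1$ at ``time'' $t$ contributing $x_t$) gives exactly $e_{b-a}(\vx)$, since one chooses the set of $b-a$ times at which that coordinate increases. Hence the generating function for \emph{free} (unconstrained) walk families from starts $(A_1,\dots,A_h)$ to ends $(B_1,\dots,B_h)$ factors as $\prod_i e_{B_i-A_i}(\vx)$. I would then invoke the reflection principle for walks confined to an alcove of $\tilde S_h$ --- the affine counterpart of the Gessel--Zeilberger formula for Weyl chambers~\cite{Gessel1992}, which in the present cylindric setting is exactly the enumeration carried out by Gessel and the third author~\cite{Gessel1997} --- to express the number of band-confined walks as the signed orbit sum $\sum_{\sigma\in\tilde S_h}\sgn(\sigma)\,(\text{free walks from the start to }\sigma\cdot\text{end})$.

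To finish, I would organize $\tilde S_h=S_h\ltimes Q$, where $Q=\{\mathbf k=(k_1,\dots,k_h)\in\ZZ^h:k_1+\dots+k_h=0\}$ is the root lattice and the translation $t_{\mathbf k}$ shifts the end heights by $(h+w)\mathbf k$. Writing $\sigma=\tau\,t_{\mathbf k}$ with $\tau\in S_h$, the reflected endpoint sends path $i$ to a free walk from $-\tau(i)$ to $\lambda_i-i+(h+w)k_i$, contributing $\prod_i e_{\lambda_i-i+\tau(i)+(h+w)k_i}(\vx)$; summing over $\tau$ with sign $\sgn(\tau)$ assembles the determinant $\det_{1\le i,j\le h}(e_{\lambda_i-i+j+(h+w)k_i}(\vx))$, and summing over $\mathbf k\in Q$ produces the constraint $k_1+\dots+k_h=0$. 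The expression is a well-defined power series because $e_n(\vx)=0$ for $n<0$, so in each total degree only finitely many $\mathbf k$ contribute. The main obstacle --- and the step I would treat most carefully --- is the bookkeeping of the affine reflection principle: pinning down the alcove, its reflecting hyperplanes and period $h+w$, the action of $\tilde S_h$ on the endpoint lattice, and above all verifying that each translation $t_{\mathbf k}$ enters with sign $+1$, so that $\sgn(\tau\,t_{\mathbf k})=\sgn(\tau)$. This is what makes the signed orbit sum collapse to the unsigned sum over $\{k_1+\dots+k_h=0\}$ of determinants, rather than acquiring an extra factor such as the $(-1)^{\sum k_i}$ appearing in the type-$D$-like identity~\eqref{eq:ABL2}.
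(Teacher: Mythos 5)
Your proposal is correct, and it takes a genuinely different route from the paper's. The paper proves \eqref{eq:CJT} by converting a tableau in \( \CRST(\lambda;h,w) \) into a family \( \mathbf{P}=(P_1,\dots,P_h) \) of nonintersecting lattice paths in the plane, with the cylindric condition becoming the requirement that the translate of \( P_1 \) by \( (-h-w,h+w) \) avoid \( P_h \), and then citing the ready-made determinant evaluation for such cylindric path families from \cite[Sec.~9]{Gessel1997}. You instead work in the dual model of Proposition~\ref{prop:cssyt=path} --- a single walk in \( \Z^h \) with \( 0/1 \)-steps confined to \( x_1\ge\dots\ge x_h\ge x_1-w \) --- and run the weighted affine reflection principle of type \( \tilde A_{h-1} \) directly. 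Your bookkeeping is right at all the delicate points: the shift \( y_i=x_i-i \) places both endpoints strictly inside the alcove (the endpoint precisely because \( \lambda_1-\lambda_h\le w \)); the step set \( \{0,1\}^h \) is invariant under the affine Weyl group and each step changes \( y_i-y_j \) by at most \( 1 \), so no wall (including the affine wall \( y_1-y_h=h+w \)) can be jumped over, which is the reflectability hypothesis of \cite{Gessel1992}; the weight \( x_t^{\alpha_t} \) depends only on the \( W \)-invariant number of ones in the \( t \)-th step, so the reflection pairing is weight-preserving; a free one-coordinate trajectory from \( a \) to \( b \) indeed generates \( e_{b-a}(\vx) \) (a coordinate rises at most once per time unit, so one chooses a \emph{set} of times); and translations in \( \mathfrak{S}_h\ltimes Q \) are products of evenly many reflections, so \( \sgn(t_{\mathbf{k}})=+1 \) and the signed orbit sum collapses to the unsigned sum over \( k_1+\dots+k_h=0 \) of determinants, exactly as in \eqref{eq:CJT}. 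What each approach buys: the paper's proof is shorter because it outsources the entire reflection analysis to the cylindric nonintersecting-path theorem of \cite{Gessel1997}, while yours is more self-contained on the combinatorial side (only the general alcove reflection principle is imported) and makes the ``affine'' structure emphasized in Remark~\ref{rem:aff}(3) completely explicit --- at the price of the verifications you correctly flagged as the main burden. At bottom the two arguments are dual formulations of the same reflection idea (\( h \) nonintersecting planar paths versus one walk in an alcove of \( \Z^h \)), so their agreement is no accident.
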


\begin{proof}
By Proposition~\ref{prop:cyl_sch}, the cylindric Schur function on the
left-hand side of~\eqref{eq:CJT}  is a generating function for
cylindric row-strict tableaux. 
We first show that these tableaux are in bijection with certain
  families of nonintersecting lattice paths. In order to see this, we place
  ourselves in the setting of Section~9 in~\cite{Gessel1997}. Namely, we
  consider the graph with vertices being the points in the integer plane $\Z^2$,
  and with edges being horizontal edges $(i-1,j)\to(i,j)$ and vertical edges
  $(i,j-1)\to(i,j)$. We define the weight of vertical edges to be~1 and the
  weight of a horizontal edge $e$ from $(i-1,j)$ to $(i,j)$ to be
  $\omega(e):=x_{i+j}$. By definition, the weight $\omega(P)$ of a path~$P$ is the product
  of the weights of all its edges, and the weight $\omega(\mathbf P)$ of a family
  $\mathbf P=(P_1,P_2,\dots,P_k)$ of paths is $ \prod _{i=1} ^{k}\omega(P_i)$. Now,
  given \( T\in \CRST(\lambda;w,h) \), we convert it into a family $\mathbf
  P=(P_1,P_2,\dots,P_{h})$ of paths in the graph, by mapping the $i$-th row of
  \( T \) to the path $P_i$ from $(-i+1,i-1)$ to $(\la_i-i+1,\infty)$ such that
  the weights of the horizontal steps of~$P_i$ are indexed by the entries in that row.
  For example, see Figure~\ref{fig:6} for the family of paths corresponding to the
  tableau in Figure~\ref{fig:rst}.

\begin{figure}
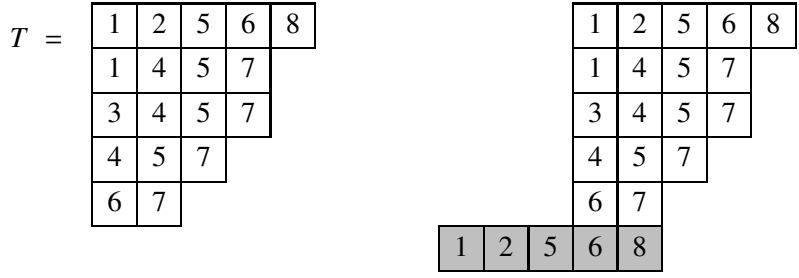

  \centering
 \(T~=\)\quad\begin{ytableau}
    1&2&5&6&8\\
    1&4&5&7\\
    3&4&5&7\\
    4&5&7\\
    6&7
  \end{ytableau}
  \qquad \qquad
  \begin{ytableau}
    \none&\none&\none&1&2&5&6&8\\
    \none&\none&\none&1&4&5&7\\
    \none&\none&\none&3&4&5&7\\
    \none&\none&\none&4&5&7\\
    \none&\none&\none&6&7\\
    *(lightgray) 1&*(lightgray)2&*(lightgray)5&*(lightgray)6&*(lightgray)8
  \end{ytableau}

  \caption{The left diagram shows a row-strict tableau \( T\in\CRST(\lambda;h,w)
    \) for $\la=(5,4,4,3,2)$, \( h=5 \), and \( w=3 \).
  The right diagram shows \( T \) together with a copy of its first row
  translated by \( (h,-w) \).}
  \label{fig:rst}
\end{figure}

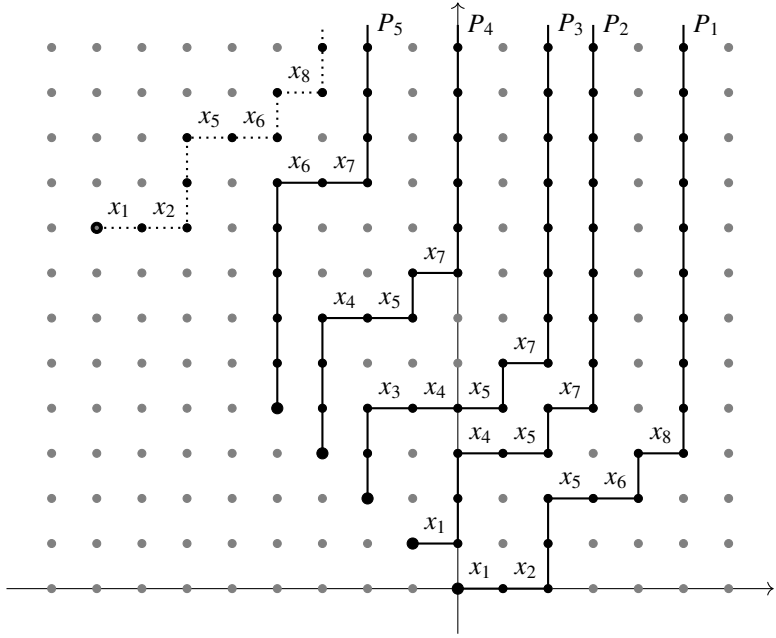
\begin{figure}
\small{
\begin{tikzpicture}[scale=0.6]

\draw[->] (-10,0) -- (7,0);
\draw[->] (0,-1) -- (0,13);

\foreach \i in {-9,...,6}
\foreach \j in {0,...,12}
\filldraw[fill=gray, color=gray] (\i,\j) circle (2.5pt);

\foreach \i in {0,1,2,3,4,8}
\filldraw (-\i,\i) circle (3.5pt);

\foreach \i/\j in {0,1,3,4,5,8}
\filldraw (1-\j,\j) circle (2.5pt);

\foreach \i/\j in {0,2,4,5,6,8}
\filldraw (2-\j,\j) circle (2.5pt);

\foreach \i/\j in {1,3,4,6,7,9}
\filldraw (3-\j,\j) circle (2.5pt);

\foreach \i/\j in {2,3,4,6,8,10}
\filldraw (4-\j,\j) circle (2.5pt);

\foreach \i/\j in {2,3,4,6,9,10}
\filldraw (5-\j,\j) circle (2.5pt);

\foreach \i/\j in {2,4,5,7,9,10}
\filldraw (6-\j,\j) circle (2.5pt);

\foreach \i/\j in {3,4,5,7,9,11}
\filldraw (7-\j,\j) circle (2.5pt);


\foreach \j in {3,4,...,12}
\filldraw (8-3,\j) circle (2.5pt);

\foreach \j in {5,6,...,12}
\filldraw (8-5,\j) circle (2.5pt);

\foreach \j in {6,7,...,12}
\filldraw (8-6,\j) circle (2.5pt);

\foreach \j in {8,9,...,12}
\filldraw (8-8,\j) circle (2.5pt);

\foreach \j in {10,11,12}
\filldraw (8-10,\j) circle (2.5pt);

\draw[thick, dotted]
(-8,8) -- (-6,8) -- (-6,10) -- (-4,10) -- (-4, 11) -- (-3, 11)--(-3,12.5);

\foreach \j in {11,12}
\filldraw (8-11,\j) circle (2.5pt);

\filldraw[color=gray] (-8,8) circle (1pt);

\draw[thick] (0,0) -- (2,0) -- (2,2) -- (4,2) -- (4,3) -- (5,3)--(5,12.5)
(-1,1) -- (0,1) -- (0,3) -- (2,3) -- (2,4) -- (3,4) -- (3,5)--(3,12.5)
(-2,2) -- (-2,4) -- (1,4) -- (1,5) -- (2,5) -- (2,6)--(2,12.5)
(-3,3) -- (-3,6) -- (-1,6) -- (-1,7) -- (0,7) -- (0,8)--(0,12.5)
(-4,4) -- (-4,9) -- (-2,9) -- (-2,10)--(-2,12.5);

\foreach \j in {0,1,8}
\node at (0.5-\j, 0.4+\j) {\(x_1\)};

\foreach \j in {0,8}
\node at (1.5-\j, 0.4+\j) {\(x_2\)};

\foreach \j in {4}
\node at (2.5-\j, 0.4+\j) {\(x_3\)};

\foreach \j in {3,4,6}
\node at (3.5-\j, 0.4+\j) {\(x_4\)};

\foreach \j in {2,3,4,6,10}
\node at (4.5-\j, 0.4+\j) {\(x_5\)};

\foreach \j in {2,9,10}
\node at (5.5-\j, 0.4+\j) {\(x_6\)};

\foreach \j in {4,5,7,9}
\node at (6.5-\j, 0.4+\j) {\(x_7\)};

\foreach \j in {3,11}
\node at (7.5-\j, 0.4+\j) {\(x_8\)};

\node[right] at (5, 12.5) {\(P_1\)};
\node[right] at (3, 12.5) {\(P_2\)};
\node[right] at (2, 12.5) {\(P_3\)};
\node[right] at (0, 12.5) {\(P_4\)};
\node[right] at (-2, 12.5) {\(P_5\)};

\end{tikzpicture}

}
\caption{The family \(\mathbf{P}=(P_1,P_2,P_3,P_4,P_5)\) of paths corresponding to the tableau \( T \) in Figure~\ref{fig:rst}. The shift of \(P_1\) is indicated by the dotted path.}
\label{fig:6}
\end{figure}

The property of \( T \) having weak increase of entries along
columns translates into the property of $\mathbf P$ being nonintersecting, and
the property of \( T \) being \( (h,w) \)-cylindric translates into the property that
the shift of $P_1$ by $(-h-w,h+w)$ does not intersect~$P_{h}$ (and, thus, also
not any other paths). In Figure~\ref{fig:6}, this shift of~$P_1$ is
indicated by the dotted path.

Therefore the left-hand side of~\eqref{eq:CJT} equals the generating function $\sum_{\mathbf
  P}\omega(\mathbf P)$ for these families $\mathbf P$ of nonintersecting lattice
paths. As explained in~\cite[Sec.~9]{Gessel1997} this generating function is
equal to
\[
\underset{k_{1}+\cdots+k_{h}=0}{ \sum_{k_{1}, \ldots, k_{h} \in \mathbb{Z} }}
    \det_{1 \leq i, j \leq h} \left(e_{\lambda_{i}-i+j+(h+w) k_{i}}(\vx)\right),
\]
which is exactly the right-hand side of~\eqref{eq:CJT}.
\end{proof}

\section{Affine bounded Littlewood identities}
\label{sec:symm-funct-ident}

In this section, we present our affine bounded Littlewood identities,
see Theorems~\ref{thm:affine_BK_intro}, \ref{thm:C}, and~\ref{thm:D}.
A first proof of Theorem~\ref{thm:affine_BK_intro} is given in
Section~\ref{sec:first-proof}. In the subsequent
Section~\ref{sec:second-proof} we develop a uniform approach to prove
affine bounded Littlewood identities. This leads to an alternative
proof of Theorem~\ref{thm:affine_BK_intro}, as well as to proofs of
Theorems~\ref{thm:C} and \ref{thm:D}.

\medskip
We start by restating the identities~\eqref{eq:ABL1} and~\eqref{eq:ABL2}
in a more compact form. As explained in the introduction, these are
affine extensions of the bounded Littlewood
identities~\eqref{eq:BK_odd1} and~\eqref{eq:BK_even1}. Moreover, as
pointed out after Theorem~\ref{thm:BK-intro}, the latter two identities 
express odd orthogonal characters indexed by a rectangular shape in terms of
Schur functions.

\begin{thm}[\sc Two affine bounded Littlewood identities: odd
    orthogonal case]
\label{thm:affine_BK_intro}
For positive integers \( h \) and \( w \), we have
\begin{align}
  \label{eq:aGBK1s}
  \sum_{\la\in\Par(2h+1,w)} s_{\lambda[2h+1,w]'}(\vx)
  &= \sum_{k\ge0}e_k(\vx)
    \det_{1\le i,j\le h}\left(
    F_{-i+j,2h+1+w}(\vx) - F_{i+j,2h+1+w}(\vx)
    \right),\\
  \label{eq:aGBK2s}
  \sum_{\la\in\Par(2h,w)} s_{\lambda[2h,w]'}(\vx)
  &= \det_{1\le i,j\le h}\left(
    \overline{F}_{-i+j,2h+w}(\vx) + \overline{F}_{i+j-1,2h+w}(\vx)
    \right),
\end{align}
where \( \Par(m,w) \) is the set of partitions \( \lambda \)
with \( \ell(\lambda)\le m \) and \( \lambda_1-\lambda_m\le w \), and
  \begin{align*}
    F_{r,N}(\vx) &= \sum_{k\in\Z} f_{r+Nk}(\vx),\\
    \overline{F}_{r,N}(\vx) &= \sum_{k\in\Z} (-1)^k f_{r+Nk}(\vx),
  \end{align*}
with
$$
f_r(\vx)=\sum_{i\in \Z}e_i(\vx) e_{i+r}(\vx),
$$
as before.
\end{thm}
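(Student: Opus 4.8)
The plan is to recognise that \eqref{eq:aGBK1s} and \eqref{eq:aGBK2s} are compact rewritings of the identities \eqref{eq:ABL1} and \eqref{eq:ABL2} of Theorem~\ref{thm:affine_BK_intro2}, so that the proof splits into a bookkeeping reduction and the genuinely substantive step of establishing \eqref{eq:ABL1}--\eqref{eq:ABL2} themselves. For the left-hand sides I would invoke the cylindric Jacobi--Trudi formula of Theorem~\ref{thm:ssyt2}: applying \eqref{eq:CJT} with $h$ replaced by $2h+1$ (respectively $2h$) expands each summand $s_{\lambda[2h+1,w]'}(\vx)$ (respectively $s_{\lambda[2h,w]'}(\vx)$) as an inner sum over $k_1+\dots+k_{2h+1}=0$ (respectively $k_1+\dots+k_{2h}=0$) of a determinant of elementary symmetric functions. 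Summing over $\lambda\in\Par(2h+1,w)$ (respectively $\Par(2h,w)$) then reproduces verbatim the left-hand side of \eqref{eq:ABL1} (respectively \eqref{eq:ABL2}), since the constraint $\ell(\lambda)\le 2h+1$ together with $\lambda_1-\lambda_{2h+1}\le w$ is exactly membership in $\Par(2h+1,w)$.

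For the right-hand sides the task is to absorb the row-sums over the indices $k_1,\dots,k_h$ sitting outside the determinants in \eqref{eq:ABL1}--\eqref{eq:ABL2} into the matrix entries, producing the functions $F_{r,N}$ and $\overline{F}_{r,N}$. I would use multilinearity of the determinant in its rows: since the shift $(2h+1+w)k_i$ (respectively $(2h+w)k_i$) occurs only in row~$i$, the Leibniz expansion $\sum_\sigma\sgn(\sigma)\prod_i a_{i\sigma(i)}(k_i)$ has each factor depending on a single index $k_i$, so the outer sum $\sum_{k_1,\dots,k_h}$ distributes over the rows independently. Each row-sum then collapses $\sum_{k_i\in\Z}f_{\cdot+Nk_i}(\vx)$ into $F_{\cdot,N}(\vx)$ in the odd case; in the even case the sign $(-1)^{\sum_i k_i}=\prod_i(-1)^{k_i}$ attaches one factor $(-1)^{k_i}$ to row~$i$, turning the row-sum into $\overline{F}_{\cdot,N}(\vx)$, with $N=2h+1+w$ and $N=2h+w$ respectively. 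A small point to verify here is convergence in the ring of formal power series: since $f_s(\vx)=\sum_{i}e_i(\vx)e_{i+s}(\vx)$ satisfies $f_s=f_{-s}$ and has lowest degree $|s|$, the series $F_{r,N}$ and $\overline{F}_{r,N}$ are well defined, each fixed monomial receives contributions from only finitely many $(k_1,\dots,k_h)$, and the interchange of summation with the Leibniz expansion is thereby legitimate.

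This reduces Theorem~\ref{thm:affine_BK_intro} to the identities \eqref{eq:ABL1} and \eqref{eq:ABL2}, and it is their proof that I expect to be the real obstacle (and which is deferred to Section~\ref{sec:first-proof}). Here one would follow the Gordon--Houten/Stembridge recipe: the inner determinant of $e$'s, summed over all bounded-length shapes, is a generating function for families of nonintersecting lattice paths with free endpoints, now living on a cylinder because of the periodicity built into Theorem~\ref{thm:ssyt2}. Evaluating this sum should proceed through a minor-summation/Pfaffian step—the reflection- and periodicity-compatible analog of Gordon's lemma—which for $2h+1$ rows collapses to the factor $\sum_{k\ge0}e_k(\vx)$ times an $h\times h$ determinant and for $2h$ rows to an $h\times h$ determinant directly. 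The delicate part, I anticipate, is controlling the interaction between the cylindric periodicity and the reflection principle so that the off-diagonal Pfaffian entries assemble precisely into the combinations $F_{-i+j}-F_{i+j}$ and $\overline{F}_{-i+j}+\overline{F}_{i+j-1}$, and so that the extra summation indices $k_i$ survive the collapse in exactly the stated form.
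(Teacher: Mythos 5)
Your reduction of \eqref{eq:aGBK1s}--\eqref{eq:aGBK2s} to \eqref{eq:ABL1}--\eqref{eq:ABL2} is correct and is exactly what the paper treats as obvious in Remark~\ref{rem:aff}(1): the left-hand sides match via the cylindric Jacobi--Trudi formula \eqref{eq:CJT}, and on the right your multilinearity argument, distributing $\sum_{k_i}$ over the rows (with $(-1)^{\sum_i k_i}=\prod_i(-1)^{k_i}$ in the even case), correctly assembles $F_{r,N}$ and $\overline{F}_{r,N}$; the check that each monomial receives only finitely many contributions legitimately justifies the rearrangement. The problem is that everything after this point --- the actual proof of \eqref{eq:ABL1} and \eqref{eq:ABL2} --- remains a forecast rather than an argument, and the forecast misses the one idea the paper's proof turns on. Before any minor summation formula can be applied, the double sum over $\lambda\in\Par(m,w)$ and $(k_1,\dots,k_m)\in\Z^m$ with $k_1+\dots+k_m=0$ must first be converted into a \emph{single} sum of minors of one fixed matrix; this is accomplished by Lemmas~\ref{lem:rem} and~\ref{lem:rem2}, which biject such pairs with integer sequences $\alpha$ whose residues $R_N(\alpha_i)$ are distinct, via a cyclic shift of the coordinates. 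The parity of that cyclic shift is where the sign lives: for $m=2h+1$ odd a cyclic shift of the rows leaves the determinant unchanged, while for $m=2h$ even it costs a factor $(-1)^t=(-1)^{\sum_i\lfloor\alpha_i/N\rfloor}$, and this is precisely how the sign $(-1)^{\sum_i k_i}$ in \eqref{eq:ABL2} is metabolized. Your sketch contains no mechanism accounting for this sign, and without the residue-class rearrangement the sum is not in a form to which Corollary~\ref{cor:IsWa} applies.

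Moreover, your anticipated ``delicate part'' is misplaced. After the rearrangement the paper proceeds purely algebraically: Corollary~\ref{cor:IsWa} yields a Pfaffian with entries $d_N(i,j)$ (respectively $\bar d_N(i,j)$), row/column differencing identifies these entries as $F_{j-i-1,N}(\vx)-F_{j-i+1,N}(\vx)$ (respectively sums of $\overline{F}_{r,N}(\vx)$) via the computation \eqref{eq:dN-dN} and its signed analog, and then Gordon's Lemma~\ref{lem:Gordon5} is applied \emph{verbatim} --- no ``reflection- and periodicity-compatible analog'' is required, since the hypothesis $z_{-i}=-z_i$ is already satisfied by $z_i=F_{i-1,N}(\vx)-F_{i+1,N}(\vx)$ (using $F_{r,N}=F_{-r,N}$), the periodicity having been packaged into the entries themselves. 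Nonintersecting lattice paths on a cylinder play no role in this proof: they enter only in Theorem~\ref{thm:ssyt2}, which you already consumed in the reduction, and later in Section~\ref{sec:cylindric_SSYT} for combinatorial interpretations. So the genuinely substantive content you defer --- the cyclic-shift bijection of Lemmas~\ref{lem:rem}/\ref{lem:rem2} and the Pfaffian-entry computations --- is absent, and the route you gesture at (cylinder paths plus a modified reflection principle inside the proof of the identity) is not the one that works here and is nowhere substantiated.
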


\begin{rems} \label{rem:aff}
(1) It should be obvious that the identities~\eqref{eq:aGBK1s}
  and~\eqref{eq:aGBK2s} are equivalent with~\eqref{eq:ABL1}
  and~\eqref{eq:ABL2}, respectively.

\medskip\noindent
(2) Note that \( f_{r}(\vx) = f_{-r}(\vx) \),
\( F_{r,N}(\vx) = F_{-r,N}(\vx) \),
\( \overline{F}_{r,N}(\vx) = \overline{F}_{-r,N}(\vx) \),  and
\[
  \lim_{N\to\infty} F_{r,N}(\vx) =
  \lim_{N\to\infty} \overline{F}_{r,N}(\vx) = f_r(\vx).
\]
In particular, as $w\to\infty$, the identities~\eqref{eq:aGBK1s}
  and~\eqref{eq:aGBK2s} reduce to~\eqref{eq:BK_odd1}
  and~\eqref{eq:BK_even1}, respectively.

\medskip\noindent
(3) Why do we call the identities in Theorem~\ref{thm:affine_BK_intro}
``affine" bounded Littlewood identities, i.e., what is the meaning of
``affine" in this context? To understand this, we observe that,
by~\eqref{eq:JT}, the summand indexed by~$\la$ on the left-hand sides
of~\eqref{eq:ABL1} and~\eqref{eq:ABL2} can be written as
$$
s_{\lambda'}(\vx) = \det_{1\le i,j\le m}\big(e_{\lambda_i-i+j}(\vx)\big)=
\sum_{\si\in\mathfrak S_m}\sgn\si
\prod _{i=1} ^{m}e_{\la_i-i+\si(i)}(\vx),
$$
where $m=2h+1$ respectively $m=2h$.
The right-hand side
is a sum over the symmetric
group~$\mathfrak S_m$. In the classification of finite Coxeter groups,
this is the reflection group of type~$A_{m-1}$
(cf.\ \cite[p.~41]{Humphreys}). On the other hand, 
by~\eqref{eq:CJT},
the summand indexed by~$\la$ on the left-hand sides of~\eqref{eq:aGBK1s}
and~\eqref{eq:aGBK2s} can be written as
$$
s_{\lambda[m,w]'}(\vx) =
\underset{k_1+\dots+k_{m}=0}{\sum_{k_1,\dots,k_{m}\in\Z}}
\det_{1\le i,j\le m}\big(e_{\lambda_i-i+j+(m+w)k_i}(\vx)\big)=
\underset{k_1+\dots+k_{m}=0}{\sum_{ k_1,\dots,k_{m}\in\Z}}
\sum_{\si\in\mathfrak S_m}\sgn\si
\prod _{i=1} ^{m}e_{\la_i-i+\si(i)+(m+w)k_i}(\vx),
$$
where $m$ has the same meaning as before.
The right-hand side is now a sum over the {\it affine} symmetric
group~$\tilde{\mathfrak S}_m$ defined by
$$\tilde{\mathfrak{S}}_m:=\mathfrak{S}_m \ltimes
\{(k_1,k_2,\dots,k_m) \in \Z^m:k_1+k_2+\dots+k_m=0\}.$$
In the classification of affine Coxeter groups,
this is the reflection group of (affine) type~$\tilde A_{m-1}$
(cf.\ \cite[Sec.~8.3]{BjBr}). 
\end{rems}

Our next identities  provide affine extensions of
\begin{equation} \label{eq:even-sp}
\underset{\la\text{ even}}{\sum_{\la:\lambda_1 \le 2h }} s_\lambda(\vx)
 =
\det_{1 \le i, j \le h} \left(
 f_{-i+j}(\vx) - f_{i+j}(\vx)
\right),
\end{equation}
where a partition $\lambda$ is called \emph{even} if all of its parts
are even (cf.\ \cite[Eq.~(7.2)]{Stembridge1990}, modulo an application
of the 
symmetric function involution interchanging complete homogeneous and
elementary symmetric functions).
We point out that the right-hand side of this identity is
(essentially) an irreducible character of rectangular shape
of the  symplectic group $\text{Sp}_{2n}({\mathbb  C})$
(see~\cite[Cor.~7.4(b)]{Stembridge1990}).

\begin{thm}[\sc Two affine bounded Littlewood identities: symplectic case]
\label{thm:C}
Let $h$ and $w$ be positive integers.
For a partition $\lambda$ with length $\le 2h$ and $\lambda_1 - \lambda_{2h} \le w$, we define
\begin{equation}
\label{eq:wt_C}
c^\pm_{2h,w}(\lambda)
 =
\begin{cases}
 1,
 &\text{if $\lambda_{2i-1} = \lambda_{2i}$ for all $1 \le i \le h$,}
\\
 \pm 1,
 &\text{if $\lambda_1 - \lambda_{2h} = w$ and $\lambda_{2i} = \lambda_{2i+1}$ for all $1 \le i \le h-1$,}
\\
 0,
 &\text{otherwise.}
\end{cases}
\end{equation}
Then we have
\begin{align}
  \label{eq:C+}
  \sum_{\lambda \in \Par(2h,w)}
  c^+_{2h,w}(\lambda)\,
  s_{\la[2h,w]'}(\vx)
  &= \det_{1\le i,j\le h}\left(
  \overline{F}_{-i+j,2h+w}(\vx) - \overline{F}_{i+j,2h+w}(\vx)
  \right),\\
  \label{eq:C-}
  \sum_{\lambda \in \Par(2h,w)}
  c^-_{2h,w}(\lambda)\,
s_{\la[2h,w]'}(\vx)
  &= \det_{1\le i,j\le h}\left(
  F_{-i+j,2h+w}(\vx) - F_{i+j,2h+w}(\vx)
  \right),
\end{align}
where $F_{r,N}(\vx)$ and $\overline F_{r,N}(\vx)$ are defined
in Theorem~\ref{thm:affine_BK_intro}.
\end{thm}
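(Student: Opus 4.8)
The plan is to prove both \eqref{eq:C+} and \eqref{eq:C-} at once by the uniform ``minor-summation'' method of Section~\ref{sec:second-proof}, carrying a sign $\epsilon\in\{+1,-1\}$ that governs the cylindric wrap-around and interpolates between the two cases (with $c^+$ matched to $\overline F$ and $c^-$ to $F$). Throughout I set $N=2h+w$. First I would expand the left-hand side by the cylindric Jacobi--Trudi formula of Theorem~\ref{thm:ssyt2}, writing
\[
s_{\la[2h,w]'}(\vx)=\underset{k_1+\dots+k_{2h}=0}{\sum_{k_1,\dots,k_{2h}\in\Z}}\ \det_{1\le i,j\le 2h}\big(e_{\la_i-i+j+Nk_i}(\vx)\big),
\]
so that $\sum_{\la}c^\epsilon_{2h,w}(\la)\,s_{\la[2h,w]'}(\vx)$ becomes a weighted sum of $2h\times 2h$ determinants of elementary symmetric functions.

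The next step is to encode the combinatorial weight $c^\epsilon_{2h,w}(\la)$, which forces the parts of $\la$ to coincide in consecutive pairs $(\la_{2i-1},\la_{2i})$ --- equivalently, $\la'$ to be an even partition --- together with the single ``boundary'' family $\la_1-\la_{2h}=w$, $\la_{2i}=\la_{2i+1}$, by means of a Pfaffian. This is precisely the shape of input required by a minor-summation (Pfaffian) formula such as that of Ishikawa and Wakayama. The heart of the argument is then to apply that formula to collapse the weighted sum of $2h\times 2h$ determinants into a single $2h\times 2h$ Pfaffian $\Pf_{1\le i<j\le 2h}(A_{ij})$, whose entries $A_{ij}$ are sums of products $e_ne_{n+r}$, i.e.\ values of $f_r$. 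Folding the affine translations $k_i$ modulo $N$ turns these entries into $F_{r,N}(\vx)$ when no sign is accrued, and into $\overline F_{r,N}(\vx)=\sum_k(-1)^kf_{r+Nk}(\vx)$ when the cylindric wrap contributes $(-1)^k$; tracing through which of the two occurs, and reconciling it with the boundary case in the definition of $c^\pm_{2h,w}$, is exactly what fixes the correspondence $c^+\leftrightarrow\overline F$, $c^-\leftrightarrow F$. Finally I would evaluate the resulting structured $2h\times 2h$ Pfaffian; because of the block/striped pattern inherited from the consecutive-pair structure, it reduces to the $h\times h$ determinant $\det_{1\le i,j\le h}\big(F_{-i+j,N}-F_{i+j,N}\big)$ (respectively with $\overline F$), which is the asserted right-hand side.

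I expect the main obstacle to be the affine bookkeeping in the folding step: verifying that summing the affine determinant over the even-paired partitions, together with the lone boundary family, reproduces exactly the entries $F_{r,N}$ or $\overline F_{r,N}$ with the correct signs, and that the cylindric wrap is accounted for by precisely this one extra family in the definition of $c^\pm_{2h,w}$ rather than by an uncontrolled cascade of boundary terms. In classical ($w\to\infty$) terms this is the step where the sum over the affine symmetric group $\tilde{\mathfrak S}_{2h}$ of Remark~\ref{rem:aff}(3) replaces the finite group $\mathfrak S_{2h}$; checking that the $w\to\infty$ specialization recovers \eqref{eq:even-sp} provides a useful consistency test along the way.
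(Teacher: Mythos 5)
Your proposal is correct and follows essentially the same route as the paper: the paper's proof is precisely your plan, instantiated through Proposition~\ref{prop:general} (cylindric Jacobi--Trudi expansion, Ishikawa--Wakayama minor summation, and the affine folding of Lemma~\ref{lem:bijection}) together with Lemma~\ref{lem:matrix_C}, which supplies the Pfaffian encoding of $c^{\pm}_{2h,w}$ via explicit skew-symmetric matrices built from sequences $\gamma^{\pm}_{\ell}$ that are $N$-periodic for $c^-$ and $N$-antiperiodic for $c^+$ (with the sub-Pfaffian evaluation $\Pf\,(C^{\pm})^{I_m(\lambda)}=c^{\pm}_{m,w}(\lambda)$ checked via \cite[Lem.~3.4(3)]{Okada1998}), confirming your correspondence $c^-\leftrightarrow F$ and $c^+\leftrightarrow\overline{F}$. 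The resulting Pfaffian $\Pf_{1\le i<j\le 2h}\left(v^{\pm}_{j-i}\right)$ with $v^+_r=\overline{F}_{r-1,2h+w}(\vx)-\overline{F}_{r+1,2h+w}(\vx)$ and $v^-_r=F_{r-1,2h+w}(\vx)-F_{r+1,2h+w}(\vx)$ is then reduced to the asserted $h\times h$ determinant by Gordon's identity~\eqref{eq:Gordon}, exactly as you anticipate.
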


Clearly, using~\eqref{eq:s-inf},
we see that, in the limit $w \to \infty$, both~\eqref{eq:C+}
and~\eqref{eq:C-} reduce to~\eqref{eq:even-sp}.

\medskip
The last set of identities in this section consists of affine
extensions of the identities
\begin{align} \label{eq:D1}
\underset{\lambda'\text{ even}}{\sum_{\lambda:\lambda_1 \le 2h}}
   s_\lambda(\vx)
  +
\underset{\lambda'\text{ odd}}{\sum_{\lambda:\lambda_1 = 2h }}
   s_\lambda(\vx)
  &=
    \frac{1}{2}
    \det_{1 \le i, j \le h} \left(
    f_{-i+j}(\vx) + f_{i+j-2}(\vx)
    \right),
    \\
 \label{eq:D2}
\underset{\lambda'\text{ even}}{\sum_{\lambda:\lambda_1 \le 2h}}
   s_\lambda(\vx)
  -
\underset{\lambda'\text{ odd}}{\sum_{\lambda:\lambda_1 = 2h}}
   s_\lambda(\vx)
  &=
    e(\vx) \cdot \overline{e}(\vx)
    \det_{1 \le i, j \le h-1} \left(
    f_{-i+j}(\vx) - f_{i+j}(\vx)
    \right),
    \\
 \label{eq:D3}
\underset{\lambda'\text{ even}}{\sum_{\lambda:\lambda_1 \le 2h+1}}
   s_\lambda(\vx)
  +
\underset{\lambda'\text{ odd}}{\sum_{\lambda:\lambda_1 = 2h+1}}
   s_\lambda(\vx)
  &=
    e(\vx)
    \det_{1 \le i, j \le h} \left(
    f_{-i+j}(\vx) - f_{i+j-1}(\vx)
    \right),
    \\
 \label{eq:D4}
\underset{\lambda'\text{ even}}{\sum_{\lambda:\lambda_1 \le 2h+1}}
   s_\lambda(\vx)
  -
\underset{\lambda'\text{ odd}}{\sum_{\lambda:\lambda_1 = 2h+1}}
   s_\lambda(\vx)
  &=
    \overline{e}(\vx)
    \det_{1 \le i, j \le h} \left(
    f_{-i+j}(\vx) + f_{i+j-1}(\vx)
    \right),
\end{align}
where a partition is called \emph{odd} if all of its parts
are odd, and where
\begin{equation} \label{eq:e(x)}
  e(\vx) = \sum_{i \ge 0} e_i(\vx), \qquad
  \overline{e}(\vx) = \sum_{i \ge 0} (-1)^i e_i(\vx).
\end{equation}
These are universal character identities resulting from
Theorem~2.3(3) in \cite{Okada1998}, if seen in combination with
the Weyl denominator formula (cf.\ \cite[Eq.~(24.40)]{FuHaAA} or
\cite[Prop.~1.1]{Okada1998}) and 
determinantal identities that can be found in
\cite[App.~A.64--A.67]{FuHaAA}. The right-hand sides are (essentially)
sums, or differences, of two irreducible characters of
(signed) rectangular shape of the even orthogonal group
$\text{SO}_{2n}(\mathbb C)$.

\begin{thm}[\sc Four affine bounded Littlewood identities: even
    orthogonal case]
\label{thm:D}
For a partition $\lambda$ of length $\le m$ for which $\lambda_1 - \lambda_m \le w$, we define
\begin{equation}
\label{eq:wt_D}
  d^\pm_{m,w}(\lambda) =
  \begin{cases}
    1,
    &\text{if $\lambda_1, \dots, \lambda_m \in 2 \Z$,}
    \\
    \pm 1,
    &\text{if $\lambda_1, \dots, \lambda_m \in 2 \Z + 1$,}
    \\
    0,
    &\text{otherwise.}
  \end{cases}
\end{equation}
For an integer \( h\ge1 \) and an even integer \( w\ge2 \), we have
\begin{align}
  \sum_{\lambda \in \Par(2h,w)}
  d^+_{2h,w}(\lambda)\,
s_{\la[2h,w]'}(\vx)
  &= \frac{1}{2}\det_{1\le i,j\le h}\left(
  \overline{F}_{-i+j,2h+w}(\vx) + \overline{F}_{i+j-2,2h+w}(\vx)
  \right),
\label{eq:D1a}\\
  \sum_{\lambda \in \Par(2h,w)}
  d^-_{2h,w}(\lambda)\,
s_{\la[2h,w]'}(\vx)
  &= e(\vx) \cdot \overline{e}(\vx)
  \det_{1\le i,j\le h-1}\left(
  F_{-i+j,2h+w}(\vx) - F_{i+j,2h+w}(\vx)
  \right),
\label{eq:D2a}\\
  \sum_{\lambda \in \Par(2h+1,w)}
  d^+_{2h+1,w}(\lambda)\,
s_{\la[2h+1,w]'}(\vx)
  &= e(\vx) \det_{1\le i,j\le h}\left(
  F_{-i+j,2h+1+w}(\vx) - F_{i+j-1,2h+1+w}(\vx)
  \right),
\label{eq:D3a}\\
  \sum_{\lambda \in \Par(2h+1,w)}
  d^-_{2h+1,w}(\lambda)\,
s_{\la[2h+1,w]'}(\vx)
  &= \overline{e}(\vx)\det_{1\le i,j\le h}\left(
  \overline{F}_{-i+j,2h+1+w}(\vx) + \overline{F}_{i+j-1,2h+1+w}(\vx)
  \right),
\label{eq:D4a}
\end{align}
where $F_{r,N}(\vx)$ and $\overline F_{r,N}(\vx)$ are defined
in Theorem~\ref{thm:affine_BK_intro},
the determinant of an empty matrix is defined to be~$1$, and
$e(\vx)$ and $\overline{e}(\vx)$ are given in~\eqref{eq:e(x)}.
\end{thm}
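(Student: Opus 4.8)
The plan is to run the uniform machine of Section~\ref{sec:second-proof}: convert each weighted sum of cylindric Schur functions into a Pfaffian by a minor-summation argument, absorb the affine $k$-summation into the functions $F_{r,N}$ and $\overline{F}_{r,N}$, and then evaluate the resulting structured Pfaffian as the $h\times h$ (or $(h-1)\times(h-1)$) determinant on the right-hand side. First I would split each $d^{\pm}$-weighted sum according to~\eqref{eq:wt_D} into its ``all parts even'' and ``all parts odd'' contributions, so that the left-hand sides of~\eqref{eq:D1a}--\eqref{eq:D4a} become $\sum_{\lambda\text{ even}}s_{\la[m,w]'}(\vx)\pm\sum_{\lambda\text{ odd}}s_{\la[m,w]'}(\vx)$ with $m=2h$ or $m=2h+1$. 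For each such sum I would substitute the cylindric Jacobi--Trudi formula~\eqref{eq:CJT}, turning it into
\[
\underset{k_1+\dots+k_m=0}{\sum_{k_1,\dots,k_m\in\Z}}\ \sum_{\lambda}\det_{1\le i,j\le m}\bigl(e_{\lambda_i-i+j+(m+w)k_i}(\vx)\bigr),
\]
where $\lambda$ runs over partitions of length $\le m$ with all parts of a fixed parity and $\lambda_1-\lambda_m\le w$.

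The central step is to carry out the inner sum over~$\lambda$. Writing the strictly decreasing sequence $\mu_i=\lambda_i-i$ and recording its parity class, the sum over even (respectively odd) partitions of an $m\times m$ determinant of elementary symmetric functions is exactly of the type handled by the classical De~Bruijn / Ishikawa--Wakayama minor-summation formula for Pfaffians. Applying it, the sum collapses to the Pfaffian of an $m\times m$ skew-symmetric matrix whose $(i,j)$ entry is a bilateral sum of products $e_ae_b$; because the summation pairs an index with a shifted index, these entries are precisely the functions $f_r(\vx)=\sum_{n}e_n(\vx)e_{n+r}(\vx)$ from~\eqref{eq:f_k}. The affine sign $(-1)^{\sum_i k_i}$ and the alternating sign produced by the even-versus-odd parity of the parts together decide whether the periodic $k$-summation $\sum_k f_{r+Nk}$ produces $\overline{F}_{r,N}$ or $F_{r,N}$, while the $\pm$ combination of the even and odd sums selects which internal shift ($i+j$, $i+j-1$, or $i+j-2$) survives. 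This is where the four distinct right-hand sides separate, and keeping the bookkeeping of these signs and shifts straight is the most delicate part of the argument.

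Once the left-hand side is written as the Pfaffian of an $m\times m$ skew-symmetric matrix with entries built from $F_{r,N}$ and $\overline{F}_{r,N}$, I would evaluate it. For even $m=2h$ the matrix has the ``Toeplitz plus Hankel'' structure typical of orthogonal characters, and a standard Pfaffian-to-determinant reduction expresses it as an $h\times h$ determinant; the residual factor $\tfrac12$ in~\eqref{eq:D1a}, and the drop in size to $h-1$ together with the prefactor $e(\vx)\,\overline e(\vx)$ in~\eqref{eq:D2a}, arise from the boundary row and column of the matrix (whose entry involves the index $0$, respectively the two ``constant'' border vectors $e(\vx),\overline e(\vx)$ coming from $\sum_i e_i(\vx)$). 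For odd $m=2h+1$ one first borders the matrix by a row and column of $e_n(\vx)$'s to make the dimension even; expanding the bordered Pfaffian produces the single prefactor $e(\vx)$ in~\eqref{eq:D3a} and $\overline e(\vx)$ in~\eqref{eq:D4a}, with the remaining Pfaffian again reducing to the displayed $h\times h$ determinant.

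The main obstacle is the Pfaffian evaluation together with the sign/shift bookkeeping: one must verify that the alternating signs from the parity of the parts, the sign $(-1)^{\sum_i k_i}$ from the affine summation, and the signs produced by the bordering all combine to yield exactly $\overline F$ versus $F$ and the specific shifts $i+j,\ i+j-1,\ i+j-2$ in each of the four cases, and that the boundary contributions reproduce the prefactors $\tfrac12$, $e\cdot\overline e$, $e$, and $\overline e$. As a sanity check, letting $w\to\infty$ and using Remark~\ref{rem:aff}(2) (so that $F_{r,N},\overline F_{r,N}\to f_r$) should collapse~\eqref{eq:D1a}--\eqref{eq:D4a} onto the known non-affine identities~\eqref{eq:D1}--\eqref{eq:D4}, which fixes all normalizations and every sign convention.
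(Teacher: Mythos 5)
Your overall plan---minor summation to a Pfaffian, absorb the affine sum into $F_{r,N}$ and $\overline F_{r,N}$, then a Gordon-type Pfaffian-to-determinant reduction---is indeed the paper's strategy (Section~\ref{sec:second-proof}, via Proposition~\ref{prop:general} and Lemmas~\ref{lem:matrix_D} and~\ref{lem:Gordon5}). But your very first move, splitting each $d^{\pm}$-weighted sum into a separate ``all parts even'' sum and an ``all parts odd'' sum and running the machine on each parity class, breaks down in the odd-height cases \eqref{eq:D3a} and \eqref{eq:D4a}. There $m=2h+1$ and $w$ is even, so the affine period $N=m+w$ is \emph{odd}; the rearrangement that unfolds the constrained sum over $(\lambda,k)$ with $k_1+\dots+k_m=0$ into an unconstrained sum over index sequences with distinct residues mod $N$ (Lemma~\ref{lem:bijection}, whose role your sketch omits entirely but which is a prerequisite for any minor-summation step) shifts entries by multiples of $N$ and hence \emph{flips their parity}. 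Consequently the set of even (or odd) partitions is not stable under the affine unfolding, and the inner sum over a single parity class cannot be written in the form $\sum_K \Pf A^K \det M_K$ with a parity-indicator Pfaffian weight: no skew-symmetric matrix representing $\chi[\lambda\text{ even}]$ alone can satisfy the shift-invariance conditions (ii)--(iii) of Proposition~\ref{prop:general} when $N$ is odd. This is precisely why the paper never splits by parity: it encodes the signed combinations $d^{\pm}$ in single matrices $D^{\pm}$ built from sequences $\delta^{\pm}_\ell$ that are only \emph{quasi}-periodic, e.g.\ $\delta^+_{\ell+N}=\delta^+_\ell+N'$ with $N'=(N+1)/2$, so that the parity flip is absorbed by row/column operations on the Pfaffian, and the vanishing of $\Pf (D^{\pm})^{I_m(\lambda)}$ on mixed-parity sequences replaces your split. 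Relatedly, your claim that the matrix entries are ``precisely $f_r$'' periodicized into $F$ or $\overline F$ fails for these matrices: $\delta^{\pm}_\ell$ grows linearly, and $F$/$\overline F$ only emerge after second-difference operations (e.g.\ $v^+_r-v^+_{r-1}-v^+_{r+1}+v^+_r=\sum_{t=1}^{N-1}(-1)^tF_{r+t,N}(\vx)$ in the proof of \eqref{eq:D3a}, which then needs the variant \eqref{eq:Gordon_var2} rather than \eqref{eq:Gordon} itself).

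A second, smaller error: you attribute the factor $\tfrac12$ in \eqref{eq:D1a} to a boundary row/column, but that identity uses \emph{no} bordering ($p=0$). The $\tfrac12$ arises because the parity-selecting Pfaffian does not evaluate to $\pm1$ but to a power of $2$: one has $\Pf (D^+)^{I_m(\lambda)}=2^{h-1}d^+_{2h,w}(\lambda)$, while the determinant reduction via \eqref{eq:Gordon_var3} produces $2^{h-2}$, leaving $\tfrac12$. Your account of the prefactors $e(\vx)\,\overline e(\vx)$, $e(\vx)$, and $\overline e(\vx)$ from border rows ($p=2$ resp.\ $p=1$) is correct in spirit for \eqref{eq:D2a}--\eqref{eq:D4a}. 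So the proposal identifies the right machinery but misses the one idea that makes Theorem~\ref{thm:D} work: the parity weight must be built into the skew-symmetric matrix jointly with the affine structure, not imposed by restricting the summation, and constructing the quasi-periodic matrices $D^{\pm}$ of Lemma~\ref{lem:matrix_D} is the actual content of the proof rather than bookkeeping.
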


Again using~\eqref{eq:s-inf},
we see that, in the limit $w \to \infty$, the
identities~\eqref{eq:D1a}--\eqref{eq:D4a}
reduce to~\eqref{eq:D1}--\eqref{eq:D4}, respectively.

\begin{rem}
Obviously, the reader will ask what happens when $w$ is odd.
What we can tell is that all of~\eqref{eq:D1a}--\eqref{eq:D4a} do not
hold if $w$ is odd. We do not know whether it is
possible to modify the right-hand sides
in order to obtain valid identities, or whether there are no
determinantal formulas at all for odd~$w$.
\end{rem}

\section{Pfaffians and sums of minors}
\label{sec:aux}

In this section, we recall two Pfaffian/determinantal
formulas, which will be crucial in the proofs of
the affine bounded Littlewood identities
in Section~\ref{sec:symm-funct-ident}, given in the coming two
sections. These are the minor summation formula of Ishikawa and
Wakayama (see Theorem~\ref{thm:IsWa}) --- of which we state an important
special case in Corollary~\ref{cor:IsWa} separately --- and Gordon's
Pfaffian-to-determinant reduction (see Lemma~\ref{lem:Gordon5})
together with simple consequences given in Corollary~\ref{cor:Gordon}.
The reader should recall (cf.\ \cite[Sec.~2]{Stembridge1990}) that
Pfaffians are defined for upper triangular arrays. As usual, we extend
the Pfaffian to skew-symmetric matrices~$A$, with the understanding
that $\Pf A$ is by definition the Pfaffian of the upper triangular
part of~$A$. 
In the sequel, for a matrix \( M \), its transpose is denoted by \( M^t \).

\begin{defn}\label{defn:A_K}
  Let \( A=(a_{i,j})_{i\in I, j\in J} \) be a matrix and let \(R=(r_1,\dots,r_p)
  \) and \( S=(s_1,\dots,s_q) \) be sequences of row and column indices respectively.
  We define
  \[
    A^R_S =\left(a_{r_i,s_j}\right)_{1\le i\le p,1\le j\le q}.
  \]
  We also define \( (r_1, \dots, r_p) \sqcup (r'_1, \dots, r'_{p'}) = (r_1,\dots,r_p,r'_1,\dots,r'_{p'}) \).
\end{defn}

The minor summation formula of Ishikawa and
Wakayama is the following.

\begin{thm}[\sc {\cite[Th.~1]{Ishikawa1995}}]
\label{thm:IsWa}%
Let $m$ and $p$ be positive integers
and $M = ( M_{i,j} )_{1 \le i \le m, 1 \le j \le p}$ any $m \times p$ matrix.
\begin{enumerate}
\item[(1)]
If $m$ is even
and $A = (a_{r,s})_{1 \le r,s \le p}$ is any $p \times p$ skew-symmetric matrix,
then we have
\[
\sum_{K}
 \Pf \left( A^K_K \right)
 \det \left( M_K^{[m]} \right)
 =
\Pf \left( M \,A \, M^t \right)
 =
\Pf_{1 \le i < j \le m}
 \left(
  \sum_{r,s=1}^p a_{r,s} M_{i,r} M_{j,s}
 \right),
\]
where $K=(k_1,\dots,k_m)$ runs over all increasing sequences \( 1\le
k_1<\dots<k_m\le p \) of integers, and $[m]:= (1,2,\dots,m)$.
\item[(2)]
If $m$ is odd
and $A = (a_{r,s})_{0 \le r,s \le p}$ is any $(p+1) \times (p+1)$ skew-symmetric matrix,
then we have
\[
\sum_{K}
 \Pf \left( A^{(0) \sqcup K}_{(0) \sqcup K} \right)
 \det \left( M_K^{[m]} \right)
 =
\Pf_{0 \le i < j \le m}
 \left(
  \begin{cases}
   \sum_{r=1}^p a_{0,r} M_{j,r}, &\text{if $i=0$} \\
   \sum_{r,s=1}^p a_{r,s} M_{i,r} M_{j,s} ,&\text{if $i>0$}
  \end{cases}
 \right),
\]
where $K=(k_1,\dots,k_m)$ runs over all increasing sequences \( 1\le
k_1<\dots<k_m\le p \) of integers.
\end{enumerate}
\end{thm}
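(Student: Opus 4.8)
The plan is to deduce the second equality in part~(1) by a direct computation, to prove the minor-summation identity of part~(1) via Grassmann (exterior-algebra) calculus, and then to obtain part~(2) from part~(1) by a bordering trick. For the easy reductions: since $A$ is skew-symmetric, $(MAM^t)^t=MA^tM^t=-MAM^t$, so $MAM^t$ is an $m\times m$ skew-symmetric matrix and $\Pf(MAM^t)$ is defined; its $(i,j)$-entry is $\sum_{r,s=1}^p a_{r,s}M_{i,r}M_{j,s}$, which is exactly the second equality. It thus remains to establish $\Pf(MAM^t)=\sum_K\Pf(A^K_K)\det(M^{[m]}_K)$.

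For part~(1) I would work in the exterior algebra. Let $\psi_1,\dots,\psi_p$ be anticommuting generators and recall the exponential formula for Pfaffians,
\[
\exp\Bigl(\tfrac12\sum_{r,s=1}^p a_{r,s}\psi_r\psi_s\Bigr)=\sum_{L}\Pf(A^L_L)\,\psi_{l_1}\cdots\psi_{l_q},
\]
where the sum is over all increasing sequences $L=(l_1<\dots<l_q)$ in $\{1,\dots,p\}$ (terms of odd length $q$ contribute $\Pf=0$). Introducing a second family $\theta_1,\dots,\theta_m$ of anticommuting generators, I would apply the algebra homomorphism determined by $\psi_r\mapsto\sum_{i=1}^m M_{i,r}\theta_i$. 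Under this homomorphism the quadratic form in the exponent becomes $\tfrac12\sum_{i,j}(MAM^t)_{i,j}\theta_i\theta_j$, so the left-hand side maps to $\exp\bigl(\tfrac12\sum_{i,j}(MAM^t)_{i,j}\theta_i\theta_j\bigr)$, whose coefficient of $\theta_1\cdots\theta_m$ equals $\Pf(MAM^t)$ by the same exponential formula applied to $MAM^t$. On the right-hand side the image of $\psi_{l_1}\cdots\psi_{l_q}$ is homogeneous of $\theta$-degree $q$, so only $q=m$ survives, and extracting the coefficient of $\theta_1\cdots\theta_m$ from the image of a single $L$ with $|L|=m$ yields $\sum_{\sigma\in\mathfrak S_m}\sgn(\sigma)\prod_{i}M_{\sigma(i),l_i}=\det(M^{[m]}_L)$. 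Comparing the two coefficient computations proves part~(1).

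For part~(2), where $m$ is odd and $A$ is indexed by $\{0,1,\dots,p\}$, I would reduce to part~(1) by bordering. Define the $(m+1)\times(p+1)$ matrix $\tilde M$, indexed by $\{0,\dots,m\}\times\{0,\dots,p\}$, by $\tilde M_{0,0}=1$, $\tilde M_{0,r}=0$ and $\tilde M_{i,0}=0$ for $i,r\ge1$, and $\tilde M_{i,r}=M_{i,r}$ otherwise. Using $a_{0,0}=0$, a short computation shows that $\tilde M A\tilde M^t$ is exactly the $(m+1)\times(m+1)$ skew-symmetric matrix appearing in the statement of part~(2). Since $m+1$ is even, part~(1) gives $\Pf(\tilde M A\tilde M^t)=\sum_{\tilde K}\Pf(A^{\tilde K}_{\tilde K})\det(\tilde M^{(0,1,\dots,m)}_{\tilde K})$, the sum being over increasing $(m+1)$-sequences $\tilde K$ in $\{0,\dots,p\}$. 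As the $0$-th row of $\tilde M$ is $(1,0,\dots,0)$, this minor vanishes unless $\tilde K=(0)\sqcup K$ with $K$ an increasing $m$-sequence in $\{1,\dots,p\}$; a single Laplace expansion along that row then gives $\det(\tilde M^{(0,1,\dots,m)}_{(0)\sqcup K})=\det(M^{[m]}_K)$, while $\Pf(A^{\tilde K}_{\tilde K})=\Pf(A^{(0)\sqcup K}_{(0)\sqcup K})$, which is precisely part~(2).

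The main obstacle is the sign bookkeeping in the coefficient extraction of part~(1): one must verify that, after the anticommuting substitution $\psi_r\mapsto\sum_i M_{i,r}\theta_i$, reordering the $\theta$'s assembles the terms into the determinant $\det(M^{[m]}_L)$ with the correct sign, rather than into a permanent or a sign-flipped minor. Everything else---the transformation of the quadratic form, the vanishing argument in the bordering step, and the final row expansion---is routine once the exponential formula is in hand.
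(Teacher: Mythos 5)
Your proposal is correct, but note that the paper does not actually prove this statement: it is imported verbatim as \cite[Th.~1]{Ishikawa1995}, so there is no internal proof to compare against. What you have written is essentially a reconstruction of the original Ishikawa--Wakayama argument, which likewise rests on the exterior-algebra identity $\exp\bigl(\sum_{r<s}a_{r,s}\psi_r\psi_s\bigr)=\sum_L\Pf\bigl(A^L_L\bigr)\psi_{l_1}\cdots\psi_{l_q}$ and the fact that $\psi_r\mapsto\sum_i M_{i,r}\theta_i$ is an algebra homomorphism transporting the $2$-form $\tfrac12\sum a_{r,s}\psi_r\psi_s$ to $\tfrac12\sum(MAM^t)_{i,j}\theta_i\theta_j$; comparing top-degree coefficients then yields part~(1). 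The sign bookkeeping you flag as the main obstacle does check out: the image of $\psi_{l_1}\cdots\psi_{l_m}$ expands as $\sum_{i_1,\dots,i_m}M_{i_1,l_1}\cdots M_{i_m,l_m}\theta_{i_1}\cdots\theta_{i_m}$, only injective index tuples survive, and $\theta_{\sigma(1)}\cdots\theta_{\sigma(m)}=\sgn(\sigma)\,\theta_1\cdots\theta_m$ assembles exactly $\det\bigl(M_{i,l_j}\bigr)_{1\le i,j\le m}=\det\bigl(M^{[m]}_L\bigr)$, a determinant and not a permanent. Your bordering reduction for part~(2) is also sound: with $\tilde M_{0,0}=1$ and the zero paddings, the skew-symmetry hypothesis gives $a_{0,0}=0$, the computation $(\tilde MA\tilde M^t)_{0,j}=\sum_{r=1}^p a_{0,r}M_{j,r}$ and $(\tilde MA\tilde M^t)_{i,j}=\sum_{r,s=1}^p a_{r,s}M_{i,r}M_{j,s}$ for $i,j\ge1$ reproduces the bordered matrix in the statement, the minor $\det\bigl(\tilde M^{(0,\dots,m)}_{\tilde K}\bigr)$ vanishes unless $0\in\tilde K$, and the Laplace expansion along the top row carries cofactor sign $+1$. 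In short, your write-up supplies a complete, self-contained proof of a result the paper uses as a black box, by what is in effect the method of the cited source.
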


We note that Theorem~\ref{thm:IsWa} is also valid in the limiting case
$p=\infty$ provided that the limits of both sides exist.

For convenience, we state the special case where $A$ is
the skew-symmetric matrix with all $1$'s above
the diagonal, which, in abuse of notation, we write as $(1)$.
It should be noted that
$\Pf_{1\le i<j\le 2p}(1)=1$ for all $p$ (see
\cite[Prop.~2.3(c)]{Stembridge1990}). 

\begin{cor}[\sc {\cite[Th.~3]{Okada1989}}]
\label{cor:IsWa}%
Let $m$ and $p$ be positive integers
and $M = ( M_{i,j} )_{1 \le i \le m, 1 \le j \le p}$ any $m \times p$ matrix.
\begin{enumerate}
\item[(1)]
If $m$ is even, then we have
\[
\sum_{K}
 \det M_K^{[m]}
 =
\Pf_{1 \le i < j \le m}
 \left(
  \sum_{1 \le r < s \le p} \left( M_{i,r} M_{j,s} - M_{i,s} M_{j,r} \right)
 \right),
\]
where $K=(k_1,\dots,k_m)$ runs over all increasing sequences \( 1\le
k_1<\dots<k_m\le p \) of integers.
\item[(2)]
If $m$ is odd, then we have
\[
\sum_{K}
 \det M_K^{[m]}
 =
\Pf_{0 \le i < j \le m}
 \left(
  \begin{cases}
   \sum_{1 \le r \le p} M_{j,r}, &\text{if $i=0$} \\
   \sum_{1 \le r < s \le p} \left( M_{i,r} M_{j,s} - M_{i,s} M_{j,r} \right),
    &\text{if $i>0$}
  \end{cases}
 \right),
\]
where $K=(k_1,\dots,k_m)$ runs over all increasing sequences \( 1\le
k_1<\dots<k_m\le p \) of integers.
\end{enumerate}
\end{cor}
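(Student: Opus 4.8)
The plan is to derive Corollary~\ref{cor:IsWa} directly from Theorem~\ref{thm:IsWa} by specializing the skew-symmetric matrix $A$ to the matrix $(1)$ whose entries above the diagonal are all equal to~$1$ (so that $a_{r,s}=1$ for $r<s$, $a_{r,s}=-1$ for $r>s$, and $a_{r,s}=0$ for $r=s$). No new idea is required beyond tracking how the two ingredients of Theorem~\ref{thm:IsWa}---the Pfaffian weights $\Pf(A^K_K)$ and the bilinear forms $\sum_{r,s}a_{r,s}M_{i,r}M_{j,s}$---simplify for this particular~$A$.

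For part~(1), I would take $A=(a_{r,s})_{1\le r,s\le p}=(1)$ in Theorem~\ref{thm:IsWa}(1). The first observation is that, for any increasing sequence $K=(k_1,\dots,k_m)$ of even length~$m$, the principal submatrix $A^K_K$ is again a copy of the all-ones-above-diagonal skew-symmetric matrix, now of size~$m$; hence $\Pf(A^K_K)=1$ by \cite[Prop.~2.3(c)]{Stembridge1990}. Consequently the left-hand side of Theorem~\ref{thm:IsWa}(1) collapses to $\sum_K \det M^{[m]}_K$, the left-hand side of the corollary. For the right-hand side, I would evaluate the form $\sum_{r,s=1}^p a_{r,s}M_{i,r}M_{j,s}$ by splitting the sum according to $r<s$, $r=s$, and $r>s$. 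The diagonal terms vanish, and relabelling $r\leftrightarrow s$ in the $r>s$ portion turns it into $-\sum_{r<s}M_{i,s}M_{j,r}$, so that the whole form equals $\sum_{1\le r<s\le p}(M_{i,r}M_{j,s}-M_{i,s}M_{j,r})$, which is exactly the entry of the Pfaffian in the corollary.

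For part~(2), the argument is entirely parallel, now using Theorem~\ref{thm:IsWa}(2) with $A=(a_{r,s})_{0\le r,s\le p}=(1)$. Since $m$ is odd, the augmented sequence $(0)\sqcup K$ has even length $m+1$, and its principal submatrix $A^{(0)\sqcup K}_{(0)\sqcup K}$ is once more of the form $(1)$, so its Pfaffian is~$1$; thus the left-hand side again reduces to $\sum_K\det M^{[m]}_K$. The entries with $i>0$ are computed exactly as in part~(1). For the remaining entries, indexed by $i=0$, I would use that $a_{0,r}=1$ for every $r\ge1$ (because $0<r$), whence $\sum_{r=1}^p a_{0,r}M_{j,r}=\sum_{r=1}^p M_{j,r}$, matching the $i=0$ row of the Pfaffian in the corollary.

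This deduction is routine; there is no genuine obstacle beyond the index bookkeeping. The one point that requires a little care is the reduction of the double sum $\sum_{r,s}a_{r,s}M_{i,r}M_{j,s}$ to the antisymmetrized single-indexed expression, and---in part~(2)---checking that the border row ($i=0$) collapses to a plain row sum rather than an antisymmetrized one; both follow immediately once the sign pattern of $(1)$ is made explicit.
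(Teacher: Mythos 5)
Your proposal is correct and coincides with the paper's own derivation: the paper also obtains the corollary by specializing Theorem~\ref{thm:IsWa} to the skew-symmetric matrix $(1)$ with all $1$'s above the diagonal, invoking $\Pf_{1\le i<j\le 2p}(1)=1$ (Stembridge, Prop.~2.3(c)) for the collapsed left-hand side, and simplifying the bilinear form and the border row exactly as you do. Nothing further is needed.
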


The following identity between a Pfaffian and a determinant is due to
Gordon.

\begin{lem}[\sc {\cite[Lem.~1]{Gordon5}}]
\label{lem:Gordon5}
If the quantities $z_i$, $i \in \Z$, satisfy $z_{-i} = - z_i$, then we have
\begin{equation}
  \label{eq:Gordon}
  \Pf_{1 \le i, j \le 2h} \left( z_{j-i} \right)
  =
  \det_{1 \le i, j \le h} \left(  z_{|j-i|+1}+z_{|j-i|+3}+z_{|j-i|+5} +\cdots + z_{i+j-1}\right).
\end{equation}
\end{lem}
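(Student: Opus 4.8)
The plan is to prove Gordon's identity \eqref{eq:Gordon} by expressing the Pfaffian combinatorially and recognizing the determinant as the generating function for a corresponding family of nonintersecting lattice paths. First I would recall the standard interpretation of a Pfaffian as a signed sum over perfect matchings: since $\Pf_{1\le i<j\le 2h}(z_{j-i})$ is built from an antisymmetric array with $z_{-i}=-z_i$, its expansion runs over all perfect matchings of $\{1,2,\dots,2h\}$, each contributing a product of entries $z_{j-i}$ weighted by the sign of the matching. The right-hand determinant, expanded over $\mathfrak S_h$, similarly produces signed products, and the antidiagonal-type entries $z_{|j-i|+1}+z_{|j-i|+3}+\cdots+z_{i+j-1}$ are themselves finite sums of $z$'s with shifted, same-parity indices. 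The core task is to match these two signed expansions term by term, or more precisely to exhibit a sign-reversing involution that cancels everything except a bijectively paired residue.

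An alternative and likely cleaner route is a direct algebraic induction on $h$. The plan here is to expand the Pfaffian along its last row (or equivalently its first column) using the recursive Laplace-type expansion
\[
\Pf_{1\le i<j\le 2h}(z_{j-i})
=\sum_{k=1}^{2h-1}(-1)^{k-1}z_{2h-k}\,
\Pf\bigl(\widehat{z}^{(k,2h)}\bigr),
\]
where the hatted Pfaffian omits rows and columns $k$ and $2h$, and then to show that this recursion is matched by a corresponding cofactor expansion of the right-hand determinant. The determinant entries $z_{|j-i|+1}+z_{|j-i|+3}+\cdots+z_{i+j-1}$ have a telescoping structure under the shift $i\mapsto i+1$ or $j\mapsto j+1$, since passing from one entry to an adjacent one adds or removes a single term $z_{\text{something}}$; this is exactly the feature that should let an induction close. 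I would set up the induction hypothesis for size $h-1$ and verify that the alternating sum produced by the Pfaffian recursion reassembles into the $h\times h$ determinant after using the antisymmetry $z_{-i}=-z_i$ to fold negative-index contributions back into the stated entries.

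The main obstacle I anticipate is bookkeeping the parities and the telescoping correctly: the entry in position $(i,j)$ sums $z$'s over indices ranging from $|j-i|+1$ up to $i+j-1$ in steps of $2$, so the number of summands depends on $\min(i,j)$, and one must track how this count changes under the cofactor expansion without sign or index errors. A related subtlety is ensuring that the boundary terms of the telescoping (namely the extreme $z_{i+j-1}$ and the lowest $z_{|j-i|+1}$) land in the right place after applying antisymmetry, since these are precisely where spurious non-cancelling terms could appear. To control this I would, as a sanity check, verify the base cases $h=1$ (where both sides equal $z_1$) and $h=2$ by hand, confirming that $\Pf\bigl(\begin{smallmatrix}0&z_1&z_2&z_3\\ -z_1&0&z_1&z_2\\ -z_2&-z_1&0&z_1\\ -z_3&-z_2&-z_1&0\end{smallmatrix}\bigr)=z_1z_1-z_2\cdot 0+z_3\cdots$ reduces to $\det\bigl(\begin{smallmatrix}z_1& z_2\\ z_2& z_1+z_3\end{smallmatrix}\bigr)$, and then trust the inductive pattern. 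Given that Lemma~\ref{lem:Gordon5} is quoted from Gordon, one may also simply cite \cite{Gordon5}, but the lattice-path or inductive argument above is the self-contained route I would follow.
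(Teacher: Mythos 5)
The paper offers no proof of Lemma~\ref{lem:Gordon5} at all: it quotes the identity verbatim from Gordon \cite[Lem.~1]{Gordon5}. So your proposal must be judged as a free-standing argument, and as such it has a genuine gap at the central step of the route you develop furthest, the induction on $h$. The Laplace-type expansion you write down is correct, but for $1<k<2h-1$ the sub-Pfaffian $\Pf\bigl(\widehat{z}^{(k,2h)}\bigr)$ is taken over rows and columns indexed by $\{1,\dots,2h-1\}\setminus\{k\}$, a set with a hole; this matrix is \emph{not} the Toeplitz array $(z_{j-i})_{1\le i,j\le 2h-2}$, so the induction hypothesis says nothing about it, and there is no evident closed form for these gapped minors. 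Symmetrically, cofactor expansion of the right-hand determinant deletes an interior row and column, and since the number of summands in the $(i,j)$ entry is $\min(i,j)$, the resulting minors are not of the stated shape with $h$ replaced by $h-1$. Thus your assertion that ``the recursion is matched by a corresponding cofactor expansion'' is precisely the missing content, not a routine verification; the telescoping between \emph{adjacent} entries that you invoke is the feature exploited in the row/column operations of Corollary~\ref{cor:Gordon} (which go \emph{from} \eqref{eq:Gordon} to its variants), but it does not repair the gapped sub-Pfaffians in the inductive step.

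Your first route — a sign-reversing involution between the perfect-matching expansion of the Pfaffian and the permutation expansion of the determinant — is only announced, with no candidate involution, so it cannot be assessed as a proof. There is also a slip in your $h=2$ check: in $\Pf = a_{12}a_{34}-a_{13}a_{24}+a_{14}a_{23}$ one has $a_{24}=z_{2}\neq 0$, so the middle term is $-z_2^2$, not $-z_2\cdot 0$; the final identity $z_1^2-z_2^2+z_1z_3=\det\bigl(\begin{smallmatrix}z_1&z_2\\ z_2&z_1+z_3\end{smallmatrix}\bigr)$ does hold, but the displayed computation as written is wrong. If you want a self-contained proof rather than the citation the paper uses, the standard device is the transformation rule $\Pf\left(BAB^t\right)=\det(B)\Pf(A)$ with a suitably chosen integer matrix $B$ that folds the $2h$ indices into $h$ pairs — the same mechanism visible in Theorem~\ref{thm:IsWa} — or an induction whose statement is strengthened enough to cover the gapped sub-Pfaffians; as it stands, your inductive step would fail.
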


It is convenient to rewrite the determinant above in the following form.
Equation~\eqref{eq:Gordon_var1} below
was used in the proof of~\cite[Th.~7.1(a)]{Stembridge1990}.

\begin{cor} \label{cor:Gordon}
If the quantities $z_i$, $i \in \Z$, satisfy $z_{-i} = - z_i$, then we have
\begin{align}
\label{eq:Gordon_var1}
\Pf_{1 \le i, j \le 2h} \left( z_{j-i} \right)
 &=
\det_{1 \le i, j \le h} \left(
 \sum_{s=0}^{2\min(i,j)-2} (-1)^s \left( z_{i+j-1-s} - z_{i+j-2-s} \right)
\right)
\\
\label{eq:Gordon_var2}
 &=
\det_{1 \le i, j \le h} \left(
 \sum_{s=0}^{2\min(i,j)-2} \left( z_{i+j-1-s} + z_{i+j-2-s} \right)
\right)
\\
\label{eq:Gordon_var3}
 &=
\det_{1 \le i, j \le h} \left(
 \begin{cases}
  z_1, &\text{if $i=j=1$} \\
  z_i - z_{i-2}, &\text{if $j=1$ and $i \ge 2$} \\
  z_j - z_{j-2}, &\text{if $i=1$ and $j \ge 2$} \\
  z_{i+j-1} - z_{i+j-3} + z_{|j-i|+1} - z_{|j-i|-1}, &\text{if $i \ge 2$ and $j \ge 2$}
 \end{cases}
\right).
\end{align}
\end{cor}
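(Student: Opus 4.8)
The plan is to treat the determinant on the right-hand side of \eqref{eq:Gordon} in Lemma~\ref{lem:Gordon5} as the common anchor. Writing $L_{i,j}:=z_{|j-i|+1}+z_{|j-i|+3}+\cdots+z_{i+j-1}$, it suffices to show that each of the three determinants in \eqref{eq:Gordon_var1}--\eqref{eq:Gordon_var3} equals $\det_{1\le i,j\le h}(L_{i,j})$, since by Lemma~\ref{lem:Gordon5} that determinant already equals $\Pf_{1\le i,j\le 2h}(z_{j-i})$. Throughout I would use only the hypothesis $z_{-i}=-z_i$ (so that in particular $z_0=0$) and the recursion $L_{i,j}=L_{i-1,j-1}+z_{i+j-1}$, which is immediate from the explicit form of $L_{i,j}$.

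For \eqref{eq:Gordon_var1} I would perform the elementary column operations $C_j\mapsto C_j-C_{j-1}$ (for $j=h,h-1,\dots,2$) followed by the analogous row operations $R_i\mapsto R_i-R_{i-1}$ on the matrix $(L_{i,j})$. These preserve the determinant, and the resulting $(i,j)$-entry is the mixed second difference $L_{i,j}-L_{i-1,j}-L_{i,j-1}+L_{i-1,j-1}$ when $i,j\ge 2$, a single first difference when exactly one of $i,j$ equals~$1$, and $L_{1,1}=z_1$ in the corner. A direct expansion using the explicit alternating-sum form of $L_{i,j}$ then identifies this entry with $\sum_{s=0}^{2\min(i,j)-2}(-1)^s\bigl(z_{i+j-1-s}-z_{i+j-2-s}\bigr)$, which is exactly the entry in \eqref{eq:Gordon_var1}.

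For \eqref{eq:Gordon_var2} I would avoid a second such computation and instead apply the substitution $z_n\mapsto(-1)^n z_n$ to the already-proved identity \eqref{eq:Gordon_var1}. This substitution again satisfies the antisymmetry hypothesis, and inside the determinant it converts the $(i,j)$-entry of \eqref{eq:Gordon_var1} into $(-1)^{i+j-1}$ times the $(i,j)$-entry of \eqref{eq:Gordon_var2}. On the Pfaffian side one has $\Pf_{1\le i,j\le 2h}\bigl((-1)^{j-i}z_{j-i}\bigr)=\det(D)\,\Pf_{1\le i,j\le 2h}(z_{j-i})$ with $D=\operatorname{diag}\bigl((-1)^i\bigr)_{1\le i\le 2h}$ and $\det D=(-1)^h$; pulling the diagonal factors $(-1)^{i+j-1}$ out of the determinant on the other side produces the same factor $(-1)^h$. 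The two factors cancel, leaving precisely \eqref{eq:Gordon_var2}.

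For \eqref{eq:Gordon_var3} I would again operate on $(L_{i,j})$, but with the step-two operations $C_j\mapsto C_j-C_{j-2}$ (for $j=h,\dots,3$) followed by $R_i\mapsto R_i-R_{i-2}$ (for $i=h,\dots,3$), again preserving the determinant. For $i,j\ge 3$ the resulting entry is $L_{i,j}-L_{i,j-2}-L_{i-2,j}+L_{i-2,j-2}$; the identity $L_{i,j}-L_{i-2,j}=z_{|j-i|+1}+z_{i+j-1}$ collapses it to $z_{i+j-1}-z_{i+j-3}+z_{|j-i|+1}-z_{|j-i|-1}$, the generic case of \eqref{eq:Gordon_var3}, while the untouched boundary rows and columns reduce to the three special cases, where $z_0=0$ makes the formulas consistent. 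The routine but genuinely necessary work, and the main obstacle, is exactly this entrywise bookkeeping in the two operation-based arguments: one must see how the alternating sums defining $L_{i,j}$ telescope under the differences, which requires a short case analysis according to whether $i<j$, $i=j$, or $i>j$, together with careful use of $z_0=0$ and $z_{-1}=-z_1$ to reconcile the formulas along and near the main diagonal.
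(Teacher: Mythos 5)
Your proposal is correct, and for \eqref{eq:Gordon_var1} and \eqref{eq:Gordon_var3} it coincides with the paper's proof: there, too, these identities are obtained from \eqref{eq:Gordon} by subtracting the $(i-1)$-st (respectively the $(i-2)$-nd) row from the $i$-th, $i=h,h-1,\dots$, and performing the analogous column operations. You diverge only at \eqref{eq:Gordon_var2}: the paper obtains it by the parallel mechanism of \emph{adding} the $(i-1)$-st row/column to the $i$-th, whereas you deduce it from the already-established \eqref{eq:Gordon_var1} via the substitution $z_n\mapsto(-1)^nz_n$ (which preserves the hypothesis $z_{-n}=-z_n$), using $\Pf\left(DMD^t\right)=\det(D)\Pf(M)$ with $D=\operatorname{diag}\left((-1)^i\right)_{1\le i\le 2h}$ and $\det D=(-1)^{h(2h+1)}=(-1)^h$ on the Pfaffian side, and the factorization $(-1)^{i+j-1}=(-1)^i\,(-1)^{j-1}$ on the determinant side, which contributes the same factor $(-1)^{h(h+1)/2}(-1)^{h(h-1)/2}=(-1)^h$; the factors cancel as you say. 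This is a valid and arguably slicker route for that one identity, as it avoids a second round of entrywise telescoping, while the paper's version has the merit of handling all three identities by a single uniform device. One small inaccuracy to repair in your write-up of \eqref{eq:Gordon_var3}: the displayed identity $L_{i,j}-L_{i-2,j}=z_{|j-i|+1}+z_{i+j-1}$ holds only for $j\ge i$; for $j<i$ the correct statement is $L_{i,j}-L_{i-2,j}=z_{i+j-1}-z_{|j-i|-1}$, or uniformly in signed form $z_{j-i+1}+z_{i+j-1}$ (using $z_{-m}=-z_m$). Since the resulting second difference $z_{j-i+1}+z_{i+j-1}-z_{j-i-1}-z_{i+j-3}$ agrees with the generic entry of \eqref{eq:Gordon_var3} in all three cases $i<j$, $i=j$, $i>j$ (again by antisymmetry and $z_0=0$), this is precisely the case analysis you anticipated, and your argument goes through as planned.
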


\begin{proof}
Equation \eqref{eq:Gordon_var1} (respectively \eqref{eq:Gordon_var2}) is obtained from~\eqref{eq:Gordon} by
subtracting the $(i-1)$-st row from the $i$-th
(respectively adding the $(i-1)$-st row to the $i$-th), $i = h, h-1, \dots, 2$,
and then doing the analogous column operations.
Similarly, we obtain \eqref{eq:Gordon_var3} from~\eqref{eq:Gordon} by
subtracting the $(i-2)$-nd row from the $i$-th, $i = h, h-1, \dots, 3$
and performing the same operations on the columns.
\end{proof}

\section{Proof of the affine bounded Littlewood identities in Theorem
  \ref{thm:affine_BK_intro}}
\label{sec:first-proof}

In this section, we prove the affine bounded Littlewood identities
in~\eqref{eq:aGBK1s} and~\eqref{eq:aGBK2s}.

For a positive integer \( N \) and an integer \( t \), let \( R_N(t) \) denote
the remainder of \( t \) when divided by \( N \), that is, \( t = \flr{t/N}N +
R_N(t) \) and $0\le R_N(t)<N$. For a statement \( S \) we let \( \chi[S]=1 \)
if the statement \( S \) is true and \( \chi[S]=0 \) otherwise.

\medskip
We start by rewriting and rearranging the terms on the left-hand sides
of~\eqref{eq:aGBK1s} and~\eqref{eq:aGBK2s}, with the goal of expressing
them as sums of minors, so that the minor summation theorem in
Corollary~\ref{cor:IsWa} can be applied.

\begin{lem}\label{lem:rem}
Let \( m\) and \( N \) be positive integers such that \( m<N \) and \( m \) is odd.
Then
\begin{equation} \label{eq:lem1}
  \underset{\mu_1>\dots>\mu_m \text{ \em and } \mu_1-\mu_m<N}
  {\sum_{\mu_{1}, \ldots, \mu_m \in \Z}}\
\underset{k_{1}+\cdots+k_m=0}{\sum_{k_{1}, \ldots, k_m \in \Z}}
  \det_{1 \leq i, j \leq m}\big(e_{\mu_{i}+N k_{i}+j}(\vx)\big)
  =\underset{R_N(\alpha_1)>\dots>R_N(\alpha_m)}
  {\sum_{\alpha_1,\dots,\alpha_m\in\Z}}
  \det_{1 \leq i, j \leq m}\big(e_{\alpha_i+j}(\vx)\big).
\end{equation}
\end{lem}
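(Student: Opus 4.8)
The plan is to interpret both sides as sums of the single quantity $D(\beta):=\det_{1\le i,j\le m}\big(e_{\beta_i+j}(\vx)\big)$, where on the left $\beta_i=\mu_i+Nk_i$ and on the right $\beta_i=\alpha_i$, and then to exhibit an explicit bijection between the two index sets that preserves $D$. The first ingredient is that $D$ is alternating in its arguments $\beta_1,\dots,\beta_m$, so permuting them multiplies $D$ by the sign of the permutation; in particular, since $m$ is \emph{odd}, every cyclic rotation of $(\beta_1,\dots,\beta_m)$ is an even permutation (its sign is $(-1)^{t(m-1)}=1$) and hence leaves $D$ unchanged. This is the one place where the hypothesis that $m$ is odd enters, and it is what makes the whole scheme work.

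The second ingredient concerns residues. On the left, the constraints $\mu_1>\dots>\mu_m$ and $\mu_1-\mu_m<N$ force the $\mu_i$ to occupy at most two consecutive length-$N$ blocks, so they have pairwise distinct residues modulo $N$; since $R_N(\beta_i)=R_N(\mu_i)$, the same holds for $\beta$, and these residues are \emph{cyclically decreasing}, meaning that exactly one cyclic rotation of $(\beta_1,\dots,\beta_m)$ has strictly decreasing residues. I would therefore define the forward map $\Phi$ by $\Phi(\mu,k)=\alpha$, where $\alpha$ is the unique cyclic rotation of $\beta=\mu+Nk$ with $R_N(\alpha_1)>\dots>R_N(\alpha_m)$. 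By the first ingredient $D(\alpha)=D(\beta)$, and $\alpha$ lies in the right-hand index set, so once $\Phi$ is shown to be a bijection, the lemma follows term by term.

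The heart of the argument is the construction of $\Phi^{-1}$. Given $\alpha$ with $\rho_i:=R_N(\alpha_i)$ strictly decreasing, I would run over the $m$ cyclic rotations $\beta^{(t)}$ of $\alpha$, $t=0,\dots,m-1$, and ask for which $t$ there is a valid pair $(\mu,k)$ with $\mu+Nk=\beta^{(t)}$. For each $t$, the decreasing-window representative $\mu^{(t)}$ carrying the prescribed residues is unique up to a global shift by $N(1,\dots,1)$, while the condition $k_1+\dots+k_m=0$ is equivalent to $\sum_i\mu_i=\sum_i\beta^{(t)}=\sum_i\alpha_i$; as rotations preserve the coordinate sum, existence of a preimage depends only on $\sum_i\alpha_i$ modulo $mN$. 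The decisive computation is that the canonical representative satisfies $\sum_i\mu^{(t)}_i=\sum_i\rho_i-tN$, so that as $t$ ranges over $\{0,\dots,m-1\}$ these sums run through all $m$ residue classes modulo $mN$ that are congruent to $\sum_i\rho_i$ modulo~$N$, which is exactly the class of $\sum_i\alpha_i$. Hence there is precisely one admissible $t$, and it determines $(\mu,k)$ uniquely, yielding a two-sided inverse of $\Phi$.

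I expect the main obstacle to be exactly this bookkeeping: disentangling the freedom coming from the constraint $\sum_i k_i=0$, the global $N$-shift ambiguity in the choice of $\mu$, and the choice of cyclic rotation, and verifying that together they assemble into a clean one-to-one correspondence. The identity $\sum_i\mu^{(t)}_i=\sum_i\rho_i-tN$ is what resolves this tidily; everything else—well-definedness of the rotation, distinctness of residues, and the sign count—is routine once that identity is in hand.
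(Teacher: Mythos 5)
Your proposal is correct and matches the paper's proof in essence: both establish the identity term by term via the bijection $(\mu,k)\mapsto\beta=\mu+Nk\mapsto\alpha$, where $\alpha$ is the unique cyclic rotation of $\beta$ with strictly decreasing residues, and both use that an $m$-cycle is even for odd $m$ so the determinant is unchanged. Your inverse, which selects the admissible rotation $t$ by the identity $\sum_i\mu^{(t)}_i=\sum_i\rho_i-tN$ and a count of coordinate sums modulo $mN$, is just a repackaging of the paper's recovery of $(q,t)$ from $\sum_i\lfloor\alpha_i/N\rfloor=qm+t$ with $0\le t<m$.
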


\begin{rem} \label{rem:2}
In view of \eqref{eq:CJT}, the left-hand side of~\eqref{eq:aGBK1s}
equals (cf.~\eqref{eq:ABL1})
$$
\underset{ \la_1-\la_{2h+1}\le w}{\sum_{\la:\ell(\la)\le2h+1}}\
\underset{k_1+\dots+k_{2h+1}=0}{\sum_{ k_1,\dots,k_{2h+1}\in\Z}}
\det_{1\le i,j\le 2h+1}\big(
e_{\la_i-i+j+(2h+w+1)k_i}(\vx)\big).
$$
If we now do the substitution $\la_i=\mu_i+i$, then we see that we
obtain the left-hand side of~\eqref{eq:lem1} with $m=2h+1$ and $N=2h+1+w$.
\end{rem}

\begin{proof}[Proof of Lemma \ref{lem:rem}]
  Let \( U \) be the set of pairs \( (\mu,k) \) of \( m \)-tuples \(
  \mu=(\mu_1,\dots,\mu_m)\in\Z^m \) and \( k=(k_1,\dots,k_m)\in\Z^m \) with
  \( \mu_1>\dots>\mu_m \), \( \mu_1-\mu_m<N \), and \( k_{1}+\cdots+k_m=0
  \). Let \( V \) be the set of \( m \)-tuples \(
  \alpha=(\alpha_1,\dots,\alpha_m)\in\Z^m \) for which \(
  R_N(\alpha_1)>\dots>R_N(\alpha_m) \).
  Then what we need to show is
  \begin{equation}
    \label{eq:mu,k=alpha}
    \sum_{(\mu,k)\in U}
    \det_{1 \leq i, j \leq m}\big(e_{\mu_{i}+N k_{i}+j}(\vx)\big)
    = \sum_{\alpha\in V}
    \det_{1 \leq i, j \leq m}\big(e_{\alpha_i+j}(\vx)\big).
  \end{equation}

  Suppose \( (\mu,k)\in U \). Let \( q = \flr{\mu_m/N} \) and, for \( 1\le i\le m
  \), let \( r_i \) be the integer such that \( \mu_i = qN + r_i\).
  Then there is a unique integer \( 0\le t<m \) such that
\begin{equation}\label{eq:rem_seq}
  N+r_m > r_1>\dots>r_t \ge N> r_{t+1}>\dots>r_m \ge0.
\end{equation}
Now let \( \beta=(\beta_1,\dots,\beta_m) \), where \( \beta_i=\mu_i+Nk_i \).
By~\eqref{eq:rem_seq}, we have
\begin{equation}\label{eq:Q_N(beta)}
  \sum_{i=1}^m \flr{\beta_i/N} = \sum_{i=1}^m (\flr{\mu_i/N}+k_i)
  = \sum_{i=1}^m \flr{\mu_i/N} = qm + t.
\end{equation}
Moreover, we have \( R_N(\beta_i) = R_N(\mu_i)\), which is equal to \( r_i \) if
\(t+1\le i\le m \) and to \( r_i-N \) if \( 1\le i\le t \). Since the integers \(
q,t\in\Z \) with \( 0\le t<m \) are determined by \( \beta \) via
to~\eqref{eq:Q_N(beta)}, the pair \( (\mu,k) \) can be recovered from the sequence
\( \beta \) as follows:
\begin{align}
  \label{eq:mu=beta}
\mu_i &=
\begin{cases}
  (q+1)N + R_N(\beta_i), & \mbox{if \( 1\le i\le t \)},\\
  qN + R_N(\beta_i), & \mbox{if \(t+1\le i\le m \)},
\end{cases}\\
  \label{eq:k=b-m}
k_i & = \frac{\beta_i-\mu_i}{N} .
\end{align}

On the other hand, by \eqref{eq:rem_seq}, we have
\begin{equation}\label{eq:R_N(beta)}
  R_N(\beta_{t+1})>\dots>R_N(\beta_m) > R_N(\beta_1)>\dots>R_N(\beta_t).
\end{equation}
Let \( \alpha=(\alpha_1,\dots,\alpha_m)=
(\beta_{t+1},\dots,\beta_m,\beta_1,\dots,\beta_t) \). By~\eqref{eq:R_N(beta)},
we have \( \alpha\in V \).

Observe that \( \det\big(e_{\mu_{i}+N k_{i}+j}(\vx)\big) =
\det\big(e_{\beta_i+j}(\vx)\big)
=\det\big(e_{\alpha_i+j}(\vx)\big) \) because cyclically shifting indices of an \( m\times m
\) matrix does not change its determinant when \( m \) is odd. Thus, to prove~\eqref{eq:mu,k=alpha}, it suffices to show that the map \( (\mu,k)\mapsto \alpha
\) is a bijection between \( U \) and \( V \).

Suppose \( \alpha\in V \). Let \( q \) and \( t \) be the unique integers
satisfying \( \sum_{i=1}^m \flr{\alpha_i/N} = qm+t \) and \( 0\le t<m \). Let \(
\beta=(\alpha_{m-t+1},\dots,\alpha_m,\alpha_1,\dots,\alpha_{m-t}) \). Finally,
define \( (\mu,k) \) using~\eqref{eq:mu=beta} and~\eqref{eq:k=b-m}. It is easy
to check that the map \( \alpha\mapsto (\mu,k) \) is the desired inverse map.
\end{proof}

\begin{lem}\label{lem:rem2}
  Let \( m \) and \( N \) be positive integers with \( m<N \) and \(
  m \) is even. Then
  \begin{equation} \label{eq:lem2}
    \underset{\mu_1>\dots>\mu_m \text{ \em and } \mu_1-\mu_m<N}
    {\sum_{\mu_{1}, \ldots, \mu_m \in \Z }}\
   \underset{k_{1}+\cdots+k_m=0}{ \sum_{k_{1}, \ldots, k_m \in \Z}}
    \det_{1 \leq i, j \leq m}\big(e_{\mu_{i}+N k_{i}+j}(\vx)\big)
    =\underset{R_N(\alpha_1)>\dots>R_N(\alpha_m)}
    {\sum_{\alpha_1,\dots,\alpha_m\in\Z}} (-1)^{\sum_{i=1}^m \flr{\alpha_i/N}}
    \det_{1\le i,j\le m}\big(e_{\alpha_i+j}(\vx)\big).
  \end{equation}
\end{lem}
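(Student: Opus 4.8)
The plan is to follow the proof of Lemma~\ref{lem:rem} essentially verbatim, since the bijection between the index sets constructed there makes no use of the parity of~$m$. The only place parity enters is in comparing the two determinants, and that is precisely where the extra sign factor in~\eqref{eq:lem2} will be produced.

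First I would set up the same sets $U$ and $V$, together with the intermediate tuple $\beta=(\beta_1,\dots,\beta_m)$ given by $\beta_i=\mu_i+Nk_i$ and the cyclic reordering $\alpha=(\beta_{t+1},\dots,\beta_m,\beta_1,\dots,\beta_t)$, and the correspondence $(\mu,k)\leftrightarrow\alpha$ defined through~\eqref{eq:mu=beta} and~\eqref{eq:k=b-m}. Every claim up to and including the remainder ordering~\eqref{eq:R_N(beta)} and the identity $\sum_i\lfloor\beta_i/N\rfloor=qm+t$ from~\eqref{eq:Q_N(beta)} holds word for word, so the map $(\mu,k)\mapsto\alpha$ is again a bijection from $U$ onto $V$, with the same inverse.

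The one substantive change is the passage from $\det_{1\le i,j\le m}(e_{\beta_i+j}(\vx))$ to $\det_{1\le i,j\le m}(e_{\alpha_i+j}(\vx))$. Since $\alpha$ is obtained from $\beta$ by a cyclic shift of $t$ positions, the corresponding row permutation is the $t$-th power of an $m$-cycle, whose sign equals $(-1)^{t(m-1)}$. When $m$ is odd this is $+1$, which is exactly why no sign appeared in Lemma~\ref{lem:rem}; but when $m$ is even it equals $(-1)^t$, so now $\det_{1\le i,j\le m}(e_{\beta_i+j}(\vx))=(-1)^t\det_{1\le i,j\le m}(e_{\alpha_i+j}(\vx))$.

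Finally I would identify the exponent with the prescribed weight. Because $\alpha$ is merely a rearrangement of $\beta$, we have $\sum_i\lfloor\alpha_i/N\rfloor=\sum_i\lfloor\beta_i/N\rfloor=qm+t$ by~\eqref{eq:Q_N(beta)}, and since $m$ is even, $qm$ is even, whence $(-1)^t=(-1)^{\sum_i\lfloor\alpha_i/N\rfloor}$. Inserting this into the term-by-term comparison afforded by the bijection produces exactly the sign-weighted right-hand side of~\eqref{eq:lem2}. The only point requiring care is the correct computation of the parity of the cyclic-shift permutation and its matching with the floor-sum; the combinatorial core — the bijection $U\to V$ — is inherited unchanged from Lemma~\ref{lem:rem}, so I do not expect any genuine obstacle.
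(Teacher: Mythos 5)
Your proposal is correct and coincides with the paper's own proof: the authors likewise reuse the bijection $(\mu,k)\mapsto\alpha$ from Lemma~\ref{lem:rem} unchanged, observe that the cyclic shift by $t$ positions contributes the sign $(-1)^t$ when $m$ is even, and conclude via $(-1)^t=(-1)^{\sum_{i=1}^m\flr{\alpha_i/N}-qm}=(-1)^{\sum_{i=1}^m\flr{\alpha_i/N}}$. Your sign computation $(-1)^{t(m-1)}$ for the $t$-th power of an $m$-cycle, and the matching with the floor-sum via~\eqref{eq:Q_N(beta)}, are both exactly right.
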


\begin{rem} \label{rem:3}
Similarly to Remark~\ref{rem:2},
in view of \eqref{eq:CJT}, the left-hand side of~\eqref{eq:aGBK2s}
equals (cf.~\eqref{eq:ABL2})
the left-hand side of~\eqref{eq:lem2} with $m=2h$ and $N=2h+w$.
\end{rem}

\begin{proof}[Proof of Lemma \ref{lem:rem2}]
  This can be proved by the same arguments as in the proof of
  Lemma~\ref{lem:rem}. 
  The only difference is that, if \( (\mu,k) \) corresponds to \(
  \alpha \), then 
  \( \det\big(e_{\mu_{i}+N k_{i}+j}(\vx)\big) =
  \det\big(e_{\beta_i+j}(\vx)\big)
  =(-1)^t\det\big(e_{\alpha_i+j}(\vx)\big) \)
  because cyclically shifting indices of an \(
  m\times m \) matrix changes the sign of its determinant when \( m \) is even.
  Since \( (-1)^t = (-1)^{ \sum_{i=1}^m \flr{\alpha_i/N} - qm} = (-1)^{
    \sum_{i=1}^m \flr{\alpha_i/N}} \), we obtain the desired formula.
\end{proof}

In view of Remark~\ref{rem:2}, the identity~\eqref{eq:aGBK1s} will
follow once we show the following equality.

\begin{prop}\label{thm:odd2}
  For nonnegative integers \( h \) and \( N \) with \( N>2h+1 \), we have
  \[
  \underset{R_N(\alpha_1)>\dots>R_N(\alpha_{2h+1})}
  {\sum_{\alpha_1,\dots,\alpha_{2h+1}\in\Z}} \det_{1\le i,j\le
      2h+1}\big(e_{\alpha_i+j}(\vx)\big)
    =\sum_{k\ge0}e_k(\vx) \det_{1\le i,j\le h}\left(F_{-i+j,N}(\vx) -
    F_{i+j,N}(\vx) \right).
  \]
\end{prop}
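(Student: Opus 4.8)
The plan is to follow the Gordon--Houten recipe that Stembridge used for the non-affine identity~\eqref{eq:BK_odd1}: rewrite the left-hand side as a signed sum of minors, convert it into a Pfaffian via the minor summation formula of Corollary~\ref{cor:IsWa}, and then collapse that Pfaffian to the desired $h\times h$ determinant by Gordon's reduction in Lemma~\ref{lem:Gordon5}. Write $m=2h+1$. Splitting each index as $\alpha_i=a_i+Nb_i$ with $a_i=R_N(\alpha_i)\in\{0,1,\dots,N-1\}$ and $b_i=\lfloor\alpha_i/N\rfloor\in\Z$, the constraint becomes $a_1>\dots>a_m$ while the $b_i$ are free. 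Since $b_i$ occurs only in the $i$-th row of $(e_{\alpha_i+j})$, multilinearity of the determinant in its rows lets me perform the sum over $(b_1,\dots,b_m)\in\Z^m$ inside the determinant, producing entries $g_{a,j}(\vx):=\sum_{b\in\Z}e_{a+Nb+j}(\vx)$. The key simplification is that $g_{a,j}$ depends only on $(a+j)\bmod N$: writing $\phi_c:=\sum_{n\equiv c\,(\mathrm{mod}\,N)}e_n(\vx)$, one has $g_{a,j}=\phi_{(a+j)\bmod N}$. Hence the left-hand side becomes $\sum_{N-1\ge a_1>\dots>a_m\ge0}\det_{1\le i,j\le m}(g_{a_i,j})$, a signed sum of $m\times m$ minors of the finite $m\times N$ matrix $M$ with $M_{j,a}=g_{a,j}$ (this is where $m<N$ enters).

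Next I reverse each decreasing index string $a_1>\dots>a_m$ into the increasing string required by Corollary~\ref{cor:IsWa}, which costs the global sign $(-1)^{\lfloor m/2\rfloor}=(-1)^h$, and apply part~(2) of that corollary (valid since $m$ is odd). This turns the minor sum into a bordered $(2h+2)\times(2h+2)$ Pfaffian $\Pf_{0\le i<j\le m}(T_{i,j})$. The border entries are constant, $T_{0,j}=\sum_{a=0}^{N-1}g_{a,j}=\sum_{c=0}^{N-1}\phi_c=\sum_{k\ge0}e_k(\vx)=e(\vx)$ for every $j\ge1$, while the interior entries are
\[
T_{i,j}=P_{i,j}:=\sum_{0\le a<c\le N-1}\big(\phi_{a+i}\phi_{c+j}-\phi_{c+i}\phi_{a+j}\big),
\]
with all subscripts of $\phi$ read modulo $N$. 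Because the whole zeroth row equals the constant $e(\vx)$, I subtract column~$1$ from columns $2,\dots,m$ and row~$1$ from rows $2,\dots,m$ (a congruence transformation of determinant~$1$, so the Pfaffian is unchanged); this clears $T_{0,j}$ for $j\ge2$, and expanding along the zeroth row extracts the factor $e(\vx)$, leaving $e(\vx)\cdot\Pf_{2\le i,j\le m}(P_{i,j}-P_{i,1}-P_{1,j})$.

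The decisive computational claim, and the main obstacle, is that these corrected interior entries are functions of $j-i$ alone and in fact equal $-z_{j-i}$, where $z_r:=F_{r-1,N}(\vx)-F_{r+1,N}(\vx)$. Establishing this rests on the cyclic autocorrelation identity $F_{r,N}=\sum_{c=0}^{N-1}\phi_c\,\phi_{c+r}$ (which follows by grouping the defining double sum of $f$ by residues modulo $N$), together with a careful accounting of the boundary contributions coming from the residue windows $0\le a<c\le N-1$. It is exactly these boundary terms that keep $P_{i,j}$ from depending on $j-i$ by itself, and what must be checked is that they cancel in the combination $P_{i,j}-P_{i,1}-P_{1,j}$; this parallels, but is more delicate than, the telescoping in the limit $N\to\infty$, where it recovers Stembridge's computation with $z_r=f_{r-1}-f_{r+1}$.

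Granting this, the endgame is routine. Since $F_{s,N}=F_{-s,N}$ we have $z_{-r}=-z_r$, so Gordon's Lemma~\ref{lem:Gordon5} yields $\Pf_{1\le i,j\le 2h}(z_{j-i})=\det_{1\le i,j\le h}\big(z_{|j-i|+1}+z_{|j-i|+3}+\dots+z_{i+j-1}\big)$, and with $z_r=F_{r-1,N}-F_{r+1,N}$ the inner sum telescopes to $F_{-i+j,N}-F_{i+j,N}$. Collecting the reversal sign $(-1)^h$, the sign $(-1)^h$ from $\Pf_{2h}(-z_{j-i})=(-1)^h\Pf_{2h}(z_{j-i})$, and the factor $e(\vx)=\sum_{k\ge0}e_k(\vx)$, I recover precisely the right-hand side $\sum_{k\ge0}e_k(\vx)\det_{1\le i,j\le h}\big(F_{-i+j,N}(\vx)-F_{i+j,N}(\vx)\big)$.
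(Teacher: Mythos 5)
Your overall architecture coincides with the paper's proof (the Gordon--Houten/Stembridge recipe): the residue decomposition and multilinearity giving a sum of maximal minors of an $m\times N$ matrix, the reversal sign $(-1)^h$, the bordered Pfaffian from Corollary~\ref{cor:IsWa}(2) with constant border $e(\vx)$, and the endgame via Gordon's Lemma~\ref{lem:Gordon5} with $z_r=F_{r-1,N}(\vx)-F_{r+1,N}(\vx)$ and the telescoping $z_{|j-i|+1}+z_{|j-i|+3}+\cdots+z_{i+j-1}=F_{j-i,N}(\vx)-F_{i+j,N}(\vx)$; likewise your identity $F_{r,N}=\sum_{c=0}^{N-1}\phi_c\phi_{c+r}$ and the sign bookkeeping are correct. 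However, the one step you explicitly defer --- that the boundary contributions ``cancel in the combination $P_{i,j}-P_{i,1}-P_{1,j}$'' so that it equals $-z_{j-i}$ --- is false, and this is the crux of the whole computation. Your anchored operation (subtract row/column~$1$ from all later rows/columns) is not the right one; the paper instead subtracts row/column $i-1$ from row/column $i$ \emph{successively} for $i=2h+2,2h+1,\dots,2$, producing second differences.

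To see the failure concretely: the paper's relation~\eqref{eq:dN-dN} reads, in your notation (where $P_{i,j}=-d_N(i,j)$),
\[
P_{i,j}-P_{i-1,j}\;=\;-2e(\vx)\,\phi_{i-1}+F_{j-i,N}(\vx)+F_{j-i+1,N}(\vx),
\]
with subscripts of $\phi$ taken modulo $N$. Summing these anchored first differences, the boundary terms $e(\vx)\phi_t$ do cancel in your combination, but what survives is
\[
P_{i,j}-P_{i,1}-P_{1,j}\;=\;\sum_{t=1}^{i-1}\bigl(F_{j-t,N}(\vx)+F_{j-t-1,N}(\vx)-F_{t,N}(\vx)-F_{t-1,N}(\vx)\bigr),
\]
which depends on $i$ and $j$ separately, not only on $j-i$. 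It coincides with $-z_{j-i}$ on the first superdiagonal (at $(i,j)=(2,3)$ it is $F_{2,N}-F_{0,N}=-z_1$), so a small spot check is misleading; but already at $(i,j)=(2,4)$ it equals $F_{3,N}+F_{2,N}-F_{1,N}-F_{0,N}\neq F_{3,N}-F_{1,N}=-z_2$ (the constant terms of $F_{0,N}$ and $F_{2,N}$ are $1$ and $0$ for $N>2$), and at $(3,5)$, with the same $j-i$, it is $F_{4,N}+2F_{3,N}-2F_{1,N}-F_{0,N}$. Thus for $h\ge2$ your reduced matrix is not Toeplitz and Lemma~\ref{lem:Gordon5} cannot be applied to it (your diagnosis is also inverted: the uncorrected $P_{i,j}=-d_N(i,j)$ already carries $i$-dependent boundary terms, and it is the anchoring that prevents a Toeplitz outcome, not the boundary terms per se). With the paper's successive operations, as in the proof of~\eqref{eq:odd_id2}, the border still collapses so that one Pfaffian expansion extracts $e(\vx)$ (the leftover row/column vanishes in the expansion), and the interior entries become genuine second differences in which the $i$-dependent term cancels:
\[
P_{i,j}-P_{i-1,j}-P_{i,j-1}+P_{i-1,j-1}\;=\;F_{j-i+1,N}(\vx)-F_{j-i-1,N}(\vx)\;=\;-z_{j-i},
\]
after which your signs and the Gordon/telescoping endgame go through verbatim.
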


Similarly,
in view of Remark~\ref{rem:3}, the identity~\eqref{eq:aGBK2s} will
follow once we show the equality in the proposition below.

\begin{prop}\label{thm:even2}
  For positive integers \( h \) and \( N \) with \( N>2h \), we have
  \[
  \underset{R_N(\alpha_1)>\dots>R_N(\alpha_{2h})}
  {\sum_{\alpha_1,\dots,\alpha_{2h}\in\Z}}
    (-1)^{\sum_{i=1}^{2h} \flr{\alpha_i/N}}\det_{1\le i,j\le
      2h}\big(e_{\alpha_i+j}(\vx)\big)
    = \det_{1 \leq i, j \leq h}\left(\overline{F}_{-i+j,N}(\vx)
    +\overline{F}_{i+j-1,N}(\vx)\right).
  \]
\end{prop}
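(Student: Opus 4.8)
The plan is to treat Proposition~\ref{thm:even2} as the even-$m$ companion (here $m=2h$) of Proposition~\ref{thm:odd2}, the only genuinely new features being that $m$ is even, so that part~(1) of the minor summation formula is used, and that the summand carries the sign $(-1)^{\sum_i\flr{\alpha_i/N}}$, which is exactly what upgrades $F_{\cdot,N}(\vx)$ to $\overline{F}_{\cdot,N}(\vx)$. Following the recipe behind Proposition~\ref{thm:odd2}, I would first collapse the whole left-hand side into a single Pfaffian. Since $\det_{i,j}(e_{\alpha_i+j}(\vx))$ is alternating in $(\alpha_1,\dots,\alpha_{2h})$ and $m=2h<N$ forces the remainders $R_N(\alpha_i)$ to be distinct, I would apply Theorem~\ref{thm:IsWa}(1) (in the limiting case $p=\infty$) to the matrix $M=(e_{\alpha+i}(\vx))_{1\le i\le 2h,\ \alpha\in\Z}$, so that $\det M^{[2h]}_K=\det_{i,j}(e_{\alpha_i+j}(\vx))$ after a transpose, together with the skew-symmetric array $a_{\alpha,\beta}=(-1)^{\flr{\alpha/N}+\flr{\beta/N}}\,\sgn\!\big(R_N(\alpha)-R_N(\beta)\big)$. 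Because a perfect matching meets each index exactly once, $\Pf(A^K_K)$ factors as $\prod_i(-1)^{\flr{\alpha_i/N}}$ times the Pfaffian of the pure sign matrix $\big(\sgn(R_N(\alpha_i)-R_N(\alpha_j))\big)$, and the latter equals the sign of the permutation sorting the remainders into \emph{decreasing} order (using $\Pf(1)=1$). Thus $\Pf(A^K_K)\det M^{[2h]}_K$ reproduces precisely the decreasing-remainder summand $(-1)^{\sum_i\flr{\alpha_i/N}}\det_{i,j}(e_{\alpha_i+j}(\vx))$ with no stray global sign, and the left-hand side becomes
\[
\Pf_{1\le i<j\le 2h}\big(A_{i,j}\big),
\qquad
A_{i,j}=\sum_{\alpha,\beta\in\Z}(-1)^{\flr{\alpha/N}+\flr{\beta/N}}\sgn\!\big(R_N(\alpha)-R_N(\beta)\big)\,e_{\alpha+i}(\vx)\,e_{\beta+j}(\vx).
\]
(Equivalently one folds the columns into the $N$ residue classes, with entries $\sum_q(-1)^q e_{qN+r+i}(\vx)$, and invokes Corollary~\ref{cor:IsWa}(1); this is the route strictly parallel to the proof of Proposition~\ref{thm:odd2}, at the cost of an explicit reversal sign $(-1)^h$.)

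The crucial and most delicate step is the evaluation of $A_{i,j}$. I claim that $A_{i,j}$ depends on $i,j$ only through $j-i$, i.e.\ $A_{i,j}=z_{j-i}$ with $z_{-d}=-z_d$. This is not obvious, since neither $R_N$ nor $\flr{\cdot/N}$ is translation invariant; the point is that, shifting $(i,j)\mapsto(i+1,j+1)$ and reindexing $(\alpha,\beta)\mapsto(\alpha-1,\beta-1)$, the coefficient of each monomial $e_{\alpha+i}(\vx)\,e_{\beta+j}(\vx)$ is unchanged except possibly when $\alpha$ or $\beta$ is divisible by $N$; and there the wrap-around $R_N(\cdot-1)=N-1$ flips the value of $\sgn$ exactly when $\flr{(\cdot-1)/N}$ drops by $1$, so the two sign changes cancel in every case. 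Granting this, grouping the monomials by the gap $w$ and the residue class of one index, and summing the quotient signs, I expect to identify $z_d$ as the affine analogue of the cumulant from the non-affine case: in the limit $N\to\infty$ one gets $z_d=\sum_w\sgn(d-w)\,f_w(\vx)$, which is precisely Stembridge's entry, while for finite $N$ the $(-1)^{\flr{\cdot/N}}$ factors assemble the alternating tails into cumulative combinations of $\overline{F}_{\cdot,N}(\vx)$.

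Finally, I would feed $\Pf_{1\le i<j\le 2h}(z_{j-i})$ into Gordon's reduction. Since $z_{-d}=-z_d$, Lemma~\ref{lem:Gordon5} applies, and among its reformulations in Corollary~\ref{cor:Gordon} the plain-sum variant~\eqref{eq:Gordon_var2} (the one producing ``$+$''-type two-term entries, as opposed to the ``$-$''-type~\eqref{eq:Gordon_var1} used for the odd orthogonal/symplectic cases) is tailored to telescope the cumulative sums defining $z$ into $\overline{F}_{-i+j,N}(\vx)+\overline{F}_{i+j-1,N}(\vx)$. This collapses the $2h\times 2h$ Pfaffian into the $h\times h$ determinant on the right-hand side of Proposition~\ref{thm:even2}. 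The internal signs are forced to be consistent, since in the limit $N\to\infty$ the whole argument must reproduce the established non-affine even identity~\eqref{eq:BK_even1}.

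I expect the evaluation of $A_{i,j}$ to be the main obstacle. Both halves require careful bookkeeping of the interlocking $\sgn$, $R_N$, and $(-1)^{\flr{\cdot/N}}$ factors: first the wrap-around cancellation that secures translation invariance $A_{i,j}=z_{j-i}$, and then the precise matching of the resulting cumulant with the two-term $\overline{F}_{\cdot,N}(\vx)$ entries that Gordon's lemma extracts. Once $A_{i,j}=z_{j-i}$ is in hand with the correct closed form, the passage to the determinant is a mechanical application of Corollary~\ref{cor:Gordon}.
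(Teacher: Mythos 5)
Your skeleton coincides with the paper's proof: rewrite the left-hand side as a sum of minors, apply the Ishikawa--Wakayama minor summation formula to produce a Pfaffian whose entries depend only on $j-i$, and collapse to an $h\times h$ determinant via Gordon's reduction. Your first step is sound — whether one folds the columns into the $N$ residue classes and uses Corollary~\ref{cor:IsWa}(1), as the paper does to reach $\Pf_{1\le i,j\le 2h}\big(\bar d_N(i,j)\big)$ in~\eqref{eq:Pf_bar_d}, or keeps the columns indexed by $\Z$ with your skew-symmetric array $a_{\alpha,\beta}$: the diagonal factorization of $\Pf\big(A^K_K\big)$, the vanishing of $\Pf\big(A^K_K\big)$ when two remainders coincide (two equal rows), and the absence of a stray global sign all check out, and your entry $A_{i,j}$ is literally the paper's $\bar d_N(i,j)$ after the substitution $m=\alpha+i$, $n=\beta+j$. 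Your wrap-around cancellation proving $A_{i+1,j+1}=A_{i,j}$ is also correct, and is a nice complement to the paper, which never isolates translation invariance but obtains it as a byproduct of a closed formula.

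Two genuine problems remain. First, the evaluation of $z_d$ is the crux of the whole proof, and you leave it at ``I expect''. The paper proves, in the proof of~\eqref{eq:Pf-Pf}, by induction on $j-i$ together with a term-by-term comparison of coefficients $a_{m,n}(i,j)=b_{m,n}(i,j)$ (splitting on whether $R_N(n-j)=N-1$), that $\bar d_N(i,j)-\bar d_N(i,j-1)=\overline{F}_{j-i-1,N}(\vx)+\overline{F}_{j-i,N}(\vx)$, whence $z_d=\sum_{r=-d+1}^{d}\overline{F}_{r,N}(\vx)$; without this computation your proposal is a plan, not a proof, as you yourself concede. Second, your final step cites the wrong variant of Corollary~\ref{cor:Gordon}: with the cumulant relation $z_d-z_{d-1}=\overline{F}_{d,N}(\vx)+\overline{F}_{d-1,N}(\vx)$, it is the alternating variant~\eqref{eq:Gordon_var1}, with entries $\sum_{s}(-1)^s\left(z_{i+j-1-s}-z_{i+j-2-s}\right)$, that telescopes to $\overline{F}_{i+j-1,N}(\vx)+\overline{F}_{|i-j|,N}(\vx)$, and this is exactly what the paper uses in the proof of~\eqref{eq:Pf-det}. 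The all-plus variant~\eqref{eq:Gordon_var2} that you invoke does not telescope: already at $(i,j)=(1,2)$ it yields $z_2+z_1=\overline{F}_{2,N}(\vx)+3\overline{F}_{1,N}(\vx)+2\overline{F}_{0,N}(\vx)$ rather than $\overline{F}_{1,N}(\vx)+\overline{F}_{2,N}(\vx)$. The fix is mechanical (swap the citation), but as written that step fails.
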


In the next two subsections, we prove Propositions~\ref{thm:odd2} and
\ref{thm:even2}. Our approach parallels the one in Stembridge's proof of the
bounded Littlewood identities~\eqref{eq:BK_odd1}
and~\eqref{eq:BK_even1} (see \cite[Th.~7.1]{Stembridge1990}).

\subsection{Proof of Proposition~\ref{thm:odd2}}

Let \( h \) and \( N \) be nonnegative integers satisfying \( N>2h+1 \).
Throughout this subsection we write \( e(\vx)=\sum_{k\ge0}e_k(\vx) \), as
in Theorem~\ref{thm:D} before, and \( E\) is the
\( 1\times (2h+1) \) matrix whose entries are all equal to \( e(\vx) \).

For two integers \( i \) and \( j \), we define
\[
  d_N(i,j) =
  \underset{R_N(m-i) > R_N(n-j)}{\sum_{m,n\in \Z}} e_m(\vx) e_n(\vx)
  - \underset{R_N(m-i) < R_N(n-j)}{\sum_{m,n\in \Z}} e_m(\vx) e_n(\vx).
\]
By definition, \( d_N(j,i) = -d_N(i,j) \) and \( d_N(i,i)=0 \). For a
nonnegative integer \( n \), let \( D_N(n) \) denote the \( n\times n \)
skew-symmetric matrix given by
\[
D_{N}(n) = \big(d_N(i,j)\big)_{1\leq i,j \leq n}.
\]

We will prove the following three identities:
\begin{gather}
  \label{eq:odd_id1}
  \underset{R_N(\alpha_1)>\dots>R_N(\alpha_{2h+1})}
  {\sum_{\alpha_1,\dots,\alpha_{2h+1}\in\Z}} \det_{1\le i,j\le 2h+1}
  \big(e_{\alpha_i+j}(\vx)\big)
  = \Pf
  \begin{pmatrix}
    0 & E \\
    -E^t & D_{N}(2h+1)
  \end{pmatrix},\\
  \label{eq:odd_id2}
  \Pf
  \begin{pmatrix}
    0 & E \\
    -E^t & D_{N}(2h+1)
  \end{pmatrix}
  = e(\vx) \Pf_{1\le i,j\le 2h} \left( F_{j-i-1,N}(\vx) - F_{j-i+1,N}(\vx)
  \right),\\ 
  \label{eq:odd_id3}
  \Pf_{1\le i,j\le 2h} \left( F_{j-i-1,N}(\vx) - F_{j-i+1,N}(\vx) \right)
  = \det_{1\le i,j\le h} \left( F_{-i+j,N}(\vx) - F_{i+j,N}(\vx)\right).
\end{gather}
It is obvious that \eqref{eq:odd_id1}--\eqref{eq:odd_id3} together
yield Proposition~\ref{thm:odd2}, as desired.

In the remainder of this subsection, we provide the proofs
of~\eqref{eq:odd_id1}--\eqref{eq:odd_id3}.

\begin{proof}[Proof of \eqref{eq:odd_id1}]
  By taking the transpose we may rewrite the left-hand side
  of~\eqref{eq:odd_id1} as 
\begin{equation}\label{eq:1rN}
  \sum_{N>r_1>\dots>r_{2h+1}\ge0}\sum_{k_1,\dots,k_{2h+1}\in\Z}
  \det_{1\le i,j\le 2h+1} \left( e_{k_jN+r_j+i}(\vx)\right)
  = \sum_{1\le r_1<\dots<r_{2h+1}\le N} (-1)^{\binom{2h+1}{2}}
\det_{1\le i,j\le 2h+1} \left( \sum_{k\in\Z}e_{kN+i+r_j-1}(\vx)\right).
\end{equation}
Let \( M=(M_{i,j})_{1\le i\le 2h+1, 1\le j\le N} \) be the matrix whose \( (i,j)
\)-entry is $M_{i,j}=\sum_{k\in\Z}e_{kN+i+j-1}(\vx)$. Note that, for \( 1\le j\le 2h+1
\), we have
\[ \sum_{1\le r\le N} M_{j,r} = \sum_{1\le r\le N}\sum_{k\in \Z}
e_{kN+r+j-1}(\vx) =\sum_{\ell \in\Z} e_\ell(\vx) = e(\vx), \]
and, for \( 1\le i<j\le 2h+1 \),
\begin{align*}
  \sum_{1 \le r < s \le N} \left( M_{i,r} M_{j,s} - M_{i,s} M_{j,r} \right)
 &=\sum_{1 \le r < s \le N}\sum_{k,\ell\in\Z}
  \left( e_{kN+i+r-1}(\vx)e_{\ell N+j+s-1}(\vx) -
  e_{kN+i+s-1}(\vx)e_{\ell N+j+r-1}(\vx) \right)\\
  &= - d_N(i,j).
\end{align*}
Thus, by the minor summation formula in
Corollary~\ref{cor:IsWa}(2) with $n=2h+1$ and $p=N$, the right-hand side
of~\eqref{eq:1rN} equals
\[
  (-1)^h
  \Pf
  \begin{pmatrix}
    0 & E \\
    -E^t & -D_{N}(2h+1)
  \end{pmatrix}
  = \Pf
  \begin{pmatrix}
    0 & E \\
    -E^t & D_{N}(2h+1)
  \end{pmatrix},
\]
as desired.
\end{proof}

\begin{proof}[Proof of \eqref{eq:odd_id2}]
  Let \( A \) be the matrix on the left-hand side of~\eqref{eq:odd_id2}. In the
  matrix \( A \) subtract row/column \( i-1 \) from row/column \( i \) for \(
  i=2h+2,2h+1,\dots, 2 \). Then the resulting matrix is of the form
  \[
    \begin{pmatrix}
      0 & e(\vx) & 0 \\
      -e(\vx) & * & * \\
      0 & * & B \\
    \end{pmatrix},
  \]
  where \( B=(B_{i,j})_{1\le i,j\le 2h} \) is the matrix whose \( (i,j) \)-entry
  is
\[
  B_{i,j}= d_N(i,j) -d_N(i-1,j) - d_N(i,j-1) + d_N(i-1,j-1).
\]
Since \( \Pf A = e(\vx) \Pf B \), it remains to show that
\(   B_{i,j} = F_{j-i-1,N}(\vx) - F_{j-i+1,N}(\vx) \).

We claim that
\begin{equation}
  \label{eq:dN-dN}
  d_N(i,j) - d_N(i-1,j)
  = 2\sum_{m,n\in \Z} \chi\big[R_N(m-i) = N-1\big]\, e_m(\vx) e_n(\vx)
  - F_{j-i,N}(\vx) - F_{j-i+1,N}(\vx).
\end{equation}
To prove the claim note that \( d_N(i,j) - d_N(i-1,j) = P-Q \), where
\begin{align*}
  P &= \sum_{m,n\in \Z}
      \Big(\chi\big[R_N(m-i) > R_N(n-j)\big] - \chi\big[R_N(m-i+1) > R_N(n-j)\big]\Big)\,
      e_m(\vx) e_n(\vx),\\
  Q &= \sum_{m,n\in \Z}
      \Big(\chi\big[R_N(m-i) < R_N(n-j)\big] - \chi\big[R_N(m-i+1) < R_N(n-j)\big]\Big)\,
      e_m(\vx) e_n(\vx).
\end{align*}
One can easily check that the coefficient of \( e_m(\vx)e_n(\vx) \) in
\( P \) is equal to 
\begin{multline*}
  \chi\big[R_N(m-i) = N-1\ne R_N(n-j)\big] - \chi \big[R_N(m-i)
  = R_N(n-j)\ne N-1\big]\\
 =   \chi\big[R_N(m-i) = N-1\big] - \chi \big[R_N(m-i)
  = R_N(n-j)\big].
\end{multline*}
Thus, we have
\[
    P = \sum_{m,n\in \Z}
    \chi\big[R_N(m-i) = N-1\big]e_m(\vx) e_n(\vx) -
    \sum_{m,n\in \Z} \chi \big[R_N(m-i) = R_N(n-j)\big] e_m(\vx)e_n(\vx),
\]
and similarly,
\[
  Q =
  \sum_{m,n\in \Z} \chi \big[R_N(m-i+1) = R_N(n-j)\big] e_m(\vx)e_n(\vx)
- \sum_{m,n\in \Z} \chi\big[R_N(m-i) = N-1\big]e_m(\vx) e_n(\vx).
\]
 Since
 \[
   \sum_{m,n\in \Z} \chi[R_N(m-i) = R_N(n-j)]e_m(\vx) e_n(\vx)
   =\sum_{m\in\Z}\sum_{k\in\Z} e_m(\vx)e_{kN+m+j-i}(\vx)
   =\sum_{k\in\Z} f_{kN+j-i}(\vx) = F_{j-i,N}(\vx),
 \]
 \( P-Q \) is equal to the right-hand side of~\eqref{eq:dN-dN},
 and we obtain the claim.

 By \eqref{eq:dN-dN} we have
\[
  B_{i,j} = \left( - F_{j-i,N}(\vx) - F_{j-i+1,N}(\vx) \right) -
  \left( - F_{j-i-1,N}(\vx) - F_{j-i,N}(\vx) \right)
  = F_{j-i-1,N}(\vx) - F_{j-i+1,N}(\vx),
\]
and the proof is completed.
\end{proof}

\begin{proof}[Proof of \eqref{eq:odd_id3}]
  If \( z_i=F_{i-1,N}(\vx)-F_{i+1,N}(\vx) \), then
\[
  z_{|j-i|+1}+ z_{|j-i|+3} +\cdots+  z_{i+j-1}
  = F_{|j-i|,N}(\vx) -F_{i+j,N}(\vx)
  = F_{j-i,N}(\vx) -F_{i+j,N}(\vx) .
\]
Thus \eqref{eq:odd_id3} follows from Lemma~\ref{lem:Gordon5}.
\end{proof}

As pointed out before the statement of Proposition~\ref{thm:odd2},
with this proposition being established, the
identity~\eqref{eq:aGBK1s} follows.

\subsection{Proof of Proposition~\ref{thm:even2}}
\label{sec:pf1_even}

Let \( h \) and \( N \) be nonnegative integers satisfying \( N>2h \).
For two integers \( i \) and \( j \), we define
\[
   \bar d_N(i,j) = \underset{R_N(m-i) > R_N(n-j)}{\sum_{m,n\in \Z}}
  (-1)^{\fl{(m-i)/N}+\fl{(n-j)/N}} e_m(\vx) e_n(\vx)
  - \underset{R_N(m-i) < R_N(n-j)}{\sum_{m,n\in \Z}}
  (-1)^{\fl{(m-i)/N}+\fl{(n-j)/N}} e_m(\vx) e_n(\vx).
\]
By definition, \( \bar d_N(j,i) = -\bar d_N(i,j) \) and \( \bar d_N(i,i)=0 \).

We will prove the following three identities:
  \begin{gather}\label{eq:Pf_bar_d}
    \underset{R_N(\alpha_1)>\dots>R_N(\alpha_{2h})}
    {\sum_{\alpha_1,\dots,\alpha_{2h}\in\Z}}
    (-1)^{\sum_{i=1}^{2h} \flr{\alpha_i/N}}
    \det_{1\le i,j\le 2h}\big(e_{\alpha_i+j}(\vx)\big)
    = \Pf_{1\le i,j\le 2h}\left(\bar d_N(i,j)\right),\\
\label{eq:Pf-Pf}
\Pf_{1\le i,j\le 2h}\left(\bar d_N(i,j)\right)
= \Pf_{1\le i,j\le
  2h}\left(\sum_{r=-j+i+1}^{j-i}\overline{F}_{r,N}(\vx)\right),\\
\label{eq:Pf-det}
\Pf_{1\le i,j\le
  2h}\left(\sum_{r=-j+i+1}^{j-i}\overline{F}_{r,N}(\vx)\right)
=  \det_{1\le i,j\le
  h}\left(\overline{F}_{j-i,N}(\vx)+\overline{F}_{i+j-1,N}(\vx)\right). 
\end{gather}
It is obvious that \eqref{eq:Pf_bar_d}--\eqref{eq:Pf-det} together
yield Proposition~\ref{thm:even2}, as desired.

In the remainder of this subsection, we provide the proofs
of~\eqref{eq:Pf_bar_d}--\eqref{eq:Pf-det}.

\begin{proof}[Proof of \eqref{eq:Pf_bar_d}]
We rewrite the left-hand side of~\eqref{eq:Pf_bar_d} as
  \[
    \sum_{N>r_1>\dots>r_{2h}\ge0}
    \det_{1\le i,j\le 2h}\left( \sum_{k\in\Z} (-1)^k e_{kN+r_j+i}(\vx)\right)
    = \sum_{1\le r_1<\dots<r_{2h}\le N} (-1)^{\binom{2h}{2}}
    \det_{1\le i,j\le 2h} \left( \sum_{k\in\Z} (-1)^ke_{kN+i+r_j-1}(\vx)\right).
  \]
  Then, as in the proof of~\eqref{eq:odd_id1}, we
  obtain~\eqref{eq:Pf_bar_d} using the minor summation formula in
  Corollary~\ref{cor:IsWa}(1) with $n=2h$, $p=N$, and $M_{i,j}=\sum_{k\in \Z}
  (-1)^k e_{kN+i+j-1}(\vx)$.
\end{proof}

\begin{proof}[Proof of \eqref{eq:Pf-Pf}]
We claim that, for any integers \( i,j\in\Z \) with \( i\le j \), we have
  \[
    \bar d_N(i,j)=\sum_{r=-j+i+1}^{j-i}\overline{F}_{r,N}(\vx)
    =\sum_{r\in\Z: r\le j-i}\overline{F}_{r,N}(\vx) -  \sum_{r\in\Z: r\le i-j}\overline{F}_{r,N}(\vx).
    \]
Clearly, this would establish \eqref{eq:Pf-Pf}.

  If \( j=i \), then both sides of the equation are equal to zero. Thus, by
  induction on \( j-i \), it suffices to show that, for \( i<j \), we have
\[
  \bar d_N(i,j)-\bar d_N(i,j-1)
  =\overline{F}_{-j+i+1,N}(\vx) +\overline{F}_{j-i,N}(\vx)
  =\overline{F}_{j-i-1,N}(\vx) +\overline{F}_{j-i,N}(\vx).
\]
Using the definition of $\bar d_N(i,j)$, we have
\[
  \bar d_N(i,j)-\bar d_N(i,j-1)
  =\sum_{m,n\in\Z} (-1)^{\fl{(m-i)/N}+\fl{(n-j)/N}} a_{m,n}(i,j)
  e_m(\vx)e_n(\vx) ,
\]
where
\begin{multline*}
  a_{m,n}(i,j)=  \chi\big[R_N(m-i) > R_N(n-j)\big]
  - \chi\big[R_N(m-i) < R_N(n-j)\big]\\
  -  (-1)^{\chi\big[R_N(n-j)=N-1\big]}
  \Big(
  \chi\big[R_N(m-i) > R_N(n-j+1)\big] - \chi\big[R_N(m-i) < R_N(n-j+1)\big]
  \Big).
\end{multline*}
On the other hand, we have
\begin{align*}
  \overline{F}_{j-i,N}(\vx)
  &= \sum_{k\in\Z} (-1)^k f_{kN+j-i}(\vx)
    = \sum_{k\in\Z} (-1)^k \sum_{m\in\Z}e_m(\vx) e_{m+kN+j-i}(\vx)\\
  &= \sum_{m,n\in\Z} (-1)^{(n-m-j+i)/N}
    \chi\big[ R_N(m-i)=R_N(n-j) \big] e_m(\vx)e_n(\vx).
\end{align*}
Note that, if \( R_N(m-i)=R_N(n-j) \), then
\[
  (-1)^{(n-m-j+i)/N}  = (-1)^{((n-j)-(m-i))/N}
  = (-1)^{\flr{(n-j)/N}-\flr{(m-i)/N}}
  = (-1)^{\flr{(m-i)/N}+\flr{(n-j)/N}}.
\]
Thus,
\[
  \overline{F}_{j-i-1,N}(\vx) +\overline{F}_{j-i,N}(\vx)
  =\sum_{m,n\in\Z} (-1)^{\fl{(m-i)/N}+\fl{(n-j)/N}} b_{m,n}(i,j)
  e_m(\vx)e_n(\vx) ,
\]
where
\[
  b_{m,n}(i,j)= \chi\big[R_N(m-i)=R_N(n-j)\big]
  +  (-1)^{\chi\big[R_N(n-j)=N-1\big]}
  \chi\big[R_N(m-i)=R_N(n-j+1)\big]
.
\]
By considering the two cases \( R_N(n-j)=N-1 \) and \( R_N(n-j)\ne N-1 \)
separately, one can easily check that \( a_{m,n}(i,j)=b_{m,n}(i,j) \) for all \(
m,n\in\Z \), which proves~\eqref{eq:Pf-Pf}.
\end{proof}

\begin{proof}[Proof of \eqref{eq:Pf-det}]
Let \( z_i = \sum_{r\le i}\overline{F}_{r,N}(\vx) - \sum_{r\le
  -i}\overline{F}_{r,N}(\vx) \). Then, by~\eqref{eq:Pf-Pf}
and~\eqref{eq:Gordon_var1}, we have
\[
  \Pf_{1\le i,j\le 2h}\left(\bar d_N(i,j)\right)
  =\Pf_{1 \le i, j \le 2h} \left( z_{j-i} \right)
  = \det_{1 \le i, j \le h} \left(
    \sum_{s=0}^{2\min(i,j)-2} (-1)^s \left( z_{i+j-1-s} - z_{i+j-2-s} \right)\right).
\]
Using the fact \(
z_i-z_{i-1}=\overline{F}_{i,N}(\vx)+\overline{F}_{-i+1,N}(\vx)=\overline{F}_{i,N}(\vx)+\overline{F}_{i-1,N}(\vx)
\), we see that the above determinant equals
\[
  \det_{1\le i,j\le h}\left(\overline{F}_{j-i,N}(\vx)+\overline{F}_{i+j-1,N}(\vx)\right),
  \]
as desired.
\end{proof}

As pointed out before the statement of Proposition~\ref{thm:even2},
with this proposition being established, the
identity~\eqref{eq:aGBK2s} follows.

\section{A systematic approach to affine bounded Littlewood identities}
\label{sec:second-proof}

In this section, we develop a general approach to deriving affine
bounded Littlewood identities, see Subsection~\ref{sec:LG}.
It is based on the essentials of the line of argument of the proof of
Theorem~\ref{thm:affine_BK_intro} in the previous section. It is
however more general as we allow the application of the full minor
summation formula of Theorem~\ref{thm:IsWa}, as opposed to ``just" its
special case in Corollary~\ref{cor:IsWa} that we used in
Section~\ref{sec:first-proof}.
This approach allows us to provide an alternative proof of the affine
bounded Littlewood in Theorem~\ref{thm:affine_BK_intro}, that is,
different from the previous section. Moreover, it also
gives us the means to prove the affine bounded Littlewood
identities in Theorem~\ref{thm:C}, see Subsection~\ref{sec:LC},
as well as the affine bounded Littlewood
identities in Theorem~\ref{thm:D}, see Subsection~\ref{sec:LD}.

\subsection{General framework} \label{sec:LG}

Recall that, for positive integers $m$ and $w$, we denote by $\Par(m,w)$
the set of partitions of length at most $m$ satisfying $\lambda_1 -
\lambda_m \le w$. 
In this subsection, we consider sums of the form
\[
\sum_{\lambda \in \Par(m,w)}
\underset{k_1 + \dots + k_m = 0}{\sum_{k_1, \dots, k_m \in \Z}} 
 u(\lambda) \det_{1 \le i, j \le m} \left( e_{\lambda_i-i+j+Nk_i}(\vx) \right),
\]
where $N = m+w$, and $u$ is a statistic on $\Par(m,w)$, and give Pfaffian
formulas for the summations under certain conditions.

Let, as in Definition~\ref{def:par},
$\Par(m)$ be the set of partitions of length at most $m$.
To such a partition~$\lambda$, we associate the sequence $I_m(\lambda)$ given by
\[
I_m(\lambda) = (\lambda_m+1,\lambda_{m-1}+2, \dots,  \lambda_1+m).
\]
Then the correspondence $\lambda \mapsto I_m(\lambda) = (i_1, \dots, i_m)$ gives a bijection
between $\Par(m)$ and the set of increasing sequences of positive integers of length $m$,
and $\lambda \in \Par(m,w)$ if and only if $i_1 < \dots < i_m < i_1 + N$.

Recall the notation in Definition~\ref{defn:A_K}. For brevity we also define \(
A^K := A^K_K \). Recall that $R_N(t)$ is the remainder of $t$ when divided
by $N$ so that $0 \le R_N(t) \le N-1$.

\begin{prop}
\label{prop:general}
Let $m$ and $w$ be positive integers and put $N = m+w$.
Let $u : \Par(m,w) \to \Z$ be a statistic on $\Par(m,w)$.
Let $p$ be a nonnegative integer such that $p+m$ is even.
Suppose that $A$ is a skew-symmetric matrix  with rows/columns
indexed by the totally ordered set $\{ 0_1 < 0_2 < \dots < 0_p < 1 < 2 < \cdots \}$
satisfying the following three conditions:
\begin{enumerate}
\item[(i)]
For $\lambda \in \Par(m,w)$, we have
\[
\Pf A^{I_0 \sqcup I_m(\lambda)} = u(\lambda),
\]
where $I_0 = (0_1, 0_2, \dots, 0_p)$.
\item[(ii)]
For an increasing sequence $(i_1, \dots, i_m)$ of positive integers with $i_m - i_1 > N$,
we have
\[
\Pf A^{I_0 \sqcup (i_1+N, i_2, \dots, i_{m-1}, i_m-N)}
= \Pf A^{I_0 \sqcup (i_1, i_2, \dots, i_m)}.
\]
\item[(iii)]
For an increasing sequence $(i_1, \dots, i_m)$ of positive integers with
$i_k \equiv i_{\ell} \bmod N$ for some $k<\ell$, we have
\[
\Pf A^{I_0 \sqcup (i_1, \dots, i_m)} = 0.
\]
\end{enumerate}
Then we have
\begin{equation*}
\label{eq:general}
\sum_{\lambda \in \Par(m,w)} u(\lambda)\,
s_{\lambda[m,w]'}(\vx)
=
\Pf \left( T_p A T_p^t \right),
\end{equation*}
where $T_p$ is the following matrix with rows indexed by $\{ 0_1, \dots, 0_p, 1, 2, \dots, m \}$
and columns indexed by $\{ 0_1, \dots, 0_p, 1, 2, \dots \}$:
\begin{equation}
  \label{eq:T_p}
  T_p
  =
  \begin{pmatrix}
    \mathbf{I}_p & O \\
    O & \left( e_{j-i}(\vx) \right)_{1 \le i \le m, \, j \ge 1}
  \end{pmatrix}.
\end{equation}
Here $\mathbf{I}_p$ stands for the identity matrix of size~$p$.
\end{prop}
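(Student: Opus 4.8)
The plan is to read the asserted identity from right to left: I would expand the Pfaffian $\Pf(T_pAT_p^t)$ by the minor summation formula and then reorganise the resulting sum of minors into $\sum_{\lambda}u(\lambda)\,s_{\lambda[m,w]'}(\vx)$ with the help of the cylindric Jacobi--Trudi formula~\eqref{eq:CJT}. Since $p+m$ is even, I apply Theorem~\ref{thm:IsWa}(1) in its limiting version $p=\infty$, taking $M=T_p$ and the given matrix $A$. Here the block shape of $T_p$ in~\eqref{eq:T_p} is decisive: in the expansion
\[
\Pf(T_pAT_p^t)=\sum_{K}\Pf\bigl(A^{K}\bigr)\det\bigl((T_p)^{R}_{K}\bigr),
\]
where $R=(0_1,\dots,0_p,1,\dots,m)$ is the full list of rows and $K$ runs over the increasing sequences of length $p+m$ in $\{0_1<\dots<0_p<1<2<\cdots\}$, the identity block $\mathbf I_p$ forces the minor to vanish unless $K$ contains all of $0_1,\dots,0_p$. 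Writing the surviving $K$ as $I_0\sqcup K_+$ with $K_+=(i_1<\dots<i_m)$ a sequence of positive integers, the minor factors as $\det(\mathbf I_p)\det\bigl((e_{i_j-i}(\vx))_{1\le i,j\le m}\bigr)$, and hence
\[
\Pf(T_pAT_p^t)=\sum_{1\le i_1<\dots<i_m}\Pf\bigl(A^{I_0\sqcup K_+}\bigr)\det_{1\le i,j\le m}\bigl(e_{i_j-i}(\vx)\bigr).
\]

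Next I would normalise the minors. Transposing and then reversing the order of the columns gives $\det_{1\le i,j\le m}(e_{i_j-i}(\vx))=(-1)^{\binom{m}{2}}\det\bigl((e_{\alpha_\ell+j}(\vx))_{1\le\ell,j\le m}\bigr)$ with $\alpha_\ell=i_\ell-m-1$, so the right-hand side becomes a sum of standard minors over increasing sequences, weighted by $\Pf(A^{I_0\sqcup K_+})$. On the left I use~\eqref{eq:CJT} to write $s_{\lambda[m,w]'}(\vx)$ as the sum over $k$ with $k_1+\dots+k_m=0$ of $\det(e_{\lambda_i-i+j+Nk_i}(\vx))$; putting $\beta_i=\lambda_i-i+Nk_i$ and sorting $\beta$ increasingly by a permutation $\tau$ turns each such determinant into $\sgn(\tau)\det\bigl((e_{\alpha_\ell+j}(\vx))\bigr)$ with $\alpha_\ell=\beta_{\tau(\ell)}$. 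The assignment $(\lambda,k)\mapsto K_+$, where $i_\ell=\beta_{\tau(\ell)}+m+1$, is exactly the bijection underlying Lemmas~\ref{lem:rem} and~\ref{lem:rem2}, between pairs $(\lambda,k)$ with $\lambda\in\Par(m,w)$ and $\sum k_i=0$ and increasing positive sequences with pairwise distinct residues modulo~$N$; one checks that the canonical case $k=0$ corresponds to $K_+=I_m(\lambda)$, using that $\lambda_1-\lambda_m\le w$ is equivalent to $i_m-i_1<N$.

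With the two sums thus indexed compatibly, everything reduces to the weight identity
\[
\Pf\bigl(A^{I_0\sqcup K_+}\bigr)=(-1)^{\binom{m}{2}}\,u(\lambda)\,\sgn(\tau),
\]
and this is where the three hypotheses are used. Condition~(iii) makes $\Pf(A^{I_0\sqcup K_+})$ vanish whenever two entries of $K_+$ share a residue, matching the vanishing of a determinant with a repeated row, so both sides are supported on distinct-residue sequences. Condition~(ii) is the transport step: the move $(i_1,\dots,i_m)\mapsto(i_1+N,i_2,\dots,i_{m-1},i_m-N)$ followed by re-sorting strictly decreases the spread $i_m-i_1$ and leaves the Pfaffian unchanged up to the sign of the re-sorting permutation, which is precisely the change of $\sgn(\tau)$ on the partition side. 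Iterating drives $K_+$ to its unique representative $I_m(\lambda)$ of spread $<N$, where $\tau$ is the reversal with $\sgn(\tau)=(-1)^{\binom{m}{2}}$ and condition~(i) gives $\Pf(A^{I_0\sqcup I_m(\lambda)})=u(\lambda)$; thus the weight identity holds at the base point and is propagated along each orbit. Reassembling the $k$-sum via~\eqref{eq:CJT} then yields $\sum_{\lambda\in\Par(m,w)}u(\lambda)\,s_{\lambda[m,w]'}(\vx)$, as required.

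I expect the genuine difficulty to lie entirely in this last, sign-sensitive step: one must verify, move by move, that the re-sorting sign furnished by condition~(ii) agrees with the change of $\sgn(\tau)$ coming from re-sorting $\beta=(\lambda_i-i+Nk_i)$, and that the global parity factor $(-1)^{\binom{m}{2}}$ behaves consistently with the even/odd distinction in $m$ that already governs whether Lemma~\ref{lem:rem} or Lemma~\ref{lem:rem2} applies. The minor summation, the block reduction, and the combinatorial bijection itself are either routine or already available in the paper.
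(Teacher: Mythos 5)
Your proposal is correct and is essentially the paper's own proof run from right to left: the same minor summation expansion of $\Pf(T_pAT_p^t)$ (with the identity block forcing $K\supseteq I_0$), the same bijection between pairs $(\lambda,k)$ and distinct-residue increasing sequences (the paper isolates this as Lemma~\ref{lem:bijection}), and the same induction on the spread $i_m-i_1$ using conditions (i)--(iii), where the paper's sign $\sgn(\sigma)$ for the decreasing sort equals your $(-1)^{\binom{m}{2}}\sgn(\tau)$ for the increasing sort, so your weight identity matches the paper's $\Pf A^{I_0\sqcup I_m(\kappa)}=\widetilde{u}(\kappa)$ exactly. The move-by-move sign verification you flag as the real work is carried out in the paper via the identity $\iota=\tau(\kappa-N\theta+\delta)-\delta$ and the observation that $j_1>i_1$ and $j_m<i_m$ force the spread to decrease strictly, precisely as you anticipate.
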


For the proof of the proposition, we need an auxiliary result,
which essentially extracts the
essence of the proof of Lemma~\ref{lem:rem}. 

\begin{lem}
\label{lem:bijection}
Let $m$ and $w$ be positive integers and let $N = m+w$.
There is a bijection $\Phi$ between
\[
X =
\{ (\lambda,k)\in\Par(m,w)\times \Z^m: k_1+\dots+k_m=0,\text{ and }
\lambda_i+m-i+Nk_i \ge 0 \mbox{
  \em for \(1\le i\le m \)} \}
\]
and
\[
Y =
\{ \ka \in \Par(m) : \text{$R_N(\ka_i + m-i)$, $1 \le i \le m$, \em are distinct} \}
\]
such that, if\/ $\Phi(\lambda,k) = \ka$, then
\[
\det_{1 \le i, j \le m} \left(
 e_{\lambda_i - i + j + N k_i}(\vx)
\right)
 =
\sgn ({\sigma}) \det_{1 \le i, j \le m} \left(
 e_{\ka_i - i + j}(\vx) \right),
\]
where $\sigma\in \mathfrak{S}_m$ is the unique permutation
that rearranges the vector $(\la_i+m-i+Nk_i)_{1\le i\le m}$
in decreasing order.
\end{lem}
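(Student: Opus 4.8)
The plan is to construct $\Phi$ explicitly, read off the determinant identity from a single row rearrangement, and verify invertibility by mirroring the bookkeeping already done in the proof of Lemma~\ref{lem:rem}. First I would pass to the quantities $b_i = \lambda_i+m-i+Nk_i$, so that the $(i,j)$-entry of the matrix on the left is $e_{b_i-m+j}(\vx)$. Setting $a_i=\lambda_i+m-i$, the sequence $(a_1,\dots,a_m)$ is strictly decreasing with $a_1-a_m=\lambda_1-\lambda_m+m-1\le N-1$, so the $a_i$ lie in a window of length less than $N$ and hence have pairwise distinct residues modulo $N$. Since $R_N(b_i)=R_N(a_i)$ and the defining condition of $X$ forces $b_i\ge 0$, the $b_i$ are distinct nonnegative integers with distinct residues. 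I then let $\sigma\in\mathfrak S_m$ be the permutation sorting them, $b_{\sigma(1)}>\dots>b_{\sigma(m)}$, put $c_i=b_{\sigma(i)}$ and $\kappa_i=c_i-(m-i)$. As the $c_i$ are strictly decreasing nonnegative integers, $\kappa$ is a partition of length at most $m$, and $R_N(\kappa_i+m-i)=R_N(c_i)$ are distinct, so $\kappa\in Y$; this defines $\Phi(\lambda,k)=\kappa$.

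The determinant identity is then immediate: permuting the rows of $\big(e_{b_i-m+j}(\vx)\big)$ by $\sigma$ scales the determinant by $\sgn(\sigma)$, and after the permutation the $(i,j)$-entry becomes $e_{b_{\sigma(i)}-m+j}(\vx)=e_{c_i-m+j}(\vx)=e_{\kappa_i-i+j}(\vx)$. This is exactly the asserted relation, with the same $\sigma$ as in the statement.

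For the inverse I would reconstruct $(\lambda,k)$ from $\kappa\in Y$ following the recipe in Lemma~\ref{lem:rem}. Set $c_i=\kappa_i+m-i$ and $Q=\sum_i\flr{c_i/N}$; writing $Q=qm+t$ with $0\le t<m$ recovers the common ``level'' $q\ge 0$ (note $Q\ge 0$ as $c_i\ge 0$) and the number $t$ of entries sitting one block higher. With $S=\{R_N(c_i)\}=\{s_1>\dots>s_m\}$, the normalized sequence is forced to be $a_i=(q+1)N+s_{m-t+i}$ for $i\le t$ and $a_i=qN+s_{i-t}$ for $i>t$; one checks $a$ is strictly decreasing with $a_1-a_m<N$ and $a_m\ge 0$, so $\lambda_i:=a_i-(m-i)$ is a partition in $\Par(m,w)$. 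The distinct residues then single out the unique permutation with $R_N(a_{\sigma(i)})=R_N(c_i)$, after which $k_i:=(c_{\sigma^{-1}(i)}-a_i)/N$ is an integer with $\sum_i k_i=(\sum_i c_i-\sum_i a_i)/N=0$, placing $(\lambda,k)$ in $X$.

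The one genuinely fiddly point — and thus where I expect to spend the most care — is verifying that these two assignments are mutually inverse, i.e.\ that the decomposition $Q=qm+t$ read off from $\kappa$ coincides with the one implicit in $(\lambda,k)$. Concretely this amounts to the identity $\sum_i\flr{a_i/N}=qm+t$ together with $\sum_i a_i=\sum_i c_i$, which is precisely the content extracted from the proof of Lemma~\ref{lem:rem} (where $\sum_i\flr{\beta_i/N}=qm+t$ plays the same role). The only new ingredient here is that we normalize by the full sorting permutation $\sigma$ rather than by a cyclic shift, which is exactly why the factor $\sgn(\sigma)$ appears in place of the $\pm 1$ that had to be treated separately in Lemmas~\ref{lem:rem} and~\ref{lem:rem2}.
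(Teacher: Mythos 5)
Your proposal is correct and is essentially the paper's own argument made explicit: the paper proves the lemma by invoking the bijection $\beta_i=\mu_i+Nk_i$ implicit in the proof of Lemma~\ref{lem:rem}, composing it with the shift $\mu_i=\lambda_i+m-i$ and with sorting of components, and then restricting to $X$ and $Y$ --- which is precisely your construction $b_i=a_i+Nk_i$ followed by the sorting permutation $\sigma$, whose row permutation of the determinant supplies the factor $\sgn(\sigma)$. Your inverse, built from $Q=\sum_i\lfloor c_i/N\rfloor=qm+t$ and the identity $\sum_i a_i=\sum_i c_i$, is the same recovery of $(\mu,k)$ from $\beta$ as in \eqref{eq:Q_N(beta)}, \eqref{eq:mu=beta} and \eqref{eq:k=b-m}, written out rather than cited.
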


\begin{proof}
In the proof of Lemma~\ref{lem:rem}, it was
(implicitly) shown that there is a bijection between the set
$$
U=\{(\mu,k)\in\Z^m\times\Z^m:
\mu_1>\dots>\mu_m ,\  \mu_1-\mu_m<N,\text{ and } k_{1}+\cdots+k_m=0\}
$$
and the set
$$
V'=\{\be\in\Z^m:
  R_N(\beta_{t+1})>\dots>R_N(\beta_m) >
  R_N(\beta_1)>\dots>R_N(\beta_t)
  \text{ for some $t$ with }0\le t< m\}.
$$
More precisely, the bijection is given by $\be_i=\mu_i+Nk_i$, for
$1\le i\le m$.

Now, the set $U$ is in bijection with the set
\begin{align*}
X'&=\{(\la,k)\in\Z^m\times\Z^m:
\la_1\ge \dots\ge \la_m ,\  \la_1-\la_m\le w,\text{ and }
k_{1}+\cdots+k_m=0\}\\ 
&=\{(\la,k)\in\Par(m,w)\times\Z^m:k_{1}+\cdots+k_m=0\}\end{align*}
via $\mu_i=\la_i+m-i$, for $1\le i\le m$. On the other hand, the
set~$V'$ is in bijection with the set
$$
Y' =
\{ \gamma \in \Z^m :\gamma_1>\dots>\gamma_m
\text{ and $R_N(\gamma_i)$, $1 \le i \le m$, are distinct} \}.
$$
Indeed, to go from $V'$ to $Y'$, one orders the components of the
elements $\be\in V'$, while, to go from~$Y'$ to~$V'$, one orders the
remainders $R_N(\gamma_1),\dots,R_N(\gamma_m)$ of an element $\gamma\in Y'$.
Furthermore, trivially, the set~$Y'$ is in bijection with
$$
Y'' =
\{ \nu \in \Z^m :\nu_1\ge\dots\ge\nu_m
\text{ and $R_N(\nu_i+m-i)$, $1 \le i \le m$, are distinct} \}.
$$

The asserted bijection arises from the one described above by
restricting~$X'$ to~$X$ and~$Y''$ to~$Y$. The property that it is
claimed to satisfy follows straightforwardly from the construction.
\end{proof}

We use the above lemma to prove Proposition~\ref{prop:general}.

\begin{proof}[Proof of Proposition~\ref{prop:general}]
We extend the statistic $u$ on $\Par(m,w)$ to $\widetilde{u} : \Par(m) \to \Z$ by
\[
\widetilde{u}(\ka)
 =
\begin{cases}
 \sgn({\sigma}) u(\lambda), &\text{if $\ka \in Y$,} \\
 0, &\text{otherwise},
\end{cases}
\]
where $Y$ is the subset of $\Par(m)$ given in Lemma~\ref{lem:bijection} and
${\sigma} \in {\mathfrak{S}}_m$ and $\lambda \in \Par(m,w)$
are determined by the bijection of that lemma.
It then follows from Theorem~\ref{thm:ssyt2} and
Lemma~\ref{lem:bijection} that 
\begin{equation}
\label{eq:sum}
\sum_{\lambda \in \Par(m,w)}
   u(\lambda)\,s_{\lambda[m,w]'}(\vx)
=\sum_{\lambda \in \Par(m,w)}
\underset{k_1+\dots+k_{m}=0}{\sum_{ k_1,\dots,k_{m}\in\mathbb Z}}
   u(\lambda)
   \det_{1 \le i, j \le m} \left(
    e_{\lambda_i - i + j + N k_i}(\vx)
  \right)
 =
\sum_{\ka \in \Par(m)}
 \widetilde{u}(\ka)
 \det_{1 \le i, j \le m} \left(
  e_{\ka_i - i + j}(\vx)
 \right).
\end{equation}

We shall show that, for all \( \ka \in \Par(m) \),
\begin{equation}
\label{eq:u=Pf}
\widetilde{u}(\ka) = \Pf A^{I_0 \sqcup I_m(\ka)}.
\end{equation}

We write $I_m(\ka) = (i_1, \dots, i_m)$.
If $\ka \not\in Y$, then $i_p \equiv i_q \bmod N$ for some $p<q$.
Hence Condition~(iii) implies $\Pf A^{I_0 \sqcup I_m(\ka)} = 0 = \widetilde{u}(\ka)$.

From now on,
we assume $\ka \in Y$ and proceed by induction on $i_m - i_1$.
If $i_m - i_1 < N$, then $\ka \in \Par(m,w)$, and $\widetilde{u}(\ka) = u(\ka)$.
So, by Condition~(i), we obtain $\Pf A^{I_0 \sqcup I_m(\ka)} = \widetilde{u}(\ka)$.

Suppose that $i_m - i_1 > N$. Let $(j_1, \dots, j_m)$ be the rearrangement of
$(i_1+N, i_2, \dots, i_{m-1}, i_m-N)$ in increasing order and $\tau \in
\mathfrak{S}_m$ the permutation achieving this rearrangement. Let $\iota$ be the
partition such that $I_m(\iota) = (j_1, \dots, j_m)$.
Then
$$
\iota = \tau(\kappa-N\theta+\delta)-\delta,
$$
where $\theta = (1,0,\dots,0,-1)$ and $\delta = (m-1, m-2, \dots, 1, 0)$.
By using Condition~(ii) and the
alternating property of the Pfaffian, we obtain $\Pf A^{I_0 \sqcup I_m(\ka)} =
\sgn(\tau) \Pf A^{I_0 \sqcup I_m(\iota)}$.
Since $j_1 = \min \{ i_1 + N, i_2, i_m
- N \} > i_1$ and $j_m = \max \{ i_1 + N, i_{m-1}, i_m - N \} < i_m$,
we see that
$j_m - j_1 < i_m - i_1$. By applying the induction hypothesis to
$\iota$, we have
$\Pf A^{I_0 \sqcup I_m(\iota)} = \sgn({\rho}) u(\lambda)$,
where $\lambda 
\in \Par(m,w)$ and $\iota = \rho(\lambda+Nk+\delta)-\delta$. On the other
hand, since 
\begin{align*}
\ka&=\tau^{-1}(\iota+\delta)+N\theta-\delta\\
&=\tau^{-1}\left(\rho(\lambda+Nk+\delta)\right)+N\theta-\delta\\
&=\tau^{-1}\rho\left(\lambda+N(k+\rho^{-1}\tau\theta)+\delta\right)-\delta,
\end{align*}
we have \(
\widetilde{u}(\ka) = \sgn(\tau^{-1}\rho)u(\lambda) \), or equivalently
\( \sgn({\rho})u(\lambda) = \sgn(\tau)\widetilde{u}(\ka) \). Combining these
facts, we obtain
\[
  \Pf A^{I_0 \sqcup I_m(\ka)} =
  \sgn(\tau) \Pf A^{I_0 \sqcup I_m(\iota)}
  =\sgn(\tau) \sgn({\rho}) u(\lambda)
  =\sgn(\tau)\sgn(\tau)\widetilde{u}(\ka)
  =\widetilde{u}(\ka) ,
\]
 which completes the proof of~\eqref{eq:u=Pf}.

Now, by \eqref{eq:sum} and~\eqref{eq:u=Pf}, we have
\[
\sum_{\lambda \in \Par(m,w)}
 u(\lambda)\,s_{\lambda[m,w]'}(\vx)
 =
\sum_{\ka \in \Par(m)}
\Pf A^{I_0 \sqcup I_m(\ka)}
 \det_{1 \le i, j \le m} \left(
  e_{\ka_i - i + j}(\vx)
 \right).
\]
For an increasing subsequence $K$ of $I_0 \sqcup (1,2,\dots)$ of length $p + m$, we have
\[
\det  (T_p)_K^{[m]}
 =
\begin{cases}
 \det_{1 \le i, j \le m} \left(
  e_{\ka_i - i + j}(\vx)
 \right),
 &\text{if $K = I_0 \sqcup I_m(\ka)$ for some $\ka \in \Par(m)$,} \\
0, &\text{otherwise.}
\end{cases}
\]
Hence we have
\[
\sum_{\lambda \in \Par(m,w)}
 u(\lambda)\,s_{\lambda[m,w]'}(\vx)
 =
\sum_K
 \Pf A^K \det (T_p)_K^{[m]},
\]
where $K$ runs over all
increasing subsequences of $I_0 \sqcup (1,2,\dots)$ of length $p + m$.
The desired result follows from the minor summation formula
in Theorem~\ref{thm:IsWa}.
\end{proof}

\subsection{Proof of Theorem \ref{thm:affine_BK_intro}}
\label{sec:LC}
We apply Proposition~\ref{prop:general} to prove
Theorem~\ref{thm:affine_BK_intro}. For that purpose,
we introduce a skew-symmetric matrix $A$ that satisfies the conditions
in Proposition~\ref{prop:general} with the statistic~$u$ being equal to
the statistic $b$ defined by $b(\lambda) = 1$ for all $\lambda \in \Par(m,w)$.

\begin{lem}
\label{lem:matrix_B}
Let $m$ and $w$ be positive integers and put $N = m+w$.
\begin{enumerate}
\item[(1)]
  Suppose that $m = 2h+1$ is odd.
  Let ${\beta}_\ell $, $\ell  \in \Z$, be the sequence defined by
  \[
    {\beta}_\ell 
    =
    \begin{cases}
      2k+1, &\text{if $\ell =kN+r$, where \( k,r\in\Z \) and $0 < r < N$,} \\
      2k  , &\text{if $\ell =kN$, where \( k\in\Z \),}
    \end{cases}
  \]
  and ${B} = ({B}_{r,s})$ the skew-symmetric matrix with rows/columns indexed by nonnegative integers
  whose $(r,s)$-entry, $0 \le r < s$, is given by
  \[
    {B}_{r,s}
    =
    \begin{cases}
      1, &\text{if $r=0$,} \\
      {\beta}_{s-r}, &\text{if $r \ge 1$.}
    \end{cases}
  \]
  Then ${B}$ satisfies the three conditions in Proposition~\ref{prop:general}
  for $u=b$ and $p=1$.
\item[(2)]
Suppose that $m = 2h$ is even.
Let $\bar{\beta}_\ell $, $\ell  \in \Z$, be the sequence defined by
\[
\bar{\beta}_\ell 
 =
\begin{cases}
 (-1)^k, &\text{if $\ell =kN+r$, where \( k,r\in\Z \) and $0 < r < N$,} \\
 0  ,    &\text{if $\ell =kN$, where \( k\in\Z \),}
\end{cases}
\]
and $\bar{B} = (\bar{B}_{r,s})$ the skew-symmetric matrix with rows/columns indexed by positive integers
whose $(r,s)$-entry is given by $\bar{B}_{r,s} = \bar{\beta}_{s-r}$.
Then $\bar{B}$ satisfies the three conditions in Proposition~\ref{prop:general} for $u=b$
and $p=0$.
\end{enumerate}
\end{lem}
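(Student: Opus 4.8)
The plan is to verify conditions (i)--(iii) of Proposition~\ref{prop:general} for each of the two matrices, namely $B$ (with $p=1$, $m=2h+1$) and $\bar B$ (with $p=0$, $m=2h$), taking $u=b\equiv 1$. Everything reduces to two quasi-periodicity properties of the defining sequences, which I would record first. Rewriting the case definitions as $\beta_\ell = 2\lfloor \ell/N\rfloor + \chi[N\nmid \ell]$ and $\bar\beta_\ell = (-1)^{\lfloor \ell/N\rfloor}\chi[N\nmid \ell]$, one reads off immediately, for every $c\in\Z$,
\[
\beta_{\ell-cN}=\beta_\ell-2c,\qquad \bar\beta_{\ell-cN}=(-1)^c\bar\beta_\ell ,
\]
together with $\beta_{-\ell}=-\beta_\ell$ and $\bar\beta_{-\ell}=-\bar\beta_\ell$ (so that $B$ and $\bar B$ are indeed skew-symmetric). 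These identities drive all three verifications.

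Conditions (i) and (iii) are then quick. For (i), if $\lambda\in\Par(m,w)$ and $I_m(\lambda)=(i_1,\dots,i_m)$, then $i_1<\dots<i_m<i_1+N$, so $0<i_p-i_k<N$ and hence $\beta_{i_p-i_k}=\bar\beta_{i_p-i_k}=1$ whenever $k<p$; since moreover $B_{0,s}=1$, the relevant submatrix is in both cases the all-ones upper-triangular skew-symmetric array, of even size $m+1$ respectively $m$, whose Pfaffian equals $1$ (the fact recalled before Corollary~\ref{cor:IsWa}). Thus $\Pf=1=b(\lambda)$. For (iii), write $i_\ell=i_k+cN$ with $c\ge1$; since $\det=(\Pf)^2$ for skew-symmetric matrices it suffices to exhibit a row dependence. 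In the even case $\bar\beta_{\ell-cN}=(-1)^c\bar\beta_\ell$ (and $\bar\beta_{\pm cN}=0$) shows that rows $i_k$ and $i_\ell$ are proportional with ratio $(-1)^c$; in the odd case $\beta_{\ell-cN}=\beta_\ell-2c$ gives $\mathrm{row}_{i_\ell}-\mathrm{row}_{i_k}=-2c\,(0,1,\dots,1)=-2c\,\mathrm{row}_0$, again a dependence. Either way the submatrix is singular and its Pfaffian vanishes.

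Condition~(ii) is where the real work lies, and where I would proceed most carefully. Replacing $i_1\mapsto i_1+N$ and $i_m\mapsto i_m-N$, the quasi-periodicity identities say precisely how the entries move: in the odd case $\beta_{i_p-(i_1+N)}=\beta_{i_p-i_1}-2$ and $\beta_{(i_m-N)-i_p}=\beta_{i_m-i_p}-2$, while the corner entry changes by $-4$. I would cancel these shifts by the Pfaffian-preserving congruence that adds $2\times$(row/column~$0$) to row/column $i_1+N$ and subtracts $2\times$(row/column~$0$) from row/column $i_m-N$; because $B_{0,s}\equiv1$, this restores every entry (the corner being corrected by $+4$ through the two column operations) and carries $B^{(0)\sqcup(i_1+N,i_2,\dots,i_m-N)}$ into $B^{(0)\sqcup(i_1,\dots,i_m)}$ entrywise, so the Pfaffians agree. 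In the even case there is no zeroth row to use, but the shift is now multiplicative: $\bar\beta_{i_p-(i_1+N)}=-\bar\beta_{i_p-i_1}$, $\bar\beta_{(i_m-N)-i_p}=-\bar\beta_{i_m-i_p}$, and the corner entry is unchanged because it acquires the factor $(-1)^2$. Conjugating by the diagonal sign matrix $D=\operatorname{diag}(-1,1,\dots,1,-1)$ (negating the first and last positions) repairs all signs, and since $\det D=1$ this congruence preserves the Pfaffian and turns $\bar B^{(i_1+N,i_2,\dots,i_m-N)}$ into $\bar B^{(i_1,\dots,i_m)}$.

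The main obstacle is exactly the entry bookkeeping in~(ii): the congruences must act simultaneously on rows and columns in order to preserve skew-symmetry, so I would need to check that the zeroth row/column and the untouched middle block survive unchanged and that the single corner entry is corrected. This computation is elementary but unforgiving, since one misplaced entry would break the Pfaffian identity.
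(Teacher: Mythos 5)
Your proof is correct and takes essentially the same route as the paper's: condition~(i) via the all-ones skew-symmetric array, condition~(ii) via the quasi-periodicities $\beta_{\ell+N}=\beta_\ell+2$ and $\bar\beta_{\ell+N}=-\bar\beta_\ell$ combined with Pfaffian-preserving congruences (adding $\pm2\times$ the $0$-th row/column in the odd case, flipping the signs of the first and last row/column in the even case), and condition~(iii) via the linear dependence of rows $0$, $i_k$, $i_\ell$, respectively the proportionality of rows $i_k$ and $i_\ell$. The only differences are cosmetic --- your closed forms for $\beta_\ell$, $\bar\beta_\ell$ and the $\det=\Pf^2$ justification of vanishing --- and your entry bookkeeping in~(ii), including the $+4$ correction of the corner entry, checks out.
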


\begin{proof}
Let $I = (i_1, \dots, i_m)$ be an increasing sequence of positive integers.

\medskip
(1) For Condition~(i) of Proposition~\ref{prop:general},
it suffices to
show that, if $i_m - i_1 < N$, then $\Pf B^{(0)\sqcup I} = 1$. Since $i_m - i_1 <
N$ implies that $0 < i_{\ell} - i_k < N$ for any~$k$ and~$\ell$ with $1 \le k
< \ell \le m$, 
we see that ${B}^{(0)\sqcup I}$ is the skew-symmetric matrix with all $1$'s
above the diagonal. So we have $\Pf {B}^{(0)\sqcup I} = 1$. For the second
condition, Condition~(ii), suppose that \( i_m-i_1>N \) and let $J =
(i_1+N, i_2, \dots, 
i_{m-1}, i_m-N)$. By noting ${\beta}_{\ell+N} = {\beta}_\ell+2$,
for $1\le r<s$ we have
\[
{B}_{r+N,s} = {B}_{r,s}-2,
\quad
{B}_{r+N,s-N} = {B}_{r,s}-4,
\quad
{B}_{r,s-N} = {B}_{r,s}-2.
\]
Hence the matrix ${B}^{(0)\sqcup J}$ is obtained from ${B}^{(0)\sqcup I}$ by
adding the $0$-th row/column multiplied by $-2$ to the first row/column and
adding the $0$-th row/column multiplied by $2$ to the $n$-th row/column.
These operations do not change the value of the Pfaffian. Hence we
have \( \Pf{B}^{(0)\sqcup I}=\Pf{B}^{(0)\sqcup J} \). For the third
condition, Condition~(iii),
suppose $i_\ell - i_k = cN$. Since ${B}_{r+cN,s} = {B}_{r,s} - 2c$, we see
that the $0$-th, $k$-th and $\ell$-th rows of ${B}^{(0)\sqcup I}$ are linearly
dependent, so we have $\Pf {B}^{(0)\sqcup I} = 0$.

\medskip\noindent
(2) Condition~(i) of Proposition~\ref{prop:general}
can be proved similarly as in~(1). For Condition~(ii), suppose
$i_m - i_1 > N$ and $J = (i_1+N, i_2, \dots, i_{m-1}, i_m-N)$. Since
$\bar{\beta}_{\ell+N} = - \bar{\beta}_\ell$, we have
\[
  \bar{B}_{r+N,s} = -\bar{B}_{r,s},
  \quad
  \bar{B}_{r+N,s-N} = \bar{B}_{r,s},
  \quad
  \bar{B}_{r,s-N} = - \bar{B}_{r,s}.
\]
Hence, by multiplying the first row/column of $\bar{B}^J$ by $-1$ and the $m$-th
row/column by $-1$, we obtain $\Pf \bar{B}^J = (-1)^2 \Pf \bar{B}^I = \Pf
\bar{B}^I$. For the last condition, Condition~(iii), suppose $i_\ell - i_k = cN$. Since
$\bar{B}_{r+cN,s} = (-1)^c \bar{B}_{r,s}$, we see that the $k$-th and $\ell$-th
rows of $\bar{B}^I$ are proportional to each other, hence we have $\Pf \bar{B}^I
= 0$.
\end{proof}

We are now ready for the proof of Theorem~\ref{thm:affine_BK_intro}.
We start with the second identity in the theorem.

\begin{proof}[Proof of \eqref{eq:aGBK2s}]
Here $m=2h$ is even.
By applying Proposition~\ref{prop:general}
to the statistic $b(\lambda) = 1$, for $\lambda \in \Par(m,w)$,
and the skew-symmetric matrix $\bar{B}$
given in Lemma~\ref{lem:matrix_B}(2),
we get
\[
\sum_{\lambda \in \Par(2h,w)} s_{\lambda[2h,w]'}(\vx)
 =
\Pf \left( T_0 \bar{B} T_0^t \right),
\]
where $T_0 = \left( e_{j-i}(\vx) \right)_{1 \le i \le m, j \ge 1}$ as given in~\eqref{eq:T_p} with $p=0$.
The $(i,j)$-entry of $T_0 \bar{B} T_0^t$ equals
\[
\sum_{r,s \ge 1} e_{r-i}(\vx) \bar{\beta}_{s-r} e_{s-j}(\vx)
 =
\sum_{\ell  \in \Z} \bar{\beta}_\ell  f_{\ell -j+i}(\vx).
\]
If we put $z_r = \sum_{\ell  \in \Z} \bar{\beta}_\ell  f_{\ell -r}(\vx)$, then we have
\[
\sum_{\lambda \in \Par(2h,w)} s_{\lambda[2h,w]'}(\vx)
 =
\Pf_{1 \le i < j \le m} \left( z_{j-i} \right)
 =
\det_{1 \le i, j \le h} \left(
 \sum_{s=0}^{2\min(i,j)-2} (-1)^s \left( z_{i+j-1-s} - z_{i+j-2-s} \right)
\right),
\]
where the last equality follows from~\eqref{eq:Gordon_var1}.
Since we have
\[
\bar{\beta}_\ell  - \bar{\beta}_{\ell -1}
 =
\begin{cases}
 (-1)^k, &\text{if $\ell =Nk$ or $Nk+1$,} \\
 0, &\text{otherwise,}
\end{cases}
\]
we obtain
\begin{multline}\label{eq:z_r}
  z_r - z_{r-1} = \sum_{\ell \in\Z} (\bar{\beta}_\ell  - \bar{\beta}_{\ell -1}) f_{\ell -r}(\vx)
  =\sum_{k\in\Z}(-1)^k \big(f_{kN-r}(\vx)+f_{kN+1-r}(\vx)\big)\\
  = \overline{F}_{-r,N}(\vx) + \overline{F}_{1-r,N}(\vx)
  = \overline{F}_{r,N}(\vx) + \overline{F}_{r-1,N}(\vx).
\end{multline}
Thus,
\[
\sum_{s=0}^{2\min(i,j)-2} (-1)^s \left( z_{i+j-1-s} - z_{i+j-2-s} \right)
 =
\overline{F}_{i+j-1,N}(\vx) + \overline{F}_{|i-j|,N}(\vx)
 =
\overline{F}_{i+j-1,N}(\vx) + \overline{F}_{j-i,N}(\vx).
\]
This completes the proof of~\eqref{eq:aGBK2s}.
\end{proof}

Next we prove the first identity in Theorem~\ref{thm:affine_BK_intro}.

\begin{proof}[Proof of \eqref{eq:aGBK1s}]
Here $m=2h+1$  is odd. By applying
Proposition~\ref{prop:general} to the skew-symmetric matrix ${B}$ given
in Lemma~\ref{lem:matrix_B}(1), we get
\[
  \sum_{\lambda \in \Par(2h+1,w)} s_{\lambda[2h+1,w]'}(\vx)
 =
\Pf \left( T_1 {B} T_1^t \right),
\]
where \( T_1 \) is given by~\eqref{eq:T_p} with $p=1$.
If we put $e(\vx) = \sum_{i \ge 0} e_i(\vx)$, as before in~\eqref{eq:e(x)},
and $w_r = \sum_{\ell  \in \Z} {\beta}_\ell  f_{\ell -r}(\vx)$,
then the $(i,j)$-entry $Q_{i,j}$, $0 \le i < j \le m$, of $Q:=T_1 {B} T_{1}^t$ is given by
\[
Q_{i,j}
 =
\begin{cases}
 e(\vx), &\text{if $i=0$,} \\
 w_{j-i}, &\text{if $i \ge 1$.}
\end{cases}
\]
By subtracting the $(i-1)$-st row/column from the $i$-th row/column for $i=m, m-1, \dots, 2$,
and then expanding the resulting Pfaffian along the $0$-th row/column, we obtain
\[
\Pf Q=
\Pf \left( T_1 {B} T_1^t \right)
 =
e(\vx) \cdot
\Pf_{2 \le i < j \le n}
 \left( w_{j-i} - w_{j-i-1} - w_{j-i+1} + w_{j-i} \right).
\]
Since we have
\[
{\beta}_\ell  - {\beta}_{\ell -1}
=
\begin{cases}
 1, &\text{if $R_N(\ell ) = 0$ or $1$,} \\
 0, &\text{otherwise,}
\end{cases}
\]
by a similar computation as in~\eqref{eq:z_r} we get $w_r - w_{r-1} =
F_{r,N}(\vx) 
+ F_{r-1,N}(\vx)$ and $w_r - w_{r-1} - w_{r+1} + w_r = F_{r-1,N}(\vx)
- F_{r+1,N}(\vx)$.
Now, by using~\eqref{eq:Gordon} with $z_r = F_{r-1,N}(\vx) - F_{r+1,N}(\vx)$, we
complete the proof of~\eqref{eq:aGBK1s}.
\end{proof}

\subsection{Proof of Theorem \ref{thm:C}}

In this subsection, we prove the affine bounded Littlewood
identities in Theorem~\ref{thm:C}.
The idea of the proof is the same as that of the proof of
Theorem~\ref{thm:affine_BK_intro},
so we shall only provide a sketch.

The following lemma gives a skew-symmetric matrix that satisfies the
conditions 
in Proposition~\ref{prop:general} for $u=c^\pm_{2h,w}$, given
by~\eqref{eq:wt_C}. 

\begin{lem}
\label{lem:matrix_C}
Let $m = 2h$ be a positive even integer and $w$ a positive integer, and put $N = 2h+w$.
We define the sequences $\gamma^+_\ell $ and $\gamma^-_\ell $, $\ell  \in \Z$, by
\begin{align*}
  \gamma^+_\ell  &=
  \begin{cases}
    (-1)^k, &\text{if $\ell =kN+1$ or $kN + (N-1)$, where \( k\in\Z \),} \\
    0, &\text{otherwise,}
  \end{cases}\\
  \gamma^-_\ell  &=
        \begin{cases}
          1, &\text{if $\ell =kN+1$, where \( k\in\Z \),} \\
          -1, &\text{if $\ell =kN+(N-1)$, where \( k\in\Z \),} \\
          0, &\text{otherwise.}
        \end{cases}
\end{align*}
Let $C^+ = (C^+_{r,s})$ (respectively $C^- = (C^-_{r,s})$) be the
skew-symmetric matrix 
with rows/columns indexed by positive integers
whose $(r,s)$-entry, $r<s$, is given by $C^+_{r,s} = \gamma^+_{s-r}$
(respectively by $C^-_{r,s} = \gamma^-_{s-r}$).
Then the matrix $C^+$ (respectively $C^-$) satisfies the conditions in
Proposition~\ref{prop:general} 
for $u=c^+_{m,w}$ (respectively $u=c^-_{m,w}$) and \( p=0 \).
\end{lem}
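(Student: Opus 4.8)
The plan is to verify the three conditions (i)--(iii) of Proposition~\ref{prop:general} for the matrices $C^\pm$ with $p=0$ and $u=c^\pm_{2h,w}$, following the template of the proof of Lemma~\ref{lem:matrix_B}(2). Everything rests on two elementary observations about the sequences $\gamma^\pm$. First, directly from the definitions one checks the oddness $\gamma^\pm_{-\ell}=-\gamma^\pm_\ell$ (needed so that $C^\pm_{r,s}=\gamma^\pm_{s-r}$ is genuinely skew-symmetric) and the quasi-periodicity relations $\gamma^+_{\ell+N}=-\gamma^+_\ell$ and $\gamma^-_{\ell+N}=\gamma^-_\ell$. Second, on the range $1\le \ell\le N-1$ the only nonzero values are $\gamma^\pm_1=1$, $\gamma^+_{N-1}=1$, and $\gamma^-_{N-1}=-1$. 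Condition~(i) is the heart of the argument.

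For Condition~(i) I write $I_m(\lambda)=(i_1<\dots<i_m)$ with $i_j=\lambda_{m-j+1}+j$, so that $i_m-i_1=(\lambda_1-\lambda_m)+(m-1)\le w+m-1=N-1$ for $\lambda\in\Par(m,w)$. Hence every difference $i_s-i_r$ with $r<s$ lies in $\{1,\dots,N-1\}$, and by the second observation the entry $\gamma^\pm_{i_s-i_r}$ is nonzero only when $i_s-i_r=1$ or $i_s-i_r=N-1$. Since the sequence is strictly increasing, $i_s-i_r=1$ forces $s=r+1$, while $i_s-i_r=N-1$ forces $(r,s)=(1,m)$ (because $i_s-i_r<i_m-i_1\le N-1$ whenever $(r,s)\ne(1,m)$). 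Thus the support of $C^{\pm,I_m(\lambda)}$ consists of the consecutive edges $(r,r+1)$ with $i_{r+1}-i_r=1$, together with the single long edge $(1,m)$, which is present precisely when $\lambda_1-\lambda_m=w$. Expanding the Pfaffian as a signed sum over perfect matchings, a path argument pins down the admissible matchings: vertex $1$ can only be paired with $2$ or (if available) with $m$, after which the remaining vertices are forced along the path. This leaves at most two matchings of nonzero weight, namely the all-adjacent matching $\{(1,2),(3,4),\dots,(m-1,m)\}$ and, when the long edge is present, $\{(1,m),(2,3),(4,5),\dots,(m-2,m-1)\}$. Translating $i_{r+1}-i_r=1$ back into a part-equality, the first matching is admissible exactly under $\lambda_{2i-1}=\lambda_{2i}$ for all $i$, and the second exactly under $\lambda_1-\lambda_m=w$ together with $\lambda_{2i}=\lambda_{2i+1}$ for $1\le i\le h-1$; these are mutually exclusive, since together they would force $\lambda_1=\dots=\lambda_m$. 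A short sign computation gives Pfaffian sign $+1$ for both (the first is the identity term; the second corresponds to the word $(1,m,2,3,\dots,m-1)$ with $m-2$ inversions), so their weights are $(\gamma^\pm_1)^h=1$ and $\gamma^\pm_{N-1}(\gamma^\pm_1)^{h-1}=\gamma^\pm_{N-1}$, respectively. Comparing with~\eqref{eq:wt_C} and recalling $\gamma^+_{N-1}=1$, $\gamma^-_{N-1}=-1$, this yields $\Pf C^{\pm,I_m(\lambda)}=c^\pm_{2h,w}(\lambda)$ in all cases, the ``otherwise'' case giving $0$ because the unique available matching then contains a zero edge.

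For Condition~(iii), if $i_\ell-i_k=cN$ for some $k<\ell$, the quasi-periodicity gives $C^+_{i_\ell,i_s}=(-1)^c C^+_{i_k,i_s}$ for all $s$, and $C^-_{i_\ell,i_s}=C^-_{i_k,i_s}$; in either case two rows (and the matching columns) of $C^{\pm,I}$ are proportional, so the Pfaffian vanishes. For Condition~(ii), with $J=(i_1+N,i_2,\dots,i_{m-1},i_m-N)$, I compare $C^{\pm,J}$ with $C^{\pm,I}$ entry by entry using $\gamma^+_{\ell-N}=-\gamma^+_\ell$ and $\gamma^-_{\ell-N}=\gamma^-_\ell$. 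For $C^-$, periodicity gives $C^{-,J}=C^{-,I}$ outright. For $C^+$, replacing $i_1$ by $i_1+N$ and $i_m$ by $i_m-N$ negates the first and last rows and columns of $C^+$; scaling the first and last row/column of $C^{+,J}$ by $-1$ (which multiplies the Pfaffian by $(-1)^2=1$) restores exactly $C^{+,I}$, whence $\Pf C^{+,J}=\Pf C^{+,I}$. This is the same mechanism as in Lemma~\ref{lem:matrix_B}(2).

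The main obstacle is Condition~(i): the delicate points are the path argument isolating the two candidate matchings, the verification that their admissibility conditions match the three cases of~\eqref{eq:wt_C} exactly, and the sign bookkeeping. One should also check the degenerate case $h=1$ (i.e.\ $m=2$) separately, where the two candidate matchings collapse to the single pair $(1,2)$; there the Pfaffian is just $\gamma^\pm_{i_2-i_1}$, read off directly according to whether $\lambda_1=\lambda_2$, $\lambda_1-\lambda_2=w$, or neither, and it again agrees with $c^\pm_{2,w}(\lambda)$.
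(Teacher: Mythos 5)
Your proof is correct, but it takes a genuinely different route from the paper on the crucial Condition~(i). The paper disposes of Condition~(i) in two lines: it observes that $(C^\pm)^{I_m(\lambda)} = (M^\pm)^{I_m(\lambda^0)}$, where $\lambda^0 = (\lambda_1-\lambda_m,\dots,\lambda_{m-1}-\lambda_m,0)$ and $M^\pm$ is the explicit $N\times N$ skew-symmetric matrix with $m^\pm_{i,i+1}=1$, $m^\pm_{1,N}=\pm1$, and zeroes elsewhere, and then invokes the known Pfaffian evaluation \cite[Lem.~3.4(3)]{Okada1998} to conclude $\Pf\,(C^\pm)^{I_m(\lambda)} = c^\pm_{m,w}(\lambda)$. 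You instead prove this evaluation from scratch: you bound $i_m-i_1\le N-1$, note that the only nonzero entries of $\gamma^\pm$ on $\{1,\dots,N-1\}$ sit at $1$ and $N-1$, so the support of $(C^\pm)^{I_m(\lambda)}$ is a subgraph of the cycle on $\{1,\dots,m\}$, and then expand the Pfaffian over the (at most two) perfect matchings of an even cycle, with explicit sign bookkeeping ($m-2$ inversions for the word $(1,m,2,3,\dots,m-1)$, sign $+1$ since $m$ is even) and a correct translation of the adjacency conditions into the part-equalities of~\eqref{eq:wt_C}, including the mutual exclusivity of the two cases (which would force $\lambda_1=\dots=\lambda_m$, incompatible with $\lambda_1-\lambda_m=w\ge1$). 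Your verification of Conditions~(ii) and~(iii) via the quasi-periodicities $\gamma^+_{\ell+N}=-\gamma^+_\ell$ and $\gamma^-_{\ell+N}=\gamma^-_\ell$ coincides in substance with what the paper delegates to the template of Lemma~\ref{lem:matrix_B} (indeed, your row-scaling/proportional-rows mechanism is the one used for $\bar B$ in Lemma~\ref{lem:matrix_B}(2), which is the mechanism that actually applies here since $p=0$). What your approach buys is self-containment --- no appeal to an external Pfaffian lemma --- at the cost of the matching-expansion and sign analysis; what the paper's approach buys is brevity and reuse of a general evaluation that also serves elsewhere. Your separate treatment of the degenerate case $h=1$ is careful but not strictly necessary, as the cycle argument specializes correctly there (the two candidate matchings coincide as edge sets only formally; the Pfaffian is the single entry $\gamma^\pm_{i_2-i_1}$, as you say).
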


\begin{proof}
  We only prove the first part,
  namely $\Pf (C^\pm)^{I_m(\lambda)} = c^\pm_{m,w}(\lambda)$ for
  $\lambda \in \Par(m,w)$. 
(Since $\gamma^+_{\ell +N} = - \gamma^+_\ell $ and $\gamma^-_{\ell +N} = \gamma^-_\ell $, the other parts can be proved in exactly the same way
as in the proof of Lemma~\ref{lem:matrix_B}(1).)

Let $M^{\pm} = (m^{\pm}_{i,j})_{1 \le i, j \le N}$ be
the skew-symmetric matrix with entries given by
\[
m^{\pm}_{i,j}
 =
\begin{cases}
 1, &\text{if $j=i+1$,} \\
 \pm 1, &\text{if $i=1$ and $j=N$,} \\
 0, &\text{otherwise.}
\end{cases}
\]
Then we have $(C^\pm)^{I_m(\lambda)} = (M^\pm)^{I_m(\lambda^0)}$,
where $\lambda^0 = (\lambda_1 - \lambda_m, \lambda_2 - \lambda_m, \dots, \lambda_{m-1} - \lambda_m, 0)$.
By using~\cite[Lem.~3.4(3)]{Okada1998}, we obtain $\Pf (C^\pm)^{I_m(\lambda)} = c^\pm_{m,w}(\lambda)$.
\end{proof}

\begin{proof}[Proof of Theorem~\ref{thm:C}]
  By Proposition~\ref{prop:general} and Lemma~\ref{lem:matrix_C}
  with \( m=2h \) and \( N=2h+w \), we obtain
\[
\sum_{\lambda \in \Par(2h,w)}
 c^\pm_{m,w}(\lambda)\,s_{\lambda[2h,w]'}(\vx)
 =
\Pf \left( T_0 C^\pm T_0^t \right)
 =
\Pf_{1 \le i < j \le 2h} \left( v^\pm_{j-i} \right),
\]
where
\( v^+_r = \overline{F}_{r-1,N}(\vx) - \overline{F}_{r+1,N}(\vx) \)
and \( v^-_r = F_{r-1,N}(\vx) - F_{r+1,N}(\vx) \).
Then the proof is completed by applying~\eqref{eq:Gordon}.
\end{proof}

\subsection{Proof of Theorem \ref{thm:D}}
\label{sec:LD}
Again, we only provide a sketch here since the idea of the proof is
the same as that of the proof of Theorem~\ref{thm:affine_BK_intro}.
Recall that the statistics $d^+_{m,w}$ and $d^-_{m,w}$ on $\Par(m,w)$ (see~\eqref{eq:wt_D}) are given by
\[
d^\pm_{m,w}(\lambda)
 =
\begin{cases}
 1, &\text{if $\lambda$ is even,} \\
 \pm 1, &\text{if $\lambda$ is odd,} \\
 0, &\text{otherwise,}
\end{cases}
\]
where, as earlier in Section~\ref{sec:symm-funct-ident}, a partition~$\la$
is called \emph{even} (respectively \emph{odd}) if all its parts
$\lambda_1, \dots, \lambda_m$ are even (respectively odd).

\begin{lem}
\label{lem:matrix_D}
Let $m$ be a positive integer and $w$ a positive even integer, and put $N = m+w$.
\begin{enumerate}
\item[(1)]
Suppose that $m=2h$ is even and define a sequence $\delta^+_\ell$, $\ell \in \Z$, by
\[
\delta^+_\ell 
 =
\begin{cases}
 (-1)^k, &\text{if $\ell $ is odd and $\ell =kN+r$, where \( k,r\in\Z \) and \( 0<r<N \),} \\
 0, &\text{if $\ell $ is even.}
\end{cases}
\]
Let $D^+ = ( D^+_{r,s})$ be the skew-symmetric matrix with rows/columns indexed by $\{ 1, 2, \dots \}$
whose $(r,s)$-entry is given by $D^+_{r,s} = \delta^+_{s-r}$.
Then $D^+$ satisfies the conditions of Proposition~\ref{prop:general}
for $u=2^{h-1} \cdot d^+_{2h,w}$ and $p=0$.
\item[(2)]
Suppose that $m=2h$ is even and define a sequence $\delta^-_\ell$, $\ell \in \Z$, by
\[
\delta^-_\ell 
 =
\begin{cases}
  2k+1, &\text{if $\ell $ is odd and $\ell =kN+r$, where \( k,r\in\Z \) and \( 0<r<N \),} \\
 0, &\text{if $\ell $ is even.}
\end{cases}
\]
Let $D^- =  (D^-_{r,s}) $ be the skew-symmetric matrix with rows/columns indexed
by $\{ 0, 0', 1, 2, \dots \}$ whose entries are given by
\[
D^-_{0,0'} = 0,
\quad
D^-_{0,s} = 1,
\quad
D^-_{0',s} = (-1)^{s-1},
\quad
D^-_{r,s} = \delta^-_{s-r},
\]
where $r, s \ge 1$.
Then $D^-$ satisfies the conditions of Proposition~\ref{prop:general}
for $u=2^h \cdot d^-_{2h,w}$ and $p=2$.
\item[(3)]
Suppose that $m=2h+1$ is odd and define a sequence $\delta^+_\ell$, $\ell \in \Z$, by
\[
\delta^+_\ell 
 =
\begin{cases}
 k N' + \lceil r/2 \rceil,
 &\text{if $\ell  = kN+r$, where \( k,r\in \Z \) and $0 < r < N$,}
\\
k N', &\text{if $\ell =kN$, where \( k\in \Z \),}
\end{cases}
\]
where $N' = (N+1)/2$ and $\lceil x \rceil$ is the smallest integer greater than or equal to $x$.
Let $D^+ = ( D^+_{r,s} )$ be the skew-symmetric matrix with 
rows/columns 
indexed by $\{ 0, 1, 2, \dots \}$ whose entries are given by
\[
D^+_{0,s} = 1,
\quad
D^+_{r,s} = \delta^+_{s-r},
\]
where $r$, $s \ge 1$.
Then $D^+$ satisfies the conditions of Proposition~\ref{prop:general}
for $u=d^+_{2h+1,w}$ and $p=1$.
\item[(4)]
Suppose that $m=2h+1$ is odd and define a sequence $\delta^-_\ell$, $\ell \in \Z$, by
\[
\delta^-_\ell 
 =
(-1)^{\ell +1} \delta^+_\ell 
 =
\begin{cases}
 (-1)^{\ell +1} \left( k N' + \lceil r/2 \rceil \right),
 &\text{if $\ell  = kN+r$, where \( k,r\in \Z \) and $0 < r < N$,}
\\
(-1)^{\ell +1} k N', &\text{if $\ell =kN$, where \( k\in \Z \).}
\end{cases}
\]
Let $D^- = ( D^-_{r,s} )$ be the skew-symmetric matrix with 
rows/columns
indexed by $\{ 0, 1, 2, \dots \}$ whose entries are given by
\[
D^-_{0,s} = (-1)^{s-1},
\quad
D^-_{r,s} = \delta^-_{s-r},
\]
where $r$, $s \ge 1$.
Then $D^-$ satisfies the conditions of Proposition~\ref{prop:general}
for $u=d^-_{2h+1,w}$ and $p=1$.
\end{enumerate}
\end{lem}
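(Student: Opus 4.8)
The plan is to follow verbatim the template of the proofs of Lemmas~\ref{lem:matrix_B} and~\ref{lem:matrix_C}: Conditions~(ii) and~(iii) of Proposition~\ref{prop:general} are soft consequences of the quasi-periodicity of the defining sequences under the shift $\ell\mapsto\ell+N$, whereas Condition~(i) is the one substantive computation, requiring an explicit Pfaffian evaluation. As announced for Theorems~\ref{thm:C} and~\ref{thm:D}, I would present only a sketch and, for Condition~(i), reduce everything to the model Pfaffian evaluations in~\cite[Lem.~3.4]{Okada1998} that were already invoked in Lemma~\ref{lem:matrix_C}.

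First I would record, in each of the four cases, the effect of $\ell\mapsto\ell+N$ on $\delta^\pm_\ell$. Since $w$ is even, $N=2h+w$ is even in~(1),(2) and $N=2h+1+w$ is odd in~(3),(4); a direct check then gives $\delta^+_{\ell+N}=-\delta^+_\ell$ in~(1), the parity-dependent additive rule $\delta^-_{\ell+N}=\delta^-_\ell+2\,\chi[\ell\text{ odd}]$ in~(2), the uniform additive rule $\delta^+_{\ell+N}=\delta^+_\ell+N'$ in~(3), and, using the definition $\delta^-_\ell=(-1)^{\ell+1}\delta^+_\ell$ together with~(3), a sign-twisted additive rule in~(4). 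Feeding these into the very row/column operations used in Lemma~\ref{lem:matrix_B} yields Conditions~(ii) and~(iii): the multiplicative $(-1)$ appearing in~(1),(4) is absorbed by rescaling the two rows/columns indexed $i_1$ and $i_m$ (contributing only a square of a sign), while each additive part is cancelled by adding a suitable multiple of one of the distinguished rows/columns $0$ (and $0'$). This is precisely why $p$ is taken to be $1$ in~(3),(4) and $2$ in~(2) --- in case~(2) the two model rows $D^-_{0,s}=1$ and $D^-_{0',s}=(-1)^{s-1}$ are engineered so that an appropriate combination absorbs the parity-dependent shift --- and $0$ in~(1), where no additive correction is needed. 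For Condition~(iii), the same relations exhibit a linear dependence among the rows indexed by $I_0$ together with $i_k$ and $i_\ell$ whenever $i_k\equiv i_\ell\bmod N$.

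The heart of the matter is Condition~(i), i.e.\ that $\Pf (D^\pm)^{I_0\sqcup I_m(\lambda)}$ equals the prescribed scalar multiple of $d^\pm_{m,w}(\lambda)$ for $\lambda\in\Par(m,w)$. As in Lemma~\ref{lem:matrix_C}, the constraint $i_m-i_1<N$ forces every relevant index difference $i_\ell-i_k$ to lie strictly between $0$ and $N$, so the restricted array collapses to a finite skew-symmetric matrix whose entries depend only on the residues $i_\ell-i_k\bmod N$, equivalently on $\lambda^0=(\lambda_1-\lambda_m,\dots,\lambda_{m-1}-\lambda_m,0)$. I would then identify this finite matrix, bordered by the $p$ auxiliary rows/columns, with one of the model matrices evaluated in~\cite[Lem.~3.4]{Okada1998}, reading off that its Pfaffian vanishes unless all parts of $\lambda$ share the prescribed parity, and that in the surviving cases its value and sign reproduce $d^\pm_{m,w}(\lambda)$, with the stated scalar ($1$, $2^{h-1}$, or $2^h$) emerging directly from the evaluation.

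I expect the main obstacle to be Condition~(i) in the two $D^-$ cases,~(2) and~(4): there the entries $\delta^-_\ell$ grow \emph{linearly} in the period index $k$ (namely $2k+1$, respectively $(-1)^{\ell+1}(kN'+\lceil r/2\rceil)$) rather than forming a bounded $\pm1$ pattern, so the reduction to a residue-only model matrix is not a clean truncation and the $kN'$- and $(2k+1)$-contributions must be tracked by hand; in case~(2) this is compounded by the two auxiliary rows $0,0'$ carrying different sign patterns. Pinning down the exact matching to Okada's Pfaffian identities --- and in particular the origin of the powers of $2$ in the even cases, which ultimately reflect the branching of even-orthogonal characters of rectangular shape --- is the delicate point, whereas all remaining sign bookkeeping is routine and deferred to the cited identities.
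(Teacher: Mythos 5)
Your architecture matches the paper's exactly: the paper also disposes of Conditions~(ii) and~(iii) of Proposition~\ref{prop:general} via precisely the quasi-periodicity relations you state --- $\delta^+_{\ell+N}=-\delta^+_\ell$ in~(1), $\delta^-_{\ell+N}=\delta^-_\ell+1-(-1)^\ell$ in~(2), $\delta^+_{\ell+N}=\delta^+_\ell+N'$ in~(3), and the sign-twisted rule inherited from $\delta^-_\ell=(-1)^{\ell+1}\delta^+_\ell$ in~(4) --- absorbed by the same rescaling/bordered-row operations as in Lemmas~\ref{lem:matrix_B} and~\ref{lem:matrix_C}, which is indeed the reason for the values of $p$. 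Where you diverge is Condition~(i): the paper does \emph{not} route this through \cite[Lem.~3.4]{Okada1998} (that citation serves only Lemma~\ref{lem:matrix_C}). Instead it evaluates the restricted Pfaffians directly: in cases~(1) and~(2) the core matrix is the $0/1$ checkerboard (entry $1$ iff $j-i$ is odd), whose Pfaffian is computed to be $2^{m/2-1}$ by elementary row/column operations, with mixed-parity $\lambda$ killed by two equal rows; in cases~(3) and~(4) the within-period relation $\delta^+_\alpha-\delta^+_\ell=\lceil\alpha/2\rceil-\lceil\ell/2\rceil$ is used together with the all-ones $0$-th row to flatten the matrix to $M$ with $M_{i,j}=(-1)^{j-i-1}$ for $i\ge2$, and then $\Pf_{2\le i<j\le m}\left((-1)^{j-i-1}\right)=1$ is quoted from \cite[Lem.~7]{Ishikawa1995}. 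So your plan would work only if the specific bordered model matrices here really occur in Okada's Lemma~3.4, which you have not verified; the direct computation is short enough that the paper simply performs it.

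One diagnostic in your last paragraph is off target. In Condition~(i) the hypothesis $\lambda\in\Par(m,w)$ gives $i_m-i_1<N$, so every index difference lies strictly between $0$ and $N$ and the period index is $k=0$ throughout: the $(2k+1)$- and $kN'$-contributions you propose to ``track by hand'' never enter, and in case~(2) the core collapses to the \emph{same} clean checkerboard as in case~(1) (the two auxiliary rows merely install the sign $(-1)^{j-1}$ versus $(-1)^j$ that distinguishes even from odd $\lambda$, producing $\pm2^h$). The growth in $k$ matters only for Conditions~(ii) and~(iii), which you already handle softly. The genuinely delicate point, which you do not flag, is the \emph{within-period} growth $\lceil r/2\rceil$ in cases~(3) and~(4): there the core is not residue-bounded even with $k=0$, and it is exactly the bordering row of $1$'s (respectively of $(-1)^{s-1}$'s) --- not just the periodicity bookkeeping --- that makes the flattening row operations, the linear-dependence argument for mixed-parity $\lambda$, and the final reduction to $\left((-1)^{j-i-1}\right)$ possible.
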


For example, if $N = 7$, then the sequences $\delta^+$ and $\delta^-$
defined in~(3) and~(4) are the following:
\[
\begin{array}{c|ccccccccccccccccccccccc}
\ell\kern-2pt  & \dots & -9 & -8 & -7 & -6 & -5 & -4 & -3 & -2 & -1 & 0 & 1 & 2 & 3 & 4 & 5 & 6 & 7 & 8 & 9 & 10 & \dots
\\
\hline
\delta^+_\ell \kern-2pt &
    \dots & -5 & -5 & -4 & -3 & -3 & -2 & -2 & -1 & -1 & 0 & 1 & 1 & 2 & 2 & 3 & 3 & 4 & 5 & 5 & 6 & \dots
\\
\delta^-_\ell \kern-.2pt &
    \dots & -5 & \hphantom{-}5 & -4 & \hphantom{-}3 & -3 & \hphantom{-}2 & -2 & \hphantom{-}1 & -1 & 0 & 1 & -1 & 2 & -2 & 3 & -3 & 4 & -5 & 5 & -6 & \dots
\end{array}
\]

\begin{proof}
(1)
We only show that $\Pf (D^+)^{I_m(\lambda)} = d^+_{m,w}(\lambda)$ if $\lambda \in \Par(m,w)$.
(Since $\delta^+_{\ell +N} = - \delta^+_\ell $, the proof of the other
parts is the same as in the proof of Lemma~\ref{lem:matrix_B}(1).)
If $\lambda$ is neither even nor odd, then there is an index $k$ such that $i_k \equiv i_{k+1} \bmod 2$,
where $I_m(\lambda) = (i_1, \dots, i_m)$. In that case,
we see that the $k$-th row of $D^+_{I_m(\lambda)}$ is the same as the $(k+1)$-st row,
hence we have $\Pf (D^+)^{I_m(\lambda)} = 0$.
If $\lambda$ is even or odd, then the $(i,j)$-entry, $i<j$, of $(D^+)^{I_m(\lambda)}$
is equal to $1$ if $j-i \equiv 1 \bmod 2$ and $0$ otherwise.
By performing elementary row/column operations and using the expansion
of the Pfaffian,
we obtain $\Pf (D^+)^{I_m(\lambda)} = 2^{m/2-1}$.

\medskip\noindent
(2) Condition~(i) of Proposition~\ref{prop:general}
for $\Pf (D^-)^{I_m(\lambda)}$ with $\lambda \in \Par(m,w)$
is checked by an argument similar to that of~(1).
Conditions~(ii) and~(iii) can be verified by using the relation
$\delta^-_{\ell +N} = \delta^-_\ell  + 1 - (-1)^\ell $.

\medskip\noindent
(3)
In order to prove $\Pf (D^+)^{I_m(\lambda)} = d^+_{m,w}(\lambda)$ for $\lambda \in \Par(m,w)$,
we note the relation
\[
\delta^+_\alpha - \delta^+_\ell 
 =
\lceil \alpha/2 \rceil - \lceil \ell /2 \rceil.
\]
If $\lambda$ is neither even nor odd, then there is an index $k$ such that $i_k \equiv i_{k+1} \bmod 2$,
where $I_m(\lambda) = (i_1, \dots, i_m)$.
We can use the above relation to see that the $0$-th, $k$-th and $(k+1)$-st rows are linearly dependent,
hence we have $\Pf (D^+)^{I_m(\lambda)} = 0$.
If $\lambda$ is even or odd, then, by using the above relation, we can
perform row/column operations 
to transform the skew-symmetric matrix $(D^+)^{I_m(\lambda)}$ into the
skew-symmetric matrix 
$M = ( M_{i,j} )_{0 \le i, j \le m}$ with entries $M_{i,j}$, $i<j$, given by
\[
M_{i,j}
 =
\begin{cases}
 1, &\text{if $i=0$ and $j=1$,} \\
 1, &\text{if $i=1$ and $j$ is even,} \\
 (-1)^{j-i-1}, &\text{if $i \ge 2$,} \\
 0, &\text{otherwise.}
\end{cases}
\]
Hence, by expanding the Pfaffian along the $0$-th row/column, we see that
\[
\Pf (D^+)^{I_m(\lambda)} = \Pf M = \Pf_{2 \le i<j \le m} \left( (-1)^{j-i-1} \right)
 =
1,
\]
where we used \cite[Lem.~7]{Ishikawa1995} in the last equality.
Therefore we have $\Pf (D^+)^{I_m(\lambda)} = d^+_{2h+1,w}(\lambda)$.

Using the relation $\delta^+_{\ell +N} = \delta^+_\ell  + N'$,
we can prove that $D^+$ satisfies Conditions~(ii) and~(iii).

\medskip\noindent
(4)
The proof is the same as (3) up to a sign, so we omit it.
\end{proof}

We are now in the position to prove the identities in~Theorem~\ref{thm:D}.

\begin{proof}[Proof of \eqref{eq:D1a}]
By applying Proposition~\ref{prop:general} to the skew-symmetric matrix $D^+$ given in Lemma~\ref{lem:matrix_D}~(1), we get
\[
2^{h-1} \sum_{\lambda \in \Par(2h,w)}
 d^+_{2h,w}(\lambda)\,s_{\lambda[2h,w]'}(\vx)
 =
\Pf \left( T_0 D^+ T_0^t \right)
 =
\Pf_{1 \le i < j \le 2h} \left( v^+_{j-i} \right),
\]
where $v^+_r = \sum_{k \in \Z} \delta^+_k f_{-k+r}(\vx)$.
Since $\delta^+_\ell  -
\delta^+_{\ell -2} = 2 (-1)^k$ if $\ell  = kN+1$ and $0$ otherwise, we obtain
\( v^+_1 = \overline{F}_{0,N}(\vx) \) and  \( v^+_r - v^+_{r-2} = 2 \overline{F}_{r-1,N}(\vx) \).
Hence, by using~\eqref{eq:Gordon_var3}, we have
\begin{align*}
\Pf_{1 \le i < j \le 2h} \left( v^+_{j-i} \right)
 &=
\det \begin{pmatrix}
 \overline{F}_{0,N}(\vx) & \left( 2 \overline{F}_{j-1,N}(\vx) \right)_{2 \le j \le h} \\
 \left( 2 \overline{F}_{i-1,N}(\vx) \right)_{2 \le i \le h}
 & \left( 2 \overline{F}_{i+j-2,N}(\vx) + 2 F_{|j-i|,N}(\vx) \right)_{2 \le i, j \le h}
\end{pmatrix}
\\
 &=
2^{h-2}
\det_{1 \le i, j \le h} \left(
 \overline{F}_{-i+j,N}(\vx) + \overline{F}_{i+j-2,N}(\vx)
\right).
\qedhere
\end{align*}
\end{proof}

\begin{proof}[Proof of \eqref{eq:D2a}]
By applying Proposition~\ref{prop:general} to the skew-symmetric matrix $D^-$
given in Lemma~\ref{lem:matrix_D}(2), we get
\[
2^{h-1} \sum_{\lambda \in \Par(2h,w)}
 d^-_{2h,w}(\lambda)\,s_{\lambda[2h,w]'}(\vx)
 =
\Pf \left( T_2 D^- T_2^t \right).
\]
Here the entries $Q_{i,j}$, $i,j \in \{ 0, 0', 1, \dots, m \}$ and $i<j$,
of the matrix \(Q:= T_2 D^- T_2^t \) are given by
\[
Q_{0,0'} = 0,
\quad
Q_{0,j} = e(\vx),
\quad
Q_{0',j} = (-1)^{j-1} \overline{e}(\vx),
\quad
Q_{i,j} = \sum_{\ell  \in \Z} \delta^-_\ell  f_{-\ell +j-i}(\vx),
\]
where $i,j \ge 1$.
In $\Pf Q$,
we subtract the $(i-2)$-nd row/column from the $i$-th row/column for $i=m, m-1, \dots, 3$,
and then add the $0'$-th row/column to the $0$-th row/column.
Then, by expanding the resulting Pfaffian along the $0$-th row/column, we see that
\[
\Pf \left( T_2 D^- T_2^t \right)
 =
2 e(\vx) \overline{e}(\vx)
\Pf_{1 \le i, j \le 2h-2} \left(
 v^-_{j-i} - v^-_{j-i-2} - v^-_{j-i+2} + v^-_{j-i}
\right),
\]
where $v^-_r = \sum_{\ell  \in \Z} \delta^-_\ell  f_{-\ell +j-i}(\vx)$.
Since $\delta^-_\ell  - \delta^-_{\ell -2} = 2$ if $\ell =kN+1$ and $0$ otherwise, we have $v^-_r - v^-_{r-2} = 2 F_{r-1,N}(\vx)$.
Now, by using~\eqref{eq:Gordon}, we obtain the desired identity.
\end{proof}

\begin{proof}[Proof of \eqref{eq:D3a}]
By using Proposition~\ref{prop:general} and a computation similar to the proof of~\eqref{eq:aGBK2s},
we have
\[
\sum_{\lambda \in \Par(2h+1,w)}
 d^+_{2h+1,w}(\lambda)\,s_{\lambda[2h+1,w]'}(\vx)
 =
e(\vx) \cdot \Pf_{2 \le i, j \le 2h+1} \left(
 v^+_{j-i} - v^+_{j-i-1} - v^+_{j-i+1} + v^+_{j-i}
\right).
\]
Since we have
\[
\delta^+_\ell  - \delta^+_{\ell -1} - \delta^+_{\ell +1} + \delta^+_\ell 
 =
\begin{cases}
  1, &\text{if $\ell =Nk+r$, where \( k,r\in \Z \) such that $0<r<N$ and $r$ is odd,} \\
  -1, &\text{if $\ell =Nk+r$, where \( k,r\in \Z \) such that $0<r<N$ and $r$ is even,} \\
 0, &\text{otherwise,}
\end{cases}
\]
we see that
\[
v^+_r - v^+_{r-1} - v^+_{r+1} + v^+_r
 =
\sum_{t=1}^{N-1} (-1)^t F_{r+t,N}(\vx),
\]
where we note that $N=2h+1+w$ is odd since $w$ is even by assumption,
and $F_{r+N,N}(\vx) = F_{r,N}(\vx)$.
Then we can complete the proof by applying~\eqref{eq:Gordon_var2}.
\end{proof}

\begin{proof}[Proof of \eqref{eq:D4a}]
The proof is similar to that of the proof of~\eqref{eq:D3a}.
In this case, we use the relation
\[
\delta^+_\ell  + \delta^+_{\ell -1} + \delta^+_{\ell +1} + \delta^+_\ell 
 =
\begin{cases}
  (-1)^k, &\text{if $\ell =Nk+r$, where \( k,r\in \Z \) and $0 < r < N$,} \\
 0, &\text{otherwise,}
\end{cases}
\]
and the formula \eqref{eq:Gordon_var1}.
\end{proof}

\section{Up-down tableaux}
\label{sec:cylindric_SSYT}

In this section, we provide combinatorial interpretations of the
right-hand sides of the affine bounded Littlewood identities~\eqref{eq:aGBK1s}
and~\eqref{eq:aGBK2s} (which are the same as~\eqref{eq:ABL1}
and~\eqref{eq:ABL2}) using up-down tableaux.
The latter are sequences of partitions satisfying certain conditions. 
Note that there is a combinatorial meaning of the left-hand sides of the affine
bounded Littlewood identities~\eqref{eq:aGBK1s} and~\eqref{eq:aGBK2s} in
terms of cylindric semistandard Young tableaux,
respectively in terms of cylindric row-strict Young tableaux, see
Definition~\ref{defn:cSchur} and Proposition~\ref{prop:cyl_sch}.

To be specific, in Theorem~\ref{thm:OT-L1} we provide a combinatorial
interpretation of the determinant on the right-hand side
of~\eqref{eq:aGBK1s}
for the case where $w$ is odd. It is then 
simple to derive a combinatorial interpretation of the right-hand side
of~\eqref{eq:aGBK1s}
itself, see Corollary~\ref{cor:OT-L1}.
In Theorem~\ref{thm:OT-L1'} we give a combinatorial interpretation of the
determinant on the right-hand side of~\eqref{eq:aGBK1s} when $w$ is
even, as well as for the determinant
on the right-hand side of~\eqref{eq:aGBK2s} for both odd and even~$w$.
The resulting combinatorial interpretations of the full right-hand
sides of~\eqref{eq:aGBK1s} with
$w$~even,
and of~\eqref{eq:aGBK2s}
are the subject of Corollaries~\ref{cor:OT-L2}--\ref{cor:OT-L4}.

\medskip
We start by defining the above-mentioned up-down tableaux precisely.

\begin{defn}
  An \emph{\( (h,w) \)-up-down tableau} is a sequence \(
  (\la^0,\la^1,\dots,\la^{2n}) \) of partitions
  satisfying the following properties:
  \begin{enumerate}
  \item [(i)]  \(
    \emptyset=\la^0\subseteq\la^1\supseteq\la^2\subseteq
    \la^3\supseteq\la^4\subseteq\dots\subseteq\la^{2n-1}\supseteq
    \la^{2n}=\emptyset
    \);
  \item [(ii)]each pair $(\la^{i-1},\la^i)$ differs by a vertical strip
    {\rm(}that is, by a collection of cells which contains at most one
    cell in each row{\rm)}, $i=1,2,\dots,2n$;
  \item [(iii)]each $\la^{i}$ has at most $h$~rows,
    $i=1,2,\dots,2n$;
  \item [(iv)]each $\la^{2i}$ has at most $w$~columns,
    $i=1,2,\dots,n$.
  \end{enumerate}
\end{defn}

Let \( \UD_n(h,w) \) denote the set of \emph{\( (h,w) \)-up-down tableaux} \(
(\la^0,\la^1,\dots,\la^{2n}) \). For \( T=(\la^0,\la^1,\dots,\la^{2n})\in
\UD_n(h,w) \), we define its weight by
\[
  \omega(T)= \prod_{i=1}^n x_i^{-\vert\la^{2i-2}\vert+2\vert\la^{2i-1}\vert-\vert\la^{2i}\vert}.
\]
Note that \( -\vert\la^{2i-2}\vert+2\vert\la^{2i-1}\vert-\vert\la^{2i}\vert \)
is the sum of the differences in sizes of \( (\lambda^{2i-2}, \lambda^{2i-1}) \)
and of \( (\lambda^{2i-1}, \lambda^{2i}) \).

The following theorem provides a combinatorial interpretation of
the determinant on the right-hand side of~\eqref{eq:aGBK1s} in terms
of up-down tableaux for the case where $w$ is odd.

\begin{thm} \label{thm:OT-L1}
For positive integers $h$, $w$, and \( n \), we have
\begin{equation}
  \label{eq:OT-L1} \det_{1\le i,j\le h}\left(
  F_{-i+j,2h+2w+2}(x_1,\dots,x_n)-F_{i+j,2h+2w+2}(x_1,\dots,x_n) \right)
  =\sum_{T\in\UD_n(h,w)} \omega(T).
\end{equation}
\end{thm}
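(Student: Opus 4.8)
The plan is to read both sides of~\eqref{eq:OT-L1} as weighted counts of families of nonintersecting lattice walks and to identify them through the Lindström--Gessel--Viennot lemma together with a reflection argument. The starting point is the generating function identity
\[
\prod_{t=1}^n (1+x_t u)(1+x_t u^{-1}) = \sum_{r\in\Z} f_r(x_1,\dots,x_n)\,u^r ,
\]
which realizes \( f_r(x_1,\dots,x_n) \) as the weight of a single walker that performs \(2n\) sub-steps: at the odd sub-step \(2i-1\) it moves up by~\(1\) (weight \(x_i\)) or stays, and at the even sub-step \(2i\) it moves down by~\(1\) (weight \(x_i\)) or stays, the net displacement over the \(2n\) sub-steps being~\(r\). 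Accordingly, the periodization \( F_{r,N}=\sum_k f_{r+Nk} \) and the combination \( F_{j-i,N}-F_{i+j,N} \) are precisely the expressions produced by the reflection principle for such a walker confined between two walls, the modulus \( N=2h+2w+2 \) being twice the distance between them.

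First I would construct a weight-preserving bijection between \( \UD_n(h,w) \) and families \( \mathbf{P}=(P_1,\dots,P_h) \) of \(h\) nonintersecting confined walkers of the above type. Encoding a partition \( \la \) with at most \(h\) rows by the strictly decreasing staircase coordinates \( a_i=\la_i+h-i \), the alternating chain \( \emptyset=\la^0\subseteq\la^1\supseteq\la^2\subseteq\cdots\supseteq\la^{2n}=\emptyset \) turns into the \(2n\) sub-steps of the \(h\) walkers, all starting and ending at \( (h-1,h-2,\dots,0) \). Under this encoding, condition~(ii) (consecutive shapes differ by a vertical strip) is exactly the requirement that the walkers remain strictly ordered, i.e.\ nonintersecting; condition~(iii) is automatic because there are \(h\) walkers; positivity of parts gives the lower wall \(a_i>-1\); and condition~(iv) (at most \(w\) columns at even times) becomes the upper wall \(a_i<h+w\), which the odd sub-steps may touch but the even ones may not. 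A direct check shows that \( \omega(T) \) equals the total weight \( \prod_i \omega(P_i) \) of the associated walker family.

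Next I would apply the Lindström--Gessel--Viennot lemma to the \(2n\)-sub-step walks, so that the nonintersecting families are counted by a single \(h\times h\) determinant whose \((i,j)\)-entry is the confined single-walker generating function from \(a_i=h-i\) to \(a_j=h-j\). The reflection principle for the affine hyperoctahedral group (walls at \(-1\) and \(h+w\), composite translation \(2(h+w+1)=N\)) replaces this confined generating function by the alternating sum \( F_{j-i,N}-F_{2h+2-i-j,N} \); here the positive term records the net displacement \( (h-j)-(h-i) \) and the negative term the displacement after reflecting the start across the lower wall. Reversing the index order \(i\mapsto h+1-i\), \(j\mapsto h+1-j\) --- which fixes the determinant --- turns \( F_{2h+2-i-j,N} \) into \( F_{i+j,N} \), and one arrives at \( \det_{1\le i,j\le h}\left(F_{j-i,N}-F_{i+j,N}\right) \), as claimed.

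The hard part will be the reflection step, for two related reasons. First, the up- and down-sub-steps are not individually symmetric, so a naive André-type reflection of a suffix of a walk interchanges the two kinds of sub-steps and breaks the prescribed up/down alternation; this must be repaired by reflecting with a suitable half-period time shift, or by running the argument on a doubled (periodized and reflected) digraph on which Lindström--Gessel--Viennot applies directly. Second, the column bound is imposed only at even times --- odd shapes are permitted one extra column --- so the two walls act with period-two inhomogeneity; it is essential to apply the lemma to the full \(2n\)-sub-step walks rather than to the marginalized length-\(n\) walk, since only then are the simultaneous upward excursions of several walkers (which may collide, and which carry the \(x_i^2\) contributions) correctly excluded by the nonintersection condition. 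I would pin down the precise wall positions and additive offsets on the base case \(h=1\), where \( \UD_n(1,w) \) reduces to bounded Motzkin-type paths and the determinant collapses to \( F_{0,N}-F_{2,N} \) with \( N=2w+4 \), before treating general~\(h\). An alternative, purely algebraic route that I would keep in reserve is to write the left-hand side as \( \langle\emptyset\vert M(x_1)\cdots M(x_n)\vert\emptyset\rangle \) for the transfer matrix \(M\) of vertical-strip moves on the partition lattice bounded by \(h\) rows and \(w\) columns, and to diagonalize \(M\); its eigenvalues, of the form \( \prod_t(1+x_t\zeta)(1+x_t\zeta^{-1}) \) as \(\zeta\) ranges over the relevant roots of unity, reproduce the functions \(F_{r,N}\) directly.
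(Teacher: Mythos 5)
Your proposal is correct and follows essentially the same route as the paper: there, too, $f_r$ and its periodization $F_{r,2h+2w+2}$ are realized as generating functions for a single walker taking alternating up/down sub-steps weighted by $x_i$, confined between two walls by a sign-reversing reflection argument (Lemma~\ref{lem:L_ab}), after which the Lindstr\"om--Gessel--Viennot switching yields nonintersecting families that are bijected with $(h,w)$-up-down tableaux via exactly your staircase coordinates. The alternation-breaking issue you correctly flag is handled in the paper by the ``modified reflection'' $\mathfrak{R}$ of~\cite{Krattenthaler1996}, which regroups the sub-steps in pairs before reflecting --- precisely the half-period repair you propose.
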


We will prove this theorem with the help of the idea of
{\it nonintersecting lattice paths} (see \cite{LindAA, GeViAA}), with
the particular setting being the one from~\cite{Krattenthaler1996}.
We need some preparations first, however.

\medskip
Following~\cite[Sec.~2]{Krattenthaler1996}, we consider lattice
paths in the plane integer lattice with steps from the set
\begin{equation}
  \label{eq:steps}
  S=\{(i,j)\to(i,j+1),\ (i,2j-2)\to(i+1,2j-1),\ (i,2j-1)\to(i-1,2j): i,j\in\Z  \}.
\end{equation}
In words, the set of steps consists of vertical steps \( s_v \) north, forward
diagonal steps \( s_f \) northeast from even height to odd height, and backward
diagonal steps \( s_b \) northwest from odd height to even
height. Throughout, when we say `lattice path' or simply `path' we
always mean a lattice path with steps from the set $S$. Several such lattice
paths are displayed in Figure~\ref{fig:7}.

\begin{figure}

\small{
\begin{tikzpicture}[scale=0.6]

\draw[->] (-1,0) -- (8,0);
\draw[->] (0,-1) -- (0,11);
\draw[dotted] (7,-1) -- (7,11);
\draw[dotted] (1,-1) -- (1,11);

\foreach \i in {0,...,7}
\foreach \j in {0,...,10}
\filldraw[fill=gray, color=gray] (\i,\j) circle (2.5pt);

\foreach \i in {1,2,3,4}
\filldraw (\i,0) circle (3.5pt);

\foreach \i in {1,2,4,5}
\filldraw (\i,1) circle (2.5pt);

\foreach \i in {1,2,3,5}
\filldraw (\i,2) circle (2.5pt);

\foreach \i in {2,3,4,6}
\filldraw (\i,3) circle (2.5pt);

\foreach \i in {1,3,4,6}
\filldraw (\i,4) circle (2.5pt);

\foreach \i in {1,4,5,7}
\filldraw (\i,5) circle (2.5pt);

\foreach \i in {1,3,5,6}
\filldraw (\i,6) circle (2.5pt);

\foreach \i in {2,3,5,6}
\filldraw (\i,7) circle (2.5pt);

\foreach \i in {2,3,4,5}
\filldraw (\i,8) circle (2.5pt);

\foreach \i in {2,3,4,5}
\filldraw (\i,9) circle (2.5pt);

\foreach \i in {1,2,3,4}
\filldraw (\i,10) circle (3.5pt);

\node[right] at (1.9,8.5) {\( P_1\)};
\node[right] at (2.9,8.5) {\( P_2\)};
\node[right] at (3.9,8.5) {\( P_3\)};
\node[right] at (4.9,8.5) {\( P_4\)};

\foreach \i/\j in {3.65/0.6, 4.65/0.6, 3.65/1.4}
\node[left] at (\i,\j) {\(x_1\)};

\foreach \i/\j in {1.65/2.6, 2.65/2.6, 3.65/2.6, 5.65/2.6,1.65/3.4}
\node[left] at (\i,\j) {\(x_2\)};

\foreach \i/\j in {3.65/4.6, 4.65/4.6, 6.65/4.6,3.65/5.4, 6.65/5.4}
\node[left] at (\i,\j) {\(x_3\)};

\foreach \i/\j in {1.65/6.6, 4.65/7.4, 5.65/7.4}
\node[left] at (\i,\j) {\(x_4\)};

\foreach \i/\j in {1.65/9.4, 2.65/9.4, 3.65/9.4, 4.65/9.4}
\node[left] at (\i,\j) {\(x_5\)};

\draw[thick] (1,0) -- (1,2) -- (2,3) -- (1,4) -- (1,6) -- (2,7) -- (2,9) -- (1,10)
(2,0) -- (2,2) -- (3,3) -- (3,4) -- (4,5) -- (3,6) -- (3,9) -- (2,10)
(3,0) -- (4,1) -- (3,2) -- (4,3) -- (4,4) -- (5,5) -- (5,7) -- (4,8) -- (4,9) -- (3,10)
(4,0) -- (5,1) -- (5,2) -- (6,3) -- (6,4) -- (7,5) -- (6,6) -- (6,7) -- (5,8) -- (5,9) -- (4,10);

\end{tikzpicture}

}
\caption{An example of a family \(\mathbf{P}=(P_1,P_2,P_3,P_4)\) of nonintersecting paths.}
\label{fig:7}
\end{figure}
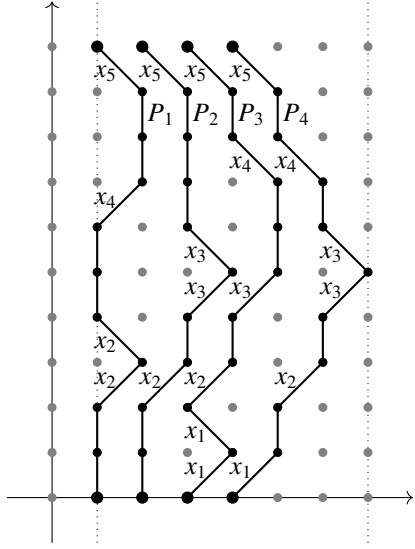

We define the weight $\omega(s_v)$ of a vertical step~$s_v$ to be~1, the weight
$\omega(s_f)$ of a forward diagonal step $s_f$ from $(i,2j-2)$ to $(i+1,2j-1)$
to be $x_j$, and the weight $\omega(s_b)$ of a backward diagonal step from
$(i,2j-1)$ to $(i-1,2j)$ to be also~$x_j$. The weight of a path equals the
product of the weights of its steps.

Let \( L(u\to v) \) denote the set of lattice paths from \( u \) to \( v \). Let
\( L(a,b;u\to v) \) denote the set of \( P\in L(u\to v) \) for which every point
\( (i,2j) \) of even height in \( P \) satisfies \( a\le i\le b \). Note that, if
\( P\in L\big(a,b;(r,0)\to (s,2n)\big) \) for some integers \( r,s \) with \( a\le
r,s\le b \), then every point \( (i,2j+1) \) of odd height in \( P \) always
satisfies \( a\le i\le b+1 \).

\begin{lem}\label{lem:L_ab}
  For positive integers \( i,j,N \) and \( n \) with \( 1\le i,j<N \), we have
  \begin{equation}
    \label{eq:F1}
    F_{-i+j,2N+2}(x_1,\dots,x_n)-F_{i+j,2N+2}(x_1,\dots,x_n)
    = \sum_{P\in L\big(1,N;(i,0)\to (j,2n)\big)} \omega(P).
  \end{equation}
\end{lem}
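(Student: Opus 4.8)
The plan is to prove~\eqref{eq:F1} by the reflection principle (method of images) for weighted lattice walks confined to an interval, after first recording the \emph{free} (unconstrained) generating function.

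First I would reduce the path to its even-height trace. Every step in the set $S$ raises the height by exactly one, so a path from $(i,0)$ to $(j,2n)$ has exactly $2n$ steps, which split into $n$ consecutive \emph{double steps}, the $\ell$-th going from height $2\ell-2$ to $2\ell$. At level $\ell$ the first sub-step (to odd height) is a vertical step or a forward diagonal step, the latter moving one unit right with weight $x_\ell$; the second sub-step (back to even height) is a vertical step or a backward diagonal step, the latter moving one unit left with weight $x_\ell$; and these two choices are independent. Recording for each level whether a forward step occurs (a subset $A\subseteq\{1,\dots,n\}$) and whether a backward step occurs (a subset $B\subseteq\{1,\dots,n\}$), the weight of the path is $\prod_{\ell\in A}x_\ell\prod_{\ell\in B}x_\ell$ and its net horizontal displacement is $|A|-|B|=j-i$. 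Summing over all $A,B$ yields the free generating function
\[
\sum_{P\in L((i,0)\to(j,2n))}\omega(P)
=\sum_{p-q=j-i}e_p(x_1,\dots,x_n)\,e_q(x_1,\dots,x_n)
=f_{j-i}(x_1,\dots,x_n).
\]
In particular, tracking only the even-height positions turns a path into a walk $i=a_0,a_1,\dots,a_n=j$ with steps in $\{-1,0,+1\}$ whose per-step transfer weights ($x_\ell$ for a step $\pm1$, and $1+x_\ell^2$ for a step $0$) are symmetric under $x\mapsto -x$.

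Next I would apply reflection. The condition defining $L(1,N;\cdot)$ is exactly that all even-height positions lie in $\{1,\dots,N\}$, i.e.\ the even-height walk stays strictly between the reflecting walls $x=0$ and $x=N+1$. Since its steps are $\{-1,0,+1\}$ the walk cannot cross either wall without landing on it, and the per-step weights are invariant under the reflections $\sigma_0\colon x\mapsto -x$ and $\sigma_{N+1}\colon x\mapsto 2(N+1)-x$, each of which merely interchanges $+1$ and $-1$ steps. Hence the Gessel--Zeilberger reflection principle (in the form used in~\cite{Krattenthaler1996}) applies, expressing the confined generating function as the signed sum, over the orbit of $j$ under the infinite dihedral group $W=\langle\sigma_0,\sigma_{N+1}\rangle$, of free generating functions to the image endpoints. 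The group $W$ contains the translation $\tau=\sigma_{N+1}\sigma_0$ by $2(N+1)=2N+2$; its elements are $\tau^k$ (of sign $+1$) sending $j\mapsto j+(2N+2)k$, and $\sigma_0\tau^k$ (of sign $-1$) sending $j\mapsto -j+(2N+2)k$. Using the free generating function above together with $f_r=f_{-r}$, the signed image sum becomes
\[
\sum_{k\in\Z}f_{(j-i)+(2N+2)k}(x_1,\dots,x_n)-\sum_{k\in\Z}f_{-(i+j)+(2N+2)k}(x_1,\dots,x_n)
=F_{j-i,2N+2}(x_1,\dots,x_n)-F_{i+j,2N+2}(x_1,\dots,x_n),
\]
where in the second sum I used $F_{-(i+j),2N+2}=F_{i+j,2N+2}$. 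Since $F_{j-i,2N+2}=F_{-i+j,2N+2}$, this is precisely the left-hand side of~\eqref{eq:F1}.

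The main obstacle will be the rigorous justification of the reflection step in this weighted setting: one must check that the sign-reversing involution (reflecting the initial portion of the walk up to its first visit to a wall) is weight-preserving and pairs off exactly the walks that violate the band constraint. The delicate point is that the intermediate \emph{odd}-height points are not themselves constrained and may legitimately reach $x=N+1$ via a forward-then-backward double step based at the even-height position $N$; this is why it is cleanest to phrase everything in terms of the even-height walk, where the step set is genuinely symmetric under $W$ and the walls $0$ and $N+1$ sit strictly outside the allowed band $\{1,\dots,N\}$. The hypothesis $1\le i,j<N$ (which in the application comes from $i,j\le h$ and $N=h+w$ with $w\ge1$) guarantees that the endpoints lie in this band with room to spare.
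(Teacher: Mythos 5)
Your proposal is correct and takes essentially the same route as the paper: the paper also first records the free generating function $\sum_{P\in L((a,0)\to(b,2n))}\omega(P)=f_{b-a}(x_1,\dots,x_n)$ and then establishes the resulting signed identity over the infinite dihedral group of the two walls $x=0$ and $x=N+1$ (with translation by $2N+2$) via a weight-preserving, sign-reversing reflection involution. Your even-height-walk reduction, under which the reflection becomes a literal interchange of $+1$ and $-1$ double steps while fixing the $0$ steps, is precisely the paper's modified reflection $\mathfrak{R}$ that regroups the $2n$ steps into consecutive pairs; the only cosmetic difference is that the paper reflects the initial portion up to the wall-visit of \emph{maximal} height rather than the first one.
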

\begin{proof}
  Since
  \begin{equation}\label{eq:w=f}
    \sum_{P\in L\big((a,0)\to (b,2n)\big)} \omega(P) = \sum_{k\in \Z}
    e_k(x_1,\dots,x_n) e_{b-a+k}(x_1,\dots,x_n) = f_{b-a}(x_1,\dots,x_n),
  \end{equation}
  we can rewrite \eqref{eq:F1} as follows:
  \begin{equation}
    \label{eq:F=L1}
    \underset{P\in L\big((\ep i+k(2N+2),0)\to (j,2n)\big)}
    {\sum_{k \in\Z,\,\,\ep\in\{-1,+1\}}} \ep\cdot\omega(P)
    = \sum_{P\in L\big(1,N;(i,0)\to (j,2n)\big)} \omega(P).
  \end{equation}
We prove~\eqref{eq:F=L1} by constructing a sign-reversing involution, say
  \( \ph \), on the set
  \[
    \mathcal B:=\bigg(\bigcup_{k\in\Z}
    \bigcup_{\ep\in \{-1,+1\}} L\big((\ep i+k(2N+2),0)\to (j,2n)\big)
    \bigg)\,\big\backslash\, L\big(1,N;(i,0)\to (j,2n)\big),
  \]
  where the sign of \( P\in L\big((\ep i+k(2N+2),0)\to (j,2n)\big) \) is defined to be
  \( \ep \).

  Let \( P\in\mathcal{B}\). Then there is at least one point \( (x,y) \) in \( P
  \) such that \( x\in \{0,N+1\} \) and \( y \) is even. Choose such a point \(
  (x,y) \) with maximal \( y \). We let $P'$ be the portion of~$P$ from its
  starting point up to~$(x,y)$ and $P''$ the portion from $(x,y)$ up to $P$'s
  end point. Then we define \( \ph(P)=\mathfrak{R}(P')P'' \), where
  $\mathfrak R$ means the modified reflection from~\cite[Proof
  of~(1.1)]{Krattenthaler1996}. To be specific, beginning from $P$'s starting
  point up to $(x,y)$, we group steps in pairs. If a pair consists of two
  vertical steps then we leave them invariant, as well as if the pair consists
  of a forward diagonal step followed by a backward diagonal step. If a pair
  consists of a forward diagonal step followed by a vertical step, then we
  replace this pair by the pair consisting of a vertical step followed by a
  backward diagonal step, and vice versa. 

  Since the map $\ph$ leaves the portion of the path after $(x,y)$ invariant,
  twofold application of $\ph$ will bring us back to~$P$. Thus, the map $\ph$ is
  indeed an involution. To see that \( \ph \) is sign-reversing
  on the set $\mathcal B$, suppose that the
  starting point of \( P \) is \( (\ep i+k(2N+2),0) \) so that the sign of \( P
  \) is \( \ep \). Then the starting point of \( \ph(P) \) is \( (-\ep
  i-k(2N+2),0) \) if \( x=0 \),
  respectively \( (-\ep i+(1-k)(2N+2),0) \) if \(
  x=N+1 \). Thus the sign of $\ph(P)$ is always \( -\ep \). Furthermore, the
  map $\ph$ is clearly weight-preserving. This completes the proof
  of~\eqref{eq:F=L1}, and thus of \eqref{eq:F1}.
\end{proof}

We are now in the position to prove the claimed combinatorial
interpretation of the determinant on the right-hand side
of~\eqref{eq:aGBK1s}. 

\begin{proof}[Proof of Theorem \ref{thm:OT-L1}]

\begin{figure}
  \ytableausetup{smalltableaux}
\(
\emptyset
~\!\!\subseteq~
{\small
\begin{ytableau}
~\\
~
\end{ytableau}}
~\supseteq~
{\small
\begin{ytableau}
~
\end{ytableau}}
~\!\!\subseteq~
{\small
\begin{ytableau}
~&~\\
~\\
~\\
~
\end{ytableau}}
~\supseteq~
{\small
\begin{ytableau}
~&~\\
~\\
~
\end{ytableau}}
~\!\!\subseteq~
{\small
\begin{ytableau}
~&~&~\\
~&~\\
~&~
\end{ytableau}}
~\supseteq~
{\small
\begin{ytableau}
~&~\\
~&~\\
~
\end{ytableau}}
~\!\!\subseteq~
{\small
\begin{ytableau}
~&~\\
~&~\\
~\\
~
\end{ytableau}}
~\supseteq~
{\small
\begin{ytableau}
~\\
~\\
~\\
~
\end{ytableau}}
~\!\!\subseteq~
{\small
\begin{ytableau}
~\\
~\\
~\\
~
\end{ytableau}}
~\supseteq~
\emptyset
\)
\caption{The up-down tableau corresponding to the family \( \mathbf{P} \) in Figure \ref{fig:7}.}
\label{fig:8}
\end{figure}

By Lemma~\ref{lem:L_ab}, the left-hand side of~\eqref{eq:OT-L1}
is equal to
\begin{equation}\label{eq:PinX}
  \sum_{\sigma\in \mathfrak{S}_h} \sgn(\sigma)
  \prod_{j=1}^h \sum_{P_j\in L\big(1,h+w; (\sigma(j),0)\to (j,2n)\big)} \omega(P_j)
  =\sum_{\mathbf{P}\in X} \sgn(\mathbf{P}) \omega(\mathbf{P}),
\end{equation}
where \( X \) is the set of families of paths $\mathbf P=(P_1,P_2,\dots,P_h)$
for which \( P_j\in L\big(1,h+w; (\sigma(j),0)\to (j,2n)\big) \), \( j=1,2,\dots,h \),
for some \( \sigma\in \mathfrak{S}_h \), \( \sgn(\mathbf{P})=\sgn(\sigma) \) and \(
\omega(\mathbf{P})=\prod _{j=1} ^{h}\omega(P_j) \). We claim that the right-hand
side of~\eqref{eq:PinX} is equal to \( \sum_{\mathbf{P}\in Y} \sgn(\mathbf{P})
\omega(\mathbf{P}) \), where \( Y \) is the set of all families $\mathbf
P=(P_1,P_2,\dots,P_h)$ of nonintersecting lattice paths with $P_j\in
L\big(1,h+w;(j,0)\to (j,2n)\big)$ for $j=1,2,\dots,h$.

To see this, suppose that \( \mathbf P=(P_1,P_2,\dots,P_h)\in X \) has some
intersection points. We choose the intersection point \( (x, y) \) with maximal
\( y \), and if there are several of them, then among these the one with minimal
\( x \). Let $(s,t)$ be lexicographically minimal such that $(x,y)$ is a point
on $P_s$ {\it and\/}~$P_t$. We then do the usual
``Lindstr\"om--Gessel--Viennot'' switching (cf.\ \cite{LindAA,GeViAA})
of the initial portions of $P_s$ and
$P_t$ up to the intersection point $(x,y)$. If \( \sgn(\mathbf{P})=\sgn(\sigma)
\), then the corresponding family \( \mathbf{P}' \) satisfies \(
\sgn(\mathbf{P}')=\sgn(\sigma\circ (s,t))=-\sgn(\sigma) \). Thus the map \(
\mathbf{P}\mapsto \mathbf{P}' \) gives a sign-reversing and weight-preserving
involution on \( X\setminus Y \), and the claim is proved.

\medskip
It remains to show that there is a weight-preserving bijection between \( Y \)
and \( \UD_n(h,w) \). In order to see this, we start from a family $\mathbf
P=(P_1,P_2,\dots,P_h)\in Y$ of nonintersecting lattice paths. See
Figure~\ref{fig:7}, which shows an example for $n=5$, $h=4$, and $w=2$. In
particular, the bounding line $x=h+w+1=7$ is indicated by the dotted line in the
figure.

For $j=0,1,\dots,2n$, we now read the vector of $x$-coordinates
of the points of the lattice paths that are at height~$j$.
For our running example from Figure~\ref{fig:7}, we get the sequence
\begin{multline*}
(1,2,3,4),\
(1,2,4,5),\
(1,2,3,5),\
(2,3,4,6),\
(1,3,4,6),\
(1,4,5,7),\\
(1,3,5,6),\
(2,3,5,6),\
(2,3,4,5),\
(2,3,4,5),\
(1,2,3,4).
\end{multline*}
In the next step, we subtract $i$ from the $i$-th coordinate of each
vector, $i=1,2,\dots,h$, and we reverse the order of the coordinates.
In our example, this leads to
\begin{multline*}
(0,0,0,0),\
(1,1,0,0),\
(1,0,0,0),\
(2,1,1,1),\
(2,1,1,0),\
(3,2,2,0),\\
(2,2,1,0),\
(2,2,1,1),\
(1,1,1,1),\
(1,1,1,1),\
(0,0,0,0).
\end{multline*}
Finally, each of the vectors is interpreted as a Young diagram. In this manner,
in our example we arrive at the \( (h,w) \)-up-down tableau in
Figure~\ref{fig:8}. It is straightforward to verify that this correspondence is
a weight-preserving bijection between \( Y \) and \( \UD_n(h,w) \), which completes
the proof.
\end{proof}

An immediate consequence of Theorem~\ref{thm:OT-L1} is the following corollary,
which gives a combinatorial interpretation of the right-hand side of the affine
bounded Littlewood identity~\eqref{eq:aGBK1s} when \( w \) is odd.

\begin{cor} \label{cor:OT-L1}
  The coefficient of $x_1^{m_1}x_2^{m_2}\cdots x_n^{m_n}$ in
  \[
\sum_{k\ge0}e_k(\vx)
\det_{1\le i,j\le h}\left(
  F_{-i+j,2h+2w+2}(\vx)-F_{i+j,2h+2w+2}(\vx)  \right)
  \]
  equals the number of \( (h,w) \)-up-down tableaux \( (\la^0,\la^1,\dots,\la^{2n}) \)
  which satisfy $-\vert\la^{2i-2}\vert+2\vert\la^{2i-1}\vert-\vert\la^{2i}\vert
  \in \{m_{i},m_i-1\}$, $i=1,2,\dots,n$.
\end{cor}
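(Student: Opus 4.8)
The plan is to reduce the statement directly to Theorem~\ref{thm:OT-L1} by means of the elementary generating-function identity $\sum_{k\ge0}e_k(x_1,\dots,x_n)=\prod_{i=1}^n(1+x_i)$.

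First I would observe that extracting the coefficient of $x_1^{m_1}\cdots x_n^{m_n}$ only sees the variables $x_1,\dots,x_n$: since this monomial involves no variable $x_j$ with $j>n$, its coefficient in any power series $f(\vx)$ equals its coefficient in the specialization $f(x_1,\dots,x_n,0,0,\dots)$. Under this specialization $F_{r,N}(\vx)$ becomes $F_{r,N}(x_1,\dots,x_n)$, and $\sum_{k\ge0}e_k(\vx)$ becomes $\sum_{k\ge0}e_k(x_1,\dots,x_n)=\prod_{i=1}^n(1+x_i)$, because $e_k(x_1,\dots,x_n)=0$ for $k>n$. Hence, writing $d_i(T):=-|\la^{2i-2}|+2|\la^{2i-1}|-|\la^{2i}|$ for an up-down tableau $T=(\la^0,\dots,\la^{2n})$, Theorem~\ref{thm:OT-L1} lets me rewrite the expression in the corollary as
\[
\prod_{i=1}^n(1+x_i)\cdot\sum_{T\in\UD_n(h,w)}\prod_{i=1}^n x_i^{d_i(T)}.
\]

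Next I would expand $\prod_{i=1}^n(1+x_i)=\sum_{S\subseteq\{1,\dots,n\}}\prod_{i\in S}x_i$ and interchange the two sums. The coefficient of $x_1^{m_1}\cdots x_n^{m_n}$ is then the number of pairs $(S,T)$ satisfying $d_i(T)+\chi[i\in S]=m_i$ for every $i$. Since $\chi[i\in S]\in\{0,1\}$, such a pair can exist only if $d_i(T)\in\{m_i-1,m_i\}$ for all $i$, and in that case the set $S$ is forced, namely $i\in S$ exactly when $d_i(T)=m_i-1$. Therefore each admissible $T$ contributes exactly one pair, and the coefficient equals the number of $(h,w)$-up-down tableaux $T$ with $d_i(T)\in\{m_i-1,m_i\}$ for all $i$, which is precisely the claim.

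There is no genuine obstacle here: the entire combinatorial substance is already contained in Theorem~\ref{thm:OT-L1}, and what remains is the bookkeeping above. The only point that deserves a word of care is the passage to finitely many variables, which is what makes both the coefficient extraction well-defined and the sum $\sum_{k\ge0}e_k$ a genuine finite product.
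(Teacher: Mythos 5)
Your proposal is correct and takes essentially the same route as the paper, which states the corollary as an immediate consequence of Theorem~\ref{thm:OT-L1}: multiplying by $\sum_{k\ge0}e_k(\vx)$ amounts to optionally inserting one extra factor $x_i$ per index, which is exactly your subset-$S$ bookkeeping. Your explicit treatment of the specialization $x_{n+1}=x_{n+2}=\dots=0$ (turning $\sum_k e_k$ into $\prod_{i=1}^n(1+x_i)$ and reconciling the infinitely-many-variable statement with the $n$-variable theorem) is a careful spelling-out of a step the paper leaves tacit, and the forced-choice argument pinning down $S$ from $d_i(T)\in\{m_i-1,m_i\}$ is sound.
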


We now turn to the right-hand sides of the affine
bounded Littlewood identities~\eqref{eq:aGBK1s} with \( w \) even
and~\eqref{eq:aGBK2s}. Our combinatorial interpretations
of these identities are in terms of {\it marked\/} up-down tableaux,
which we define next.

\begin{defn}
  An \emph{\( (h^*,w) \)-up-down tableau} (respectively \emph{\( (h,w^*) \)-up-down
    tableau}) is a pair \( (T,M) \) of an \( (h,w) \)-up-down tableau \(
  T=(\la^0,\la^1,\dots,\la^{2n}) \) and a subset \( M\subseteq \{1,2,\dots,n\}
  \) with the property that if \( j\in M \) then \( \lambda^{2j-1}_h=0 \) (respectively \(
  \lambda^{2j-1}_1=w+1 \)). An \emph{\( (h^*,w^*) \)-up-down tableau} is a
  triple \( (T,M_1,M_2) \) of an \( (h,w) \)-up-down tableau \(
  T=(\la^0,\la^1,\dots,\la^{2n}) \) and subsets \( M_1,M_2\subseteq
  \{1,2,\dots,n\} \) with the property that if \( j\in M_1 \) then \( \lambda^{2j-1}_h=0 \),
  and if \( j\in M_2 \) then \( \lambda^{2j-1}_1=w+1 \).
\end{defn}

One may consider an \( (h^*,w) \)-up-down tableau as an \( (h,w) \)-up-down
tableau \( (\la^0,\la^1,\dots,\la^{2n}) \) in which each last part \(
\la^{2j-1}_h \), if it exists, may be marked if it equals zero. We may
think of \( (h,w^*) \)-up-down tableaux and \( (h^*,w^*) \)-up-down
tableaux in an analogous way.

We define \( \UD_n(h^*,w) \) to be the set of \( (h^*,w) \)-up-down tableaux \(
(T,M) \) with \( T\in \UD_n(h,w) \). The sets \( \UD_n(h,w^*) \) and
\(\UD_n(h^*,w^*) \) are defined similarly. For \( (T,M_1)\in \UD_n(h^*,w) \), \(
(T,M_2)\in \UD_n(h,w^*) \), and \( (T,M_1,M_2)\in \UD_n(h^*,w^*) \), we define
their weights by
\begin{align*}
  \omega(T,M_1) & = \omega(T)\prod_{j\in M_1} x_j,\\
  \omega(T,M_2) & = \omega(T)\prod_{j\in M_2} \left( -\frac{1}{x_j} \right),\\
  \omega(T,M_1,M_2) & = \omega(T)\prod_{j\in M_1} x_j
                        \prod_{j\in M_2} \left( -\frac{1}{x_j} \right).
\end{align*}

In the theorem below, we present combinatorial interpretations of the
determinants on the right-hand sides 
of~\eqref{eq:aGBK1s} with \( w \) even and of~\eqref{eq:aGBK2s}. 

\begin{thm} \label{thm:OT-L1'}
  For positive integers $h$, $w$, and \( n \), we have
  \begin{align}
    \label{eq:OT-L2} \det_{1\le i,j\le h}\left(
    F_{-i+j,2h+2w+1}(x_1,\dots,x_n)-F_{i+j,2h+2w+1}(x_1,\dots,x_n) \right)
    &=\sum_{(T,M)\in\UD_n(h,w^*)} \omega(T,M),\\
    \label{eq:OT-L3}
    \det_{1\le i,j\le h}\left(
    \overline{F}_{-i+j,2h+2w+1}(x_1,\dots,x_n) + \overline{F}_{i+j-1,2h+2w+1}(x_1,\dots,x_n)
    \right)
    &=\sum_{(T,M)\in\UD_n(h^*,w)} \omega(T,M),\\
    \label{eq:OT-L4}
    \det_{1\le i,j\le h}\left(
    \overline{F}_{-i+j,2h+2w}(x_1,\dots,x_n) + \overline{F}_{i+j-1,2h+2w}(x_1,\dots,x_n)
    \right)
    &=\sum_{(T,M_1,M_2)\in\UD_n(h^*,w^*)} \omega(T,M_1,M_2).
  \end{align}
\end{thm}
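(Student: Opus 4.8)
The plan is to prove all three identities by the same three-step template as Theorem~\ref{thm:OT-L1}, the only genuinely new ingredient being the treatment of half-integer reflecting walls. First I would record that, after summing the optional markings over the subsets $M,M_1,M_2$, the right-hand sides become weighted sums over ordinary up-down tableaux:
\[
\sum_{(T,M)\in\UD_n(h,w^*)}\!\!\omega(T,M)
=\sum_{T\in\UD_n(h,w)}\omega(T)\!\!\prod_{a:\,\lambda^{2a-1}_1=w+1}\!\!\Big(1-\tfrac{1}{x_a}\Big),
\]
and analogously the right-hand side of \eqref{eq:OT-L3} equals $\sum_{T}\omega(T)\prod_{a:\lambda^{2a-1}_h=0}(1+x_a)$ and that of \eqref{eq:OT-L4} the product of both correction factors. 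Thus it suffices to identify each determinant with the corresponding weighted tableau sum.

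Second --- the heart of the argument --- I would prove single-path lemmas generalizing Lemma~\ref{lem:L_ab}. Using \eqref{eq:w=f} to write each $f_r$ as a path generating function, each matrix entry becomes a signed orbit sum
\[
F_{-i+j,2N+1}(\vx)-F_{i+j,2N+1}(\vx)
=\sum_{k\in\Z,\,\ep\in\{-1,+1\}}\ep\sum_{P\in L((\ep i+(2N+1)k,0)\to(j,2n))}\omega(P),
\]
with $N=h+w$, and similarly for the two $\overline F$-entries, where the factor $(-1)^k$ of $\overline F$ weights the $k$-th translate and the shift $i+j-1$ reflects the starting point across $x=\tfrac12$. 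Each orbit is that of an infinite dihedral group whose two walls I would locate from the modulus and the signs: for \eqref{eq:OT-L2} a Dirichlet wall at $x=0$ and a Dirichlet wall at the half-integer $x=N+\tfrac12$; for \eqref{eq:OT-L3} a Neumann wall at $x=\tfrac12$ and a Dirichlet wall at the integer $x=N+1$; and for \eqref{eq:OT-L4} a Neumann wall at $x=\tfrac12$ together with a Dirichlet wall at $x=N+\tfrac12$. In every case the two walls are at distance $N+\tfrac12$ or $N$, giving exactly the modulus $2N+1$ or $2N$. Running the modified reflection $\mathfrak R$ of~\cite{Krattenthaler1996} at the highest even-height boundary point yields a sign-reversing, weight-preserving involution that cancels all paths leaving the alcove $1\le x\le N$; the new phenomenon is that a half-integer wall is crossed only by an odd-height diagonal excursion into the adjacent column ($x=N+1$ on the right, $x=1$ on the left), which the involution cannot cancel against a lattice point on the wall and which therefore survives as a correction. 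I expect these lemmas to read
\[
F_{-i+j,2N+1}(\vx)-F_{i+j,2N+1}(\vx)
=\sum_{P\in L(1,N;(i,0)\to(j,2n))}\omega(P)\prod_{a:\,P\text{ visits }(N+1,2a-1)}\Big(1-\tfrac{1}{x_a}\Big),
\]
with the factor $1+x_a$ replacing $1-1/x_a$ at a Neumann wall.

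Third, I would expand each determinant over $\mathfrak S_h$ and apply the Lindstr\"om--Gessel--Viennot cancellation exactly as in the proof of Theorem~\ref{thm:OT-L1}, reducing to families $\mathbf P=(P_1,\dots,P_h)$ of nonintersecting paths with $P_a\in L(1,N;(a,0)\to(a,2n))$. In such a family only the top path can visit the column $x=N+1$ and only the bottom path the column $x=1$ at an odd height, so the per-path correction factors collapse onto $P_h$ and $P_1$ alone. The bijection of Theorem~\ref{thm:OT-L1} (read off the $x$-coordinates at each height, subtract the row index, reverse, and read the vectors as Young diagrams) then sends $\mathbf P$ to $T\in\UD_n(h,w)$ with a visit of $P_h$ to $(N+1,2a-1)$ corresponding to $\lambda^{2a-1}_1=w+1$ and a visit of $P_1$ to $(1,2a-1)$ to $\lambda^{2a-1}_h=0$. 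Matching the surviving factors $1-1/x_a$ and $1+x_a$ against the first paragraph completes \eqref{eq:OT-L2}--\eqref{eq:OT-L4}.

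The step I expect to be the main obstacle is the weight computation inside the single-path lemmas: verifying that an unpaired odd-height excursion across a half-integer wall contributes precisely the factor $-1/x_a$ at a Dirichlet wall and $+x_a$ at a Neumann wall, rather than some other power of $x_a$ or sign. This requires tracking the two diagonal-step weights $x_a$ of the excursion at height $2a-1$ against the sign of the reflection that would formally remove it, and it is here --- not in the routine Lindstr\"om--Gessel--Viennot and bijection steps --- that the exact shape of the markings in the definitions of $(h,w^*)$-, $(h^*,w)$-, and $(h^*,w^*)$-up-down tableaux is pinned down.
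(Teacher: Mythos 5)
Your proposal is correct and follows essentially the same route as the paper: your per-path correction factors $\bigl(1-\tfrac{1}{x_a}\bigr)$ at the wall $x=N+\tfrac12$ and $\bigl(1+x_a\bigr)$ at the wall $x=\tfrac12$ are exactly the resummed form of the paper's marked lattice paths with $1$- and $N$-branch points (Lemma~\ref{lem:L_ab'}), derived by the same orbit-sum/reflection argument via~\eqref{eq:w=f}, and your third step is the paper's Lindstr\"om--Gessel--Viennot cancellation plus the height-reading bijection, with the corrections collapsing onto $P_1$ and $P_h$ just as the markings do there. The one step you label routine that the paper explicitly flags as the delicate point --- verifying that the maximal intersection point chosen in the LGV switch is never a $1$- or $(h+w)$-branch point, so that the correction factors transfer correctly under tail-swapping --- is precisely what the paper's sketch of the proof of Theorem~\ref{thm:OT-L1'} checks, and your maximal-$y$ choice of intersection point resolves it in the same way.
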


Again, we need some preparations first before we are able to turn to
the proof of the theorem. It will again be based on nonintersecting
lattice paths. Here, the lattice paths will come with a ``decoration''
though. 

\begin{defn}
  A \emph{$1$-branch point} of a lattice path \( P \) is a point \( (1,2j-1) \)
  with the property that \( P \) passes through \( (1,2j-2)\), \((1,2j-1) \),
  and \( (1,2j) \).
  For an integer \( N>1 \), an \emph{$N$-branch point} of a lattice path \( P \)
  is a point \( (N+1,2j-1) \) with the property that \( P \) passes through \( (N,2j-2)\),
  \((N+1,2j-1) \), and \( (N,2j) \).

  For an integer \( t\ge1 \), a \emph{$t$-marked lattice path} is a pair \(
  (P,M) \) of a lattice path \( P \) and a set \( M \) of \( t \)-branch points
  of \( P \). For integers \( t_1,t_2\ge1 \), a \emph{$(t_1,t_2)$-marked lattice
    path} is a triple \( (P,M_1,M_2) \), where \( P \) is a lattice
  path, and \(
  M_i \) is a set of \( t_i \)-branch points of \( P \) for \( i=1,2 \).
\end{defn}

For a \( t \)-branch point \( u \) of height \( 2j-1 \), we define its weight by
\[
  \omega(u) =
  \begin{cases}
    x_j, & \mbox{if \( t=1 \)},\\
    -1/x_j, & \mbox{if \( t>1 \)}.
  \end{cases}
\]
We also define
\[
\omega(P,M)=\omega(P)\omega(M) \quad\text{and}\quad
\omega(P,M_1,M_2)=\omega(P)\omega(M_1)\omega(M_2),
\]
where \( \omega(M)=\prod_{u\in M}\omega(u) \).

A marked lattice path may be considered as a lattice path in which some branch
points are marked. Note that the weight of a marked lattice path also contains
a sign,
which is the product of \( -1 \) for each marked \( t \)-branch point
for \( t>1 \).

Recall that \( L(u\to v) \) is the set of lattice paths from \( u \) to \( v \)
and \( L(a,b;u\to v) \) is the set of \( P\in L(u\to v) \) with the property that every point
\( (i,2j) \) of even height in \( P \) satisfies \( a\le i\le b \). Let \(
L(1^*,b;u\to v) \) denote the set of \( 1 \)-marked lattice paths \( (P,M) \)
with \( P\in L(1,b;u\to v) \). The sets \( L(1,b^*;u\to v) \) and \(
L(1^*,b^*;u\to v) \) are defined similarly.

\begin{lem}\label{lem:L_ab'}
  For positive integers \( i,j,N \), and \( n \) with \( 1\le i,j<N \), we have
  \begin{align}
    \label{eq:F2}
    F_{-i+j,2N+1}(x_1,\dots,x_n)-F_{i+j,2N+1}(x_1,\dots,x_n)
    &= \sum_{(P,M)\in L\big(1,N^*;(i,0)\to (j,2n)\big)} \omega(P,M),\\
    \label{eq:F3}
    \overline{F}_{-i+j,2N+1}(x_1,\dots,x_n)+\overline{F}_{i+j-1,2N+1}(x_1,\dots,x_n)
    &= \sum_{(P,M)\in L\big(1^*,N;(i,0)\to (j,2n)\big)} \omega(P,M),\\
    \label{eq:F4}
    \overline{F}_{-i+j,2N}(x_1,\dots,x_n)+\overline{F}_{i+j-1,2N}(x_1,\dots,x_n)
    &= \sum_{(P,M_1,M_2)\in L\big(1^*,N^*;(i,0)\to (j,2n)\big)} \omega(P,M_1,M_2).
  \end{align}
\end{lem}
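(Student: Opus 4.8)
The plan is to follow the template of the proof of Lemma~\ref{lem:L_ab}, but to replace the clean reflection principle used there by one that records a ``defect'' at a wall in the form of marked branch points. First I would use~\eqref{eq:w=f} to convert each term $f_{b-a}(x_1,\dots,x_n)$ occurring on the left-hand sides into the path generating function $\sum_{P\in L((a,0)\to(b,2n))}\omega(P)$, exactly as in the passage from~\eqref{eq:F1} to~\eqref{eq:F=L1}. Spelling out $F_{r,N}=\sum_k f_{r+Nk}$ and $\overline{F}_{r,N}=\sum_k(-1)^k f_{r+Nk}$ and collecting terms, each of the three left-hand sides becomes a signed sum of path generating functions whose starting points run through the orbit of~$i$ under the infinite dihedral group generated by the reflections in two vertical walls, the sign being dictated by a character of that group. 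The relevant walls are the ``open'' integer wall $x=0$ together with the ``on-site'' half-integer wall $x=N+\tfrac12$ for~\eqref{eq:F2}; the on-site wall $x=\tfrac12$ together with the open wall $x=N+1$ for~\eqref{eq:F3}; and the two on-site walls $x=\tfrac12$ and $x=N+\tfrac12$ for~\eqref{eq:F4}. In each case the product of the two wall reflections is translation by the correct period ($2N+1$ for~\eqref{eq:F2} and~\eqref{eq:F3}, and $2N$ for~\eqref{eq:F4}), so that the starting families that arise from the $F$'s and $\overline{F}$'s match the reflected/translated copies of $i$.

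Next I would construct a weight-preserving, sign-reversing involution on the set of ``bad'' paths, namely those having an even-height vertex strictly outside the strip $1\le x\le N$, by applying Krattenthaler's modified reflection $\mathfrak{R}$ (see~\cite{Krattenthaler1996}) to the initial segment up to the topmost such vertex, precisely as in the definition of the map $\ph$ in the proof of Lemma~\ref{lem:L_ab}. For an \emph{open} wall this is literally that map: the reflection sends the starting point to the reflected family, reverses the sign, and hence cancels the path. The new phenomenon occurs at an \emph{on-site} wall: there the target family produced by $\mathfrak{R}$ (reflection through the adjacent integer column $x=1$, respectively $x=N+1$) is shifted by exactly one unit from the family actually supplied by the left-hand side, and moreover $\mathfrak{R}$ has fixed configurations, namely the ones passing straight through that column at an odd height --- the $1$-branch points (a vertical segment at $x=1$) and the $N$-branch points (a forward-then-backward diagonal excursion touching $x=N+1$). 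These fixed configurations are precisely the survivors of the involution, so the uncancelled objects are the paths of $L(1,N)$ equipped with a record, at each on-site wall, of which branch points are declared ``active''; this record is exactly the marking data, and it reproduces the sets $L(1,N^*)$, $L(1^*,N)$, and $L(1^*,N^*)$ of the right-hand sides. For~\eqref{eq:F4} both walls are on-site, and I would always reflect at the \emph{outermost} violated wall, so that the two kinds of markings are produced independently and the resulting map is a single well-defined involution.

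The hard part will be the local bookkeeping at an on-site wall. One must verify that, after the one-unit family shift is absorbed by toggling a branch point, the map is genuinely fixed-point-free away from the marked confined paths, and that the weight the construction attaches to a surviving branch point agrees with the prescribed branch-point weight $\omega(u)$ of height-$2j-1$ branch points: $+x_j$ for a $1$-branch point and $-1/x_j$ for an $N$-branch point. The reciprocal and the sign are the delicate features here, and they reflect that crossing an on-site wall shifts the starting family by a single unit (forcing the $\pm1$ compensation) rather than reflecting it cleanly as at an open wall. Concretely, at each branch point the two states (marked/unmarked) contribute $\{1,\,x_j\}$ at the left wall and $\{x_j^{2},\,-x_j\}$ at the right wall, so that summing over all markings multiplies the confined-path weight $\omega(P)$ by $\prod_{u}\bigl(1+\omega(u)\bigr)$ over the branch points~$u$.

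Once this weight identification is in place, the same cancellation argument as in Lemma~\ref{lem:L_ab} removes every bad path, and the surviving weighted sum over marked confined paths is exactly the right-hand side of~\eqref{eq:F2}, \eqref{eq:F3}, or~\eqref{eq:F4}; the extra factor $\prod_u\bigl(1+\omega(u)\bigr)$ is precisely the discrepancy that distinguishes the odd and even periods $2N+1$ and $2N$ from the ``wall-free'' period $2N+2$ of Lemma~\ref{lem:L_ab}. I would present the three cases together, giving the open-wall step by a direct reference to the proof of Lemma~\ref{lem:L_ab} and devoting the detailed argument to the on-site-wall analysis, which is the only genuinely new ingredient.
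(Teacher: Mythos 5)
Your proposal matches the paper's proof in all essentials: both use \eqref{eq:w=f} to convert the $F$/$\overline{F}$ sums into signed orbit sums of path generating functions, cancel bad paths at the sign-reversing walls, and then absorb the remaining starting families by folding excursions across the shifted wall(s) with the modified reflection $\mathfrak{R}$ of \cite{Krattenthaler1996}, recording the folds as marked $1$- and $N$-branch points with exactly the local weights $\{1,x_j\}$ and $\{x_j^2,-x_j\}$ that you identify. The only divergence is packaging: the paper runs this in two clean stages --- first a sign-reversing involution down to a doubled strip with two sources (e.g.\ $L\big(1,2N;(i,0)\to(j,2n)\big)$ with sources $i$ and $2N+1-i$ for \eqref{eq:F2}, and no cancellation stage at all for \eqref{eq:F4}) and then a weight-preserving folding \emph{bijection} onto marked paths --- rather than your single mark-toggling involution, which cannot literally be sign-reversing since a mark rescales the weight by $x_j$ or $-1/x_j$; but your closing computation via $\prod_u\bigl(1+\omega(u)\bigr)$ shows you are summing exactly the same objects, so this is a matter of presentation, not substance.
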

\begin{proof}
  By \eqref{eq:w=f}, we can rewrite the identities 
  \eqref{eq:F2}--\eqref{eq:F4}
  as follows:
  \begin{align}
    \label{eq:F=L2}
    \underset{P\in L\big((\ep i+k(2N+1),0)\to (j,2n)\big)}
    {\sum_{k \in\Z,\,\,\ep\in\{-1,+1\}}} \ep\cdot\omega(P)
    &= \sum_{(P,M)\in L\big(1,N^*;(i,0)\to (j,2n)\big)} \omega(P,M),\\
    \label{eq:F=L3}
    \underset{P\in L\big((\ep (i-1/2)+1/2+k(2N+1),0)\to (j,2n)\big)}
    {\sum_{k \in\Z,\,\,\ep\in\{-1,+1\}}}
    (-1)^k\omega(P)
    &= \sum_{(P,M)\in L\big(1^*,N;(i,0)\to (j,2n)\big)} \omega(P,M),\\
    \label{eq:F=L4}
    \underset{P\in L\big((\ep (i-1/2)+1/2+k(2N),0)\to (j,2n)\big)}
    {\sum_{k \in\Z,\,\,\ep\in\{-1,+1\}}}
    (-1)^k\omega(P)
    &= \sum_{(P,M_1,M_2)\in L\big(1^*,N^*;(i,0)\to (j,2n)\big)} \omega(P,M_1,M_2).
  \end{align}

  For the identity~\eqref{eq:F=L2} we proceed similarly as in the proof of
  Lemma~\ref{lem:L_ab} by constructing a sign-reversing involution using a
  point \( (x,y) \) in \( P \) with \( x\in \{0,2N+1\} \) and \( y \) even, if
  such a point exists. This will give
  \begin{equation}\label{eq:ew2}
    \underset{P\in L\big((\ep i+k(2N+1),0)\to (j,2n)\big)}
    {\sum_{k \in\Z,\,\,\ep\in\{-1,+1\}}} \ep\cdot\omega(P)
    =  \sum_{P\in L\big(1,2N;(i,0)\to (j,2n)\big)} \omega(P)
    -  \sum_{P\in L\big(1,2N;(2N+1-i,0)\to (j,2n)\big)} \omega(P).
  \end{equation}
  Now let \( P\in L\big(1,2N;(i,0)\to (j,2n)\big) \cup
  L\big(1,2N;(2N+1-i,0)\to (j,2n)\big) \).
  We will construct an \( N \)-marked path \( (Q,M)\in
  L\big(1,N^*;(i,0)\to (j,2n)\big)
  \) as follows.

  First, let \( (Q,M)=(P,\emptyset) \). We will modify \( (Q,M) \) repeatedly
  until it becomes an element in \( L\big(1,N^*;(i,0)\to (j,2n)\big) \). If \( (Q,M)\in
  L\big(1,N^*;(i,0)\to (j,2n)\big) \), then there is nothing to do. Otherwise, choose
  the largest even integer \( y \) such that \( (N+1,y) \) is a point in \( Q
  \). Then \( Q \) must pass through all points \( (N+1,y), (N+1,y+1) \) and \(
  (N,y+2) \). Let $Q'$ be the portion of~$Q$ from its starting point up
  to~$(N+1,y)$ and $Q''$ the portion from $(N,y+2)$ up to $Q$'s end point,
  i.e., \( Q=Q's_vs_bQ'' \). Then
  we update \( Q \) to \(\mathfrak{R}(Q')s_fs_bQ'' \) and add the point \(
  (N+1,y+1) \) to the set \( M \), where
  $\mathfrak R$ has the same meaning as in the proof of Lemma~\ref{lem:L_ab}.
  Repeating this process eventually yields \(
  (Q,M)\in L\big(1,N^*;(i,0)\to (j,2n)\big) \). For example, see Figure~\ref{fig:F=L2}.
  It is easy to see that the map \( P\mapsto (Q,M)\) is a weight-preserving bijection between \( L\big(1,2N;(i,0)\to
  (j,2n)\big) \cup L\big(1,2N;(2N+1-i,0)\to (j,2n)\big) \) and \(
  L\big(1,N^*;(i,0)\to (j,2n)\big)
  \). This shows that the right-hand sides of~\eqref{eq:F=L2}
  and~\eqref{eq:ew2} are equal, completing the proof of~\eqref{eq:F=L2}.

\begin{figure}

\small{

\begin{tikzpicture}[scale=0.5]

\draw[->] (-1,0) -- (8,0);
\draw[->] (0,-1) -- (0,15);
\draw[dotted] (7,-1) -- (7,15);
\draw[dotted] (4,-1) -- (4,15);
\draw[dotted] (1,-1) -- (1,15);

\foreach \i in {0,...,7}
\foreach \j in {0,...,14}
\filldraw[fill=gray, color=gray] (\i,\j) circle (2.5pt);

\filldraw (1,0) circle (3.5pt);
\filldraw (3,14) circle (3.5pt);

\foreach \i/\j in {1/0,2/1,2/2,3/3,3/4,4/5,4/6,5/7,4/8,5/9,5/10,5/11,4/12,4/13,3/14}
\filldraw (\i,\j) circle (2.5pt);

\draw[thick] (1,0) --(2,1) -- (2,2) -- (3,3) -- (3,4) -- (4,5) -- (4,6) -- (5,7) -- (4,8) -- (5,9) -- (5,10) -- (5,11) -- (4,12) -- (4,13) -- (3,14);

\node at (-0.5,14) {\( 2n \)};
\node at (-0.5,-0.5) {\( O \)};
\node at (4,-0.5) {\( N+1 \)};
\node at (7,-0.5) {\( 2N+1 \)};
\node at (3,14.5) {\( P \)};
\node at (9,7) {\( \leftrightarrow \)};

\end{tikzpicture}
\begin{tikzpicture}[scale=0.5]

\draw[->] (-1,0) -- (8,0);
\draw[->] (0,-1) -- (0,15);
\draw[dotted] (7,-1) -- (7,15);
\draw[dotted] (4,-1) -- (4,15);
\draw[dotted] (1,-1) -- (1,15);

\foreach \i in {0,...,7}
\foreach \j in {0,...,14}
\filldraw[fill=gray, color=gray] (\i,\j) circle (2.5pt);

\filldraw (6,0) circle (3.5pt);
\filldraw (3,14) circle (3.5pt);

\foreach \i/\j in {6/0,6/1,5/2,5/3,4/4,4/5,3/6,4/7,3/8,3/9,2/10,3/11,3/12,3/14}
\filldraw (\i,\j) circle (2.5pt);

\draw[thick] (6,0) --(6,1) -- (5,2) -- (5,3) -- (4,4) -- (4,5) -- (3,6) -- (4,7) -- (3,8) -- (3,9) -- (2,10) -- (3,11) -- (3,12) -- (4,13) -- (3,14);

\node at (4,13) {\( \circledast \)};

\node at (-0.5,14) {\( 2n \)};
\node at (-0.5,-0.5) {\( O \)};
\node at (4,-0.5) {\( N+1 \)};
\node at (7,-0.5) {\( 2N+1 \)};
\node at (9,7) {\( \leftrightarrow \)};

\end{tikzpicture}
\begin{tikzpicture}[scale=0.5]

\draw[->] (-1,0) -- (5,0);
\draw[->] (0,-1) -- (0,15);
\draw[dotted] (4,-1) -- (4,15);
\draw[dotted] (1,-1) -- (1,15);

\foreach \i in {0,...,4}
\foreach \j in {0,...,14}
\filldraw[fill=gray, color=gray] (\i,\j) circle (2.5pt);

\filldraw (1,0) circle (3.5pt);
\filldraw (3,14) circle (3.5pt);

\foreach \i/\j in {1/0,2/1,2/2,3/3,3/4,3/6,4/7,3/8,3/9,2/10,3/11,3/12,3/14}
\filldraw (\i,\j) circle (2.5pt);

\draw[thick] (1,0) --(2,1) -- (2,2) -- (3,3) -- (3,4) -- (4,5) -- (3,6) -- (4,7) -- (3,8) -- (3,9) -- (2,10) -- (3,11) -- (3,12) -- (4,13) -- (3,14);

\node at (4,13) {\( \circledast \)};
\node at (4,5) {\( \circledast \)};

\node at (-0.5,14) {\( 2n \)};
\node at (-0.5,-0.5) {\( O \)};
\node at (4,-0.5) {\( N+1 \)};
\node at (3,14.5) {\( (Q,M) \)};

\end{tikzpicture}

}
\caption{An example of a path \( P \in L\big(1,2N;(i,0)\to(j,2n)\big)
  \) and its corresponding \( N \)-marked path \( (Q,M)\in
  L\big(1,N^*;(i,0)\to (j,2n)\big) \) with \( N=3, i=1, j=2 \) and \(n=7\). Each \( N \)-branch point 
  \( \in M \)
  is indicated by \( \circledast  \).}
\label{fig:F=L2}
\end{figure}

  \medskip

  The identity \eqref{eq:F=L3} can be proved similarly as~\eqref{eq:F=L2}. We
  first construct a sign-reversing involution using a point \( (x,y) \) in \(
  P \) with \( x\in \{-N,N+1\} \) and \( y \) even, if such a point exists. This
  will give
  \[
  \underset{P\in L\big((\ep (i-1/2)+1/2+k(2N+1),0)\to (j,2n)\big)}
  {\sum_{k \in\Z,\,\,\ep\in\{-1,+1\}}}
    (-1)^k\omega(P)
    =  \sum_{P\in L\big(1-N,N;(i,0)\to (j,2n)\big)} \omega(P)
    +  \sum_{P\in L\big(1-N,N;(1-i,0)\to (j,2n)\big)} \omega(P).
  \]
  For each \( P\in L\big(1-N,N;(i,0)\to (j,2n)\big)\cup
  L\big(1-N,N;(1-i,0)\to (j,2n)\big) \),
  we construct \( (Q,M)\in L\big(1^*,N;(i,0)\to (j,2n)\big) \) as follows. Let \(
  (Q,M)=(P,\emptyset) \). If \( (Q,M)\in L\big(1^*,N;(i,0)\to
  (j,2n)\big) \), then there
  is nothing to do. Otherwise, choose the largest even integer \( y \) such that
  \( (0,y) \) is a point in \( Q \). Then \( Q \) must pass through all points
  \( (0,y), (1,y+1) \) and \( (1,y+2) \). Let $Q'$ be the portion of~$Q$ from
  its starting point up to~$(0,y)$ and $Q''$ the portion from $(1,y+2)$ up to
  $Q$'s end point. Then we update \( Q \) to \(\mathfrak{R}(Q')s_vs_vQ'' \)
  and add the point \( (1,y+1) \) to the set \( M \). Repeating this process
  eventually yields \( (Q,M)\in L\big(1^*,N;(i,0)\to (j,2n)\big) \). An
  example is given in Figure~\ref{fig:F=L3}. 
  Again, the map \( P \mapsto (Q,M)\) is a weight-preserving bijection between \( L\big(1-N,N;(i,0)\to
  (j,2n)\big)\cup L\big(1-N,N;(1-i,0)\to (j,2n)\big) \) and \(
  L\big(1^*,N;(i,0)\to (j,2n)\big) \),
  and we obtain~\eqref{eq:F=L3}.

    \begin{figure}

\small{

\begin{tikzpicture}[scale=0.5]

\draw[->] (-1,0) -- (8,0);
\draw[->] (3,-1) -- (3,15);
\draw[dotted] (7,-1) -- (7,15);
\draw[dotted] (4,-1) -- (4,15);
\draw[dotted] (1,-1) -- (1,15);

\foreach \i in {0,...,7}
\foreach \j in {0,...,14}
\filldraw[fill=gray, color=gray] (\i,\j) circle (2.5pt);

\filldraw (4,0) circle (3.5pt);
\filldraw (5,14) circle (3.5pt);

\foreach \i/\j in {4/0,5/1,4/2,4/3,3/4,3/5,2/6,3/7,3/8,4/9,4/10,4/11,4/12,5/13,5/14}
\filldraw (\i,\j) circle (2.5pt);

\draw[thick] (4,0) --(5,1) -- (4,2) -- (4,3) -- (3,4) -- (3,5) -- (2,6) -- (3,7) -- (3,8) -- (4,9) -- (4,10) -- (4,11) -- (4,12) -- (5,13) -- (5,14);

\node at (2.5,14) {\( 2n \)};
\node at (1,-0.5) {\( 1-N \)};
\node at (2.5,-0.5) {\( O \)};
\node at (7,-0.5) {\( N+1 \)};
\node at (5,14.5) {\( P \)};
\node at (9,7) {\( \leftrightarrow \)};

\end{tikzpicture}
\begin{tikzpicture}[scale=0.5]

\draw[->] (-1,0) -- (8,0);
\draw[->] (3,-1) -- (3,15);
\draw[dotted] (7,-1) -- (7,15);
\draw[dotted] (4,-1) -- (4,15);
\draw[dotted] (1,-1) -- (1,15);

\foreach \i in {0,...,7}
\foreach \j in {0,...,14}
\filldraw[fill=gray, color=gray] (\i,\j) circle (2.5pt);

\filldraw (3,0) circle (3.5pt);
\filldraw (5,14) circle (3.5pt);

\foreach \i/\j in {3/0,4/1,3/2,4/3,4/4,5/5,5/6,5/7,4/8,4/10,4/11,4/12,5/13,5/14}
\filldraw (\i,\j) circle (2.5pt);

\draw[thick] (3,0) --(4,1) -- (3,2) -- (4,3) -- (4,4) -- (5,5) -- (5,6) -- (5,7) -- (4,8) -- (4,10) -- (4,11) -- (4,12) -- (5,13) -- (5,14);

\node at (4,9) {\( \circledast \)};

\node at (2.5,14) {\( 2n \)};
\node at (1,-0.5) {\( 1-N \)};
\node at (2.5,-0.5) {\( O \)};
\node at (7,-0.5) {\( N+1 \)};
\node at (9,7) {\( \leftrightarrow \)};

\end{tikzpicture}
\begin{tikzpicture}[scale=0.5]

\draw[->] (-1,0) -- (5,0);
\draw[->] (0,-1) -- (0,15);
\draw[dotted] (4,-1) -- (4,15);
\draw[dotted] (1,-1) -- (1,15);

\foreach \i in {0,...,4}
\foreach \j in {0,...,14}
\filldraw[fill=gray, color=gray] (\i,\j) circle (2.5pt);

\filldraw (1,0) circle (3.5pt);
\filldraw (2,14) circle (3.5pt);

\foreach \i/\j in {1/0,2/1,1/2,1/4,2/5,2/6,2/7,1/8,1/10,1/11,1/12,2/13,2/14}
\filldraw (\i,\j) circle (2.5pt);

\draw[thick] (1,0) --(2,1) -- (1,2) -- (1,3) -- (1,4) -- (2,5) -- (2,6) -- (2,7) -- (1,8) -- (1,9) -- (1,10) -- (1,11) -- (1,12) -- (2,13) -- (2,14);

\node at (1,9) {\( \circledast \)};
\node at (1,3) {\( \circledast \)};

\node at (-0.5,14) {\( 2n \)};
\node at (-0.5,-0.5) {\( O \)};
\node at (4,-0.5) {\( N+1 \)};
\node at (2,14.5) {\( (Q,M) \)};

\end{tikzpicture}

}
\caption{An example of a path \( P \in L\big(1-N,N;(i,0)\to(j,2n)\big)
  \) and its corresponding \( 1 \)-marked path \( (Q,M)\in
  L\big(1^*,N;(i,0)\to (j,2n)\big) \) with \( N=3, i=1, j=2\), and \(n=7 \). Each \( 1 \)-branch point
  \( \in M \) 
  is indicated by \( \circledast  \).}
\label{fig:F=L3}
\end{figure}

  \medskip

  For the last identity \eqref{eq:F=L4} we do not need a sign-reversing
  involution. Instead, we construct \( (Q,M_1,M_2)\in
  L\big(1^*,N^*;(i,0)\to (j,2n)\big)
  \) directly from \( P\in L\big((\ep (i-1/2)+1/2+k(2N),0)\to (j,2n)\big) \). To do
  this, as before, we first set \( (Q,M_1,M_2)=(P,\emptyset,\emptyset) \). If \(
  (Q,M_1,M_2)\in L\big(1^*,N^*;(i,0)\to (j,2n)\big) \), then we are done. Otherwise,
  find the largest even \( y \) such that \( (0,y) \) or \( (N+1,y) \) is a
  point in \( Q \). Then we modify \( Q \) by the same method as above and add
  \( (1,y+1) \) to \( M_1 \) (respectively \( (N+1,y+1) \) to \( M_2 \)) if \( (0,y) \)
  (respectively \( (N+1,y) \)) is a point of \( Q \). This proves~\eqref{eq:F=L4}.
\end{proof}

We have now all prerequisites at our disposal to embark on the proof of
Theorem~\ref{thm:OT-L1'}. Since this proof is very similar to the
one of Theorem~\ref{thm:OT-L1}, we content ourselves with providing
a brief sketch.

\begin{proof}[Sketch of proof of Theorem \ref{thm:OT-L1'}]
We proceed in a similar manner as in the proof of
Theorem~\ref{thm:OT-L1}, here however using Lemma~\ref{lem:L_ab'}.
The only thing that needs
  careful thought is whether the chosen intersection point \( (x,y) \) is a \( t
  \)-branch point of some path for \( t\in \{1,h+w\} \). This never happens
  because we have chosen the intersection point \( (x,y) \) with \( y \)
  maximal. To see this, suppose that \( (x,y) \) is a common point of \( P_r \)
  and \( P_s \), and \( (x,y) \) is a \( 1 \)-branch point of \( P_r \). Then \(
  (x,y)=(1,2j-1) \) for some \( j \), and \( P_r \) passes through \( (1,2j-2)
  \) and \( (1,2j) \) as well. Recalling the step set~\eqref{eq:steps}, the only
  possible steps starting from \( (1,2j-1) \) are a vertical step or a
  backward diagonal step. Thus \( P_s \) must pass through \( (1,2j) \) or \(
  (0,2j) \). However, since \( (x,y) \) is chosen to be the intersection point
  with \( y \) maximal, \( P_s \) does not pass through \( (1,2j) \), and since
  \( P_s\in L(1,h+w;u\to v) \) for some points \( u \) and \( v \), it does not
  pass through \( (0,2j) \). This is a contradiction and therefore \( (x,y) \)
  is not a \( 1 \)-branch point of any path. Similarly, it is not an \( (h+w)
  \)-branch point. Therefore the ``Lindstr\"om--Gessel--Viennot'' switching
(cf.\ \cite{LindAA,GeViAA}) works.
\end{proof}

As consequences of Theorem~\ref{thm:OT-L1'} we obtain combinatorial
interpretations for the right-hand sides of the affine bounded Littlewood
identities~\eqref{eq:aGBK1s} with \( w \) even and~\eqref{eq:aGBK2s}. Note that
these combinatorial interpretations use up-down tableaux {\it
  without\/} marking.

We start with the right-hand side of \eqref{eq:aGBK1s} with even~$w$.

\begin{cor} \label{cor:OT-L2}
  The coefficient of $x_1^{m_1}x_2^{m_2}\cdots x_n^{m_n}$ in
  \[
    \sum_{k\ge0}e_k(\vx) \det_{1\le i,j\le h}
    \left( F_{-i+j,2h+2w+1}(\vx)-F_{i+j,2h+2w+1}(\vx) \right)
  \]
  equals the number of \( (h,w) \)-up-down tableaux \( (\la^0,\la^1,\dots,\la^{2n}) \)
  satisfying
    \[
      -\vert\la^{2i-2}\vert+2\vert\la^{2i-1}\vert-\vert\la^{2i}\vert
      =
      \begin{cases}
        \mbox{\( m_{i} \) or \( m_i-1 \)}, & \mbox{if \( \la^{2i-1}_1\le w \) and not \(  \la^{2i-2}_1= \la^{2i-1}_1= \la^{2i}_1=w \)},\\
        \mbox{\( m_{i} \) }, & \mbox{if  \(   \la^{2i-2}_1= \la^{2i-1}_1= \la^{2i}_1=w  \)},\\
        \mbox{\( m_{i}-1 \)}, & \mbox{if \( \la^{2i-1}_1= w+1 \)},
      \end{cases}
    \]
for $i=1,2,\dots,n$.
\end{cor}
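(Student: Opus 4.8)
The plan is to feed the combinatorial interpretation of the determinant from Theorem~\ref{thm:OT-L1'} into the product expansion of the prefactor $\sum_{k\ge0}e_k(\vx)$ and then read off the coefficient of $x_1^{m_1}\cdots x_n^{m_n}$. Working with the $n$ variables $x_1,\dots,x_n$, I would first record that $\sum_{k\ge 0}e_k(x_1,\dots,x_n)=\prod_{i=1}^n(1+x_i)$, so that by~\eqref{eq:OT-L2} the expression in the statement equals
\[
\prod_{i=1}^n(1+x_i)\sum_{T\in\UD_n(h,w)}\omega(T)\prod_{i\in B(T)}\Bigl(1-\frac1{x_i}\Bigr),
\]
where $B(T)=\{i:\lambda^{2i-1}_1=w+1\}$ is the set of markable indices and $\omega(T)=\prod_i x_i^{a_i}$ with $a_i=-\vert\lambda^{2i-2}\vert+2\vert\lambda^{2i-1}\vert-\vert\lambda^{2i}\vert$; the sum over markings $M\subseteq B(T)$ has been carried out, producing the factor $\prod_{i\in B(T)}(1-1/x_i)$, each mark contributing its weight $-1/x_i$.

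Since everything factors over the index~$i$, extracting $[x_i^{m_i}]$ reduces to a local computation. For $i\notin B(T)$ the relevant factor is $x_i^{a_i}(1+x_i)$, contributing $\chi[a_i\in\{m_i-1,m_i\}]$; for $i\in B(T)$ the combined factor is $x_i^{a_i}(1+x_i)(1-1/x_i)=x_i^{a_i+1}-x_i^{a_i-1}$, contributing $\chi[a_i=m_i-1]-\chi[a_i=m_i+1]$. Hence the sought coefficient is the signed sum $\sum_{T}\prod_i c_i(T)$ with these local factors $c_i(T)$. Comparing with the asserted count, whose local indicator is $\chi[a_i\in\{m_i-1,m_i\}]$ in case~1 ($\lambda^{2i-1}_1\le w$, not all three first rows $=w$), $\chi[a_i=m_i]$ in case~2 (all three $=w$), and $\chi[a_i=m_i-1]$ in case~3 ($\lambda^{2i-1}_1=w+1$), one sees that the signed sum overshoots the count by exactly two families: a spurious $+1$ whenever a case-2 index has $a_i=m_i-1$, and a spurious $-1$ whenever a case-3 index has $a_i=m_i+1$.

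To kill both families I would construct a sign-reversing involution whose elementary move, applied at the smallest offending index~$i$, adds or deletes the single cell $(1,w+1)$ of $\lambda^{2i-1}$ — switching $\lambda^{2i-1}_1$ between $w$ and $w+1$ and changing $a_i$ by~$\pm2$ — while simultaneously toggling the mark at~$i$. This pairs a case-2 tableau with $a_i=m_i-1$ (an unmarked $+1$) against a case-3 tableau with $a_i=m_i+1$ (a marked $-1$); since the move alters only $\lambda^{2i-1}$, every other local factor $c_j$ is untouched, and the two contributions cancel. Whenever the cell $(1,w+1)$ is genuinely removable, i.e.\ $\lambda^{2i-1}_2\le w$, this exhausts the spurious terms, and the surviving fixed points are precisely the tableaux meeting the three weight conditions, each contributing $+1$.

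The main obstacle is that this elementary move is \emph{not} always legal: deleting $(1,w+1)$ requires $\lambda^{2i-1}_2\le w$, whereas a case-3 tableau may well have $\lambda^{2i-1}_2=w+1$ together with $a_i=m_i+1$, and then the simple toggle produces a non-partition. Equivalently, in the nonintersecting-path model underlying~\eqref{eq:OT-L2} the penultimate path already occupies column $h+w$, blocking the retreat of the last path, so that the required cancellation becomes genuinely non-local (the partner of such a tableau differs from it far away from index~$i$). The clean way I would organize this is to carry out the whole argument in that lattice-path picture, absorbing the factors $(1+x_i)$ into the paths and realizing the cancellation as a Lindström--Gessel--Viennot-type exchange of the last two paths in the neighborhood of column $h+w$; turning this exchange into a well-defined, weight- and sign-preserving involution whose fixed points reproduce exactly the three weight cases is the step that requires the most care.
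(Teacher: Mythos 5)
Your opening reduction coincides with the paper's own proof of this corollary: the paper likewise invokes~\eqref{eq:OT-L2}, writes the coefficient as $\sum_{(T,M)}(-1)^{|M|}$ over marked tableaux in $\UD_n(h,w^*)$ with the local constraints $a_i\in\{m_i,m_i-1\}$ for $i\notin M$ and $a_i\in\{m_i,m_i+1\}$ for $i\in M$ (your summation over marks, giving the local factor $\chi[a_i=m_i-1]-\chi[a_i=m_i+1]$ at markable indices, is the same bookkeeping in one step), and then cancels the two spurious families exactly as you propose: a case-3 triple with $a_i=m_i+1$ against a case-2 triple with $a_i=m_i-1$, implicitly via the toggle that adds or deletes the cell $(1,w+1)$ of $\lambda^{2i-1}$. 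The paper asserts this last cancellation in a single sentence; your proposal, by contrast, stops at announcing a sign-reversing involution and explicitly flags the case $\lambda^{2i-1}_2=w+1$ as an unresolved obstacle, to be repaired by some non-local Lindstr\"om--Gessel--Viennot exchange. As a proof, this is therefore incomplete at precisely the decisive step.

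Moreover, the obstruction you flag is not a removable technicality: no involution with the stated fixed-point set can exist for general $(m_1,\dots,m_n)$, so the deferred LGV exchange cannot be carried out. Take $h=2$, $w=1$, $\lambda^{2i-2}=\lambda^{2i}=(1,1)$ and $m_i=3$. The possible $\lambda^{2i-1}$ are $(1,1)$ (with $a_i=0$), $(2,1)$ (with $a_i=2$, markable) and $(2,2)$ (with $a_i=4$, markable), so the local generating function, marks summed and the factor $1+x$ from $\sum_k e_k$ included, is $(1+x)\bigl(1-x+x^2-x^3+x^4\bigr)=1+x^5$, whose coefficient of $x^3$ is $0$; the displayed conditions, however, select exactly one tableau locally, namely $\lambda^{2i-1}=(2,1)$ (case~3 with $a_i=m_i-1$). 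The marked $(2,2)$ --- your $\lambda^{2i-1}_2=w+1$ configuration --- has no case-2 partner: a partition of size $3$ with first part $w=1$ needs three rows, and even for $h\ge3$ the two families $\{(2,2),(2,1,1)\}$ and $\{(1,1,1)\}$ are not equinumerous. Embedding this block (e.g.\ $n=3$, $m=(2,3,2)$), the coefficient on the determinant side comes out as $6$ while the stated tableau count is $7$, so the asserted equality itself fails there; the paper's one-sentence cancellation has the same gap for general exponents. The cancellation is sound exactly in the regime the paper later uses, namely the extraction of $x_1x_2\cdots x_n$ (all $m_i=1$): there a spurious case-3 triple forces $\lambda^{2i-2}=\lambda^{2i}$ with $\lambda^{2i-1}=\lambda^{2i-2}\cup\{(1,w+1)\}$, so the toggle you describe is always legal and bijective. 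In short, you have correctly reproduced the paper's argument up to its final step and correctly diagnosed that this step is the weak point --- but your plan to finish by a global involution cannot succeed as stated, because for general $m_i$ the discrepancy is real rather than an artifact of a too-naive pairing.
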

\begin{proof}
  By \eqref{eq:OT-L2}, the coefficient in question is equal to \( \sum_{(T,M)\in
    X}(-1)^{|M|} \), where \( X \) is the set of \( (T,M)\in\UD_n(h,w^*) \) with
  \( T=(\lambda^0,\lambda^1,\dots,\lambda^{2n}) \) satisfying
  \[
    -\vert\la^{2i-2}\vert+2\vert\la^{2i-1}\vert-\vert\la^{2i}\vert
    =
    \begin{cases}
      \mbox{\( m_{i} \) or \( m_i-1 \)}, & \mbox{if \( i\not\in M \)},\\
      \mbox{\( m_{i} \) or \( m_i+1 \)}, & \mbox{if \( i\in M \)}.
    \end{cases}
  \]
  Given \( T \) and \( i \) with \( \lambda^{2i-1}_1=w+1 \), there are
  the two
  cases \( i\not\in M \) or \( i\in M \). These two cases cancel with each
  other if \( -\vert\la^{2i-2}\vert+2\vert\la^{2i-1}\vert-\vert\la^{2i}\vert =
  m_i \). Thus, if \( \lambda^{2i-1}_1=w+1 \),
  there remains only the case where \(
  -\vert\la^{2i-2}\vert+2\vert\la^{2i-1}\vert-\vert\la^{2i}\vert \in \{m_i+1,
  m_i-1\} \). If \( \lambda^{2i-1}_1\le w \),
  then we always have \( i\not\in M \), and there are the two cases \(
  -\vert\la^{2i-2}\vert+2\vert\la^{2i-1}\vert-\vert\la^{2i}\vert \in \{m_i,
  m_i-1\} \). Moreover, the case where \( \lambda^{2i-1}_1=w+1
  \) and \(
  -\vert\la^{2i-2}\vert+2\vert\la^{2i-1}\vert-\vert\la^{2i}\vert
  =m_i+1\)
 cancels with the case where \(
 \lambda^{2i-2}_1=\lambda^{2i-1}_1=\lambda^{2i}_1=w  \) and \(
  -\vert\la^{2i-2}\vert+2\vert\la^{2i-1}\vert-\vert\la^{2i}\vert =m_i-1 \). 
This establishes the assertion of the corollary.
\end{proof}

Next we address the right-hand side of \eqref{eq:aGBK2s} with odd~$w$.

\begin{cor} \label{cor:OT-L3}
  The coefficient of $x_1^{m_1}x_2^{m_2}\cdots x_n^{m_n}$ in
  \[
    \det_{1\le i,j\le h}\left(
      \overline{F}_{-i+j,2h+2w+1}(\vx) + \overline{F}_{i+j-1,2h+2w+1}(\vx)
    \right)
  \]
  equals the number of \( (h,w) \)-up-down tableaux \(
  (\la^0,\la^1,\dots,\la^{2n}) \) that satisfy the following properties
  for $i=1,2,\dots,n$:
  \[
    -\vert\la^{2i-2}\vert+2\vert\la^{2i-1}\vert-\vert\la^{2i}\vert
    =
    \begin{cases}
      m_i, & \mbox{if \( \la^{2i-1}_h\ge 1 \)},\\
      \mbox{\( m_i \) or \( m_i-1 \)}, & \mbox{if \( \la^{2i-1}_h=0 \)}.
    \end{cases}
  \]
  \end{cor}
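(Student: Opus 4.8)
The plan is to read off the desired coefficient directly from the combinatorial interpretation supplied by equation~\eqref{eq:OT-L3} of Theorem~\ref{thm:OT-L1'}, which identifies the determinant
\[
\det_{1\le i,j\le h}\left(\overline{F}_{-i+j,2h+2w+1}(x_1,\dots,x_n) + \overline{F}_{i+j-1,2h+2w+1}(x_1,\dots,x_n)\right)
\]
with the generating function $\sum_{(T,M)\in\UD_n(h^*,w)}\omega(T,M)$ over marked up-down tableaux. Recalling that $\omega(T,M)=\omega(T)\prod_{j\in M}x_j$ and that $\omega(T)=\prod_{i=1}^n x_i^{v_i}$ with $v_i:=-\vert\la^{2i-2}\vert+2\vert\la^{2i-1}\vert-\vert\la^{2i}\vert$, the first step is to note that the total exponent of $x_i$ in $\omega(T,M)$ is $v_i+\chi[i\in M]$. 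Hence the coefficient of $x_1^{m_1}\cdots x_n^{m_n}$ equals the number of pairs $(T,M)\in\UD_n(h^*,w)$ for which $v_i+\chi[i\in M]=m_i$ for all $i=1,\dots,n$.

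The second step is to count, for a fixed underlying $(h,w)$-up-down tableau $T$, how many admissible marking sets $M$ realize the prescribed exponent vector $(m_1,\dots,m_n)$. Since the equation $v_i+\chi[i\in M]=m_i$ decouples over the indices, and since by definition $i\in M$ is permitted only when $\la^{2i-1}_h=0$, I would run the following case analysis at each~$i$. If $\la^{2i-1}_h\ge1$, then $i\notin M$ is forced, so the target is met exactly when $v_i=m_i$. If $\la^{2i-1}_h=0$, then $i$ may or may not be placed in $M$: taking $i\notin M$ requires $v_i=m_i$, while taking $i\in M$ requires $v_i=m_i-1$. In either situation there is at most one admissible local choice, and such a choice exists precisely when $v_i$ lies in the allowed set ($\{m_i\}$ in the first case, $\{m_i,m_i-1\}$ in the second).

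It follows that for each $T$ there is at most one admissible $M$, and one exists if and only if $v_i=m_i$ whenever $\la^{2i-1}_h\ge1$ and $v_i\in\{m_i,m_i-1\}$ whenever $\la^{2i-1}_h=0$, which is exactly the condition displayed in the statement. Summing over all $T$ then gives the asserted count. I do not expect a genuine obstacle here: the key structural point, and the reason this corollary is cleaner than Corollary~\ref{cor:OT-L2}, is that the marking in an $(h^*,w)$-up-down tableau carries the positive weight $+x_j$ rather than $-1/x_j$, so no cancellation between marked and unmarked configurations occurs and the coefficient reduces to a plain enumeration of the underlying tableaux. The only care needed is the bookkeeping of the case analysis, ensuring that the admissibility constraint ``$i\in M\Rightarrow\la^{2i-1}_h=0$'' is faithfully reflected in the two branches.
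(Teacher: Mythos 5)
Your proposal is correct and matches the paper's proof, which simply declares the corollary an immediate consequence of~\eqref{eq:OT-L3}; your write-up just makes explicit the routine bookkeeping (all weights in $\UD_n(h^*,w)$ are sign-free monomials, and for each underlying tableau $T$ the marking set $M$ is uniquely determined by the exponent vector) that the paper leaves to the reader.
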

\begin{proof}
This is an immediate consequence of~\eqref{eq:OT-L3}.
\end{proof}

Finally, we give the combinatorial interpretation of the right-hand
side of \eqref{eq:aGBK2s} with even~$w$.

\begin{cor} \label{cor:OT-L4}
  The coefficient of $x_1^{m_1}x_2^{m_2}\cdots x_n^{m_n}$ in
  \[
    \det_{1\le i,j\le h}\left(
      \overline{F}_{-i+j,2h+2w}(\vx) + \overline{F}_{i+j-1,2h+2w}(\vx)
    \right)
  \]
  equals \( A-B \), where \( A \) (respectively \( B \)) is the number of \( (h,w)
  \)-up-down tableaux \( (\la^0,\la^1,\dots,\la^{2n}) \)
  that satisfy the following properties:
  \begin{enumerate}
  \item [(i)]for $i=1,2,\dots,n$,
    \[
      -\vert\la^{2i-2}\vert+2\vert\la^{2i-1}\vert-\vert\la^{2i}\vert
      =
      \begin{cases}
        m_{i}, & \mbox{if \( \la^{2i-1}_h\ne 0 \) and \( \la^{2i-1}_1\ne w+1 \)},\\
        \mbox{\( m_{i} \) or \( m_i+1 \)}, & \mbox{if \( \la^{2i-1}_h\ne 0 \) and \( \la^{2i-1}_1= w+1 \)},\\
        \mbox{\( m_{i} \) or \( m_i-1 \)}, & \mbox{if \( \la^{2i-1}_h=0 \) and \( \la^{2i-1}_1\ne  w+1 \)},\\
        \mbox{\( m_{i}-1 \) or \( m_i+1 \)}, & \mbox{if \( \la^{2i-1}_h=0 \) and \( \la^{2i-1}_1= w+1 \)};\\
      \end{cases}
    \]
  \item [(ii)]there is an even (respectively odd) number of integers $i=1,2,\dots,n$
    satisfying \( -\vert\la^{2i-2}\vert+2\vert\la^{2i-1}\vert-\vert\la^{2i}\vert
    =m_{i}+1 \).
  \end{enumerate}
\end{cor}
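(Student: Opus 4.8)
The plan is to start from the combinatorial interpretation already established in equation~\eqref{eq:OT-L4} of Theorem~\ref{thm:OT-L1'}, which expresses the determinant in question as
\[
\sum_{(T,M_1,M_2)\in\UD_n(h^*,w^*)}\omega(T,M_1,M_2).
\]
It then remains only to extract the coefficient of $x_1^{m_1}\cdots x_n^{m_n}$ from this generating function. Writing $T=(\la^0,\dots,\la^{2n})$ and abbreviating $d_i=-|\la^{2i-2}|+2|\la^{2i-1}|-|\la^{2i}|$ for the exponent of $x_i$ in $\omega(T)$, the weight $\omega(T,M_1,M_2)=\omega(T)\prod_{j\in M_1}x_j\prod_{j\in M_2}\left(-1/x_j\right)$ has $x_i$-exponent equal to $d_i+\chi[i\in M_1]-\chi[i\in M_2]$ and carries the overall sign $(-1)^{|M_2|}$.

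First I would fix the up-down tableau $T$ and sum over all admissible pairs $(M_1,M_2)$. Because the admissibility constraints are pointwise --- namely $i$ may lie in $M_1$ only if $\la^{2i-1}_h=0$, and in $M_2$ only if $\la^{2i-1}_1=w+1$ --- the sum factors as a product over $i=1,\dots,n$ of the local contribution
\[
\sum_{\epsilon_1,\epsilon_2}(-1)^{\epsilon_2}\,\chi\big[d_i+\epsilon_1-\epsilon_2=m_i\big],
\]
where $\epsilon_1$ (respectively $\epsilon_2$) ranges over $\{0,1\}$ if $\la^{2i-1}_h=0$ (respectively $\la^{2i-1}_1=w+1$) and is forced to $0$ otherwise. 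Evaluating this local contribution in the four cases according to whether $\la^{2i-1}_h=0$ and whether $\la^{2i-1}_1=w+1$ reproduces exactly the list of admissible values of $d_i$ displayed in part~(i) of the corollary.

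The delicate point --- and the one I expect to be the crux --- is the case where \emph{both} $\la^{2i-1}_h=0$ and $\la^{2i-1}_1=w+1$ hold (the last line of~(i)). Here all four choices $(\epsilon_1,\epsilon_2)\in\{0,1\}^2$ are admissible, and the two choices $(0,0)$ and $(1,1)$ both match $d_i=m_i$ but carry opposite signs $+1$ and $-1$; they cancel. Consequently the value $d_i=m_i$ drops out, leaving only $d_i=m_i-1$ (from $(1,0)$, sign $+$) and $d_i=m_i+1$ (from $(0,1)$, sign $-$), which is precisely what the corollary asserts. In the remaining three cases one checks directly that the local contribution is $+1$ for each admissible value of $d_i$, the single exception being $d_i=m_i+1$ (possible only when $\la^{2i-1}_1=w+1$), which carries sign $-1$.

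Putting the factors back together, the coefficient of $x_1^{m_1}\cdots x_n^{m_n}$ equals
\[
\sum_{T}(-1)^{\#\{i\,:\,d_i=m_i+1\}},
\]
the sum ranging over all $T\in\UD_n(h,w)$ whose statistics $d_i$ satisfy condition~(i). Since the contribution of each such $T$ is $+1$ when the number of indices $i$ with $d_i=m_i+1$ is even and $-1$ when it is odd, this sum is exactly $A-B$ with $A$ and $B$ as defined via condition~(ii), which completes the proof.
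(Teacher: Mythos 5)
Your proposal is correct and follows essentially the same route as the paper's proof: both start from the interpretation~\eqref{eq:OT-L4}, analyze the four cases according to whether \( \la^{2i-1}_h=0 \) and whether \( \la^{2i-1}_1=w+1 \), and observe that in the doubly-marked case the two contributions with exponent \( m_i \) (markings \( (0,0) \) and \( (1,1) \)) cancel, leaving exactly the sign pattern that yields \( A-B \). Your factorization of the sum over \( (M_1,M_2) \) into a product of local signed contributions is a slightly more systematic bookkeeping of the same cancellation argument the paper carries out case by case.
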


\begin{proof}
  By \eqref{eq:OT-L4}, the coefficient in question is equal to \(
  \sum_{(T,M_1,M_2)\in X}(-1)^{|M_2|} \), where \( X \) is the set of\break \(
  (T,M_1,M_2)\in\UD_n(h^*,w^*) \) with \(
  T=(\lambda^0,\lambda^1,\dots,\lambda^{2n}) \) satisfying
  \[
    -\vert\la^{2i-2}\vert+2\vert\la^{2i-1}\vert-\vert\la^{2i}\vert
    =
    \begin{cases}
      m_{i}, & \mbox{if \( i\not\in M_1 \) and \( i\not\in M_2 \)},\\
      \mbox{\( m_i+1 \)}, & \mbox{if \( i\not\in M_1 \) and \( i\in M_2 \)},\\
      \mbox{\( m_i-1 \)}, & \mbox{if \( i\in M_1 \) and \( i\not\in M_2 \)},\\
      \mbox{\( m_{i} \)}, & \mbox{if \( i\in M_1 \) and \( i\in M_2 \)}.
    \end{cases}
  \]
Now fix \( T \) and \( i \) and consider the following four cases.
For brevity, let \( L=-\vert\la^{2i-2}\vert+2\vert\la^{2i-1}\vert-\vert\la^{2i}\vert  \).
\begin{enumerate}
\item If \( \la^{2i-1}_h\ne 0 \) and \( \la^{2i-1}_1\ne w+1 \), then \( i\not\in M_1 \) and \( i\not\in M_2 \).
  Thus \( L=m_i \).
\item If \( \la^{2i-1}_h\ne 0 \) and \( \la^{2i-1}_1= w+1 \), then
  \( i\not\in M_1 \) and (\( i\not\in M_2 \) or \( i\in M_2 \)). Thus \( L\in \{m_i,m_i+1\} \).
\item If \( \la^{2i-1}_h=0 \) and \( \la^{2i-1}_1\ne  w+1 \), then
  (\( i\not\in M_1 \) or \( i\in M_1 \)) and \( i\not\in M_2 \).
  Thus \( L\in \{m_i,m_i-1\} \).
\item If \( \la^{2i-1}_h=0 \) and \( \la^{2i-1}_1= w+1 \), then (\( i\not\in M_1
  \) or \( i\not\in M_1 \)) and (\( i\not\in M_2 \) or \( i\in M_2 \)). Thus \(
  L\in \{m_i-1,m_i,m_i+1\} \). The case \( L=m_i \) occurs twice,
  namely for (\(
  i\not\in M_1 \) or \( i\not\in M_2 \)) and for (\( i\in M_1 \) or \(
  i\in M_2 \)). These two cases
  cancel with each other. Therefore it is only the cases where \(
  L\in\{m_i-1,m_i+1\} \) which remain.
\end{enumerate}
This establishes the assertion of the corollary.
\end{proof}

\section{Cylindric standard Young tableaux and noncrossing and nonnesting
  matchings}
\label{sec:comb-ident}

In this section, we concentrate on the coefficients of $x_1x_2\cdots
x_n$ on both sides of the affine
bounded Littlewood identities in Theorem~\ref{thm:affine_BK_intro}.
Clearly, by Proposition~\ref{prop:cyl_sch} and
Corollaries~\ref{cor:OT-L1}, and~\ref{cor:OT-L2}--\ref{cor:OT-L4}, we
obtain enumeration 
results that connect cylindric standard Young tableaux and certain up-down
tableaux which we shall call {\it
  vacillating tableaux}. (The reader must be warned that
our use of the term ``vacillating" deviates from the one in~\cite{Chen2007}.)
These results are presented in
Corollary~\ref{cor:csyt=VT}. 
On the other hand, from~\cite{Chen2007} (and the
alternative~\cite{KratCE}) we know that these vacillating tableaux are
in bijection with ({\it partial}) {\it matchings}. This allows us to connect
cylindric standard Young tableaux with matchings. The corresponding results 
are the subject of Corollary~\ref{cor:syt=ncnn}.

\medskip
We begin by defining vacillating tableaux. 

\begin{defn}\label{defn:vt}
  An \emph{\( (h,w) \)-vacillating tableau} is a sequence \( (\lambda^{0},
  \lambda^{1}, \dots, \lambda^{n}) \) of partitions that satisfies the following
three conditions:
  \begin{enumerate}
  \item [(i)] \(\lambda^{0} = \lambda^{n}=\emptyset \);
  \item [(ii)]  the
    partitions \( \lambda^{i-1} \) and \( \lambda^{i} \) differ by at most one
    cell, \( i=1,2,\dots,n \);
  \item [(iii)] each $\la^{i}$ has at most \( h \) rows and at most \( w \)
    columns, $i=1,2,\dots,n$.
  \end{enumerate}
  Let \( \VT_n(h,w) \) denote the set of \( (h,w) \)-vacillating tableaux \(
  (\lambda^{0}, \lambda^{1}, \dots, \lambda^{n}) \).
\end{defn}

Suppose \( T=(\lambda^{0}, \lambda^{1}, \dots, \lambda^{n})\in\VT_n(h,w) \).
Then, by definition,
we can identify each \( \lambda^{i} \) with an \( h \)-tuple of
nonincreasing integers,
which is an element of \( \Z^h \). Using this identification, we may also
consider \( T \) as a walk of length \( n \) from \( \vec0 \) to \( \vec0 \)
consisting of steps in \( \{\pm \ep_i:1\le i\le h\}\cup\{\vec0\} \), where
$\ep_i$ is the $i$-th standard basis vector, staying in the
region
\begin{equation}\label{eq:alcove}
  \left\{(x_1,\dots,x_h)\in\Z^h: w\ge x_1\ge\dots\ge x_h\ge0\right\}.
\end{equation}
We define \(\VT_n(h,w^*) \) (respectively \(\VT_n(h^*,w) \)) to be the set of walks in
\(\VT_n(h,w) \) with the property that a zero step can never (respectively only) occur on the
hyperplane \( x_1=w \) (respectively \( x_h=0 \)). We also define \(\VT'_n(h,w) \) to
be the set of walks in \(\VT_n(h,w) \) with the property that a zero step can only occur on
the hyperplane \( x_h=0 \) or \( x_1=w \) but not both. For \(p\in \VT'_n(h,w)
\), let \( z(p) \) denote the number of zero steps on the hyperplane \( x_1=w
\).

By a combination of Definition~\ref{defn:cSchur} and
Corollaries~\ref{cor:OT-L1}, and~\ref{cor:OT-L2}--\ref{cor:OT-L4},
taking the coefficients of \( x_1x_2\cdots x_n \)
on both sides of the affine bounded Littlewood identities
in~\eqref{eq:aGBK1s} and~\eqref{eq:aGBK2s} 
leads us to the following corollary.

\begin{cor}\label{cor:csyt=VT}
  For positive integers $h$, $w$, and \( n \), we have
  \begin{align}
    \label{eq:syt=VT1}
    |\CSYT_n(2h+1,2w+1)| &=  |\VT_n(h,w)|,\\
    \label{eq:syt=VT2}
    |\CSYT_n(2h+1,2w)| &=  |\VT_n(h,w^*)|,\\
    \label{eq:syt=VT3}
    |\CSYT_n(2h,2w+1)| &=   |\VT_n(h^*,w)|,\\
    \label{eq:syt=VT4}
    |\CSYT_n(2h,2w)| &=  \sum_{T\in \VT'_n(h,w)} (-1)^{z(T)}.
  \end{align}
\end{cor}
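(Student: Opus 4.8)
The plan is to extract the coefficient of the squarefree monomial $x_1x_2\cdots x_n$ from the four affine bounded Littlewood identities and to interpret both sides combinatorially. For~\eqref{eq:syt=VT1} and~\eqref{eq:syt=VT2} I would use identity~\eqref{eq:aGBK1s}, and for~\eqref{eq:syt=VT3} and~\eqref{eq:syt=VT4} identity~\eqref{eq:aGBK2s}, in each case choosing the width parameter so that the modulus $N$ in $F_{\cdot,N}$ and $\overline{F}_{\cdot,N}$ matches the one appearing in Corollaries~\ref{cor:OT-L1}--\ref{cor:OT-L4}. Concretely, taking the width in~\eqref{eq:aGBK1s} to be $2w+1$ (resp. $2w$) produces the modulus $2h+2w+2$ (resp. $2h+2w+1$) of Corollary~\ref{cor:OT-L1} (resp. Corollary~\ref{cor:OT-L2}), while taking the width in~\eqref{eq:aGBK2s} to be $2w+1$ (resp. $2w$) produces the modulus $2h+2w+1$ (resp. $2h+2w$) of Corollary~\ref{cor:OT-L3} (resp. Corollary~\ref{cor:OT-L4}).

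On the left-hand side, Proposition~\ref{prop:cyl_sch} identifies $s_{\lambda[m,W]'}(\vx)$ as the generating function for $(m,W)$-cylindric row-strict tableaux of shape~$\lambda$. The coefficient of $x_1\cdots x_n$ selects the fillings in which each of $1,\dots,n$ occurs exactly once; these are precisely the $(m,W)$-cylindric standard Young tableaux, so summing over $\lambda\in\Par(m,W)$ gives $|\CSYT_n(m,W)|$. With the width choices above this yields $|\CSYT_n(2h+1,2w+1)|$, $|\CSYT_n(2h+1,2w)|$, $|\CSYT_n(2h,2w+1)|$, and $|\CSYT_n(2h,2w)|$, the four left-hand sides in the statement.

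On the right-hand side, specializing Corollaries~\ref{cor:OT-L1}--\ref{cor:OT-L4} to $m_1=\dots=m_n=1$ expresses the coefficient as a (possibly signed) count of $(h,w)$-up-down tableaux subject to the relevant $\{0,1\}$-constraints and markings. The central combinatorial step I would carry out is the map sending an up-down tableau $(\lambda^0,\lambda^1,\dots,\lambda^{2n})$ to the subsequence $(\lambda^0,\lambda^2,\dots,\lambda^{2n})$ of its even-indexed partitions. Since the quantity $-|\lambda^{2i-2}|+2|\lambda^{2i-1}|-|\lambda^{2i}|$ is the total size of the two vertical strips in $\lambda^{2i-2}\subseteq\lambda^{2i-1}\supseteq\lambda^{2i}$, the constraint that it lie in $\{0,1\}$ forces consecutive even-indexed partitions to differ by at most one cell; moreover the parity of this quantity determines the intermediate partition $\lambda^{2i-1}$ uniquely (it equals the larger of $\lambda^{2i-2},\lambda^{2i}$ when they differ, and equals both when they agree). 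As the row and column bounds transfer directly, this map is a weight-preserving bijection onto $\VT_n(h,w)$, which proves~\eqref{eq:syt=VT1}.

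For the remaining three identities I would track how the extra conditions refine this bijection. In Corollaries~\ref{cor:OT-L2} and~\ref{cor:OT-L3}, specialized to $m_i=1$, the width/height conditions say precisely that a zero step $\lambda^{2i-2}=\lambda^{2i}$ is forbidden on the hyperplane $x_1=w$ (resp. permitted only on the hyperplane $x_h=0$), which are the defining restrictions of $\VT_n(h,w^*)$ (resp. $\VT_n(h^*,w)$), giving~\eqref{eq:syt=VT2} and~\eqref{eq:syt=VT3}. The genuinely delicate case is~\eqref{eq:syt=VT4}, where the quantity above may equal $2$ and Corollary~\ref{cor:OT-L4} contributes the signed sum $\sum_T(-1)^{\#\{i:\,\text{step size}=2\}}$. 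Here a zero step on $x_1=w$ has two possible up-down realizations --- intermediate $\lambda^{2i-1}$ equal to $\lambda^{2i-2}$ (step size $0$), or equal to $\lambda^{2i-2}$ with one extra cell in the first row (step size $2$, so $\lambda^{2i-1}_1=w+1$) --- carrying opposite signs. I would check that the first realization exists only when the partition also lies on $x_h=0$; thus a zero step on both hyperplanes contributes $(+1)+(-1)=0$ and is excluded, a zero step on $x_1=w$ alone survives with sign $-1$, a zero step on $x_h=0$ alone survives with sign $+1$, and a zero step on neither has no realization. Summing the survivors reproduces $\sum_{T\in\VT'_n(h,w)}(-1)^{z(T)}$. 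The main obstacle is exactly this sign bookkeeping in~\eqref{eq:syt=VT4}; the unsigned cases~\eqref{eq:syt=VT1}--\eqref{eq:syt=VT3} are routine once the even-restriction bijection is in place.
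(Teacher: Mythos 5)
Your proposal is correct and is essentially the paper's own proof: the paper likewise extracts the coefficient of $x_1x_2\cdots x_n$ from \eqref{eq:aGBK1s} and \eqref{eq:aGBK2s} with the width specialized to $2w+1$ or $2w$ (so that the moduli match Corollaries~\ref{cor:OT-L1}--\ref{cor:OT-L4}), uses Proposition~\ref{prop:cyl_sch} on the left, and suppresses the odd-indexed partitions of the resulting up-down tableaux, which are forced as you describe. Your sign bookkeeping for \eqref{eq:syt=VT4} --- the flat versus first-row-spike realizations of a zero step, with the flat realization requiring $x_h=0$ and the spike requiring $x_1=w$, hence cancellation precisely on the intersection of the two hyperplanes --- is exactly the cancellation the paper carries out in its case analysis there.
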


\begin{proof}
  For the first identity, i.e.,~\eqref{eq:syt=VT1},
  we take the coefficients of \(
  x_1x_2\cdots x_n \) on both sides of~\eqref{eq:aGBK1s} with\break \( w \) replaced
  by \( 2w+1 \). By Proposition~\ref{prop:cyl_sch}, the coefficient of
  $x_1x_2\cdots x_n$ on the
  left-hand side is\break \( |\CSYT_n(2h+1,2w+1)| \). By Corollary~\ref{cor:OT-L1},
  the coefficient of $x_1x_2\cdots x_n$ on the right-hand side is
  equal to the number of \( (h,w) 
  \)-up-down tableaux \( (\la^0,\la^1,\dots,\la^{2n}) \) with the property that each
  subsequence $\la^{2i-2}\subseteq\la^{2i-1}\supseteq\la^{2i}$ satisfies
  one of the following:
\begin{enumerate}
\item $\la^{2i-2}$ and $\la^{2i}$ differ by one cell and
  $\la^{2i-1}=\la^{2i-2}\cup \la^{2i}$ (in other words, \( \la^{2i-1} \) is the larger
  partition between \( \la^{2i-2} \) and \( \la^{2i} \));
\item $\la^{2i-2}=\la^{2i-1}=\la^{2i}$.
\end{enumerate}
At this point, we see that the odd-indexed partitions $\la^{2i-1}$ are
redundant. If we suppress them, then the sequence $(\la^0,\la^2,\dots,\la^{2n})$
is an \( (h,w) \)-vacillating tableau. Thus the coefficient is equal to \(
|\VT_n(h,w)| \), and we obtain~\eqref{eq:syt=VT1}.

\medskip
For the identity \eqref{eq:syt=VT2}, consider the coefficient of \( x_1x_2\cdots x_n \) on
the right-hand side of~\eqref{eq:aGBK1s} with \( w \) replaced by \( 2w \). By
Corollary~\ref{cor:OT-L2}, this is equal to the number of \( (h,w) \)-up-down tableaux \(
(\la^0,\la^1,\dots,\la^{2n}) \) with the property that each subsequence
$\la^{2i-2}\subseteq\la^{2i-1}\supseteq\la^{2i}$ satisfies one of the following:
\begin{enumerate}
\item \( \lambda_1^{2i-1}\le w \), not \( \la^{2i-2}_1=\la^{2i-1}_1=\la^{2i}_1=w\), $\la^{2i-2}$ and $\la^{2i}$ differ by one
  cell, and $\la^{2i-1}=\la^{2i-2}\cup \la^{2i}$;
\item \( \lambda_1^{2i-1}< w \) and $\la^{2i-1}=\la^{2i-1}=\la^{2i}$;
\item  \( \la^{2i-2}_1=\la^{2i-1}_1=\la^{2i}_1=w\), and $\la^{2i-2}$ and $\la^{2i}$ differ by one
  cell;
\item \( \lambda_1^{2i-1}=w+1 \) and $\la^{2i-2}=\la^{2i-1}=\la^{2i}$.
\end{enumerate}
In fact, Cases~(3) and~(4) are impossible. Since the remaining
cases give $(\la^0,\la^2,\dots,\la^{2n})\in \VT_n(h,w^*)$, we obtain~\eqref{eq:syt=VT2}.
The third identity~\eqref{eq:syt=VT3} can be proved similarly.

\medskip
Finally, for the identity~\eqref{eq:syt=VT4},we need
some additional arguments. Consider the coefficient of \(
x_1x_2\cdots x_n \) on the right-hand side of~\eqref{eq:aGBK2s} with \( w \)
replaced by \( 2w \). By Corollary~\ref{cor:OT-L4}, this is equal to \( A-B \),
where \( A \) (respectively \( B \)) is the number of \( (h,w) \)-up-down tableaux \(
(\la^0,\la^1,\dots,\la^{2n}) \) with the property that each subsequence
$\la^{2i-2}\subseteq\la^{2i-1}\supseteq\la^{2i}$ satisfies one of the following:
\begin{enumerate}
\item \( \lambda_h^{2i-1}\ne0 \), \( \lambda_1^{2i-1}\ne w+1 \),
  $\la^{2i-2}$ and $\la^{2i}$ differ by one cell, and $\la^{2i-1}=\la^{2i-2}\cup \la^{2i}$;
\item \( \lambda_h^{2i-1}\ne0 \), \( \lambda_1^{2i-1}= w+1 \),
  $\la^{2i-2}$ and $\la^{2i}$ differ by one cell, and $\la^{2i-1}=\la^{2i-2}\cup \la^{2i}$;
\item \( \lambda_h^{2i-1}\ne0 \), \( \lambda_1^{2i-1}= w+1 \), and
  $\la^{2i-2}=\la^{2i}$ is obtained from \( \lambda^{2i-1} \) by deleting one
  cell in the first row;
\item \( \lambda_h^{2i-1}=0 \), \( \lambda_1^{2i-1}\ne w+1 \),
  $\la^{2i-2}$ and $\la^{2i}$ differ by one cell, and $\la^{2i-1}=\la^{2i-2}\cup \la^{2i}$;
\item \( \lambda_h^{2i-1}=0 \), \( \lambda_1^{2i-1}\ne w+1 \), and
  $\la^{2i-2}=\la^{2i-1}=\la^{2i}$;
\item \( \lambda_h^{2i-1}=0 \), \( \lambda_1^{2i-1}= w+1 \), and
  $\la^{2i-2}=\la^{2i}$ is obtained from \( \lambda^{2i-1} \) by deleting one
  cell in the first row;
\item \( \lambda_h^{2i-1}=0 \), \( \lambda_1^{2i-1}= w+1 \), and
  $\la^{2i-2}=\la^{2i-1}=\la^{2i}$,
\end{enumerate}
where the total number of occurrences of Cases~(3) and~(6) is even (respectively odd).
As before, Cases~(2) and~(7) are impossible because \(
\lambda^{2i-2}_1,\lambda^{2i}_1\le w \). Case~(5) with \( \lambda_1^{2i-1}=w \)
cancels with Case~(6). The remaining cases give
$(\la^0,\la^2,\dots,\la^{2n})\in \VT'_n(h,w)$. More precisely, Cases~(1) and~(4) correspond to the case where \( \lambda^{2i-2} \) and \( \lambda^{2i} \)
differ by one cell, Case~(3) corresponds to the case where \( \lambda^{2i-2} =
\lambda^{2i} \) with \( \lambda^{2i}_h\ne0 \) and \( \lambda^{2i}_1=w \), and
Case~(5) with \( \lambda^{2i-1}_1\le w-1 \) corresponds to the case where \(
\lambda^{2i-2} = \lambda^{2i} \) with \( \lambda^{2i}_h=0 \) and \(
\lambda^{2i}_1\ne w \). Thus \( A-B \) is equal to the right-hand side
of~\eqref{eq:syt=VT4}, which completes the proof.
\end{proof}

The limit case as $w\to \infty$ of the identities \eqref{eq:syt=VT1}
and~\eqref{eq:syt=VT2} is the main result in~\cite{Zeilberger_lazy},
while the limit case as $w\to\infty$ of~\eqref{eq:syt=VT3} appears
in \cite[Conjecture~1.2, proved in Theorem~1.3]{Eu2013}.
See Corollary~\ref{thm:SYT-VT} in Appendix~\ref{sec:growth} for a uniform bijective
treatment of both. 

\medskip
Vacillating tableaux are closely related to matchings. 
A \emph{(partial) matching} on 
\( \{1,2,\dots,n\} \) is a set
partition of 
\( \{1,2,\dots,n\} \) 
into blocks of size one or two. The element in a singleton block is called a \emph{fixed
  point} and a pair \( (i,j) \) of integers \( i<j \) that are contained in a
block of size two is called an \emph{arc}. Next we define various
kinds of crossings and nestings for matchings, and various sets of
matchings subject to restrictions on their crossings and nestings.

\begin{defn} \label{def:NCNN}
  Let \( k \) be a positive integer. A \emph{\( k \)-crossing} is a set of \( k
  \) arcs \( (i_{1},j_{1}), \dots, (i_{k},j_{k}) \) for which \(
  i_{1}<\dots<i_{k}<j_{1}<\dots<j_{k} \). A \emph{\( k \)-nesting} is a set of
  \( k \) arcs \( (i_{1},j_{1}), \dots, (i_{k},j_{k}) \) for which \(
  i_{1}<\dots<i_{k}<j_{k}<\dots<j_{1} \). We say that a matching is \emph{\( k
    \)-noncrossing} (respectively \emph{\( k \)-nonnesting}) if it does not have any \(
  k \)-crossing (respectively \( k \)-nesting).
\end{defn}

We denote the set of \( r \)-noncrossing and \( s
\)-nonnesting matchings on 
\( \{1,2,\dots,n\} \) 
by \( \NCNN_n(r,s) \).

\begin{defn}\label{defn:ncnn}
  Let \( k \) be a positive integer. A \emph{\( (k+1/2) \)-crossing} is a set of
  \( k \) arcs \( (i_1,j_1), \dots, (i_k,j_k) \) and a fixed point \( v \) for
 which \( i_{1}<\dots<i_{k}<v<j_{1}<\dots<j_{k} \). A \emph{\( (k+1/2)
    \)-nesting} is a set of \( k \) arcs \( (i_1,j_1), \dots, (i_k,j_k) \) and a
  fixed point \( v \) for which \( i_{1}<\dots<i_{k}<v<j_{k}<\dots<j_{1} \). We
  say that a matching is \emph{\( (k+1/2) \)-noncrossing} (respectively \emph{\(
    (k+1/2) \)-nonnesting}) if it does not have any \( t \)-crossing (respectively \( t
  \)-nesting) for \( t\ge k+1/2 \).
\end{defn}

Note that a matching is \( (k+1/2) \)-noncrossing if and only if it has neither
a \( (k+1/2) \)-crossing nor a \( (k+1) \)-crossing. A similar
remark holds for $(k+1/2)$-nonnesting matchings.

For an integer \( n \) and integers or half-integers \( r \) and \( s \), we
denote by \( \NCNN_n(r,s) \) the set of \( r \)-noncrossing and \( s
\)-nonnesting matchings on 
\( \{1,2,\dots,n\} \).

\begin{defn}\label{defn:ncnn'}
  Let \( \NCNN'_n(h+1,w+1) \) to be the set of matchings
  in \( \NCNN_n(h+1,w+1) \) in which
  every fixed point \( v \) satisfies one of the following conditions:
  \begin{itemize}
  \item \( v \) is not contained in any \( (h+1/2) \)-crossing and any \( (w+1/2)
    \)-nesting,
  \item \( v \) is contained in both an \( (h+1/2) \)-crossing
    and a \( (w+1/2) \)-nesting.
  \end{itemize}
  For \( M\in \NCNN'_n(h+1,w+1) \), let \( z(M) \) be the number of fixed
  points in \( M \) that are contained in both an \( (h+1/2) \)-crossing and a
  \( (w+1/2) \)-nesting.
\end{defn}

The following lemma connects $(h+1)$-noncrossing and
$(w+1)$-nonnesting matchings and vacillating tableaux in
\( \VT_n(h,w) \).

\begin{lem}\label{lem:ncnn-walk}
  Let $h$, $w$, and \( n \) be positive integers. There is a bijection \( \phi:
  \NCNN_n(h+1,w+1)\to \VT_n(h,w) \) such that, if \( \phi(M) = T = (\lambda^{0},
  \lambda^{1}, \dots, \lambda^{n}) \), then the following hold:
  \begin{itemize}
  \item \( i \) is a fixed point of \( M \) if and only if the \( i \)-th step
    of \( T \) is a zero step, i.e., \( \lambda^{i-1} = \lambda^{i} \),
  \item \( i \) is a fixed point of \( M \) contained in an \( (h+1/2) \)-crossing if
    and only if the \( i \)-th step of \( T \) is a zero step not on the
    hyperplane \( x_h=0 \), i.e., \( \lambda^{i-1} = \lambda^{i} \) and \(
    \lambda^{i}_h>0 \),
  \item \( i \) is a fixed point of \( M \) contained in a \( (w+1/2) \)-nesting if
    and only if the \( i \)-th step of \( T \) is a zero step on the
    hyperplane \( x_1=w \), i.e., \( \lambda^{i-1} = \lambda^{i} \) and \(
    \lambda^{i}_1=w \).
  \end{itemize}
\end{lem}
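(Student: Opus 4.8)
The plan is to realize $\phi$ as (the restriction of) the classical bijection $\psi$ between partial matchings on $\{1,2,\dots,n\}$ and \emph{vacillating tableaux} of length~$n$ --- sequences $(\lambda^0,\dots,\lambda^n)$ from $\emptyset$ to $\emptyset$ whose consecutive terms differ by at most one cell, but with no restriction on the shapes --- as constructed by Chen, Deng, Du, Stanley and Yan~\cite{Chen2007} and, alternatively, via Fomin growth diagrams in~\cite{KratCE}. In this correspondence one scans $\{1,2,\dots,n\}$ from left to right while maintaining a standard Young tableau whose shape is the ``current'' partition: an opener (smaller endpoint of an arc) triggers the insertion of a cell, a closer (larger endpoint) triggers a (reverse RSK) deletion, and a fixed point leaves the shape unchanged. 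Thus the $i$-th step of $T=\psi(M)$ is an up-step, a down-step, or a zero step according as $i$ is an opener, a closer, or a fixed point of~$M$; in particular $i$ is a fixed point iff $\lambda^{i-1}=\lambda^i$, which is the first asserted equivalence. I would take $\phi=\psi$ (possibly after transposing all shapes, to align conventions) and verify that it restricts to a bijection $\NCNN_n(h+1,w+1)\to\VT_n(h,w)$.

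The structural input I would invoke is the fundamental feature of this bijection: the partition $\lambda^{i}$ is precisely the RSK shape recording the arcs of $M$ that are \emph{open} at position~$i$ (opened at some $a\le i$ and closed at some $b>i$), and, by the Greene-type theorem underlying the construction (\cite[Thm.~2]{Chen2007}; cf.\ also~\cite{KratCE}), for every~$i$ the number of rows $\ell(\lambda^i)$ equals the maximal size of a family of pairwise crossing open arcs over~$i$, while $\lambda^i_1$ equals the maximal size of a family of pairwise nesting open arcs over~$i$. A set of $k$ pairwise crossing arcs all open at a common position is, upon ordering by openers, exactly a $k$-crossing $i_1<\dots<i_k<j_1<\dots<j_k$, and conversely every $k$-crossing is open at any point between $i_k$ and $j_1$; hence $\max_i\ell(\lambda^i)=\operatorname{cr}(M)$ and, symmetrically, $\max_i\lambda^i_1=\operatorname{ne}(M)$. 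Consequently $M$ is $(h+1)$-noncrossing and $(w+1)$-nonnesting if and only if every $\lambda^i$ has at most $h$ rows and at most $w$ columns, i.e.\ $\psi(M)\in\VT_n(h,w)$, so that $\psi$ restricts to the desired bijection.

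It remains to read off the two refined equivalences, which concern a fixed point $i$ and hence a zero step $\lambda^{i-1}=\lambda^i=:\mu$. Applying the displayed invariant at this position: $\mu_h>0$ means $\ell(\mu)\ge h$, and since all shapes have at most $h$ rows this forces $\ell(\mu)=h$, which by the invariant holds exactly when there are $h$ pairwise crossing arcs open over~$i$. Because $i$ is a fixed point, these arcs have openers strictly before and closers strictly after~$i$, so together with the distinguished point $v=i$ they form precisely an $(h+1/2)$-crossing in the sense of Definition~\ref{defn:ncnn}; this is the second bullet. The third bullet is entirely analogous with rows replaced by columns: $\mu_1=w$ holds iff there are $w$ pairwise nesting open arcs over~$i$, iff $i$ lies in a $(w+1/2)$-nesting.

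The main obstacle I anticipate is the precise bookkeeping of the half-integer (``enhanced'') crossings and nestings at fixed points: one must check that the open-arc crossing/nesting numbers recorded by $\mu=\lambda^i$ at a zero step translate into the \emph{strict} configurations $i_1<\dots<i_k<v<j_1<\dots<j_k$ (respectively $\dots<v<j_k<\dots<j_1$) of Definition~\ref{defn:ncnn} with $v=i$, rather than into ordinary integer crossings/nestings. This hinges on correctly identifying which arcs count as open at the instant the fixed point is processed, and on pinning down the (possibly transposing) row/column convention in~\cite{Chen2007} so that crossings match rows and nestings match columns as in the statement. Once these conventions are fixed, all three equivalences follow formally from the Greene-type invariant.
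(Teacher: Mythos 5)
Your proposal is correct and follows essentially the same route as the paper: the paper's proof simply cites the Chen--Deng--Du--Stanley--Yan bijection (extended to partial matchings by doing nothing at fixed points, or equivalently the growth-diagram version in \cite{KratCE}) and declares the three conditions ``straightforward to check.'' Your write-up supplies exactly the verification the paper leaves implicit --- the Greene-type local invariant identifying $\ell(\lambda^i)$ and $\lambda^i_1$ with the maximal crossing and nesting among arcs open over~$i$, including the correct handling of the half-integer configurations at fixed points and the row/column (conjugation) convention, which is indeed the one subtlety to pin down, as the conjugation step in Appendix~B of the paper confirms.
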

\begin{proof}
  Consider the bijection due to Chen et al.~\cite[Sec.~5]{Chen2007} between
  complete matchings and oscillating tableaux, which are vacillating tableaux
  without zero steps. We can extend this map to partial matchings and
  vacillating tableaux by doing nothing when we encounter a fixed
  point.
Alternatively, consider the growth diagram version of the same
bijection in~\cite[end of Sec.~3]{KratCE}, and treat fixed points as
indicated in~\cite[Fig.~6]{KratCE}.
  
  It is
  straightforward to check that this bijection satisfies the given conditions.
\end{proof}

\begin{example}
  Let \( n=3 \), \( h=1 \), and \( w=1 \). Then \( \NCNN_n(h+1,w+1/2)=
  \NCNN_n(h+1/2,w+1) \) has three elements, namely, \( \emptyset, \{(1,2)\},
  \{(2,3)\} \), where we only consider the arcs and omit the fixed points. Here,
  \( \{(1,3)\} \) is the only forbidden matching. The corresponding lattice
  paths of length 3 in \( \{1\ge x_1\ge 0\} \) are \( (0, 0, 0, 0) \), \( (0, 1,
  0, 0) \), and \( (0, 0, 1, 0) \). Note that \( (0, 1, 1, 0) \) is not allowed
  because a zero step is used when \( x_1=1 \).
\end{example}

The following proposition is an immediate consequence of
Lemma~\ref{lem:ncnn-walk}.

\begin{prop}\label{prop:NCNN=VT}
For positive integers $h$, $w$, and $n$, we have
  \begin{align}
    \label{eq:NCNN=VT1}
    |\NCNN_n(h+1,w+1)|  &=  |\VT_n(h,w)|,\\
    \label{eq:NCNN=VT2}
    |\NCNN_n(h+1,w+1/2)| &= |\VT_n(h,w^*)|, \\
    \label{eq:NCNN=VT3}
    |\NCNN_n(h+1/2,w+1)|&= |\VT_n(h^*,w)|, \\
    \label{eq:NCNN=VT4}
    \sum_{M\in \NCNN'_n(h+1,w+1)} (-1)^{z(M)} &= \sum_{T\in \VT'_n(h,w)} (-1)^{z(T)}.
  \end{align}
\end{prop}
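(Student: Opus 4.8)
The plan is to deduce all four identities directly from the bijection $\phi\colon\NCNN_n(h+1,w+1)\to\VT_n(h,w)$ of Lemma~\ref{lem:ncnn-walk}, together with the dictionary it provides between fixed points of a matching and zero steps of the associated vacillating tableau. Identity~\eqref{eq:NCNN=VT1} is nothing but the statement that $\phi$ is a bijection, so there is nothing to prove there. For the remaining three, the idea is that each refinement of the crossing/nesting conditions on the matching side corresponds, under $\phi$, to a restriction on where zero steps may occur on the walk side, and these restrictions are exactly the ones built into the definitions of $\VT_n(h,w^*)$, $\VT_n(h^*,w)$, and $\VT'_n(h,w)$.

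First I would treat~\eqref{eq:NCNN=VT2} and~\eqref{eq:NCNN=VT3}. Recall from Definition~\ref{defn:ncnn} (and the remark following it) that a matching is $(w+1/2)$-nonnesting precisely when it has no $(w+1)$-nesting and no $(w+1/2)$-nesting; hence $\NCNN_n(h+1,w+1/2)$ is the subset of $\NCNN_n(h+1,w+1)$ consisting of those matchings with no fixed point lying in a $(w+1/2)$-nesting. By the third item of Lemma~\ref{lem:ncnn-walk}, such fixed points correspond under $\phi$ to zero steps on the hyperplane $x_1=w$. Thus $\phi$ restricts to a bijection onto the set of walks in $\VT_n(h,w)$ having no zero step on $x_1=w$, which is exactly $\VT_n(h,w^*)$, giving~\eqref{eq:NCNN=VT2}. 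Identity~\eqref{eq:NCNN=VT3} follows in the mirror-image manner: being $(h+1/2)$-noncrossing singles out the matchings with no fixed point in an $(h+1/2)$-crossing, and by the second item of Lemma~\ref{lem:ncnn-walk} these are the zero steps off the hyperplane $x_h=0$; forbidding them yields precisely the walks of $\VT_n(h^*,w)$.

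The only identity requiring genuine care is~\eqref{eq:NCNN=VT4}, and this is where I expect the main bookkeeping to lie. Here one classifies each fixed point $v$ of a matching $M\in\NCNN_n(h+1,w+1)$ according to whether or not it lies in an $(h+1/2)$-crossing and whether or not it lies in a $(w+1/2)$-nesting; by the second and third items of Lemma~\ref{lem:ncnn-walk} these two binary attributes translate, respectively, into whether the corresponding zero step of $T=\phi(M)$ has $\lambda^{i}_{h}>0$ and whether it has $\lambda^{i}_{1}=w$. The defining conditions in Definition~\ref{defn:ncnn'} retain exactly the fixed points lying in \emph{neither} or in \emph{both}, i.e.\ the zero steps with $\lambda^{i}_{h}=0$ and $\lambda^{i}_{1}\ne w$, or with $\lambda^{i}_{h}>0$ and $\lambda^{i}_{1}=w$; equivalently, every retained zero step lies on exactly one of the two hyperplanes $x_h=0$ and $x_1=w$, which is precisely the defining property of $\VT'_n(h,w)$. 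Hence $\phi$ restricts to a bijection $\NCNN'_n(h+1,w+1)\to\VT'_n(h,w)$. Finally, since $z(M)$ counts the fixed points lying in both an $(h+1/2)$-crossing and a $(w+1/2)$-nesting, while $z(T)$ counts the zero steps on $x_1=w$ (which in $\VT'_n(h,w)$ automatically satisfy $\lambda^{i}_{h}>0$), the same dictionary gives $z(M)=z(T)$, so the bijection is sign-preserving and~\eqref{eq:NCNN=VT4} follows. The delicate point to verify carefully is the correspondence of the four fixed-point types and the resulting match-up of the two statistics $z$; everything else is a direct transcription of Lemma~\ref{lem:ncnn-walk}.
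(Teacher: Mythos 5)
Your proposal is correct and follows exactly the paper's route: the paper derives Proposition~\ref{prop:NCNN=VT} as an immediate consequence of Lemma~\ref{lem:ncnn-walk}, which is precisely the dictionary you invoke. Your explicit case analysis for~\eqref{eq:NCNN=VT4} (matching the ``neither/both'' fixed-point types to zero steps on exactly one of the hyperplanes $x_h=0$ and $x_1=w$, and checking $z(M)=z(T)$) is the verification the paper leaves to the reader, and it is carried out correctly.
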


By Corollary~\ref{cor:csyt=VT} and Proposition~\ref{prop:NCNN=VT}, we
are able to connect cylindric standard Young tableaux and matchings.

\begin{cor}\label{cor:syt=ncnn}
  For positive integers $h$, $w$, and \( n \), we have
  \begin{align}
    \label{eq:syt=ncnn1}
    |\CSYT_n(2h+1,2w+1)| &=  |\NCNN_n(h+1,w+1)|,\\
    \label{eq:syt=ncnn2}
    |\CSYT_n(2h+1,2w)| &=  |\NCNN_n(h+1,w+1/2)|,\\
    \label{eq:syt=ncnn3}
    |\CSYT_n(2h,2w+1)| &=   |\NCNN_n(h+1/2,w+1)|,\\
    \label{eq:syt=ncnn4}
    |\CSYT_n(2h,2w)| &=  \sum_{M\in \NCNN'_n(h+1,w+1)} (-1)^{z(M)}.
  \end{align}
\end{cor}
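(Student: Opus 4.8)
The plan is to deduce all four equalities by simply composing the two chains of identities that have already been established, since both Corollary~\ref{cor:csyt=VT} and Proposition~\ref{prop:NCNN=VT} are expressed through the very same vacillating-tableaux quantities. First I would line up the two results: Corollary~\ref{cor:csyt=VT} writes each cardinality $|\CSYT_n(2h+1,2w+1)|$, $|\CSYT_n(2h+1,2w)|$, $|\CSYT_n(2h,2w+1)|$, and the signed count $|\CSYT_n(2h,2w)|$ in terms of $|\VT_n(h,w)|$, $|\VT_n(h,w^*)|$, $|\VT_n(h^*,w)|$, and $\sum_{T\in\VT'_n(h,w)}(-1)^{z(T)}$, respectively; Proposition~\ref{prop:NCNN=VT} expresses exactly the same four $\VT$-quantities as $|\NCNN_n(h+1,w+1)|$, $|\NCNN_n(h+1,w+1/2)|$, $|\NCNN_n(h+1/2,w+1)|$, and $\sum_{M\in\NCNN'_n(h+1,w+1)}(-1)^{z(M)}$.

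Concretely, transitivity then yields the four identities at once. For \eqref{eq:syt=ncnn1} I would chain the first line of each result to get $|\CSYT_n(2h+1,2w+1)| = |\VT_n(h,w)| = |\NCNN_n(h+1,w+1)|$; for \eqref{eq:syt=ncnn2}, \eqref{eq:syt=ncnn3}, and \eqref{eq:syt=ncnn4} I would chain the second, third, and fourth lines, respectively, obtaining $|\CSYT_n(2h+1,2w)| = |\VT_n(h,w^*)| = |\NCNN_n(h+1,w+1/2)|$, then $|\CSYT_n(2h,2w+1)| = |\VT_n(h^*,w)| = |\NCNN_n(h+1/2,w+1)|$, and finally $|\CSYT_n(2h,2w)| = \sum_{T\in\VT'_n(h,w)}(-1)^{z(T)} = \sum_{M\in\NCNN'_n(h+1,w+1)}(-1)^{z(M)}$. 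No further argument is required beyond invoking the two ingredient statements.

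There is essentially no obstacle at the level of this corollary: the substance sits entirely in the two results being composed. The genuinely hard parts were carried out earlier --- on one side, extracting the coefficient of $x_1x_2\cdots x_n$ from the affine bounded Littlewood identities and translating the resulting up-down tableaux into vacillating tableaux (Corollary~\ref{cor:csyt=VT}), and, on the other side, the bijection of Lemma~\ref{lem:ncnn-walk} between matchings and vacillating tableaux that tracks fixed points, crossings, and nestings (Proposition~\ref{prop:NCNN=VT}). The only mild care needed here is bookkeeping: making sure that the four lines of the two results are paired so that the auxiliary objects $\VT_n(h,w^*)$, $\VT_n(h^*,w)$, and the signed sum over $\VT'_n(h,w)$ are matched correctly, and in particular that the statistic $z$ appearing in \eqref{eq:syt=ncnn4} is the same quantity --- the number of zero steps on the hyperplane $x_1=w$, equivalently the number of fixed points lying in both an $(h+1/2)$-crossing and a $(w+1/2)$-nesting --- on both sides, as guaranteed by the third bullet of Lemma~\ref{lem:ncnn-walk}.
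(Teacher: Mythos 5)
Your proposal is correct and coincides with the paper's own argument: the corollary is obtained there precisely by chaining Corollary~\ref{cor:csyt=VT} with Proposition~\ref{prop:NCNN=VT} line by line, with the fourth identity relying on Lemma~\ref{lem:ncnn-walk} to identify the statistic $z$ on both sides. Your bookkeeping remarks match the paper's treatment exactly.
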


Clearly, the identities \eqref{eq:syt=ncnn1} and~\eqref{eq:syt=ncnn2}
reduce to~\eqref{eq:Chen2} for $w\to\infty$. The latter identity,
together with the limit case as
$w\to\infty$ of~\eqref{eq:syt=ncnn3} are discussed in Appendix~\ref{sec:growth} from
a bijective point of view; see~Corollary~\ref{thm:SYT-NC}.

In the next section we will show that~\eqref{eq:syt=ncnn1}
and~\eqref{eq:syt=ncnn2} reduce to a result of Mortimer and
Prellberg~\cite{Mortimer2015} if \( h=1 \). We will also give a
bijective proof 
of~\eqref{eq:syt=ncnn3} and~\eqref{eq:syt=ncnn4} for \( h=1 \).
Finding a bijective
proof for the general case is an open problem.

\begin{problem}
  Find a bijective proof of Corollary~\ref{cor:syt=ncnn} for general \( h \) and
  \( w \).
\end{problem}

The results of Corollary~\ref{cor:syt=ncnn} --- or rather the results
missing there --- raise two further questions.

\begin{problem}
Is there a ``signless" relation between the cylindric standard Young
tableaux in $\CSYT(2h,2w)$ and matchings with restrictions on their
crossings and nestings?
\end{problem}

\begin{problem}
Are the matchings in $\NCNN(h+1/2,w+1/2)$ related to cylindric
standard Young tableaux?
\end{problem}

\begin{problem}
Find an explicit formula for the number of elements in
$\NCNN(h+1/2,w+1/2)$.\end{problem}

\section{Mortimer and Prellberg's result and bijective proofs}
\label{sec:h=1}

Here we focus our attention
on the special case of
Corollary~\ref{cor:syt=ncnn} where \( h=1 \).
We show that if \( h=1 \) then the first two
identities, \eqref{eq:syt=ncnn1} and~\eqref{eq:syt=ncnn2}, are equivalent to a
result of Mortimer and Prellberg~\cite{Mortimer2015}, for which a
bijective proof has
been given by Courtiel, Elvey Price and
Marcovici~\cite{Courtiel2021}. We then give a 
bijective proof of the last two identities, \eqref{eq:syt=ncnn3}
and~\eqref{eq:syt=ncnn4}, for \( h=1 \).

\medskip
The above mentioned result of Mortimer and Prellberg involves walks in
a triangle respectively bounded Motzkin paths, which we define next.

\begin{defn}\label{defn:T_n}
  We let \( T_n(m) \) denote the set of walks \( p \) of length \( n \) from \(
  (0,0) \) to any point in \( \Z^2 \) consisting of steps in \( \{ (1,0),
  (0,1), (-1,-1)\} \) with the property that \( p \) is contained in the triangular region \(
  \{(x_1,x_2)\in \RR^2: m\ge x_1\ge x_2\ge 0 \} \).
\end{defn}

\begin{defn}\label{defn:mot}
  A \emph{Motzkin path of length \( n \)} is a path from \( (0,0) \) to \( (n,0)
  \) consisting of \emph{up steps} \( (1,1) \), \emph{down steps} \( (1,-1) \),
  and \emph{horizontal steps} \( (1,0) \) that never goes below the \( x \)-axis.
  The \emph{height} of a Motzkin path is the largest \( y \)-coordinate of a point
  in it. We denote by \( \Mot_n(m) \) the set of Motzkin paths of length \( n \)
  with height at most \( m \).
\end{defn}

We consider the following three subsets of \( \Mot_n(w) \):
 \begin{align*}
    \Mot'_n(w)
    & = \{ p\in \Mot_n(w):
      \mbox{\( p \) has no horizontal step on the line \( x_2=w \)}\},\\
    \Mot^1_n(w)
    & = \{ p\in \Mot_n(w):
      \mbox{every horizontal step of \( p \) lies on the line \( x_2=0 \)}\},\\
    \Mot^2_n(w)
    & = \{ p\in \Mot_n(w):
      \mbox{every horizontal step of \( p \) lies on the lines \( x_2=0 \) and \( x_2=w \)}\}.
 \end{align*}

 A Motzkin path without horizontal steps is called a \emph{Dyck path}. A
 \emph{Dyck prefix} is a sequence of points that can be extended to a Dyck path.
 Let \( \DP_n(w) \) denote the set of Dyck prefixes of length \( n \) contained
 in the region \( \{(x_1,x_2)\in \RR^2: 0\le x_2\le w \} \). We now give an
 equivalent statement of Corollary~\ref{cor:syt=ncnn} with \( h=1 \).

\begin{thm}\label{thm:syt=ncnn_h=1}
  For positive integers \( w \) and \( n \),
  \begin{align}
    \label{eq:t=Mot1}
    |T_n(2w+1)| &= |\Mot_n(w)|,\\
    \label{eq:t=Mot2}
    |T_n(2w)| &= |\Mot'_n(w)|,\\
    \label{eq:DP=Mot1}
    |\DP_n(2w+1)| &= |\Mot_n^1(w)|,\\
    \label{eq:DP=Mot2}
    |\DP_n(2w)| &= \sum_{p\in\Mot_n^2(w)} (-1)^{k(p)},
  \end{align}
  where \( k(p) \) is the number of horizontal steps of \( p \) on the line \(
  x_2=w \).
\end{thm}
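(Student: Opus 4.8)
The plan is to show that Theorem~\ref{thm:syt=ncnn_h=1} is the case $h=1$ of Corollary~\ref{cor:syt=ncnn} (equivalently of Corollary~\ref{cor:csyt=VT}), read through two elementary bijective dictionaries: one rewriting the cylindric standard Young tableaux on the left-hand sides as walks in a triangle or as Dyck prefixes, and one rewriting the right-hand sides as bounded Motzkin paths and their variants. Concretely, I would prove the four identifications $|\CSYT_n(3,2w+1)|=|T_n(2w+1)|$, $|\CSYT_n(3,2w)|=|T_n(2w)|$, $|\CSYT_n(2,2w+1)|=|\DP_n(2w+1)|$, $|\CSYT_n(2,2w)|=|\DP_n(2w)|$ on one side, and $|\VT_n(1,w)|=|\Mot_n(w)|$, $|\VT_n(1,w^*)|=|\Mot'_n(w)|$, $|\VT_n(1^*,w)|=|\Mot^1_n(w)|$, together with $\sum_{T\in\VT'_n(1,w)}(-1)^{z(T)}=\sum_{p\in\Mot^2_n(w)}(-1)^{k(p)}$ on the other. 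Chaining these through Corollary~\ref{cor:csyt=VT} then yields \eqref{eq:t=Mot1}--\eqref{eq:DP=Mot2} in order.

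For the left-hand sides I would invoke Proposition~\ref{prop:cssyt=path}: viewing a cylindric SYT as a cylindric RST of content $(1^n)$ and summing over all shapes, the set $\CSYT_n(h,2w+\varepsilon)$ (with $\varepsilon\in\{0,1\}$ and $h\in\{2,3\}$) is counted by lattice walks of length $n$ in $\Z^h$ starting at the origin, with unit steps $e_1,\dots,e_h$, staying in the region $\{x_1\ge\dots\ge x_h\ge x_1-(2w+\varepsilon)\}$ and ending at an arbitrary point. For $h=3$ the linear map $(x_1,x_2,x_3)\mapsto(x_1-x_3,\,x_2-x_3)$ sends this region bijectively onto the triangle $\{2w+\varepsilon\ge y_1\ge y_2\ge 0\}$ and sends $e_1,e_2,e_3$ to the steps $(1,0),(0,1),(-1,-1)$; since a triangle walk records exactly the sequence of step types, this is a bijection onto $T_n(2w+\varepsilon)$. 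For $h=2$ the map $(x_1,x_2)\mapsto x_1-x_2$ turns the region into the interval $\{0\le y\le 2w+\varepsilon\}$ and $e_1,e_2$ into up and down steps, giving a bijection onto the height-bounded Dyck prefixes $\DP_n(2w+\varepsilon)$.

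For the right-hand sides I would specialize the vacillating tableaux of Corollary~\ref{cor:csyt=VT} to $h=1$. A $1$-row vacillating tableau in $\VT_n(1,w)$ is exactly a walk of length $n$ on $\{0,1,\dots,w\}$ from $0$ to $0$ with steps $+1,-1,0$, that is, a Motzkin path in $\Mot_n(w)$, where $+1/-1/0$ become up/down/horizontal steps and the coordinate $x_1$ is the height. Under this dictionary a zero step on $x_1=w$ is a horizontal step at height $w$ and a zero step on $x_1=0$ is a horizontal step at height $0$, so $\VT_n(1,w^*)$, $\VT_n(1^*,w)$ and $\VT'_n(1,w)$ map precisely onto $\Mot'_n(w)$, $\Mot^1_n(w)$ and $\Mot^2_n(w)$, with $z(T)$ equal to $k(p)$; this also delivers the signed equality needed for \eqref{eq:DP=Mot2}. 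Alternatively, staying with Corollary~\ref{cor:syt=ncnn}, the same four statements follow from the standard bijection between noncrossing partial matchings and Motzkin paths, under which the maximal nesting becomes the maximal height, a fixed point covered by $k$ arcs becomes a horizontal step at height $k$, and the half-integer crossing/nesting conditions of Definitions~\ref{defn:ncnn} and~\ref{defn:ncnn'} translate into the defining restrictions of $\Mot'_n(w)$, $\Mot^1_n(w)$ and $\Mot^2_n(w)$.

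The routine verifications (that each projection respects the region inequalities in both directions, and that the reconstructed $\Z^h$ walk never leaves the cylindric region once the triangle/interval constraints hold) are immediate, since unit steps only increase coordinates and thus keep all $x_i\ge 0$ automatically. I expect the main obstacle to be the bookkeeping in the signed identity \eqref{eq:DP=Mot2}: one must check that $\VT'_n(1,w)$ (equivalently $\NCNN'_n(2,w+1)$) corresponds exactly to $\Mot^2_n(w)$ — horizontal steps permitted only at heights $0$ and $w$ — and that the sign-carrying statistics $z$ and $k$ agree, so that the cancellation encoded in $\sum(-1)^{z}$ matches $\sum(-1)^{k}$. Care is also needed with the degenerate geometry at $h=1$, where the two walls $x_1=0$ and $x_1=w$ (which for $h\ge 2$ are the distinct hyperplanes $x_h=0$ and $x_1=w$) must stay distinct, i.e.\ one uses $w\ge 1$.
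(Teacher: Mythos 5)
Your proposal is correct and follows essentially the same route as the paper: the paper also proves the theorem as the $h=1$ case of Corollary~\ref{cor:syt=ncnn}, with direct row-recording bijections $\CSYT_n(3,w)\leftrightarrow T_n(w)$ and $\CSYT_n(2,w)\leftrightarrow \DP_n(w)$ (the same maps you obtain via Proposition~\ref{prop:cssyt=path} and your linear projections) and the standard opener/closer/fixed-point bijection between noncrossing matchings and bounded Motzkin paths, under which the half-integer crossing/nesting conditions and the statistic $z$ translate exactly as you state into the restrictions defining $\Mot'_n(w)$, $\Mot^1_n(w)$, $\Mot^2_n(w)$ and the statistic $k$. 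Your primary dictionary through $\VT_n(1,w)$ merely skips the matching intermediary, which is harmless since Corollary~\ref{cor:syt=ncnn} is itself derived from Corollary~\ref{cor:csyt=VT} via Proposition~\ref{prop:NCNN=VT}.
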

\begin{proof}
  First, we give a bijection between \( \CSYT_n(3,w) \) and \( T_n(w) \).
  Let \( T\in \CSYT_n(3,w) \). Then the corresponding path \(
  p=(p_0,\dots,p_n)\in T_n(w) \) is constructed as follows.
  Let \( p_0=(0,0) \)
  and
  define \( p_i \) by
  \[
    p_i = \begin{cases}
      p_{i-1}+(1,0), & \mbox{if \( i \) is in the first row of \( T \)},\\
      p_{i-1}+(0,1), & \mbox{if \( i \) is in the second row of \( T \)},\\
      p_{i-1}+(-1,-1), & \mbox{if \( i \) is in the third row of \( T \)},
    \end{cases}
  \]
  for \( i=1,2,\dots, n \). 
  It is easy to check that the map \( T\mapsto p \) is a bijection between \( \CSYT_n(3,w) \) and \( T_n(w) \).

\medskip
  Second, we give a bijection between \( \CSYT_n(2,w) \) and \( \DP_n(w) \) in a
  similar way. For \( T\in \CSYT_n(2,w) \), the corresponding path \(
  p=(p_0,\dots,p_n)\in \DP_n(w) \) is defined by \( p_0=(0,0) \) and
  \[
    p_{i} = \begin{cases}
      p_{i-1}+(1,1), & \mbox{if \( i \) is in the first row of \( T \)},\\
      p_{i-1}+(1,-1), &\mbox{if \( i \) is in the second row of \( T \)},
    \end{cases}
  \]
  for 
  \( i=1,2,\dots, n \).
  It is also easy to check that the map \(
  T\mapsto p \) is a bijection between \( \CSYT_n(2,w) \) and \( \DP_n(w) \).

\medskip
  Third, we find a bijection between \( \NCNN_n(2,w+1) \) and \(
  \Mot_n(w) \). This 
  is in fact a well-known bijection between noncrossing matchings
  and Motzkin paths.
  Let \( M\in \NCNN_n(2,w+1) \). Recall that \( i \) is a fixed point of \( M \)
  if \( i \) is not connected to any integer by an arc. An integer \( i \)
  is called an \emph{opener} (respectively \emph{closer}) of \( M \) if it is connected to
  \( j \) by an arc for some \( j>i \) (respectively \( j<i \)). Since \( M \) does
  not have any 2-crossings, it is determined by its openers, closers, and fixed
  points. Therefore we can construct the corresponding Motzkin path \( p=(p_0,\dots,p_n) \)
  as follows. Let \( p_0=(0,0) \) and 
  define
  \[
    p_i = \begin{cases}
      p_{i-1}+(1,1), & \mbox{if \( i \) is an opener of  \( M \)},\\
      p_{i-1}+(1,-1), & \mbox{if \( i \) is a closer of \( M \)},\\
      p_{i-1}+(1,0), & \mbox{if \( i \) is a fixed point of \( M \)},
    \end{cases}
  \]
  for \( i=1,2,\dots, n\).
  Since \( M \) does not have a \( (w+1) \)-nesting, the Motzkin path \( p \)
  stays in the region \( \{(x_1,x_2)\in \RR^2: 0\le x_2\le w \} \). One can easily check that the map \(
  M\mapsto p \) is a bijection between \( \NCNN_n(2,w+1) \) and \( \Mot_n(w) \).

  Similarly one can check that the same map \( M\mapsto p \) also induces a
  bijection between \( \NCNN_n(2,w+1/2) \) and \( \Mot'_n(w) \),
  a bijection between \(
  \NCNN_n(1+1/2,w+1) \) and \( \Mot^1_n(w) \), and a bijection between \(
  \NCNN'_n(2,w+1) \) and \( \Mot^2_n(w) \). Moreover, if \(M\in \NCNN'_n(2,w+1)
  \) corresponds to \( p\in \Mot^2_n(w) \), then \( z(M)=k(p) \).

  Applying the above bijections to Corollary~\ref{cor:syt=ncnn} with \( h=1 \), we
  immediately obtain the desired identities.
\end{proof}

The identities~\eqref{eq:t=Mot1} and~\eqref{eq:t=Mot2} were first proved by
Mortimer and Prellberg~\cite{Mortimer2015} using the kernel method. Recently,
Courtiel, Elvey Price and Marcovici~\cite{Courtiel2021} found a bijective proof of
these two identities. In the rest of this section, we give
bijective proofs
of~\eqref{eq:DP=Mot1} and of~\eqref{eq:DP=Mot2}. To simplify the right-hand side
of~\eqref{eq:DP=Mot2}, we need the following definition and lemma.

\begin{defn}
For \( p=(p_0,p_1,\dots,p_n) \in \Mot_n^2(w) \), a horizontal step \( p_{j+1}-p_{j} \) is called \emph{special} if one of the following conditions holds:
\begin{itemize}
\item \( p_{j+1}-p_{j} \) lies on the line \( x_2=w \);
\item \( p_{j+1}-p_{j} \) lies on the line \(x_2=0 \), \( (p_0,p_1,\dots,p_j)
  \in \Mot^1_{j}(w) \) with an even number of horizontal steps, and there is an
  integer \( i \) with  \( j < i < n\) such that \( (p_{j+1},p_{j+2},\dots,
  p_{i+1}) \) is a (translated) Dyck prefix from \((j+1,0)\) to \( (i+1,w) \).
\end{itemize}
We define
\[
 \Mot^3_n(w)= \{ p\in \Mot^2_n(w):\mbox{\( p \) has no special horizontal step}\},
\]
so that \( \Mot^3_n(w) \subseteq \Mot^1_n(w) \).
\end{defn}

\begin{lem} \label{lem:special}
 For positive integers \( w \) and \( n \),
 \[
  |\Mot^3_n(w)| = \sum_{p\in\Mot_n^2(w)} (-1)^{k(p)},
 \]
 where \( k(p) \) is the number of horizontal steps of \( p \) on the line \(
 x_2=w \).
\end{lem}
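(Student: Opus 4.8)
The plan is to prove the identity by constructing a sign-reversing involution $\iota$ on the set $\Mot^2_n(w)\setminus\Mot^3_n(w)$ of those paths that possess at least one special horizontal step. Note first that a path in $\Mot^3_n(w)\subseteq\Mot^1_n(w)$ has all of its horizontal steps on the line $x_2=0$, so it satisfies $k(p)=0$ and contributes $(-1)^0=1$ to the right-hand side. Thus, if I can produce an involution $\iota$ on $\Mot^2_n(w)\setminus\Mot^3_n(w)$ with no fixed points that sends $k(p)$ to $k(p)\pm 1$ (hence reverses the sign $(-1)^{k(p)}$), then all terms indexed by $\Mot^2_n(w)\setminus\Mot^3_n(w)$ cancel in pairs and the signed sum collapses to $\lvert\Mot^3_n(w)\rvert$, as claimed. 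Observe that the fixed-point-free complement is exactly $\Mot^2_n(w)\setminus\Mot^3_n(w)$: since every horizontal step on $x_2=w$ is special, a path with no special step lies in $\Mot^1_n(w)$ and, having no special step on $x_2=0$ either, lies in $\Mot^3_n(w)$.

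To define $\iota$, I would single out the \emph{first} special horizontal step $s$ of a given $p=(p_0,\dots,p_n)$, scanning from left to right, and toggle its height. If $s$ lies on $x_2=0$, say $s=p_{j+1}-p_j$ with $p_j=(j,0)$, then by the definition of special there is a smallest $i$ with $j<i<n$ for which $(p_{j+1},\dots,p_{i+1})$ is a Dyck prefix reaching $(i+1,w)$; I would \emph{raise} $s$ by replacing the factorisation $p=A\cdot s\cdot D\cdot B$ (with $A$ the portion up to $(j,0)$, $D$ the ascent to $(i+1,w)$, and $B$ the remainder) by $A\cdot\widetilde D\cdot h\cdot B$, where $\widetilde D$ is the same ascent translated one unit to the left, running from $(j,0)$ to $(i,w)$, and $h$ is the horizontal step $(i,w)\to(i+1,w)$. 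If instead $s$ lies on $x_2=w$, I would perform the inverse \emph{lowering} operation, locating the last point $(j,0)$ on $x_2=0$ from which $p$ ascends purely (by up- and down-steps) to the height of $s$, and sliding the horizontal step down to $(j,0)$ while reflecting the ascent accordingly. This is precisely the ``modified reflection'' $\mathfrak R$, combined with the sliding of a single step, already employed in Lemma~\ref{lem:L_ab} and Lemma~\ref{lem:L_ab'}. In either case exactly one horizontal step on $x_2=w$ is created or destroyed, so $k(\iota(p))=k(p)\pm1$ and the sign is reversed.

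The heart of the argument — and the step I expect to be the main obstacle — is verifying that $\iota$ is genuinely an involution, i.e.\ that raising the first special step at height $0$ yields a path whose first special step is precisely the newly created step at height $w$, and conversely that lowering recovers the original path. Since raising alters $p$ only on the factor between $(j,0)$ and $(i+1,w)$, the prefix $A$ is left untouched; I would check that no horizontal step inside $A$ can become special after the modification, and that the created height-$w$ step is indeed the \emph{first} special step of the image. This is exactly where the two auxiliary clauses in the definition of a special height-$0$ step are used: the requirement $(p_0,\dots,p_j)\in\Mot^1_j(w)$ guarantees that no horizontal step on $x_2=w$ precedes $s$, so that the raised step really is the first such step, while the parity requirement that this prefix contain an \emph{even} number of horizontal steps is what pins down the correct return-to-zero point under lowering and ensures that the distinguished special step is recognised consistently, so that $\iota\circ\iota=\mathrm{id}$. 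Once this bookkeeping is settled, the involution $\iota$ cancels all signed contributions outside $\Mot^3_n(w)$ and leaves each element of $\Mot^3_n(w)$ counted once with sign $+1$, which establishes the lemma.
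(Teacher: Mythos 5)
Your overall strategy is the paper's: a sign-reversing involution on $\Mot^2_n(w)\setminus\Mot^3_n(w)$ that toggles the \emph{first} special horizontal step between height $0$ and height $w$, with the even-parity clause in the definition of a special height-$0$ step serving exactly the bookkeeping purpose you identify. However, the concrete ``raise'' you define --- translating the Dyck-prefix ascent $D$ one unit to the left and appending a horizontal step at height $w$ --- is genuinely wrong: it is not an involution with your lowering, and in fact it is not even injective. Concretely, take $w=2$, $n=7$, and write $U,D,H$ for up, down, and horizontal steps. Both $p=HUDUUDD$ and $p'=UDHUUDD$ lie in $\Mot^2_7(2)\setminus\Mot^3_7(2)$, with first special step the indicated $H$ at height $0$ (for $p'$ the prefix $UD$ lies in $\Mot^1_2(2)$ with zero horizontal steps, an even number, and $UU$ ascends to height $2$). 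Your raising sends \emph{both} to the same path $UDUUHDD$. The source of the failure is the step you flagged as the main obstacle: a translated Dyck prefix from $(j+1,0)$ to $(i+1,w)$ may return to height $0$ in its interior (here $UDUU$ does), so after a pure translation the last zero before the newly created height-$w$ step need not be at $j$, and your lowering locates the wrong point; a two-to-one map cannot produce the pairing needed for the signed cancellation. (Also, the ``modified reflection'' $\mathfrak{R}$ of Lemma~\ref{lem:L_ab} is a step-pairing operation for a different step set and is not what is needed here, so invoking it does not repair the asymmetry between your translation-based raise and reflection-based lower.)

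The paper's proof differs at precisely this point: instead of translating, it \emph{reverses} the segment between the horizontal step and the first point at height $w$ and reflects it about a center at height $w/2$ (equivalently, flips heights $y\mapsto w-y$ and reads the segment backwards). Because the segment first reaches $w$ only at its right end, its interior heights are $<w$, so the flipped interior stays strictly above $0$; hence in the image the last zero before the created height-$w$ horizontal step is exactly $j$, and symmetrically, in the lowering direction the segment from the last zero has strictly positive interior, so its flip stays strictly below $w$ and the ``smallest $i$'' (first arrival at height $w$) is correctly recovered. Since a point reflection is self-inverse, the two cases then invert each other on the nose. With the move corrected, the deferred verification that no horizontal step in the untouched prefix becomes special in the image does go through, and it is exactly the even-parity clause that rules out the dangerous case (a height-$0$ horizontal step at position $m<j$ with no further horizontal steps before $j$ would require an even number of horizontal steps before $m$, while speciality of the step at $j$ forces that number to be odd). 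So your reduction of the lemma to a fixed-point-free sign-reversing involution is sound, but the involution itself must be the reflect-and-reverse map, not the slide.
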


\begin{proof}
  For \( p \in \Mot^2_n(w) \), we define \( \sgn(p)=(-1)^{k(p)} \). It suffices to find a sign-reversing involution \( \phi \) on  \( \Mot^2_n(w) \) whose fixed points are exactly those in \( \Mot^3_n(w) \).

Let \(p \in \Mot^2_n(w) \). If \( p \in \Mot^3_n(w) \), then define \(
\phi(p)=p \). Otherwise, we can find the smallest integer \( j \) with
\(0 \le j < n \) such that \( p_{j+1}-p_{j} \) is special. Suppose
that the first special horizontal step \( p_{j+1}-p_j \) lies on the
line \( x_2=w \). Then we can find the largest integer \( i\) with \(
0 \le i < j \) such that \( (p_i, p_{i+1}, \dots, p_{j}) \) is a
(translated) Dyck prefix. Define \( \phi(p)=(p_0, \dots, p_i, r(p_j),
r(p_{j-1}),\dots, r(p_{i+1}), p_{j+1}, \dots, p_n) \), where \( r \)
is the reflection about the point \( ((j-i+1)/2, w/2) \). Then \(
q=\phi(p) \) has the first special horizontal step \( q_{i+1}-q_i \)
on the line \( x_2=0 \) and \(k(q)=k(p)-1 \). Now suppose that \( p \)
has the first special horizontal step \( p_{j+1}-p_j \) on the line \(
x_2=0 \). Then \( (p_0,p_1,\dots,p_j) \in \Mot^1_{j}(w) \) with an even number of horizontal steps, and we can find the smallest integer \( i \) with  \( j < i < n\) such that \( (p_{j+1},p_{j+2},\dots, p_{i+1}) \) is a translated Dyck prefix from \((j+1,0)\) to \( (i+1,w) \). Define \( \phi(p)=(p_0, \dots, p_j, r(p_{i}), r(p_{i-1}),\dots, r(p_{j+1}), p_{i+1}, \dots, p_n) \), where \( r \) is the reflection about the point \( ((i-j+1)/2, w/2) \). In this case, \( q=\phi(p) \) has the first special horizontal step \( q_{i+1}-q_i \) on the line \( x_2=w \) and \(k(q)=k(p)+1 \). One can easily check that \( \phi \) is an involution, which completes the proof.
\end{proof}

Figure \ref{fig:special involution} shows an example of the involution \( \phi
\) defined in the proof of Lemma~\ref{lem:special}.

\begin{figure}
\centering
\small{
\begin{tikzpicture}[scale=0.4]

\foreach \i in {0,1,...,26}
\foreach \j in {0,1,...,3}
\filldraw[fill=gray!70, color=gray!70] (\i,\j) circle (2.5pt);

\draw[->] (-1,0) -- (26.5,0);
\draw[->] (0,-1) -- (0,3.5);

\node at (27,0) {\( x_1 \)};
\node at (0,4) {\( x_2 \)};

\foreach \i in {5,10,15,20,25}
\node[below] at (\i,0) {\i};

\node[left] at (0,3) {\( 3 \)};

\coordinate (0) at (0,0);
\coordinate (1) at (1,0);
\coordinate (2) at (2,1);
\coordinate (3) at (3,0);
\coordinate (4) at (4,0);
\coordinate (5) at (5,0);
\coordinate (6) at (6,1);
\coordinate (7) at (7,0);
\coordinate (8) at (8,1);
\coordinate (9) at (9,2);
\coordinate (10) at (10,3);
\coordinate (11) at (11,2);
\coordinate (12) at (12,1);
\coordinate (13) at (13,0);
\coordinate (14) at (14,0);
\coordinate (15) at (15,1);
\coordinate (16) at (16,0);
\coordinate (17) at (17,1);
\coordinate (18) at (18,2);
\coordinate (19) at (19,3);
\coordinate (20) at (20,2);
\coordinate (21) at (21,3);
\coordinate (22) at (22,3);
\coordinate (23) at (23,3);
\coordinate (24) at (24,2);
\coordinate (25) at (25,1);
\coordinate (26) at (26,0);

\foreach \i in {0,1,...,26}
\filldraw (\i) circle (2.5pt);

\draw[thick]
(0) -- (1) -- (2) -- (3) -- (4) -- (5) -- (6) -- (7) -- (8) -- (9)
-- (10) -- (11) -- (12) -- (13) -- (14) -- (15) -- (16) -- (17) -- (18) --(19)
-- (20) -- (21) -- (22) -- (23) -- (24) -- (25) -- (26);

\draw[ultra thick]
(4) -- (5)
(21) -- (22)
(22) -- (23);

\end{tikzpicture}\\[2ex]

\begin{tikzpicture}[scale=0.4]

\draw[<->, thick]
(0,0) -- (0,-1);

\end{tikzpicture}\\

\begin{tikzpicture}[scale=0.4]

\foreach \i in {0,1,...,26}
\foreach \j in {0,1,...,3}
\filldraw[fill=gray!70, color=gray!70] (\i,\j) circle (2.5pt);

\draw[->] (-1,0) -- (26.5,0);
\draw[->] (0,-1) -- (0,3.5);

\node at (27,0) {\( x_1 \)};
\node at (0,4) {\( x_2 \)};

\foreach \i in {5,10,15,20,25}
\node[below] at (\i,0) {\i};

\node[left] at (0,3) {\( 3 \)};

\coordinate (0) at (0,0);
\coordinate (1) at (1,0);
\coordinate (2) at (2,1);
\coordinate (3) at (3,0);
\coordinate (4) at (4,0);
\coordinate (5) at (5,1);
\coordinate (6) at (6,2);
\coordinate (7) at (7,3);
\coordinate (8) at (8,2);
\coordinate (9) at (9,3);
\coordinate (10) at (10,3);
\coordinate (11) at (11,2);
\coordinate (12) at (12,1);
\coordinate (13) at (13,0);
\coordinate (14) at (14,0);
\coordinate (15) at (15,1);
\coordinate (16) at (16,0);
\coordinate (17) at (17,1);
\coordinate (18) at (18,2);
\coordinate (19) at (19,3);
\coordinate (20) at (20,2);
\coordinate (21) at (21,3);
\coordinate (22) at (22,3);
\coordinate (23) at (23,3);
\coordinate (24) at (24,2);
\coordinate (25) at (25,1);
\coordinate (26) at (26,0);

\foreach \i in {0,1,...,26}
\filldraw (\i) circle (2.5pt);

\draw[thick]
(0) -- (1) -- (2) -- (3) -- (4) -- (5) -- (6) -- (7) -- (8) -- (9)
-- (10) -- (11) -- (12) -- (13) -- (14) -- (15) -- (16) -- (17) -- (18) --(19)
-- (20) -- (21) -- (22) -- (23) -- (24) -- (25) -- (26);

\draw[ultra thick]
(9) -- (10)
(21) -- (22)
(22) -- (23);

\end{tikzpicture}
}

\caption{An example of the involution \(\phi \) on \( \Mot^2_{26}(3) \).
  The thick horizontal lines indicate the special horizontal steps.}\label{fig:special involution}
\end{figure}
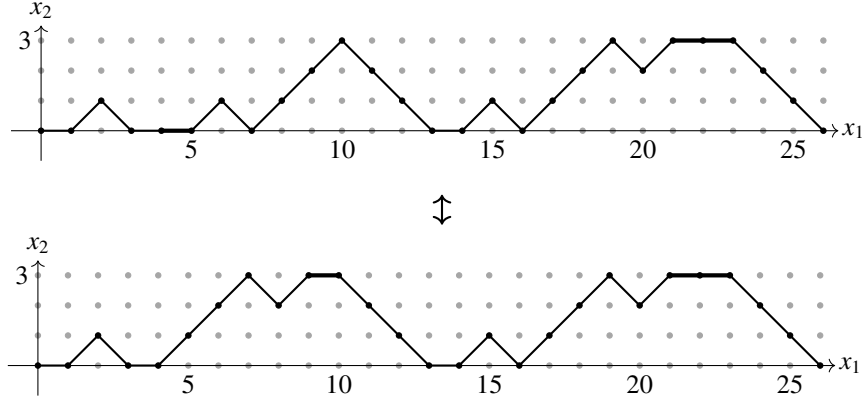

By Lemma \ref{lem:special}, the identity~\eqref{eq:DP=Mot2} is equivalent to \(
|\DP_n(2w)|=|\Mot^3_n(w)| \). To prove~\eqref{eq:DP=Mot1}
and~\eqref{eq:DP=Mot2}, we adopt a bijection between the set of bounded Dyck prefixes and the set of bounded up-down paths due to Dershowitz.

An \emph{up-down path of length \( n \)} is a path from \( (0,0) \) to
\( (n, (1+(-1)^{n+1})/2) \) consisting of upward diagonal steps $(1,1)$
and downward diagonal steps $(1,-1)$.
Let \( \GD_n(w) \) denote the set of up-down paths of length \( n \) staying in the region
  \[
   \left \{ (x_1,x_2)\in \RR^2: -\left \lfloor \frac{w}{2} \right \rfloor \le x_2\le \left \lfloor \frac{w+1}{2} \right \rfloor \right \}.
  \]
A standard reflection argument (cf.\ \cite[Th.~10.3.3]{KratCL}
under rotation by $45^\circ$) yields
\[
  |\DP_n(w)|=|\GD_n(w)|=\sum_{j \in \Z} (-1)^j
  \binom{n}{\left\lfloor \frac{n+(w+2)j}{2} \right\rfloor}.
\]
Recently, Gu and Prodinger~\cite{Gu2021} and Dershowitz~\cite{Dershowitz} found
simple bijections between \( \DP_n(w) \) and \( \GD_n(w) \) independently. Here,
we briefly introduce Dershowitz's bijection.

Let \( p \) be an up-down path from \( (0,0) \) to \( (n,m) \).
For an integer \( k \), the \emph{\(
  TA \) representation of \( p \) relative to \( k \)} is the word \(
r_1r_2\dots r_n \), where \( r_i \) is the letter \( T \) (respectively \( A \)) if the
\( i \)-th step of \( p \) moves towards (respectively away from) the line \( x_2=k \).
Note that any up-down path (whose starting point is \( (0,0) \)) is uniquely
determined by its TA representation. Now let \( p \in \DP_n(w) \). Suppose that
\( r_1r_2\dots r_n \) is the \( TA \) representation of \( p \) relative to \(
\lceil h(p)/2 \rceil -1/2 \), where \( h(p) \) is the largest \( x_2 \)-coordinate
of the points in \( p \). Let \( j \) be the smallest integer such that the
height of the ending point of the \( j \)-th step of \( p \) equals \( \lfloor
h/2 \rfloor \). Then define \( \phi(p) \) to be the up-down path  whose \( TA
\) representation relative to \( 1/2 \) is \( r_{j+1} r_{j+2}\dots r_n r_j
r_{j-1}\dots r_1\). Dershowitz~\cite{Dershowitz} showed that \( \phi : \DP_n(w)
\rightarrow \GD_n(w) \) is a bijection.

Our last ingredient is a map \(\psi : \GD_n(w) \rightarrow \Mot^1_n(\lfloor w/2 \rfloor) \). For
\( p=(p_0, p_1, \dots, p_n) \in \GD_n(w) \), define \(
\psi(p)=(q_0,q_1,\dots,q_n) \), where
\[
q_i=
\begin{cases}
p_i+(0,-1), & \mbox{if \( p_i \) is above the line \( x_2=1/2 \)},\\
-p_i, & \mbox{otherwise},
\end{cases}
\]
for \( i=0,1,\dots, n \). One can easily see that \(\psi : \GD_n(2w+1)
\rightarrow \Mot^1_n(w) \) and \(\psi : \GD_n(2w) \rightarrow \Mot^3_n(w) \) are
bijections. The combination of
the maps \( \phi \) and \( \psi \)  completes the proof
of~\eqref{eq:DP=Mot1} and~\eqref{eq:DP=Mot2}. Figure \ref{fig:(1.5)l involution}
shows examples of the maps \(\phi : \DP_{26}(6) \rightarrow \GD_{26}(6) \) and
\(\psi : \GD_{26}(6) \rightarrow \Mot^3_{26}(3) \).

\begin{figure}
\centering
\begin{tikzpicture}[scale=0.4]

\foreach \i in {0,1,...,26}
\foreach \j in {0,1,...,6}
\filldraw[fill=gray!70, color=gray!70] (\i,\j) circle (2.5pt);

\draw[->] (-1,0) -- (26.5,0);
\draw[->] (0,-1) -- (0,6.5);
\draw[] (0,2.5) -- (26,2.5);

\node at (27,0) {\( x_1 \)};
\node at (0,7) {\( x_2 \)};

\foreach \i in {5,10,15,20,25}
\node[below] at (\i,0) {\i};

\node[left] at (0,3) {\( 3 \)};
\node[left] at (0,6) {\( 6 \)};
\node[left] at (-1,0) {};

\coordinate (0) at (0,0);
\coordinate (1) at (1,1);
\coordinate (2) at (2,2);
\coordinate (3) at (3,1);
\coordinate (4) at (4,0);
\coordinate (5) at (5,1);
\coordinate (6) at (6,2);
\coordinate (7) at (7,3);
\coordinate (8) at (8,2);
\coordinate (9) at (9,3);
\coordinate (10) at (10,4);
\coordinate (11) at (11,5);
\coordinate (12) at (12,6);
\coordinate (13) at (13,5);
\coordinate (14) at (14,4);
\coordinate (15) at (15,3);
\coordinate (16) at (16,2);
\coordinate (17) at (17,3);
\coordinate (18) at (18,4);
\coordinate (19) at (19,3);
\coordinate (20) at (20,2);
\coordinate (21) at (21,1);
\coordinate (22) at (22,0);
\coordinate (23) at (23,1);
\coordinate (24) at (24,2);
\coordinate (25) at (25,1);
\coordinate (26) at (26,2);

\foreach \i in {0,1,...,26}
\filldraw (\i) circle (2.5pt);

\filldraw (7) circle (5pt);

\draw[thick]
(0) -- (1) -- (2) -- (3) -- (4) -- (5) -- (6) -- (7) -- (8) -- (9)
-- (10) -- (11) -- (12) -- (13) -- (14) -- (15) -- (16) -- (17) -- (18) --(19)
-- (20) -- (21) -- (22) -- (23) -- (24) -- (25) -- (26);

\foreach \i in {1,2,5,6,8,7,9,13,14,15,16,17,19,20,23,24,26}
\node[left] at (\i) {\( T \)};

\foreach \i in {3,4,10,11,12,18,21,22,25}
\node[left] at (\i) {\( A \)};

\end{tikzpicture}\\[2ex]

\begin{tikzpicture}[scale=0.4]

\draw[->, thick]
(0,0) -- (0,-1);

\node at (1, -0.5) {\( \phi \)};

\end{tikzpicture}\\

\begin{tikzpicture}[scale=0.4]

\foreach \i in {0,1,...,26}
\foreach \j in {-3,...,3}
\filldraw[fill=gray!70, color=gray!70] (\i,\j) circle (2.5pt);

\draw[->] (-1,0) -- (26.5,0);
\draw[->] (0,-3.5) -- (0,3.5);
\draw[] (0,0.5) -- (26,0.5);

\node at (27,0) {\( x_1 \)};
\node at (0,4) {\( x_2 \)};

\node[left] at (0,3) {\( 3 \)};
\node[left] at (0,-3) {\( -3 \)};
\node[left] at (-1,0) {};

\coordinate (0) at (0,0);
\coordinate (1) at (1,1);
\coordinate (2) at (2,0);
\coordinate (3) at (3,-1);
\coordinate (4) at (4,-2);
\coordinate (5) at (5,-3);
\coordinate (6) at (6,-2);
\coordinate (7) at (7,-1);
\coordinate (8) at (8,0);
\coordinate (9) at (9,1);
\coordinate (10) at (10,0);
\coordinate (11) at (11,-1);
\coordinate (12) at (12,0);
\coordinate (13) at (13,1);
\coordinate (14) at (14,2);
\coordinate (15) at (15,3);
\coordinate (16) at (16,2);
\coordinate (17) at (17,1);
\coordinate (18) at (18,2);
\coordinate (19) at (19,1);
\coordinate (20) at (20,0);
\coordinate (21) at (21,1);
\coordinate (22) at (22,0);
\coordinate (23) at (23,-1);
\coordinate (24) at (24,-2);
\coordinate (25) at (25,-1);
\coordinate (26) at (26,0);

\foreach \i in {0,1,...,26}
\filldraw (\i) circle (2.5pt);

\filldraw (19) circle (5pt);

\draw[thick]
(0) -- (1) -- (2) -- (3) -- (4) -- (5) -- (6) -- (7) -- (8) -- (9)
-- (10) -- (11) -- (12) -- (13) -- (14) -- (15) -- (16) -- (17) -- (18) --(19)
-- (20) -- (21) -- (22) -- (23) -- (24) -- (25) -- (26);

\foreach \i in {1,2,6,7,8,9,10,12,13,16,17,19,20,21,22,25,26}
\node[left] at (\i) {\( T \)};

\foreach \i in {3,4,5,11,14,15,18,23,24}
\node[left] at (\i) {\( A \)};

\end{tikzpicture}\\[2ex]

\begin{tikzpicture}[scale=0.4]

\draw[->, thick]
(0,0) -- (0,-1);
\node at (1, -0.5) {\( \psi \)};

\end{tikzpicture}\\

\begin{tikzpicture}[scale=0.4]

\foreach \i in {0,1,...,26}
\foreach \j in {0,1,...,3}
\filldraw[fill=gray!70, color=gray!70] (\i,\j) circle (2.5pt);

\draw[->] (-1,0) -- (26.5,0);
\draw[->] (0,-1) -- (0,3.5);

\node at (27,0) {\( x_1 \)};
\node at (0,4) {\( x_2 \)};

\foreach \i in {5,10,15,20,25}
\node[below] at (\i,0) {\i};

\node[left] at (0,3) {\( 3 \)};
\node[left] at (-1,0) {};

\coordinate (0) at (0,0);
\coordinate (1) at (1,0);
\coordinate (2) at (2,0);
\coordinate (3) at (3,1);
\coordinate (4) at (4,2);
\coordinate (5) at (5,3);
\coordinate (6) at (6,2);
\coordinate (7) at (7,1);
\coordinate (8) at (8,0);
\coordinate (9) at (9,0);
\coordinate (10) at (10,0);
\coordinate (11) at (11,1);
\coordinate (12) at (12,0);
\coordinate (13) at (13,0);
\coordinate (14) at (14,1);
\coordinate (15) at (15,2);
\coordinate (16) at (16,1);
\coordinate (17) at (17,0);
\coordinate (18) at (18,1);
\coordinate (19) at (19,0);
\coordinate (20) at (20,0);
\coordinate (21) at (21,0);
\coordinate (22) at (22,0);
\coordinate (23) at (23,1);
\coordinate (24) at (24,2);
\coordinate (25) at (25,1);
\coordinate (26) at (26,0);

\foreach \i in {0,1,...,26}
\filldraw (\i) circle (2.5pt);

\draw[thick]
(0) -- (1) -- (2) -- (3) -- (4) -- (5) -- (6) -- (7) -- (8) -- (9)
-- (10) -- (11) -- (12) -- (13) -- (14) -- (15) -- (16) -- (17) -- (18) --(19)
-- (20) -- (21) -- (22) -- (23) -- (24) -- (25) -- (26);
\end{tikzpicture}
\caption{An example of the bijection between  \(\ \DP_{26}(6)\) and \(\Mot^3_{26}(3) \).}\label{fig:(1.5)l involution}
\end{figure}
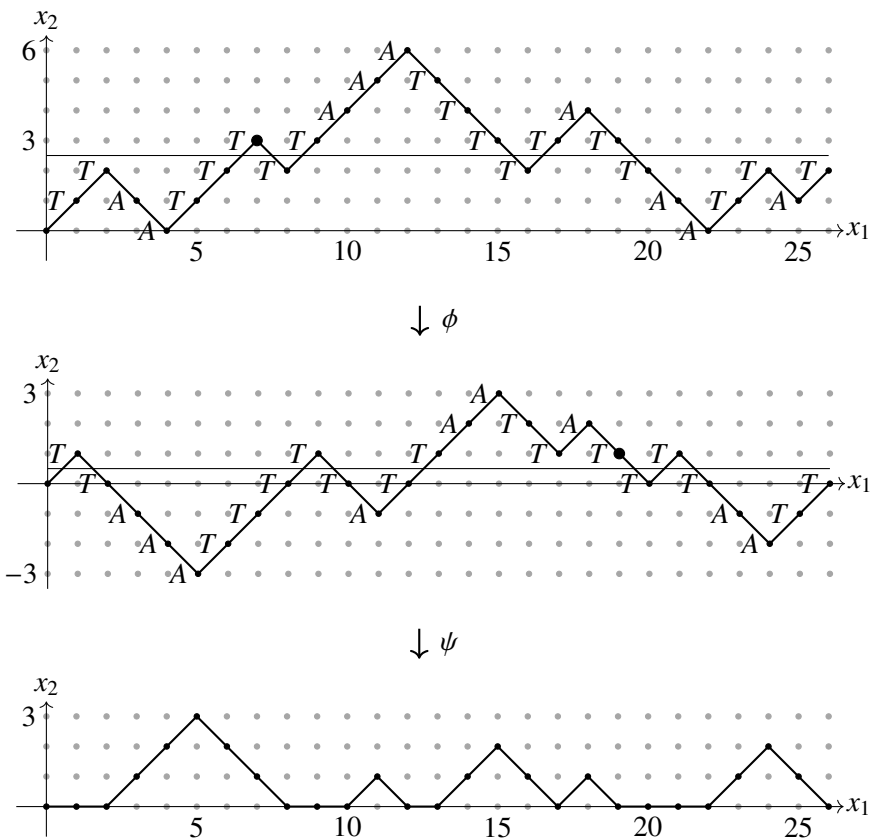

\appendix
\section{The discovery of the bounded Littlewood identities for cylindric Schur functions}
\label{sec:motivation}

Since we believe that the path of discovery of the new bounded Littlewood identities
in Theorem~\ref{thm:affine_BK_intro2} (restated
in Theorem~\ref{thm:affine_BK_intro}) is quite interesting (and
unexpected), and therefore should not be kept a secret from the reader,
we dedicate this appendix to a  description of our line of thought that ---
in the end --- led us to the identities in~\eqref{eq:aGBK1s}
and~\eqref{eq:aGBK2s}, and the further identities in
Theorems~\ref{thm:C} and~\ref{thm:D}.

\medskip
Our original goal was to find a bijective proof of the enumeration
result of Mortimer and 
Prellberg~\cite{Mortimer2015} that we discussed in 
Section~\ref{sec:h=1}, and which these authors established by some generating
function calculus based on the kernel method.
In order to state their result, we must recall 
from Definition~\ref{defn:T_n} that $T_n(m)$
denotes the set of walks of length~$n$ starting at the origin,
consisting of steps $(1,0)$, $(0,1)$, and $(-1,-1)$, and staying in
the triangular region $\{(x_1,x_2)\in \RR^2: m\ge x_1\ge x_2\ge 0 \}$,
and from Definition~\ref{defn:mot} that $\Mot_n(h)$ is the set of
Motzkin paths of length~$n$ of height at most~$h$. 

Mortimer and Prellberg~\cite{Mortimer2015} proved the following
surprising identity using the kernel method:
\begin{equation}
  \label{eq:MP}
  |T_n(2h+1)| = |\Mot_n(h)|.
\end{equation}

As given away above, we wanted to construct a bijective proof of this intriguing
identity. We first observed that~\eqref{eq:MP} is not the most transparent formulation
of this identity. In order to motivate our reformulation below, let us first
consider the limiting case where \( h\to\infty \), i.e.,
\begin{equation}
  \label{eq:MP_infty}
  |T_n(\infty)| = |\Mot_n(\infty)|.
\end{equation}
Each walk \( p=(p_0,\dots,p_n)\in T_n(\infty) \) can be identified with the sequence
\( (\lambda^{0},\dots, \lambda^{n}) \) of partitions with
at most three rows, where \( \lambda^{0}=\emptyset \) and \( \lambda^{i}
\) is obtained from \( \lambda^{i-1} \) by adding a cell to the first
(respectively second and third) row if \( p_i \) is \( (1,0) \) (respectively \( (0,1) \) and
\( (-1,-1) \)). This sequence of partitions is in turn naturally identified with
a standard Young tableau of size~\( n \) with height at most \( 3 \);
see the proof of Theorem~\ref{thm:syt=ncnn_h=1} for more details. On the
other hand, there is a simple and well-known bijection between \( \Mot_n(\infty)
\) and the set of \( 2 \)-noncrossing matchings on 
\( \{1,2,\dots, n \} \). 
(We described it in the proof of Theorem~\ref{thm:syt=ncnn_h=1}.)
Therefore \eqref{eq:MP_infty} is equivalent to
\[
 |\SYT_n(3)| = |\NC_n(2)|,
\]
where as before \( \SYT_n(3) \) is the set of standard Young tableaux
of size~\( n \) with 
at most \( 3 \)~rows and \( \NC_n(2) \) is the set of \( 2 \)-noncrossing
matchings on 
\( \{1,2,\dots, n \} \). 

It is not hard to check that the bijection between \( T_n(\infty) \)
and \( \SYT_n(3) \) induces a bijection between\break \( T_n(2w+1) \)
and \( \CSYT_n(3,2w+1) \).\footnote{Elizalde~\cite{Elizalde_cylindric}
  also found this bijection independently.} On the other hand, the
bijection between Motzkin paths and \( 2 \)-noncrossing matchings also
implies \( |\Mot_n(w)|=|\NCNN_n(2,w+1)| \). Therefore we can restate
Mortimer and Prellberg's result \eqref{eq:MP} as follows:
\begin{equation*}
  \label{eq:MP2}
  |\CSYT_n(3,2w+1)| =  |\NCNN_n(2,w+1)|.
\end{equation*}
One may now speculate that this identity holds in greater generality,
namely that we have
\begin{equation}\label{eq:MP_gen}
  |\CSYT_n(2h+1,2w+1)| =  |\NCNN_n(h+1,w+1)|
\end{equation}
(which was stated as \eqref{eq:SYT-NCNN} in the introduction, and
proved in Corollary~\ref{cor:syt=ncnn}).
Indeed, computer experiments confirmed the  truth of~\eqref{eq:MP_gen}.

The question now was how   to prove this more general identity.
Obviously, the most desirable proof would consist in finding a bijection
between   $\CSYT_n(2h+1,2w+1)$ and $\NCNN_n(h+1,w+1)$.
In lack of a good idea, we went instead for a computational proof based on
explicit formulas. Indeed, as we explained in
Proposition~\ref{prop:cssyt=path}, the tableaux in 
$\CSYT_n(2h+1,2w+1)$ are in bijection with
lattice paths in $\Z^{2h+1}$ starting at the origin, consisting of
positive unit steps in coordinate directions, and staying in
the region
$$
\big\{(x_1,x_2,\dots,x_{2h+1}):x_1\ge x_2\ge \dots\ge x_{2h+1}\ge 
x_1-(2w+1)\big\}.
$$
If the end point is also given, say $(\la_1,\la_2,\dots,\la_{2h+1})$,
then there is a formula which gives the number of such paths due
to Filaseta~\cite{Filaseta1985} (which turned out to be a special case
of the more general random-walks-in-Weyl-chambers formula of
Gessel and Zeilberger~\cite{Gessel1992}). Using this formula, we get
the following formula for the number of our tableaux:
\begin{equation}
|\CSYT_n(2h+1,2w+1)|=
\underset {\la_1-\la_{2h+1}\le 2w+1}{\sum_{|\la|= n}}
\underset{k_1+\dots+k_{2h+1}=0} {\sum_{k_1,\dots,k_{2h+1}\in\Z}}
n!\det_{1\le i,j\le 2h+1}
\left(\frac {1} {(\la_i-i+j+(2h+2w+2)k_i)!}\right).
\label{eq:A1}
\end{equation}

On the other hand, by a result of Chen et al.~\cite[Sec.~3]{Chen2007}
(see~\cite{KratCE} for a more transparent presentation),
the matchings in $\NCNN_n(h+1,w+1)$
are in bijection with vacillating tableaux
$\emptyset=\rh_0,\rh_1,\dots,\rh_{n-1},\rh_n=\emptyset$, where each
$\rh_i$ has at most $h$ rows and at most 
$w$ columns.
(Here, ``vacillating" does not have the same meaning as
in~\cite{Chen2007}, but rather means
that two successive partitions in the above sequence differ by at most
one cell; one obtains these sequences from those of
\cite[Sec.~3]{Chen2007} by ignoring the partitions in odd positions
and just keeping those in even positions; no information is lost since
it is only the special case of partial matchings that we are
interested in.) In turn, these vacillating tableaux
can be seen as lattice paths in $\Z^h$ starting at and returning to 
the origin, consisting of positive
{\it and negative} unit steps in coordinate directions {\it and
zero steps}, and staying
in the region
$$
\big\{(x_1,x_2,\dots,x_{h}):w\ge x_1\ge x_2\ge \dots\ge x_{h}\ge 
0\big\}.
$$
Also for these paths there exists a formula (also following from
the result of Gessel and Zeilberger),
namely~\cite[Eq.~(19)]{Grabiner2002}. Applied to our situation, it gives the
following formula for the above number of matchings:
\begin{multline}
|\NCNN_n(h+1,w+1)|=\sum_{m\ge0}\binom n{2m}\\
\cdot
\coef{\frac {x^{2m}} {(2m)!}}
\sum_{k_1,\dots,k_{h}\in\Z}
\det_{1\le i,j\le h}\left(
I_{-i+j+(2h+2w+2)k_i}(2x)-I_{i+j+(2h+2w+2)k_i}(2x)
\right),
\label{eq:A2}
\end{multline}
where \( I_\al(x) \) is the modified Bessel function given by
$$
I_\al(x)=\sum_{\ell\ge0}\frac {(x/2)^{2\ell+\al}} {\ell!\,(\ell+\al)!},
$$
and $\coef{x^M}g(x)$ denotes the coefficient of $x^M$ in the power
series $g(x)$.
The task now is to establish equality of the expressions
in~\eqref{eq:A1} and~\eqref{eq:A2}. Again, we were short of a good
idea.

On the other hand, a ``general principle'' says that, by making things
more general, they may become easier. Now, there is another ``general
principle" that says that, whenever one encounters an identity related
to, respectively involving {\it standard\/}
Young tableaux, then there should exist a more general identity for
{\it semistandard\/} Young tableaux! In its turn, this more general
identity would be   formulated in terms of
{\it symmetric functions}.
This ``principle" is based on the simple fact that
$$
\frac {n!} {m_1!\,m_2!\cdots m_k!}=\coef{x_1x_2\cdots x_n}
e_{m_1}(\vx)\cdots e_{m_k}(\vx),
$$
with $m_1+m_2+\dots+m_k=n$.
In other words, a multinomial coefficient is a product of elementary
symmetric functions {\it in disguise}. This is straightforward to
carry out for~\eqref{eq:A1}, which becomes
\begin{equation}
|\CSYT_n(2h+1,2w+1)|=\coef{x_1x_2\cdots x_n}
\underset {\la_1-\la_{2h+1}\le 2w+1}{\sum_{|\la|= n}}
\underset{k_1+\dots+k_{2h+1}=0}{\sum_ {k_1,\dots,k_{2h+1}\in\Z}}
\det_{1\le i,j\le 2h+1}
\left(e_{\la_i-i+j+(2h+2w+2)k_i}(\vx)\right).
\label{eq:A3}
\end{equation}
It takes not much more effort to see that, under this perspective, \eqref{eq:A2} becomes
\begin{multline}
|\NCNN_n(h+1,w+1)|=\coef{x_1x_2\cdots x_n}
\sum_{k\ge0}e_k(\vx)
\sum_{k_1,\dots,k_{h}\in\Z}
\det_{1\le i,j\le h}\left(
f_{-i+j+(2h+2w+2)k_i}(\vx)-f_{i+j+(2h+2w+2)k_i}(\vx)
\right),
\label{eq:A4}
\end{multline}
where, as before,
$$
f_\al(\vx)=\sum_{\ell\ge0}e_\ell(\vx)e_{\ell+\al}(\vx).
$$
One may now speculate that the equality of~\eqref{eq:A1}
and~\eqref{eq:A2} is just the ``shadow'' of a symmetric function
identity. In other words, maybe the symmetric functions on the
right-hand sides of~\eqref{eq:A3} and~\eqref{eq:A4} are the same:
\begin{multline}
\underset{ \la_1-\la_{2h+1}\le 2w+1}{\sum_{\la:\ell(\la)\le2h+1}}\
\underset{k_1+\dots+k_{2h+1}=0}{\sum_{ k_1,\dots,k_{2h+1}\in\Z}}
\det_{1\le i,j\le 2h+1}\left(
e_{\la_i-i+j+(2h+2w+2)k_i}(\vx)\right)\\
=
\sum_{k\ge0}e_k(\vx)
\sum_{k_1,\dots,k_{h}\in\Z}
\det_{1\le i,j\le h}\left(
f_{-i+j+(2h+2w+2)k_i}(\vx)-f_{i+j+(2h+2w+2)k_i}(\vx)
\right).
\label{eq:A5}
\end{multline}
Computer experiments confirmed that. At this point, one becomes cocky:
is it really important to have $2w+1$ as constraint on the difference
between $\la_1$ and~$\la_{2h+1}$? Phrased differently,
does~\eqref{eq:A5} continue to hold if we replace $2w+1$ by~$w$, i.e.,
is it true that
\begin{multline}
\underset{ \la_1-\la_{2h+1}\le w}{\sum_{\la:\ell(\la)\le2h+1}}\
\underset{k_1+\dots+k_{2h+1}=0}{\sum_{ k_1,\dots,k_{2h+1}\in\Z}}
\det_{1\le i,j\le 2h+1}\left(
e_{\la_i-i+j+(2h+w+1)k_i}(\vx)\right)\\
=
\sum_{k\ge0}e_k(\vx)
\sum_{k_1,\dots,k_{h}\in\Z}
\det_{1\le i,j\le h}\left(
f_{-i+j+(2h+w+1)k_i}(\vx)-f_{i+j+(2h+w+1)k_i}(\vx)
\right),
\label{eq:A6}
\end{multline}
for positive integers~$w$? Obviously, we consulted again the computer,
and it said ``yes". We had found~\eqref{eq:ABL1}!

In view of these findings, another obvious question was what would
happen if, instead of replacing $2w+1$ by~$w$, we would replace $2h+1$
by~$2h$. (One could not hope for an identity that would be uniform in
odd {\it and\/} even bounds on the difference of the first and last
part of~$\la$ in the sum on the left-hand side.)
It did not escape our attention that, for $w\to\infty$, the
identity~\eqref{eq:A6} reduces to the bounded Littlewood
identity~\eqref{eq:BK_odd1}. As is well known,
this identity in turn comes with a
companion identity, namely~\eqref{eq:BK_even1}. We thus had an
``obvious" candidate for an ``even" analog of~\eqref{eq:A6}:
$$
\underset{ \la_1-\la_{2h}\le w}{\sum_{\la:\ell(\la)\le 2h}}\
\underset{k_1+\dots+k_{2h}=0}{\sum_{ k_1,\dots,k_{2h}\in\mathbb Z}}
\det_{1\le i,j\le 2h}\left(
e_{\la_i-i+j+(2h+w)k_i}(\vx)\right)
\overset?=
\sum_{k_1,\dots,k_{h}\in\mathbb Z}
\det_{1\le i,j\le h}\big(
f_{-i+j+(2h+w)k_i}(\vx)+
f_{i+j-1+(2h+w)k_i}(\vx)
\big).
$$
Alas, computer calculations quickly told us that this identity is
wrong. However, not too much! Inspection of the first terms in the expansion
and further computer calculations led us to
discover that there was only a sign missing on the right-hand side:
\begin{multline}
\underset{ \la_1-\la_{2h}\le w}{\sum_{\la:\ell(\la)\le 2h}}\
\underset{k_1+\dots+k_{2h}=0}{\sum_{ k_1,\dots,k_{2h}\in\mathbb Z}}
\det_{1\le i,j\le 2h}\left(
e_{\la_i-i+j+(2h+w)k_i}(\vx)\right)\\
=
\sum_{k_1,\dots,k_{h}\in\mathbb Z}
(-1)^{\sum_{i=1}^hk_i}
\det_{1\le i,j\le h}\big(
f_{-i+j+(2h+w)k_i}(\vx)+
f_{i+j-1+(2h+w)k_i}(\vx)
\big).
\label{eq:A7}
\end{multline}
We had found \eqref{eq:ABL2}!

Of course, there was still the problem of finding proofs
of~\eqref{eq:A6} and~\eqref{eq:A7}. However, now we were in a much
better situation: we stood on firm grounds, by knowing where all this
belonged to, namely to the area of {\it bounded Littlewood identities}.
Stembridge~\cite[Th.~7.1]{Stembridge1990} had provided a blueprint
for the line of proof: write the left-hand side as a sum of minors of
a given matrix; apply the minor summation formula in Corollary~\ref{cor:IsWa}
to obtain a Pfaffian; apply Gordon's reduction in Lemma~\ref{lem:Gordon5}
to reduce the Pfaffian
to a determinant of half the size; simplify the determinant by
applying elementary row and column operations, see
Corollary~\ref{cor:Gordon}. We managed to rewrite our
left-hand sides as sums of minors of a given matrix; see the proofs
of~\eqref{eq:odd_id1} and~\eqref{eq:Pf_bar_d}. It is
interesting to note that, from there on, we ``just" had to do the same
steps as in Stembridge's blueprint, except that the details of the
calculations turned out to be more demanding.

Further investigations along the same lines --- but somewhat more general
since we now based the calculations on the full minor summation
formula in Theorem~\ref{thm:IsWa} (instead of just
Corollary~\ref{cor:IsWa}) --- in the end led us to come up with the
further affine
bounded Littlewood identities in Theorems~\ref{thm:C} and~\ref{thm:D}.

\medskip
In conclusion, while we still had (and have) no bijective proof of Mortimer and
Prellberg's identity~\eqref{eq:MP} (as mentioned earlier,
such a bijection was in fact found
by Courtiel, Elvey~Price and Marcovici~\cite{Courtiel2021}), nor of
the generalization~\eqref{eq:MP_gen},
instead we found new conceptual symmetric
function identities that in particular {\it implied\/}~\eqref{eq:MP_gen}
(and thus also~\eqref{eq:MP}),
namely the affine bounded Littlewood identities in
Theorem~\ref{thm:affine_BK_intro}. These led us to many more
combinatorial and algebraic results; see Sections~\ref{sec:comb-ident}
and~\ref{sec:h=1}, and a forthcoming sequel to this article.

\section{Bijections between standard Young tableaux,
  matchings, and vacillating tableaux using growth diagrams}
\label{sec:growth}

The purpose of this appendix is to review and clarify the relations between
standard Young tableaux, matchings, and vacillating tableaux that
one finds in the literature. We do this by using Fomin's growth
diagrams, which allow us to give a uniform presentation.

More precisely, with $h=\fl{m/2}$, by means of explicit bijections,
we are going to relate:

\begin{enumerate} 
\item[$*$]the set $\SYT_n(m)$ of standard Young tableaux of size~$n$
  with at most $m$~rows;
\item[$*$]the set $\NNest_n(h+1)$ of (partial) matchings 
on
  $\{1,2,\dots,n\}$ without an $(h+1)$-nesting;
\item[$*$]the set $\NC_n(h+1)$ of (partial) matchings 
on
  $\{1,2,\dots,n\}$ without an $(h+1)$-crossing;
\item[$*$]the set $\VT_n(h)$ of vacillating tableaux with at most
  $h$~rows,  that is, the set of sequences\break
  \( (\emptyset=\lambda^{0},
  \lambda^{1}, \dots, \lambda^{n}=\emptyset) \) of partitions, where $\lambda^{i-1}$
  and $\lambda^i$ differ by at most one cell for $i=1,2,\dots,n$, and
  where $\lambda^i$ has at most $h$~rows for $i=0,1,\dots,n$.
\end{enumerate}

We refer to Subsection~\ref{sec:tableaux} for the definition of
standard Young tableaux, and to Definition~\ref{def:NCNN} for the
definition of crossings and nestings in matchings. We shall also need the
subset $\NNest_n(h+1/2)$ of $\NNest_n(h+1)$, which by definition is the set of all
elements of $\NNest_n(h+1)$ without an $(h+1/2)$-nesting
(cf.\ Definition~\ref{defn:ncnn}).
Similarly, we let $\NC_n(h+1/2)$ be the subset
of $\NC_n(h+1)$ consisting of all
elements of $\NC_n(h+1)$ without an $(h+1/2)$-crossing
(cf.\ again Definition~\ref{defn:ncnn}). Furthermore, we write $\VT_n(h^*)$
for the subset of $\VT_n(h)$ consisting of all those elements (walks)
for which a zero step cannot occur on the hyperplane $x_h=0$.
The reader should also recall the meaning of $\NCNN_n(r,s)$
from the paragraph above Definition~\ref{defn:ncnn'}.

\medskip
We are going to provide bijections for the identities in
the theorems and corollaries below. We start with the (enumerative)
symmetry of matchings concerning crossings and nestings of matchings
that has been the main theme in~\cite{Chen2007} (in the more general
context of set partitions). Although not stated explicitly
in~\cite{Chen2007}, identity~\eqref{eq:NC-NN} below for integers $r$ and~$s$
is a special case
of the main theorem of~\cite{Chen2007} (namely
\cite[Th.~1.1]{Chen2007} restricted to matchings). 

\begin{thm} \label{thm:NC-NN}
For positive integers $n$ and positive integers of
half-integers $r$ and~$s$, we have
\begin{equation}
\label{eq:NC-NN}
|\NCNN_n(r,s)|=|\NCNN_n(s,r)|.
\end{equation}
\end{thm}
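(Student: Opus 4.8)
The plan is to deduce all cases of~\eqref{eq:NC-NN} at once from a single involution on matchings, obtained by conjugating the partitions in Fomin's growth diagram. First I would encode a matching $M$ on $\{1,2,\dots,n\}$ as a $0$--$1$ filling of the staircase board with rows and columns indexed by $1,2,\dots,n$: place a $1$ in cell $(i,j)$ with $i<j$ exactly when $(i,j)$ is an arc of $M$, and leave the diagonal slot $(v,v)$ empty to record each fixed point~$v$. Running Fomin's local rules (cf.\ \cite{Chen2007,KratCE}) produces a chain $\Theta(M)=(\lambda^0,\lambda^1,\dots,\lambda^n)$ with $\lambda^0=\lambda^n=\emptyset$ in which consecutive partitions differ by at most one cell, so that $\Theta$ is a bijection between matchings on $\{1,\dots,n\}$ and vacillating tableaux of length~$n$. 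Here an arc $(i,j)$ inserts a cell at step~$i$ and deletes one at step~$j$, whereas a fixed point~$v$ forces the zero step $\lambda^{v-1}=\lambda^v$.

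Second --- and this is the crux --- I would establish the dictionary between chains of arcs and the shapes occurring in $\Theta(M)$. By the theory of increasing and decreasing chains in fillings of Ferrers shapes, a $k$-crossing of $M$ is a strictly NE-increasing chain of $k$ ones in the board and a $k$-nesting is a SE-decreasing chain, and the growth diagram converts the maximal lengths of such chains into partition data, yielding $\operatorname{cr}(M)=\max_i \ell(\lambda^i)$ and $\operatorname{ne}(M)=\max_i \lambda^i_1$ (the maximal number of rows, respectively columns, among the $\lambda^i$). For the half-integer statistics I would track the diagonal: a fixed point $v$ lies in a $(k+1/2)$-crossing precisely when the zero step at~$v$ satisfies $\ell(\lambda^v)\ge k$, and in a $(k+1/2)$-nesting precisely when $\lambda^v_1\ge k$. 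This is the unbounded counterpart of the correspondence recorded in Lemma~\ref{lem:ncnn-walk}. The hard part will be exactly this refined fixed-point statement, since one must show that a fixed point's participation in a crossing or nesting is governed by the shape at the associated zero step; I expect the cleanest route is to read this off the local growth rule at the diagonal cell, rather than by a global chain argument.

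Third, I would invoke the reflective symmetry of growth diagrams: conjugating every partition, $(\lambda^0,\dots,\lambda^n)\mapsto((\lambda^0)',\dots,(\lambda^n)')$, again gives a vacillating tableau, and through $\Theta$ this defines an involution $M\mapsto M^{*}$ on matchings. Since conjugation exchanges rows and columns of each $\lambda^i$, the dictionary of the previous step shows that it swaps all crossing and nesting statistics simultaneously: $\operatorname{cr}(M^{*})=\operatorname{ne}(M)$, $\operatorname{ne}(M^{*})=\operatorname{cr}(M)$, together with the analogous exchange of the half-integer refinements at each fixed point. Hence, for any $r$ and $s$ that are positive integers or half-integers, $M$ has neither an $r$-crossing nor an $s$-nesting if and only if $M^{*}$ has neither an $s$-crossing nor an $r$-nesting, so $M\mapsto M^{*}$ restricts to a bijection $\NCNN_n(r,s)\to\NCNN_n(s,r)$, proving~\eqref{eq:NC-NN}. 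For integral $r,s$ this recovers the matching specialization of~\cite[Th.~1.1]{Chen2007}, the half-integer cases come from the fixed-point refinement, and in the bounded situations it reproduces the equalities $|\VT_n(h,w)|=|\VT_n(w,h)|$ and $|\VT_n(h^*,w)|=|\VT_n(w,h^*)|$ underlying Proposition~\ref{prop:NCNN=VT}.
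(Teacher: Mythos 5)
Your proposal is correct and takes essentially the same route as the paper's own proof: you build the growth diagram of the matching (fixed points on the diagonal), conjugate all partitions along the diagonal --- i.e.\ conjugate the vacillating tableau $\Theta(M)$ --- and run the inverse growth algorithm, with Greene's theorem giving the dictionary (crossings $\leftrightarrow$ rows, nestings $\leftrightarrow$ columns) and the fixed-point refinement handling the half-integer statistics. The one step you flag as delicate, the diagonal/fixed-point dictionary, is exactly what the paper settles via the growth-diagram form of the Chen et al.\ bijection recorded in Lemma~\ref{lem:ncnn-walk}, so your outline matches the published argument in substance and in detail.
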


The next theorem connects standard Young tableaux and matchings.

\begin{thm} \label{thm:SYT-NN}
For positive integers $n$ and $h$, we have
\begin{align}
\label{eq:SYT-NN-odd}
|\SYT_n(2h+1)|=|\NNest_n(h+1)|,\\
\label{eq:SYT-NN-even}
|\SYT_n(2h)|=|\NNest_n(h+1/2)|.
\end{align}
\end{thm}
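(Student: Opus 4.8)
The plan is to prove both identities by composing two bijections that pass through the vacillating tableaux of this appendix, namely \( \VT_n(h) \) in the odd case and its starred variant \( \VT_n(h^*) \) in the even case. Concretely, I would set up
\[
\SYT_n(2h+1)\ \longleftrightarrow\ \VT_n(h)\ \longleftrightarrow\ \NNest_n(h+1)
\]
and, in parallel,
\[
\SYT_n(2h)\ \longleftrightarrow\ \VT_n(h^*)\ \longleftrightarrow\ \NNest_n(h+1/2),
\]
so that \eqref{eq:SYT-NN-odd} and \eqref{eq:SYT-NN-even} follow by equating cardinalities. The right-hand bijection is the growth-diagram incarnation of the correspondence of Chen et al., while the left-hand one is a ``folding'' of a walk in the type-\( A_{2h} \) chamber onto a walk in a rank-\( h \) chamber; both are governed by Fomin's local growth rules, which is what makes the treatment uniform.

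For the right-hand bijection I would invoke the map \( \phi \) of Lemma~\ref{lem:ncnn-walk} (equivalently, the growth-diagram map of \cite[Sec.~3]{KratCE} built on \cite{Chen2007}), but composed with conjugation of all shapes, so that \emph{nestings} — rather than crossings — govern the number of \emph{rows}. In this normalization the maximal nesting of a matching \( M \) equals the maximal number of rows occurring among the shapes of its vacillating tableau, and a fixed point lying in an \( (h+1/2) \)-nesting corresponds precisely to a zero step taken while the current shape already has \( h \) rows. Hence \( M \) has no \( (h+1) \)-nesting iff all shapes have at most \( h \) rows (yielding \( \VT_n(h) \)), and \( M \) has in addition no \( (h+1/2) \)-nesting iff the zero steps obey exactly the \( * \)-restriction at the boundary hyperplane \( x_h=0 \) defining \( \VT_n(h^*) \). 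The switch between the crossing- and the nesting-normalization is the transpose symmetry recorded in Theorem~\ref{thm:NC-NN}, which I may assume.

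The substantive step is the left-hand bijection \( \VT_n(h)\leftrightarrow\SYT_n(2h+1) \). Viewing a vacillating tableau as a lattice walk in \( \ZZ^h \) with steps \( \pm\ep_1,\dots,\pm\ep_h,\mathbf 0 \) confined to \( x_1\ge\cdots\ge x_h\ge 0 \), I would ``unfold'' it into a monotone walk in the type-\( A_{2h} \) chamber \( y_1\ge\cdots\ge y_{2h+1}\ge 0 \) by lifting a step \( +\ep_i \) to a new cell in row \( i \), a step \( -\ep_i \) to a new cell in row \( 2h+2-i \), and a zero step to a cell in the central row \( h+1 \). Since every lifted step adds one cell, the output is a standard Young tableau with at most \( 2h+1 \) rows, and the parity of the row bound (odd \( 2h+1 \) versus even \( 2h \)) is dictated by the availability of a central, self-paired row together with the behavior of zero steps at the wall \( x_h=0 \); this is exactly what the \( * \)-restriction encodes, so the same recipe should send \( \VT_n(h^*) \) to \( \SYT_n(2h) \). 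As an elementary sanity check, for \( h=1 \) this recovers the classical facts \( |\SYT_n(3)|=M_n \) (Motzkin paths) and \( |\SYT_n(2)|=\binom{n}{\lfloor n/2\rfloor} \) (Motzkin paths with flat steps only on the axis).

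The main obstacle — and the reason Fomin's local rules are indispensable — is that the naive unfolding above is \emph{not} well defined at the walls of the chamber: the reflections of the walk at the hyperplanes \( x_i=x_{i+1} \) and at \( x_h=0 \) must be resolved so that each intermediate diagram is a legitimate partition and so that the map is an injection with the correct image. I would therefore realize the unfolding through the local growth rules, where the reflection principle is enforced cell by cell, and then verify, by checking the rules against the three step types, that the bound of \( 2h+1 \) rows and the boundary behavior are preserved. Pinning down these boundary/local rules exactly, and confirming that the even case produces precisely the \( \VT_n(h^*) \) constraint that matches \( (h+1/2) \)-nonnesting, is where essentially all the work lies; once the two bijections are in place, the counting statements \eqref{eq:SYT-NN-odd} and \eqref{eq:SYT-NN-even} are immediate, and the same diagram yields the companion results for \( \NC_n(\cdot) \) and \( \VT_n(\cdot) \) by reading a different boundary (cf.\ \cite{Eu2013,Zeilberger_lazy}).
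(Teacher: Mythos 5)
Your right-hand bijection ($\NNest_n(\cdot)\leftrightarrow\VT_n(\cdot)$) is sound and is essentially the paper's own proof of Theorem~\ref{thm:NN-VT} (a triangular growth diagram, with conjugation of the partitions along the diagonal to trade crossings for nestings). The genuine gap is in your left-hand bijection, which in your architecture carries all the content. First, the ``unfolding'' as you define it is not well defined, and not only in degenerate boundary situations: already for $h=1$ and $n=2$, the vacillating tableau $(\emptyset,(1),\emptyset)\in\VT_2(1)$ has steps $+\ep_1,-\ep_1$, which your recipe lifts to a cell in row $1$ followed by a cell in row $3$ while row $2$ stays empty --- not a Young diagram. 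The same failure occurs whenever a $-\ep_i$ step arrives before the mirror rows are populated, so the naive map breaks generically. Second, the proposed repair --- ``realize the unfolding through Fomin's local rules'' --- is precisely the missing proof: you specify no filling and no growth-rule configuration, and you give no argument that the repaired map is a bijection onto $\SYT_n(2h+1)$, nor that the even case lands exactly on $\VT_n(h^*)$. Worse, the identities $|\SYT_n(2h+1)|=|\VT_n(h)|$ and $|\SYT_n(2h)|=|\VT_n(h^*)|$ are exactly Corollary~\ref{thm:SYT-VT}, which the paper \emph{deduces from} Theorem~\ref{thm:SYT-NN} together with Theorem~\ref{thm:NN-VT}; so invoking them makes your argument circular unless you supply an independent proof, and the known independent bijections (see \cite{Eu2013}) are genuinely nontrivial and are not of the naive folding type. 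Your sanity checks (that $|\SYT_n(3)|$ is a Motzkin number and $|\SYT_n(2)|=\binom{n}{\lfloor n/2\rfloor}$) are themselves theorems and do not validate your particular map.

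For contrast, the paper's proof runs in the opposite direction and is much shorter: given $T\in\SYT_n(2h+1)$ (resp.\ $\SYT_n(2h)$), apply the inverse Robinson--Schensted correspondence to the pair $(T,T)$, realized as a symmetric $n\times n$ growth diagram; the resulting symmetric configuration of crosses encodes a matching, and Greene's theorem identifies the number of rows of $T$ with the length of the longest NE-chain of crosses. By symmetry the maximum is attained on a symmetric NE-chain, and a symmetric NE-chain of length $2k$ (resp.\ of length $2k+1$, with its middle cross on the diagonal, i.e., at a fixed point) is exactly a $k$-nesting (resp.\ a $(k+1/2)$-nesting). Thus the row bound $2h+1$ translates into the absence of $(h+1)$-nestings, and the row bound $2h$ into the absence of $(h+1/2)$-nestings, proving both \eqref{eq:SYT-NN-odd} and \eqref{eq:SYT-NN-even} directly. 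If you wish to keep your two-step architecture, the correct move is to prove Theorem~\ref{thm:SYT-NN} this way first and then obtain your left-hand bijection by composing with Theorem~\ref{thm:NN-VT}, not the reverse.
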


By combining Theorems~\ref{thm:NC-NN} and~\ref{thm:SYT-NN}, we obtain
the following equalities. Since we are going to provide bijective
proofs for the two theorems above, the appropriate combination of
bijections also yields bijective proofs of these equalities.

\begin{cor} \label{thm:SYT-NC}
For positive integers $n$ and $h$, we have
\begin{align}
\label{eq:SYT-NC-odd}
|\SYT_n(2h+1)|=|\NC_n(h+1)|,\\
\label{eq:SYT-NC-even}
|\SYT_n(2h)|=|\NC_n(h+1/2)|.
\end{align}
\end{cor}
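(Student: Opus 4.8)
The plan is to derive Corollary~\ref{thm:SYT-NC} by simply chaining the two theorems that precede it, exploiting the elementary fact that a sufficiently large crossing- or nesting-bound imposes no constraint at all. Indeed, any (partial) matching on $\{1,2,\dots,n\}$ has at most $\fl{n/2}$ arcs, so it can contain neither a $k$-crossing nor a $k$-nesting once $k>\fl{n/2}$. Fixing any integer $r>\fl{n/2}$, the condition of being $r$-noncrossing is therefore vacuous, and likewise for $r$-nonnesting. Consequently, as \emph{sets},
\[
\NNest_n(h+1)=\NCNN_n(r,h+1),\qquad \NC_n(h+1)=\NCNN_n(h+1,r),
\]
and the analogous identities with $h+1$ replaced by the half-integer $h+1/2$ hold for exactly the same reason (here one checks against Definition~\ref{defn:ncnn} that dropping the large slot leaves precisely the $(h+1/2)$-nonnesting, respectively $(h+1/2)$-noncrossing, condition).

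With this in hand, the first equality~\eqref{eq:SYT-NC-odd} follows in three steps. First, Theorem~\ref{thm:SYT-NN} gives $|\SYT_n(2h+1)|=|\NNest_n(h+1)|$. Next, rewriting $\NNest_n(h+1)=\NCNN_n(r,h+1)$ and applying the crossing--nesting symmetry of Theorem~\ref{thm:NC-NN} yields $|\NCNN_n(r,h+1)|=|\NCNN_n(h+1,r)|$. Finally, $\NCNN_n(h+1,r)=\NC_n(h+1)$ once more because the $r$-nonnesting constraint is void. Composing these identities gives $|\SYT_n(2h+1)|=|\NC_n(h+1)|$. The even case~\eqref{eq:SYT-NC-even} is proved verbatim with $h+1$ replaced by $h+1/2$ throughout, now invoking~\eqref{eq:SYT-NN-even} and the half-integer instance of Theorem~\ref{thm:NC-NN} (the large slot $r$ still being taken to be an integer exceeding $\fl{n/2}$).

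Since both ingredient theorems will be established \emph{bijectively} via Fomin's growth diagrams, the composite is bijective as well: one first applies the growth-diagram bijection $\SYT_n(2h+1)\to\NNest_n(h+1)$ of Theorem~\ref{thm:SYT-NN}, then the crossing--nesting symmetry bijection of Theorem~\ref{thm:NC-NN}, read as a map $\NCNN_n(r,h+1)\to\NCNN_n(h+1,r)$ with $r>\fl{n/2}$. Because both outer constraints are vacuous, these are honest maps between $\NNest_n(h+1)$ and $\NC_n(h+1)$ with no domain or codomain restriction to verify. I expect no genuine obstacle at this stage: the only point requiring (minimal) care is the bookkeeping that a large parameter makes the corresponding constraint disappear, which is nothing more than the crude bound on the number of arcs. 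The substantive work — constructing the growth diagrams that realize the symmetry of Theorem~\ref{thm:NC-NN} and the correspondence of Theorem~\ref{thm:SYT-NN}, and tracking how crossings, nestings, and tableau row counts transform under them — resides entirely in the proofs of those two preceding statements, not in their combination here.
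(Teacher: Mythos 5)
Your proposal is correct and matches the paper's own proof, which likewise obtains the corollary by composing the bijections of Theorems~\ref{thm:SYT-NN} and~\ref{thm:NC-NN}; your explicit observation that a bound $r>\lfloor n/2\rfloor$ renders the corresponding constraint vacuous is exactly the (implicit) bookkeeping needed to read the symmetry bijection as a map from $\NNest_n(h+1)$ (respectively $\NNest_n(h+1/2)$) to $\NC_n(h+1)$ (respectively $\NC_n(h+1/2)$).
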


Identity \eqref{eq:SYT-NC-odd} was stated already as \eqref{eq:Chen2}
in the introduction and, as explained in Appendix~\ref{sec:motivation}, it inspired us
to discover the more general identity in~\eqref{eq:syt=ncnn1}.
Identity~\eqref{eq:SYT-NC-odd} arises from~\eqref{eq:syt=ncnn3} as the
limit case where $w\to\infty$.

Next, we relate matchings with restrictions on their nestings to
vacillating tableaux (walks). 

\begin{thm} \label{thm:NN-VT}
For positive integers $n$ and $h$, we have
\begin{align}
\label{eq:NN-VT-odd}
|\NNest_n(h+1)|=|\VT_n(h)|,\\
\label{eq:NN-VT-even}
|\NNest_n(h+1/2)|=|\VT_n(h^*)|.
\end{align}
\end{thm}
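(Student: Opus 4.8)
The plan is to prove both identities bijectively by means of Fomin's growth diagrams, in the spirit of the growth-diagram presentation of the Chen--Deng--Du--Stanley--Yan correspondence used in~\cite[Sec.~3]{KratCE} and already invoked in Lemma~\ref{lem:ncnn-walk}; the only difference is that here the roles of crossings and nestings (and correspondingly of columns and rows) are interchanged. Concretely, I would encode a matching $M$ on $\{1,2,\dots,n\}$ as a $0/1$-filling of a (shifted) triangular board, placing a $1$ in the cell indexed by $(i,j)$ for each arc $(i,j)$ of $M$, and then run the local growth rules to attach a partition to every lattice point. Reading the partitions $\lambda^0,\lambda^1,\dots,\lambda^n$ along the main diagonal, with a \emph{zero step} $\lambda^{i-1}=\lambda^i$ recorded exactly when $i$ is a fixed point of $M$, produces a sequence that is manifestly a vacillating tableau in the sense of Definition~\ref{defn:vt}, with no a priori bound on the number of rows or columns. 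This assignment $M\mapsto(\lambda^0,\dots,\lambda^n)$ is a bijection onto the set of all vacillating tableaux of length $n$, the inverse being the backward growth rules.

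The crux is the standard growth-diagram dictionary between chains in the filling and the shapes of the boundary partitions (see~\cite{Chen2007,KratCE}): in the variant adapted to nestings, the number of rows of $\lambda^i$ equals the maximal number of \emph{pairwise nesting} arcs of $M$ straddling the position between $i$ and $i+1$. Since a maximal nesting of $M$ exhibits, just inside its innermost arc, exactly that many mutually nesting open arcs, it follows that the largest number of rows occurring among $\lambda^0,\dots,\lambda^n$ equals the nesting number of $M$. Hence $M$ has no $(h+1)$-nesting if and only if every $\lambda^i$ has at most $h$ rows, that is, $(\lambda^0,\dots,\lambda^n)\in\VT_n(h)$. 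Restricting the bijection of the previous paragraph therefore yields~\eqref{eq:NN-VT-odd}.

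For the half-integer identity~\eqref{eq:NN-VT-even}, I would analyse the fixed points, which correspond precisely to the zero steps of the vacillating tableau. Arguing exactly as in the proof of Lemma~\ref{lem:ncnn-walk}(2), a fixed point $v$ of $M$ lies in an $(h+1/2)$-nesting if and only if there are at least $h$ pairwise nesting arcs straddling $v$, which by the dictionary above happens if and only if the partition attached to the corresponding zero step attains full row-depth, namely $\lambda_h>0$. Consequently $M$ has no $(h+1/2)$-nesting if and only if the associated tableau in $\VT_n(h)$ has all its zero steps at row-depth zero, i.e. on the hyperplane $x_h=0$, which is exactly the restriction defining $\VT_n(h^*)$. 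This proves~\eqref{eq:NN-VT-even}, and, combined with Theorems~\ref{thm:NC-NN} and~\ref{thm:SYT-NN}, it exhibits the entire web of bijections of this appendix as a single growth-diagram construction.

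The main obstacle will be verifying the chain--shape dictionary in the precise form required here. For the crossing-based correspondence this is recorded in Lemma~\ref{lem:ncnn-walk}, but the nesting-based variant demands choosing the growth rules so that nesting chains (rather than crossing chains) are read off as row lengths; establishing that these local rules really realise the equality between the nesting number and the maximal number of rows---and, for~\eqref{eq:NN-VT-even}, that the $(h+1/2)$-nesting condition cuts out exactly the zero steps at full row-depth, so that one lands on the correct hyperplane in the definition of $\VT_n(h^*)$---is the delicate point that requires care. Once this dictionary is in place, both identities follow immediately by restriction.
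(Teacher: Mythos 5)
Your proposal is correct and is essentially the paper's own proof: the paper likewise encodes the matching as crosses in a triangular growth diagram (keeping the fixed-point crosses on the diagonal for the half-integer case~\eqref{eq:NN-VT-even}), reads the diagonal partitions with zero steps exactly at fixed points, and uses Greene's theorem to translate the $(h+1)$- and $(h+1/2)$-nesting conditions into the bound on the shapes and the condition that zero steps occur only on the hyperplane $x_h=0$. The one ``delicate point'' you flag --- local rules that read nestings off as row lengths --- is resolved in the paper simply by running the standard growth rules, where NE-chains (i.e., nestings) bound the \emph{first row} of each diagonal partition, and then conjugating every partition at the end, so your variant and the paper's construction coincide up to this final conjugation.
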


Here, identity \eqref{eq:NN-VT-odd} arises from~\eqref{eq:NCNN=VT1}
and~\eqref{eq:NCNN=VT3} in the limit as $h\to\infty$, whereas
identity~\eqref{eq:NN-VT-even} arises from~\eqref{eq:NCNN=VT2} in the
limit as $h\to\infty$.

Clearly, by combining Theorems~\ref{thm:SYT-NN} and~\ref{thm:NN-VT}
we obtain the following
corollary. Since we are going to present bijections that prove
Theorems~\ref{thm:SYT-NN} and~\ref{thm:NN-VT}, the appropriate
combinations of bijective proofs also provide bijections for the identities
below. 

\begin{cor} \label{thm:SYT-VT}
For positive integers $n$ and $h$, we have
\begin{align}
\label{eq:SYT-VT-odd}
|\SYT_n(2h+1)|=|\VT_n(h)|,\\
\label{eq:SYT-VT-even}
|\SYT_n(2h)|=|\VT_n(h^*)|.
\end{align}
\end{cor}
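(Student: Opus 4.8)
The plan is to deduce Corollary~\ref{thm:SYT-VT} by composing the two bijections that underlie Theorems~\ref{thm:SYT-NN} and~\ref{thm:NN-VT}, using nonnesting matchings as the intermediate object. For the odd case~\eqref{eq:SYT-VT-odd}, first I would take a standard Young tableau $T\in\SYT_n(2h+1)$ and apply the bijection of Theorem~\ref{thm:SYT-NN} realizing~\eqref{eq:SYT-NN-odd} to obtain a matching $M\in\NNest_n(h+1)$ with no $(h+1)$-nesting; then I would apply the bijection of Theorem~\ref{thm:NN-VT} realizing~\eqref{eq:NN-VT-odd} to send $M$ to a vacillating tableau in $\VT_n(h)$. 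Since both maps are bijections, their composition is a bijection $\SYT_n(2h+1)\to\VT_n(h)$, which proves~\eqref{eq:SYT-VT-odd}. For the even case~\eqref{eq:SYT-VT-even} the same strategy applies, but now one must respect the half-integer and starred refinements: starting from $T\in\SYT_n(2h)$, the bijection behind~\eqref{eq:SYT-NN-even} produces $M\in\NNest_n(h+1/2)$ (a matching that in addition forbids $(h+1/2)$-nestings), and the bijection behind~\eqref{eq:NN-VT-even} sends $M$ to a walk in $\VT_n(h^*)$, the elements of $\VT_n(h)$ whose zero steps never lie on the hyperplane $x_h=0$. The composition is again a bijection, giving~\eqref{eq:SYT-VT-even}.

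Because the purpose of this appendix is a uniform treatment via Fomin's growth diagrams, in practice I would realize both intermediate bijections on a single growth diagram attached to $M$: reading the sequence of shapes along the diagonal yields the vacillating tableau, while the recording data of the growth yields the standard Young tableau, and the crossing and nesting numbers of $M$ are read off from the row lengths of the partitions occurring along the diagonal. This makes the composition transparent and shows directly that no information is lost, so that the refinements match up automatically. In particular, the correspondence between a fixed point of $M$ lying in a maximal nesting and a zero step on the boundary hyperplane $x_h=0$ is exactly what is needed to pair the $(h+1/2)$-nonnesting condition with the starred condition defining $\VT_n(h^*)$.

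The hard part will not be the corollary itself, which is immediate once the two theorems are in hand, but the verification built into those theorems: namely that the number of rows of the partitions along the growth diagram controls the nesting number (so that the ``at most $2h+1$ rows'' and ``at most $2h$ rows'' conditions on the tableau translate precisely into the $(h+1)$- and $(h+1/2)$-nonnesting conditions on $M$), and that the boundary behaviour of zero steps corresponds exactly to fixed points participating in a maximal nesting. Once these two correspondences are established at the level of Theorems~\ref{thm:SYT-NN} and~\ref{thm:NN-VT}, composing the resulting bijections yields Corollary~\ref{thm:SYT-VT} with no further work.
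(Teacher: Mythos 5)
Your proposal is correct and is precisely the paper's argument: the paper obtains Corollary~\ref{thm:SYT-VT} by composing the growth-diagram bijections proving Theorems~\ref{thm:SYT-NN} and~\ref{thm:NN-VT}, with $\NNest_n(h+1)$ (odd case) and $\NNest_n(h+1/2)$ (even case) as the intermediate objects, exactly as you describe. One incidental gloss is reversed (it lives inside Theorem~\ref{thm:NN-VT}, not in the corollary itself): in the even case a fixed point contained in an $h$-nesting corresponds to a zero step at a point with $x_h>0$, so the $(h+1/2)$-nonnesting condition forces every zero step \emph{onto} the hyperplane $x_h=0$ --- consistent with the convention for $\VT_n(h^*,w)$ in Section~\ref{sec:comb-ident} and with the actual growth-diagram computation (at a fixed point the first row of $\la^{2i}$ is at most $h-1$, so after conjugation the $h$-th coordinate vanishes) --- rather than off it, as your last sentence (echoing a direction-reversed definitional sentence in the appendix) suggests.
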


Identity \eqref{eq:SYT-VT-odd} is the result
of~\cite{Zeilberger_lazy}. It is also stated in an equivalent form
in \cite[Th.~1.1]{Eu2013}, together with a bijective proof.
Identity \eqref{eq:SYT-VT-even} is also stated in an equivalent form
in~\cite{Eu2013}, see
Conjecture~1.2 and Theorem~1.3 there, and it is proved
in~\cite{Eu2013} by a bijection.

\medskip
We are now going to outline bijective proofs of
\eqref{eq:NC-NN}--\eqref{eq:SYT-NN-even} and of~\eqref{eq:NN-VT-odd}
and~\eqref{eq:NN-VT-even} using Fomin's growth diagrams. 
We assume that the reader is familiar with the concept and application
of growth diagrams, say as described in \cite[Sec.~2]{KratCE}.
In particular, we need the consequence of Greene's theorem that
prescribes the lengths of the first row and the first column of a
partition in the growth diagram in terms of the lengths of longest NE-
and SE-chains of the configuration of X's in the region to the left
and below the partition; see \cite[Th.~2]{KratCE}.

Our first observation is that the identities in
Theorem~\ref{thm:SYT-NN} result directly from the application of the
(inverse) Robinson--Schensted correspondence to the pair $(T,T)$,
where $T$ is the standard Young tableau from $\SYT_n(2h+1)$ or
$\SYT_n(2h)$ that we are considering. 

\begin{proof}[Proof of \eqref{eq:SYT-NN-odd}]
Let $T\in \SYT_n(2h+1)$. We illustrate all steps in parallel by
considering the running example
$$
\Einheit.5cm
\Pfad(0,0),11116\endPfad
\Pfad(0,-1),1111\endPfad
\Pfad(0,-2),11\endPfad
\Pfad(0,-3),11\endPfad
\Pfad(0,0),6666112222\endPfad
\Pfad(1,0),6666652\endPfad
\Pfad(3,0),6\endPfad
\Label\ru{1\vrule width0pt depth3.5pt}(0,0)
\Label\ru{2\vrule width0pt depth3.5pt}(1,0)
\Label\ru{5\vrule width0pt depth3.5pt}(2,0)
\Label\ru{6\vrule width0pt depth3.5pt}(3,0)
\Label\ru{3\vrule width0pt depth3.5pt}(0,-1)
\Label\ru{7\vrule width0pt depth3.5pt}(1,-1)
\Label\ru{4\vrule width0pt depth3.5pt}(0,-2)
\Label\ru{9\vrule width0pt depth3.5pt}(1,-2)
\Label\ru{8\vrule width0pt depth3.5pt}(0,-3)
\Label\ru{10\vrule width0pt depth3.5pt}(1,-3)
\Label\ru{11\vrule width0pt depth3.5pt}(0,-4)
\Label\r{,}(4,-3)
\hskip2cm
$$
which is a standard Young tableau of size~11 with 5~rows, that is,
an element of $\SYT_{11}(5)=\SYT_{11}(2\cdot 2+1)$.

\begin{figure}
$$
\Einheit.36cm
\Pfad(-3,0),111111111111111111111111111111111\endPfad
\Pfad(-3,3),111111111111111111111111111111111\endPfad
\Pfad(-3,6),111111111111111111111111111111111\endPfad
\Pfad(-3,9),111111111111111111111111111111111\endPfad
\Pfad(-3,12),111111111111111111111111111111111\endPfad
\Pfad(-3,15),111111111111111111111111111111111\endPfad
\Pfad(-3,18),111111111111111111111111111111111\endPfad
\Pfad(-3,21),111111111111111111111111111111111\endPfad
\Pfad(-3,24),111111111111111111111111111111111\endPfad
\Pfad(-3,27),111111111111111111111111111111111\endPfad
\Pfad(-3,30),111111111111111111111111111111111\endPfad
\Pfad(-3,33),111111111111111111111111111111111\endPfad
\Pfad(-3,0),222222222222222222222222222222222\endPfad
\Pfad(0,0),222222222222222222222222222222222\endPfad
\Pfad(3,0),222222222222222222222222222222222\endPfad
\Pfad(6,0),222222222222222222222222222222222\endPfad
\Pfad(9,0),222222222222222222222222222222222\endPfad
\Pfad(12,0),222222222222222222222222222222222\endPfad
\Pfad(15,0),222222222222222222222222222222222\endPfad
\Pfad(18,0),222222222222222222222222222222222\endPfad
\Pfad(21,0),222222222222222222222222222222222\endPfad
\Pfad(24,0),222222222222222222222222222222222\endPfad
\Pfad(27,0),222222222222222222222222222222222\endPfad
\Pfad(30,0),222222222222222222222222222222222\endPfad
\PfadDicke{2pt}
%
\Label\ro{\text {\Huge X}}(13,31)
\Label\ro{\text {\Huge X}}(10,28)
\Label\ro{\text {\Huge X}}(4,25)
\Label\ro{\text {\Huge X}}(25,22)
\Label\ro{\text {\Huge X}}(1,19)
\Label\ro{\text {\Huge X}}(-2,16)
\Label\ro{\text {\Huge X}}(16,13)
\Label\ro{\text {\Huge X}}(22,10)
\Label\ro{\text {\Huge X}}(19,7)
\Label\ro{\text {\Huge X}}(7,4)
\Label\ro{\text {\Huge X}}(28,1)
\Label\u{\small\emptyset}(-3,0)
\Label\u{\small\emptyset}(0,0)
\Label\u{\small\emptyset}(3,0)
\Label\u{\small\emptyset}(6,0)
\Label\u{\small\emptyset}(9,0)
\Label\u{\small\emptyset}(12,0)
\Label\u{\small\emptyset}(15,0)
\Label\u{\small\emptyset}(18,0)
\Label\u{\small\emptyset}(21,0)
\Label\u{\small\emptyset}(24,0)
\Label\u{\small\emptyset}(27,0)
\Label\ru{\small\emptyset}(30,0)
\Label\r{\small\emptyset}(30,3)
\Label\r{\small\emptyset}(30,6)
\Label\r{\small\emptyset}(30,9)
\Label\r{\small\emptyset}(30,12)
\Label\r{\small\emptyset}(30,15)
\Label\r{\small\emptyset}(30,18)
\Label\r{\small\emptyset}(30,21)
\Label\r{\small\emptyset}(30,24)
\Label\r{\small\emptyset}(30,27)
\Label\r{\small\emptyset}(30,30)
\Label\r{\small\emptyset}(30,33)
\Label\lo{\small1}(27,3)
\Label\lo{\small1}(24,3)
\Label\lo{\small1}(21,3)
\Label\lo{\small1}(18,3)
\Label\lo{\small1}(15,3)
\Label\lo{\small1}(12,3)
\Label\lo{\small1}(9,3)
\Label\lo{\small1}(6,3)
\Label\lo{\small1}(3,3)
\Label\lo{\small1}(0,3)
\Label\lo{\small1}(-3,3)
\Label\lo{\small1}(27,6)
\Label\lo{\small1}(24,6)
\Label\lo{\small1}(21,6)
\Label\lo{\small1}(18,6)
\Label\lo{\small1}(15,6)
\Label\lo{\small1}(12,6)
\Label\lo{\small1}(9,6)
\Label\lo{\small2}(6,6)
\Label\lo{\small2}(3,6)
\Label\lo{\small2}(0,6)
\Label\lo{\small2}(-3,6)
\Label\lo{\small1}(27,9)
\Label\lo{\small1}(24,9)
\Label\lo{\small1}(21,9)
\Label\lo{\small2}(18,9)
\Label\lo{\small2}(15,9)
\Label\lo{\small2}(12,9)
\Label\lo{\small2}(9,9)
\Label\lo{\small21\ }(6,9)
\Label\lo{\small21\ }(3,9)
\Label\lo{\small21\ }(0,9)
\Label\lo{\small21\ }(-3,9)
\Label\lo{\small1}(27,12)
\Label\lo{\small1}(24,12)
\Label\lo{\small2}(21,12)
\Label\lo{\small21\ }(18,12)
\Label\lo{\small21\ }(15,12)
\Label\lo{\small21\ }(12,12)
\Label\lo{\small21\ }(9,12)
\Label\lo{\small211\ \ }(6,12)
\Label\lo{\small211\ \ }(3,12)
\Label\lo{\small211\ \ }(0,12)
\Label\lo{\small211\ \ \ }(-3,12)
\Label\lo{\small1}(27,15)
\Label\lo{\small1}(24,15)
\Label\lo{\small2}(21,15)
\Label\lo{\small21\ }(18,15)
\Label\lo{\small31\ }(15,15)
\Label\lo{\small31\ }(12,15)
\Label\lo{\small31\ }(9,15)
\Label\lo{\small311\ \ }(6,15)
\Label\lo{\small311\ \ }(3,15)
\Label\lo{\small311\ \ }(0,15)
\Label\lo{\small311\ \ \ }(-3,15)
\Label\lo{\small1}(27,18)
\Label\lo{\small1}(24,18)
\Label\lo{\small2}(21,18)
\Label\lo{\small21}(18,18)
\Label\lo{\small31}(15,18)
\Label\lo{\small31}(12,18)
\Label\lo{\small31\ }(9,18)
\Label\lo{\small311\ \ }(6,18)
\Label\lo{\small311\ \ }(3,18)
\Label\lo{\small311\ \ }(0,18)
\Label\lo{\small411\ \ \ }(-3,18)
\Label\lo{\small1}(27,21)
\Label\lo{\small1}(24,21)
\Label\lo{\small2\ }(21,21)
\Label\lo{\small21\ \ }(18,21)
\Label\lo{\small31\ \ }(15,21)
\Label\lo{\small31\ \ }(12,21)
\Label\lo{\small31\ \ }(9,21)
\Label\lo{\small311\ \ }(6,21)
\Label\lo{\small311\ \,\ \ }(3,21)
\Label\lo{\small411\ \ }(0,21)
\Label\lo{\small421\ \ \ }(-3,21)
\Label\lo{\small1}(27,24)
\Label\lo{\small2}(24,24)
\Label\lo{\small21\ }(21,24)
\Label\lo{\small211\ \ }(18,24)
\Label\lo{\small311\ \ }(15,24)
\Label\lo{\small311\ \ }(12,24)
\Label\lo{\small311\ \ }(9,24)
\Label\lo{\small3111\ \,\ \ }(6,24)
\Label\lo{\small3111\ \,\ \ }(3,24)
\Label\lo{\small4111\ \,\ \ }(0,24)
\Label\lo{\small4211\ \,\ \ \ }(-3,24)
\Label\lo{\small1}(27,27)
\Label\lo{\small2}(24,27)
\Label\lo{\small21\ }(21,27)
\Label\lo{\small211\ \ }(18,27)
\Label\lo{\small311\ \ }(15,27)
\Label\lo{\small311\ \ }(12,27)
\Label\lo{\small311\ \ }(9,27)
\Label\lo{\small3111\ \,\ \ }(6,27)
\Label\lo{\small4111\ \,\ \ }(3,27)
\Label\lo{\small4211\ \,\ \ }(0,27)
\Label\lo{\small4221\ \,\ \ \ }(-3,27)
\Label\lo{\small1}(27,30)
\Label\lo{\small2}(24,30)
\Label\lo{\small21\ }(21,30)
\Label\lo{\small211\ \ }(18,30)
\Label\lo{\small311\ \ }(15,30)
\Label\lo{\small311\ \ }(12,30)
\Label\lo{\small411\ \ }(9,30)
\Label\lo{\small4111\ \ \ \ }(6,30)
\Label\lo{\small4211\ \ \ \ }(3,30)
\Label\lo{\small4221\ \ \ \ }(0,30)
\Label\lo{\small4222 \ \ \ \ \ }(-3,30)
\Label\lo{\small1}(27,33)
\Label\lo{\small2}(24,33)
\Label\lo{\small21\ }(21,33)
\Label\lo{\small211\ \ }(18,33)
\Label\lo{\small311\ \ }(15,33)
\Label\lo{\small411\ \ }(12,33)
\Label\lo{\small421\ \ }(9,33)
\Label\lo{\small4211\ \ \ \ }(6,33)
\Label\lo{\small4221\ \ \ \ }(3,33)
\Label\lo{\small4222\ \ \ \ }(0,33)
\Label\lo{\small42221\ \ \ \ \ \ }(-3,33)
\hskip10.8cm
$$
\caption{Example for the bijection between $\SYT_{11}(5)$ and $\NNest_{11}(3)$.}
\label{fig:1}
\end{figure}

Given $T$, we now fill an $n\times n$ growth diagram by putting the
sequence of partitions arising by recording the subshapes of~$T$ formed
by the entries $1,2,\dots,i$ for $i=0,1,\dots,n$, along the left side
of the $n\times n$ square of cells, in increasing order from the
bottom to the top, and along the top side of the square, in increasing
order from right to left. In our running example, this sequence of
subshapes is
$$
\emptyset\subseteq 1\subseteq 2\subseteq 21\subseteq 211\subseteq 311
\subseteq 411\subseteq 421\subseteq 4211\subseteq 4221\subseteq 4222
\subseteq 42221.
$$
We have put this sequence to the left and on top of the $11\times 11$
square in Figure~\ref{fig:1}. (The other labels in the diagram should
be ignored at this point).

Now we apply the inverse growth diagram algorithm in direction
bottom/right. This produces a collection of crosses that is symmetric
with respect to the right/down diagonal of the square; see
Figure~\ref{fig:1}. By labeling the rows by $1,2,\dots,n$ from top to
bottom and the columns from left to right, these crosses correspond to
a matching (involution) 
on
$\{1,2,\dots,n\}$. In our running example,
this matching is
\begin{equation} \label{eq:match1} 
\Big\{(1,6),\ (2,5),\ (4,10),\ (8,9)
\Big\},
\end{equation}
with $3,7,11$ being fixed points of the matching. 
The largest nestings have size~2 --- namely $\big\{(1,6),\ (2,5)\big\}$
and $\big\{(4,10),\ (8,9)\big\}$ ---
hence this matching is an element of $\NNest_{11}(3)$.

Greene's theorem implies that the number of rows of the original
standard Young tableau~$T$ equals the length of the longest NE-chain
of X's. Since $T\in \SYT_n(2h+1)$, this length is bounded
above by~$2h+1$. Since we are in a symmetric situation, the length of
a longest NE-chain will occur among symmetric NE-chains. This in turn
implies that the largest nesting of the matching encoded by the
crosses has size at most~$h$. In other words, the matching is an
element of $\NNest_n(h+1)$.
This establishes the identity.
\end{proof}

\begin{proof}[Proof of \eqref{eq:SYT-NN-even}]
We proceed in the same manner as in the previous proof. Here, our
running example is the standard Young tableau
$$
\Einheit.5cm
\Pfad(0,0),11116\endPfad
\Pfad(0,-1),1111\endPfad
\Pfad(0,-2),11\endPfad
\Pfad(0,-3),11\endPfad
\Pfad(0,0),6666112222\endPfad
\Pfad(1,0),6666\endPfad
\Pfad(3,0),6\endPfad
\Label\ru{1\vrule width0pt depth3.5pt}(0,0)
\Label\ru{2\vrule width0pt depth3.5pt}(1,0)
\Label\ru{5\vrule width0pt depth3.5pt}(2,0)
\Label\ru{6\vrule width0pt depth3.5pt}(3,0)
\Label\ru{3\vrule width0pt depth3.5pt}(0,-1)
\Label\ru{7\vrule width0pt depth3.5pt}(1,-1)
\Label\ru{4\vrule width0pt depth3.5pt}(0,-2)
\Label\ru{9\vrule width0pt depth3.5pt}(1,-2)
\Label\ru{8\vrule width0pt depth3.5pt}(0,-3)
\Label\ru{10\vrule width0pt depth3.5pt}(1,-3)
\Label\r{,}(4,-3)
\hskip2cm
$$
which is an element of $\SYT_{10}(4)=\SYT_{10}(2\cdot 2)$.
The growth diagram that results in this case is shown in
Figure~\ref{fig:3}. The matching that corresponds to
the crosses in the figure is
\begin{equation} \label{eq:match2} 
\Big\{
(1,5),\ (2,4),\ (3,9),\ (7,8)
\Big\},
\end{equation}
with 6 and 10 being fixed points of the matching.
The largest nestings have size~2 --- namely $\big\{(1,5),\ (2,4)\big\}$
and $\big\{(3,9),\ (7,8)\big\}$. Moreover, the largest ``half-nesting"
is $\big\{(3,9)\big\}\cup\{6\}$, which is a $(1+1/2)$-nesting.
Hence, this matching is an element of $\NNest_{10}(2+1/2)$.

\begin{figure}
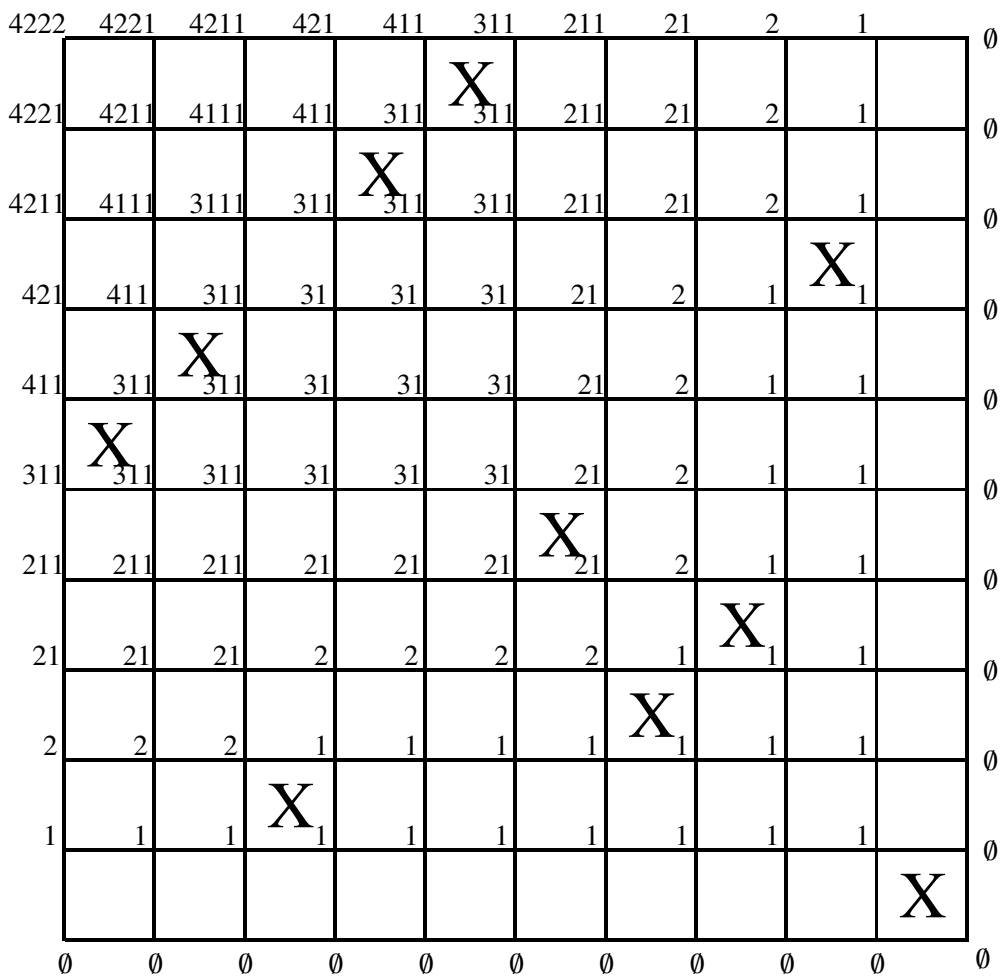

$$
\Einheit.4cm
\Pfad(0,0),111111111111111111111111111111\endPfad
\Pfad(0,3),111111111111111111111111111111\endPfad
\Pfad(0,6),111111111111111111111111111111\endPfad
\Pfad(0,9),111111111111111111111111111111\endPfad
\Pfad(0,12),111111111111111111111111111111\endPfad
\Pfad(0,15),111111111111111111111111111111\endPfad
\Pfad(0,18),111111111111111111111111111111\endPfad
\Pfad(0,21),111111111111111111111111111111\endPfad
\Pfad(0,24),111111111111111111111111111111\endPfad
\Pfad(0,27),111111111111111111111111111111\endPfad
\Pfad(0,30),111111111111111111111111111111\endPfad
\Pfad(0,0),222222222222222222222222222222\endPfad
\Pfad(3,0),222222222222222222222222222222\endPfad
\Pfad(6,0),222222222222222222222222222222\endPfad
\Pfad(9,0),222222222222222222222222222222\endPfad
\Pfad(12,0),222222222222222222222222222222\endPfad
\Pfad(15,0),222222222222222222222222222222\endPfad
\Pfad(18,0),222222222222222222222222222222\endPfad
\Pfad(21,0),222222222222222222222222222222\endPfad
\Pfad(24,0),222222222222222222222222222222\endPfad
\Pfad(27,0),222222222222222222222222222222\endPfad
\Pfad(30,0),222222222222222222222222222222\endPfad
\PfadDicke{2pt}
%
\Label\ro{\text {\Huge X}}(13,28)
\Label\ro{\text {\Huge X}}(10,25)
\Label\ro{\text {\Huge X}}(25,22)
\Label\ro{\text {\Huge X}}(4,19)
\Label\ro{\text {\Huge X}}(1,16)
\Label\ro{\text {\Huge X}}(16,13)
\Label\ro{\text {\Huge X}}(22,10)
\Label\ro{\text {\Huge X}}(19,7)
\Label\ro{\text {\Huge X}}(7,4)
\Label\ro{\text {\Huge X}}(28,1)
%
\Label\u{\small\emptyset}(0,0)
\Label\u{\small\emptyset}(3,0)
\Label\u{\small\emptyset}(6,0)
\Label\u{\small\emptyset}(9,0)
\Label\u{\small\emptyset}(12,0)
\Label\u{\small\emptyset}(15,0)
\Label\u{\small\emptyset}(18,0)
\Label\u{\small\emptyset}(21,0)
\Label\u{\small\emptyset}(24,0)
\Label\u{\small\emptyset}(27,0)
\Label\ru{\small\emptyset}(30,0)
\Label\r{\small\emptyset}(30,3)
\Label\r{\small\emptyset}(30,6)
\Label\r{\small\emptyset}(30,9)
\Label\r{\small\emptyset}(30,12)
\Label\r{\small\emptyset}(30,15)
\Label\r{\small\emptyset}(30,18)
\Label\r{\small\emptyset}(30,21)
\Label\r{\small\emptyset}(30,24)
\Label\r{\small\emptyset}(30,27)
\Label\r{\small\emptyset}(30,30)
\Label\lo{\small1}(27,3)
\Label\lo{\small1}(24,3)
\Label\lo{\small1}(21,3)
\Label\lo{\small1}(18,3)
\Label\lo{\small1}(15,3)
\Label\lo{\small1}(12,3)
\Label\lo{\small1}(9,3)
\Label\lo{\small1}(6,3)
\Label\lo{\small1}(3,3)
\Label\lo{\small1}(0,3)
\Label\lo{\small1}(27,6)
\Label\lo{\small1}(24,6)
\Label\lo{\small1}(21,6)
\Label\lo{\small1}(18,6)
\Label\lo{\small1}(15,6)
\Label\lo{\small1}(12,6)
\Label\lo{\small1}(9,6)
\Label\lo{\small2}(6,6)
\Label\lo{\small2}(3,6)
\Label\lo{\small2}(0,6)
\Label\lo{\small1}(27,9)
\Label\lo{\small1}(24,9)
\Label\lo{\small1}(21,9)
\Label\lo{\small2}(18,9)
\Label\lo{\small2}(15,9)
\Label\lo{\small2}(12,9)
\Label\lo{\small2}(9,9)
\Label\lo{\small21\ }(6,9)
\Label\lo{\small21\ }(3,9)
\Label\lo{\small21\ }(0,9)
\Label\lo{\small1}(27,12)
\Label\lo{\small1}(24,12)
\Label\lo{\small2}(21,12)
\Label\lo{\small21\ }(18,12)
\Label\lo{\small21\ }(15,12)
\Label\lo{\small21\ }(12,12)
\Label\lo{\small21\ }(9,12)
\Label\lo{\small211\ \ }(6,12)
\Label\lo{\small211\ \ }(3,12)
\Label\lo{\small211\ \ }(0,12)
\Label\lo{\small1}(27,15)
\Label\lo{\small1}(24,15)
\Label\lo{\small2}(21,15)
\Label\lo{\small21\ }(18,15)
\Label\lo{\small31\ }(15,15)
\Label\lo{\small31\ }(12,15)
\Label\lo{\small31\ }(9,15)
\Label\lo{\small311\ \ }(6,15)
\Label\lo{\small311\ \ }(3,15)
\Label\lo{\small311\ \ }(0,15)
\Label\lo{\small1}(27,18)
\Label\lo{\small1}(24,18)
\Label\lo{\small2}(21,18)
\Label\lo{\small21}(18,18)
\Label\lo{\small31}(15,18)
\Label\lo{\small31}(12,18)
\Label\lo{\small31\ }(9,18)
\Label\lo{\small311\ \ }(6,18)
\Label\lo{\small311\ \ }(3,18)
\Label\lo{\small411\ \ }(0,18)
\Label\lo{\small1}(27,21)
\Label\lo{\small1}(24,21)
\Label\lo{\small2\ }(21,21)
\Label\lo{\small21\ \ }(18,21)
\Label\lo{\small31\ \ }(15,21)
\Label\lo{\small31\ \ }(12,21)
\Label\lo{\small31\ \ }(9,21)
\Label\lo{\small311\ \ }(6,21)
\Label\lo{\small411\ \,\ \ }(3,21)
\Label\lo{\small421\ \ }(0,21)
\Label\lo{\small1}(27,24)
\Label\lo{\small2}(24,24)
\Label\lo{\small21\ }(21,24)
\Label\lo{\small211\ \ }(18,24)
\Label\lo{\small311\ \ }(15,24)
\Label\lo{\small311\ \ }(12,24)
\Label\lo{\small311\ \ }(9,24)
\Label\lo{\small3111\ \,\ \ }(6,24)
\Label\lo{\small4111\ \,\ \ }(3,24)
\Label\lo{\small4211\ \,\ \ }(0,24)
\Label\lo{\small1}(27,27)
\Label\lo{\small2}(24,27)
\Label\lo{\small21\ }(21,27)
\Label\lo{\small211\ \ }(18,27)
\Label\lo{\small311\ \ }(15,27)
\Label\lo{\small311\ \ }(12,27)
\Label\lo{\small411\ \ }(9,27)
\Label\lo{\small4111\ \,\ \ }(6,27)
\Label\lo{\small4211\ \,\ \ }(3,27)
\Label\lo{\small4221\ \,\ \ }(0,27)
\Label\lo{\small1}(27,30)
\Label\lo{\small2}(24,30)
\Label\lo{\small21\ }(21,30)
\Label\lo{\small211\ \ }(18,30)
\Label\lo{\small311\ \ }(15,30)
\Label\lo{\small411\ \ }(12,30)
\Label\lo{\small421\ \ }(9,30)
\Label\lo{\small4211\ \,\ \ }(6,30)
\Label\lo{\small4221\ \,\ \ }(3,30)
\Label\lo{\small4222\ \,\ \ }(0,30)
\hskip12.3cm
$$
\caption{Example for the bijection between $\SYT_{10}(4)$ and $\NNest_{10}(2+1/2)$.}
\label{fig:3}
\end{figure}

By Greene's theorem again, the length of the longest NE-chain of
crosses is bounded above by~$2h$. Also here, we may restrict our
attention to symmetric NE-chains. The previous observation implies
that the size of a nesting in the matching (involution) corresponding to the
crosses can be at most~$h$. Moreover, a cross on the right-down
diagonal of the square must not be part of a (symmetric) NE-chain of
length $2h+1$. Translated into the corresponding property for the
matching defined by the set of crosses, this means that
the matching does not contain an $(h+1/2)$-nesting. In other words,
the matching is an element of $\NNest_n(h+1/2)$.

This completes the proof of the identity.
\end{proof}

Next we prove the identities in Theorem~\ref{thm:NN-VT}.

\begin{proof}[Proof of \eqref{eq:NN-VT-odd}]
Let $M$ be a matching in $\NNest_n(h+1)$. Obviously, in the square
growth diagrams that we used in the proofs of~\eqref{eq:SYT-NN-odd}
and~\eqref{eq:SYT-NN-even} (cf.\ Figures~\ref{fig:1} and~\ref{fig:3}),
only one half of the information is 
essential, the rest is redundant. Indeed, here we content ourselves
with just one half of the square, namely the triangular region below
the right-down diagonal. We put the matching $M$ as crosses into the
triangle in the same fashion as in the proofs of~\eqref{eq:SYT-NN-odd}
and~\eqref{eq:SYT-NN-even}.
Also here, we illustrate all the steps of the construction
by a running example, namely the matching in~\eqref{eq:match1}, which
is an element of $\NNest_{11}(3)=\NNest_{11}(2+1)$.
Figure~\ref{fig:2} shows that matching filled in the triangular region
forming half of an $11\times 11$ rectangle. (Obviously this is half of
the cell diagram of Figure~\ref{fig:1}, excluding the fixed points
along the right-down diagonal.) The labelings should be ignored at
this point.

\begin{figure}
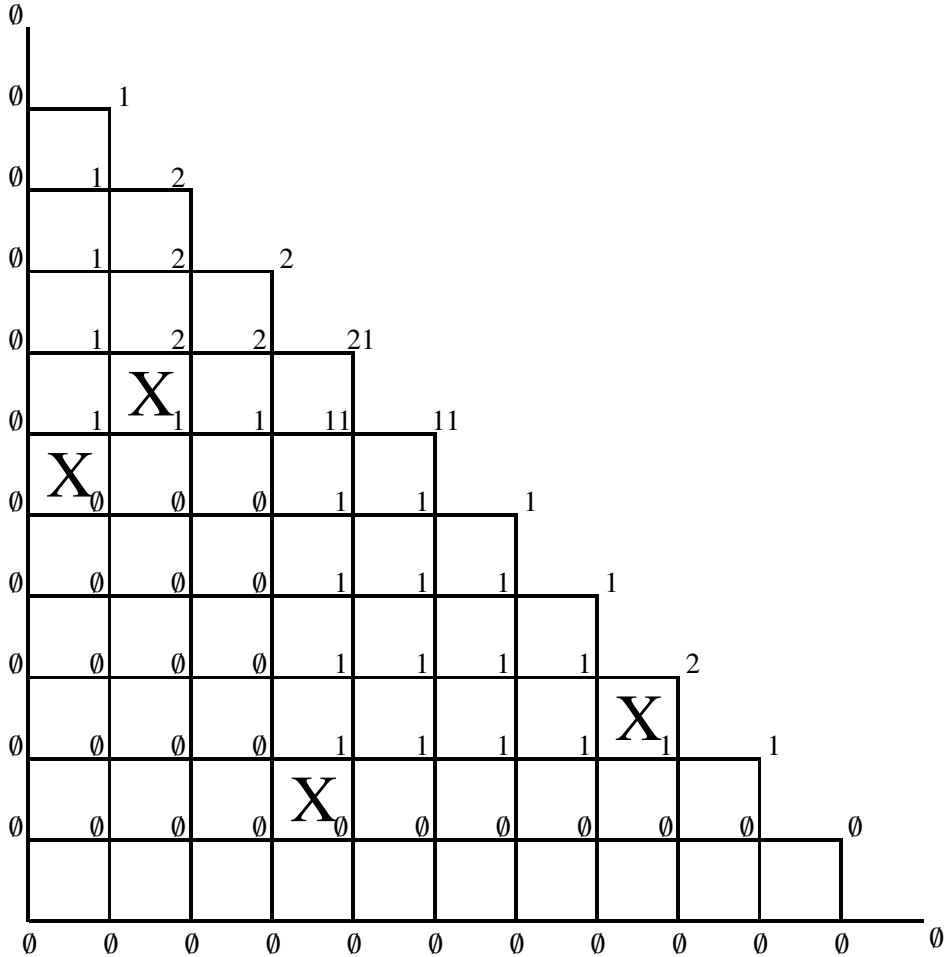

$$
\Einheit.36cm
\Pfad(-3,0),111111111111111111111111111111111\endPfad
\Pfad(-3,3),111111111111111111111111111111\endPfad
\Pfad(-3,6),111111111111111111111111111\endPfad
\Pfad(-3,9),111111111111111111111111\endPfad
\Pfad(-3,12),111111111111111111111\endPfad
\Pfad(-3,15),111111111111111111\endPfad
\Pfad(-3,18),111111111111111\endPfad
\Pfad(-3,21),111111111111\endPfad
\Pfad(-3,24),111111111\endPfad
\Pfad(-3,27),111111\endPfad
\Pfad(-3,30),111\endPfad
\Pfad(-3,0),222222222222222222222222222222222\endPfad
\Pfad(0,0),222222222222222222222222222222\endPfad
\Pfad(3,0),222222222222222222222222222\endPfad
\Pfad(6,0),222222222222222222222222\endPfad
\Pfad(9,0),222222222222222222222\endPfad
\Pfad(12,0),222222222222222222\endPfad
\Pfad(15,0),222222222222222\endPfad
\Pfad(18,0),222222222222\endPfad
\Pfad(21,0),222222222\endPfad
\Pfad(24,0),222222\endPfad
\Pfad(27,0),222\endPfad
%
\PfadDicke{2pt}
%
\Label\ro{\text {\Huge X}}(1,19)
\Label\ro{\text {\Huge X}}(-2,16)
\Label\ro{\text {\Huge X}}(19,7)
\Label\ro{\text {\Huge X}}(7,4)
%
\Label\u{\small\emptyset}(-3,0)
\Label\u{\small\emptyset}(0,0)
\Label\u{\small\emptyset}(3,0)
\Label\u{\small\emptyset}(6,0)
\Label\u{\small\emptyset}(9,0)
\Label\u{\small\emptyset}(12,0)
\Label\u{\small\emptyset}(15,0)
\Label\u{\small\emptyset}(18,0)
\Label\u{\small\emptyset}(21,0)
\Label\u{\small\emptyset}(24,0)
\Label\u{\small\emptyset}(27,0)
\Label\ru{\small\emptyset}(30,0)
%
%
\Label\ro{\small\emptyset}(27,3)
\Label\lo{\small\emptyset}(24,3)
\Label\lo{\small\emptyset}(21,3)
\Label\lo{\small\emptyset}(18,3)
\Label\lo{\small\emptyset}(15,3)
\Label\lo{\small\emptyset}(12,3)
\Label\lo{\small\emptyset}(9,3)
\Label\lo{\small\emptyset}(6,3)
\Label\lo{\small\emptyset}(3,3)
\Label\lo{\small\emptyset}(0,3)
\Label\lo{\small\emptyset}(-3,3)
\Label\ro{\small1}(24,6)
\Label\lo{\small1}(21,6)
\Label\lo{\small1}(18,6)
\Label\lo{\small1}(15,6)
\Label\lo{\small1}(12,6)
\Label\lo{\small1}(9,6)
\Label\lo{\small\emptyset}(6,6)
\Label\lo{\small\emptyset}(3,6)
\Label\lo{\small\emptyset}(0,6)
\Label\lo{\small\emptyset}(-3,6)
\Label\ro{\small2}(21,9)
\Label\lo{\small1}(18,9)
\Label\lo{\small1}(15,9)
\Label\lo{\small1}(12,9)
\Label\lo{\small1}(9,9)
\Label\lo{\small\emptyset}(6,9)
\Label\lo{\small\emptyset}(3,9)
\Label\lo{\small\emptyset}(0,9)
\Label\lo{\small\emptyset}(-3,9)
\Label\ro{\small1}(18,12)
\Label\lo{\small1}(15,12)
\Label\lo{\small1}(12,12)
\Label\lo{\small1}(9,12)
\Label\lo{\small\emptyset}(6,12)
\Label\lo{\small\emptyset}(3,12)
\Label\lo{\small\emptyset}(0,12)
\Label\lo{\small\emptyset}(-3,12)
%
\Label\ro{\small1}(15,15)
\Label\lo{\small1}(12,15)
\Label\lo{\small1}(9,15)
\Label\lo{\small\emptyset}(6,15)
\Label\lo{\small\emptyset}(3,15)
\Label\lo{\small\emptyset}(0,15)
\Label\lo{\small\emptyset}(-3,15)
\Label\ro{\small11\ }(12,18)
\Label\lo{\small11\ }(9,18)
\Label\lo{\small1}(6,18)
\Label\lo{\small1}(3,18)
\Label\lo{\small1}(0,18)
\Label\lo{\small\emptyset}(-3,18)
\Label\ro{\small21\ \ }(9,21)
\Label\lo{\small2}(6,21)
\Label\lo{\small2}(3,21)
\Label\lo{\small1}(0,21)
\Label\lo{\small\emptyset}(-3,21)
\Label\ro{\small2}(6,24)
\Label\lo{\small2}(3,24)
\Label\lo{\small1}(0,24)
\Label\lo{\small\emptyset}(-3,24)
%
\Label\lo{\small2}(3,27)
\Label\lo{\small1}(0,27)
\Label\lo{\small\emptyset}(-3,27)
\Label\ro{\small1}(0,30)
\Label\lo{\small\emptyset}(-3,30)
\Label\lo{\small\emptyset}(-3,33)
\hskip10.8cm
$$
\caption{Example for the bijection between $\NNest_{11}(3)$ and $\VT_{11}(2)$.}
\label{fig:2}
\end{figure}

Now we place empty diagrams along the bottom side and along the left
side of the triangular cell arrangement. Subsequently we apply the (forward) growth
diagram algorithm in direction top/right. Figure~\ref{fig:2} shows the
result in our running example.

We read the sequence $(\la^0,\la^1,\dots,\la^{2n})$
of diagrams that we obtain along the diagonal of
the triangular region. In our example, we obtain
$$
\Big(
\emptyset,\ \emptyset,\ 1,\ 1,\ 2,\ 2,\ 2,\ 2,\ 21,\ 11,\ 11,\ 1,\ 1,\
1,\ 1,\ 1,\ 2,\ 1,\ 1,\ \emptyset,\ \emptyset,\ \emptyset,\ \emptyset
\Big).
$$
However, again, this sequence contains a lot of redundant information.
Namely, since we filled a matching into the triangular cell arrangement,
there are exactly three possibilities for the subsequences of the form
$\la^{2i},\la^{2i+1},\la^{2i+2}$, for  $i=0,1,\dots,n-1$:

\begin{itemize} 
\item $\la^{2i}=\la^{2i+1}\subsetneqq\la^{2i+2}$;
\item $\la^{2i}\supsetneqq\la^{2i+1}=\la^{2i+2}$;
\item $\la^{2i}=\la^{2i+1}=\la^{2i+2}$.
\end{itemize}

Hence it suffices to keep all the even-indexed partitions
$(\la^0,\la^2,\dots,\la^{2n})$; the
odd-indexed ones can be reconstructed from them. In our running example,
this leads to the reduced sequence
$$
\Big(
\emptyset,\ 1,\ 2,\ 2,\ 21,\ 11,\ 1,\ 1,\ 2,\ 1,\ \emptyset,\ \emptyset
\Big).
$$

Since we started with a matching without an $(h+1)$-nesting, the
longest NE-chain of crosses is bounded above by~$h$.
By Greene's theorem, this implies that the first rows of the
diagrams~$\la^{2i}$ are also bounded above by~$h$.

In the final step we conjugate all partitions $\la^{2i}$ in the
(reduced) sequence and obtain
$\Big((\la^0)',(\la^2)',\break \dots,(\la^{2n})'\Big)$.
Clearly, this produces a vacillating tableau in
$\VT_n(h)$. In the case of our example, we obtain
$$
\Big(
\emptyset,\ 1,\ 11,\ 11,\ 21,\ 2,\ 1,\ 1,\ 11,\ 1,\ \emptyset,\ \emptyset
\Big).
$$
This completes the proof.
\end{proof}

\begin{proof}[Proof of \eqref{eq:NN-VT-even}]
Let $M\in\NNest(h+1/2)$.
We proceed as in the proof of~\eqref{eq:NN-VT-odd}, with one
exception: here, when we put our matching~$M$ in the form of crosses
into the triangular cell arrangement, we keep the crosses
corresponding to fixed points of~$M$ along the right-down diagonal.
As running example we choose the matching in~\eqref{eq:match2}. It is
a matching in $\NNest_{10}(2+1/2)$. The corresponding arrangement of
crosses is shown in Figure~\ref{fig:4}. For better recognition, the fixed
points are placed into dotted cells.

\begin{figure}
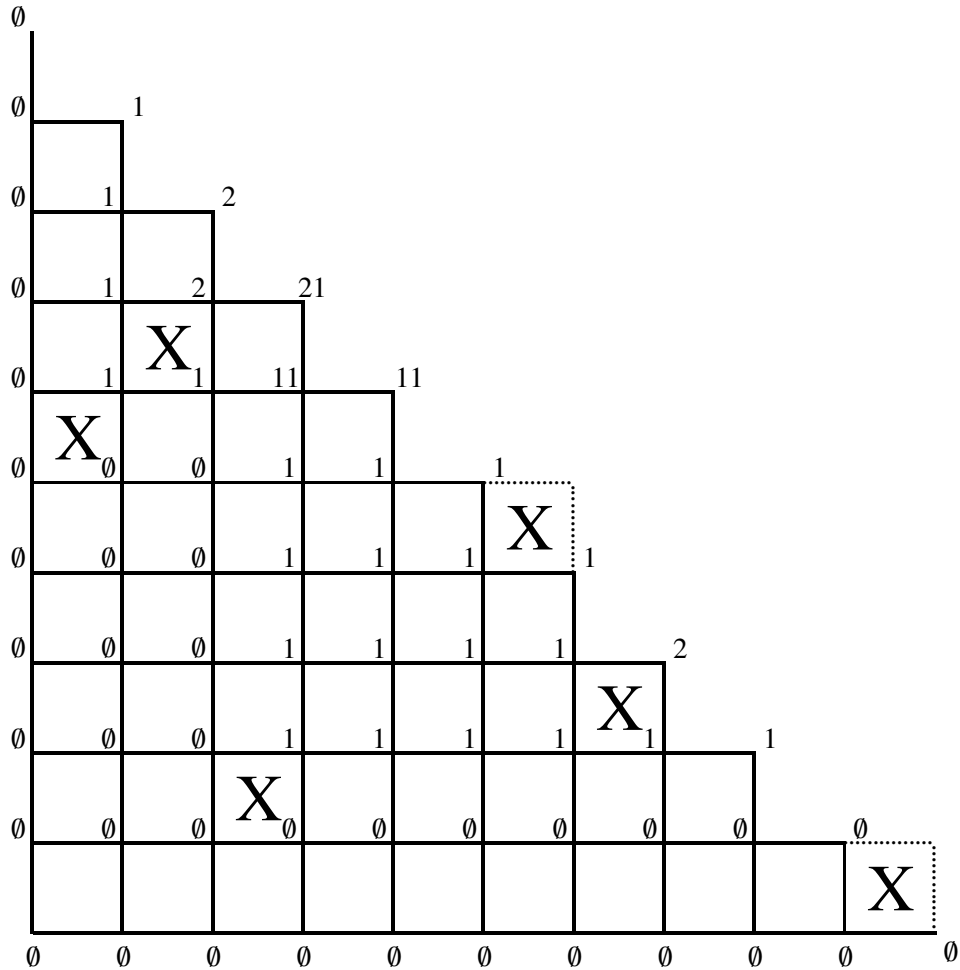

$$
\Einheit.4cm
\Pfad(0,0),111111111111111111111111111111\endPfad
\Pfad(0,3),111111111111111111111111111\endPfad
\Pfad(0,6),111111111111111111111111\endPfad
\Pfad(0,9),111111111111111111111\endPfad
\Pfad(0,12),111111111111111111\endPfad
\Pfad(0,15),111111111111111\endPfad
\Pfad(0,18),111111111111\endPfad
\Pfad(0,21),111111111\endPfad
\Pfad(0,24),111111\endPfad
\Pfad(0,27),111\endPfad
\Pfad(0,0),222222222222222222222222222222\endPfad
\Pfad(3,0),222222222222222222222222222\endPfad
\Pfad(6,0),222222222222222222222222\endPfad
\Pfad(9,0),222222222222222222222\endPfad
\Pfad(12,0),222222222222222222\endPfad
\Pfad(15,0),222222222222222\endPfad
\Pfad(18,0),222222222222\endPfad
\Pfad(21,0),222222222\endPfad
\Pfad(24,0),222222\endPfad
\Pfad(27,0),222\endPfad
%
\SPfad(30,0),222555\endSPfad
\SPfad(18,12),222555\endSPfad
\PfadDicke{2pt}
%
\Label\ro{\text {\Huge X}}(4,19)
\Label\ro{\text {\Huge X}}(1,16)
\Label\ro{\text {\Huge X}}(16,13)
\Label\ro{\text {\Huge X}}(19,7)
\Label\ro{\text {\Huge X}}(7,4)
\Label\ro{\text {\Huge X}}(28,1)
\Label\u{\small\emptyset}(0,0)
\Label\u{\small\emptyset}(3,0)
\Label\u{\small\emptyset}(6,0)
\Label\u{\small\emptyset}(9,0)
\Label\u{\small\emptyset}(12,0)
\Label\u{\small\emptyset}(15,0)
\Label\u{\small\emptyset}(18,0)
\Label\u{\small\emptyset}(21,0)
\Label\u{\small\emptyset}(24,0)
\Label\u{\small\emptyset}(27,0)
\Label\ru{\small\emptyset}(30,0)
%
%
\Label\ro{\small\emptyset}(27,3)
\Label\lo{\small\emptyset}(24,3)
\Label\lo{\small\emptyset}(21,3)
\Label\lo{\small\emptyset}(18,3)
\Label\lo{\small\emptyset}(15,3)
\Label\lo{\small\emptyset}(12,3)
\Label\lo{\small\emptyset}(9,3)
\Label\lo{\small\emptyset}(6,3)
\Label\lo{\small\emptyset}(3,3)
\Label\lo{\small\emptyset}(0,3)
\Label\ro{\small1}(24,6)
\Label\lo{\small1}(21,6)
\Label\lo{\small1}(18,6)
\Label\lo{\small1}(15,6)
\Label\lo{\small1}(12,6)
\Label\lo{\small1}(9,6)
\Label\lo{\small\emptyset}(6,6)
\Label\lo{\small\emptyset}(3,6)
\Label\lo{\small\emptyset}(0,6)
\Label\ro{\small2}(21,9)
\Label\lo{\small1}(18,9)
\Label\lo{\small1}(15,9)
\Label\lo{\small1}(12,9)
\Label\lo{\small1}(9,9)
\Label\lo{\small\emptyset}(6,9)
\Label\lo{\small\emptyset}(3,9)
\Label\lo{\small\emptyset}(0,9)
\Label\ro{\small1}(18,12)
\Label\lo{\small1}(15,12)
\Label\lo{\small1}(12,12)
\Label\lo{\small1}(9,12)
\Label\lo{\small\emptyset}(6,12)
\Label\lo{\small\emptyset}(3,12)
\Label\lo{\small\emptyset}(0,12)
%
\Label\ro{\small1}(15,15)
\Label\lo{\small1}(12,15)
\Label\lo{\small1}(9,15)
\Label\lo{\small\emptyset}(6,15)
\Label\lo{\small\emptyset}(3,15)
\Label\lo{\small\emptyset}(0,15)
\Label\ro{\small11}(12,18)
\Label\lo{\small11\ }(9,18)
\Label\lo{\small1}(6,18)
\Label\lo{\small1}(3,18)
\Label\lo{\small\emptyset}(0,18)
\Label\ro{\small21\ \ }(9,21)
\Label\lo{\small2}(6,21)
\Label\lo{\small1}(3,21)
\Label\lo{\small\emptyset}(0,21)
\Label\ro{\small2}(6,24)
\Label\lo{\small1}(3,24)
\Label\lo{\small\emptyset}(0,24)
\Label\ro{\small1}(3,27)
\Label\lo{\small\emptyset}(0,27)
\Label\lo{\small\emptyset}(0,30)
\hskip12.3cm
$$
\caption{Example for the bijection between $\NNest_{10}(2+1/2)$ and $\VT_{10}(2^*)$.}
\label{fig:4}
\end{figure}

The condition of $M$ not having an $(h+1)$-nesting and not having an
$(h+1/2)$-nesting translates into the
condition that all NE-chains of crosses in the configuration
have length at most~$h$, may they contain a cross
on the right-down diagonal (that is, a fixed point) or not.

Now, as in the proof of \eqref{eq:NN-VT-odd}, we place empty diagrams
along the bottom side and along the left 
side of the triangular cell arrangement. Subsequently we apply the (forward) growth
diagram algorithm in direction top/right; see Figure~\ref{fig:4}.
Again, we read every second
diagram along the right-down diagonal, say
$(\la^0,\la^2,\dots,\la^{2n})$.
In our running example in Figure~\ref{fig:4}, we read
$$
\Big(
\emptyset,\ 1,\ 2,\ 21,\ 11,\ 1,\ 1,\ 2,\ 1,\ \emptyset,\ \emptyset
\Big).
$$

Since the
longest NE-chain of crosses is bounded above by~$h$,
Greene's theorem implies that the first rows of the
diagrams~$\la^{2i}$ are bounded above by~$h$.
Moreover, we have $\la^{2i}=\la^{2i+2}$ for some~$i$ if and only if
at this place we find a cross corresponding to a fixed point of the
matching~$M$. Consequently, again by Greene's theorem, in this case
the length of the first row of $\la^{2i}=\la^{2i+2}$ is in fact at
most~$h-1$. 

As before, the final step consists in conjugating all partitions of
the sequence $(\la^0,\la^2,\dots,\la^{2n})$.
By the above observations it should be obvious that in this manner we
obtain an element of $\VT_n(h^*)$. In our example, we obtain
$$
\Big(
\emptyset,\ 1,\ 11,\ 21,\ 2,\ 1,\ 1,\ 11,\ 1,\ \emptyset,\ \emptyset
\Big),
$$
which is indeed an element of $\VT_{10}(2+1/2)$.
This finishes the  proof.
\end{proof}

Finally, for the sake of completeness, we recall the growth diagram
bijection from~\cite[last paragraph of Sec.~3]{KratCE} that proves the
symmetry relation in Theorem~\ref{thm:NC-NN}.  

\begin{proof}[Proof of \eqref{eq:NC-NN}]
We are going to describe a bijection between matchings $M_1$ and $M_2$
that has the property that, if $M_1$ has a $p$-crossing and a
$q$-nesting, then $M_2$ has a $q$-crossing and a $p$-nesting, for any
positive integers or half-integers~$p$ and~$q$. It is easy to see that
such a bijection implies the relation~\eqref{eq:NC-NN}.

In brief, the bijection works by taking the matching~$M_1$, putting it
in form of an arrangement of crosses into a triangular cell arrangement
of the appropriate size as in the proof of~\eqref{eq:NN-VT-even}, then
applying the forward growth diagram algorithm as in that proof,
subsequently conjugating all partitions along the right-down diagonal
and forgetting all other partition labels and the crosses that do not
correspond to fixed points, and by 
finally applying the inverse (backward) growth diagram algorithm
(keeping the crosses that corresponded to fixed points). The crosses
that one obtains define the image matching~$M_2$.
Greene's theorem implies the asserted properties concerning crossings
and nestings.

\begin{figure}
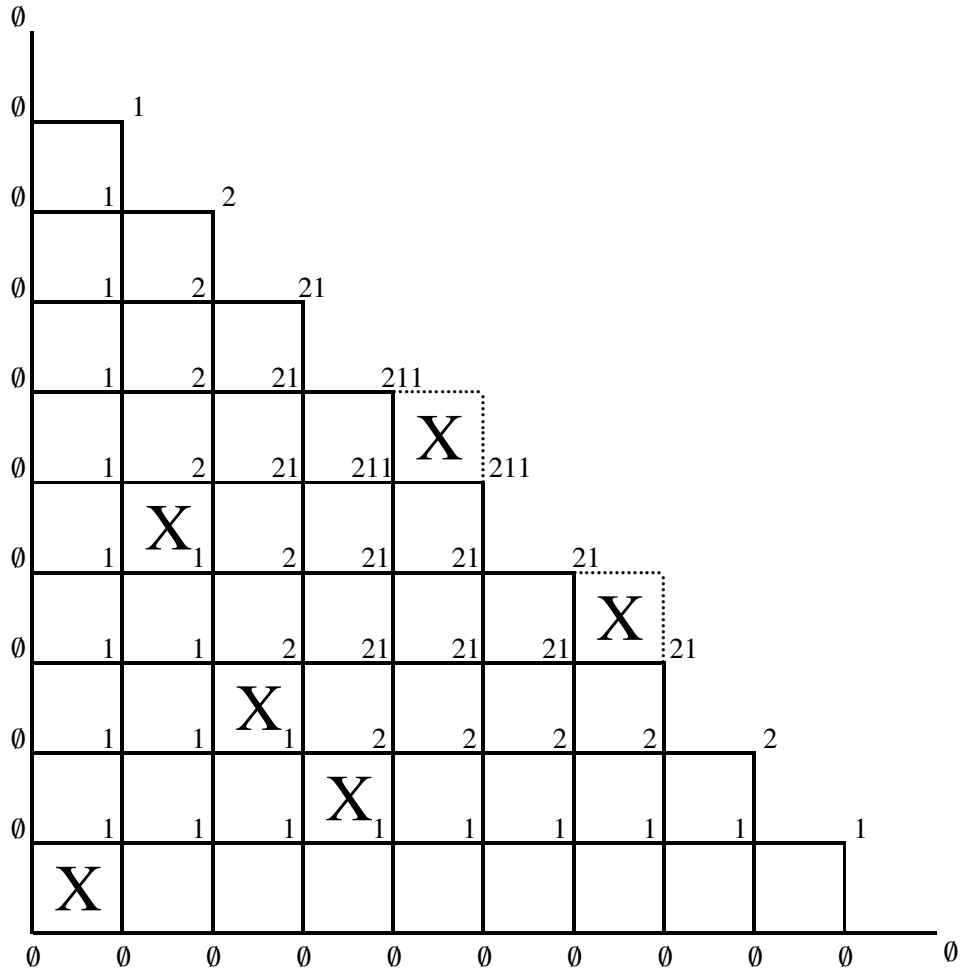

$$
\Einheit.4cm
\Pfad(0,0),111111111111111111111111111111\endPfad
\Pfad(0,3),111111111111111111111111111\endPfad
\Pfad(0,6),111111111111111111111111\endPfad
\Pfad(0,9),111111111111111111111\endPfad
\Pfad(0,12),111111111111111111\endPfad
\Pfad(0,15),111111111111111\endPfad
\Pfad(0,18),111111111111\endPfad
\Pfad(0,21),111111111\endPfad
\Pfad(0,24),111111\endPfad
\Pfad(0,27),111\endPfad
\Pfad(0,0),222222222222222222222222222222\endPfad
\Pfad(3,0),222222222222222222222222222\endPfad
\Pfad(6,0),222222222222222222222222\endPfad
\Pfad(9,0),222222222222222222222\endPfad
\Pfad(12,0),222222222222222222\endPfad
\Pfad(15,0),222222222222222\endPfad
\Pfad(18,0),222222222222\endPfad
\Pfad(21,0),222222222\endPfad
\Pfad(24,0),222222\endPfad
\Pfad(27,0),222\endPfad
\SPfad(21,9),222555\endSPfad
\SPfad(15,15),222555\endSPfad
\PfadDicke{2pt}
\Label\ro{\text {\Huge X}}(1,1)
\Label\ro{\text {\Huge X}}(4,13)
\Label\ro{\text {\Huge X}}(7,7)
\Label\ro{\text {\Huge X}}(10,4)
\Label\ro{\text {\Huge X}}(13,16)
\Label\ro{\text {\Huge X}}(19,10)
\Label\u{\small\emptyset}(0,0)
\Label\u{\small\emptyset}(3,0)
\Label\u{\small\emptyset}(6,0)
\Label\u{\small\emptyset}(9,0)
\Label\u{\small\emptyset}(12,0)
\Label\u{\small\emptyset}(15,0)
\Label\u{\small\emptyset}(18,0)
\Label\u{\small\emptyset}(21,0)
\Label\u{\small\emptyset}(24,0)
\Label\u{\small\emptyset}(27,0)
\Label\ru{\small\emptyset}(30,0)
\Label\ro{\small1}(27,3)
\Label\lo{\small1}(24,3)
\Label\lo{\small1}(21,3)
\Label\lo{\small1}(18,3)
\Label\lo{\small1}(15,3)
\Label\lo{\small1}(12,3)
\Label\lo{\small1}(9,3)
\Label\lo{\small1}(6,3)
\Label\lo{\small1}(3,3)
\Label\lo{\small\emptyset}(0,3)
\Label\ro{\small2}(24,6)
\Label\lo{\small2}(21,6)
\Label\lo{\small2}(18,6)
\Label\lo{\small2}(15,6)
\Label\lo{\small2}(12,6)
\Label\lo{\small1}(9,6)
\Label\lo{\small1}(6,6)
\Label\lo{\small1}(3,6)
\Label\lo{\small\emptyset}(0,6)
\Label\ro{\ \small21}(21,9)
\Label\lo{\small21\ }(18,9)
\Label\lo{\small21\ }(15,9)
\Label\lo{\small21\ }(12,9)
\Label\lo{\small2}(9,9)
\Label\lo{\small1}(6,9)
\Label\lo{\small1}(3,9)
\Label\lo{\small\emptyset}(0,9)
\Label\ro{\small21\ }(18,12)
\Label\lo{\small21\ }(15,12)
\Label\lo{\small21\ }(12,12)
\Label\lo{\small2}(9,12)
\Label\lo{\small1}(6,12)
\Label\lo{\small1}(3,12)
\Label\lo{\small\emptyset}(0,12)
\Label\ro{\ \ \ \small211}(15,15)
\Label\lo{\small211\ \ }(12,15)
\Label\lo{\small21\ }(9,15)
\Label\lo{\small2}(6,15)
\Label\lo{\small1}(3,15)
\Label\lo{\small\emptyset}(0,15)
\Label\ro{\small211\ \ }(12,18)
\Label\lo{\small21\ }(9,18)
\Label\lo{\small2}(6,18)
\Label\lo{\small1}(3,18)
\Label\lo{\small\emptyset}(0,18)
\Label\ro{\small21\ \ }(9,21)
\Label\lo{\small2}(6,21)
\Label\lo{\small1}(3,21)
\Label\lo{\small\emptyset}(0,21)
\Label\ro{\small2}(6,24)
\Label\lo{\small1}(3,24)
\Label\lo{\small\emptyset}(0,24)
\Label\ro{\small1}(3,27)
\Label\lo{\small\emptyset}(0,27)
\Label\lo{\small\emptyset}(0,30)
\hskip12.3cm
$$
\caption{Example for the bijection between $\NCNN_{10}(r,s)$ and
  $\NCNN_{10}(s,r)$ --- forward algorithm.}
\label{fig:5}
\end{figure}

We illustrate this construction by considering the matching
$$
\Big\{
(1,10),\ (2,6),\ (3,8),\ (4,9)
\Big\}
$$
with fixed points 5 and 7. It has a 3-crossing --- namely
$\big\{(2,6),\ (3,8),\ (4,9)\big\}$ ---, a $(3+1/2)$-crossing together
with the fixed point~5, several 2-nestings --- namely e.g.\ 
$\big\{(1,10),\ (3,8)\big\}$ ---, and a $(2+1/2)$-nesting together
with the fixed point~7. The corresponding arrangement of crosses
together with the forward growth diagram construction is shown in
Figure~\ref{fig:5}.

\begin{figure}
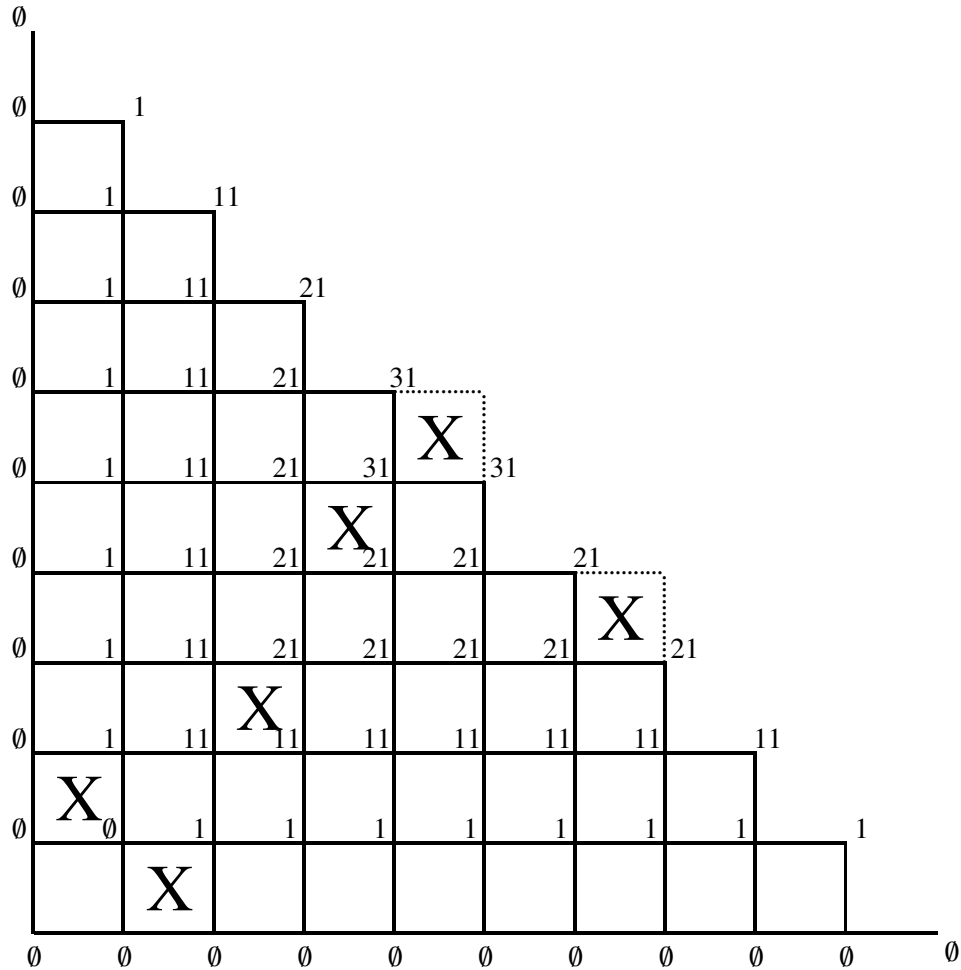

$$
\Einheit.4cm
\Pfad(0,0),111111111111111111111111111111\endPfad
\Pfad(0,3),111111111111111111111111111\endPfad
\Pfad(0,6),111111111111111111111111\endPfad
\Pfad(0,9),111111111111111111111\endPfad
\Pfad(0,12),111111111111111111\endPfad
\Pfad(0,15),111111111111111\endPfad
\Pfad(0,18),111111111111\endPfad
\Pfad(0,21),111111111\endPfad
\Pfad(0,24),111111\endPfad
\Pfad(0,27),111\endPfad
\Pfad(0,0),222222222222222222222222222222\endPfad
\Pfad(3,0),222222222222222222222222222\endPfad
\Pfad(6,0),222222222222222222222222\endPfad
\Pfad(9,0),222222222222222222222\endPfad
\Pfad(12,0),222222222222222222\endPfad
\Pfad(15,0),222222222222222\endPfad
\Pfad(18,0),222222222222\endPfad
\Pfad(21,0),222222222\endPfad
\Pfad(24,0),222222\endPfad
\Pfad(27,0),222\endPfad
%
\SPfad(21,9),222555\endSPfad
\SPfad(15,15),222555\endSPfad
\PfadDicke{2pt}
\Label\ro{\text {\Huge X}}(1,4)
\Label\ro{\text {\Huge X}}(4,1)
\Label\ro{\text {\Huge X}}(7,7)
\Label\ro{\text {\Huge X}}(10,13)
\Label\ro{\text {\Huge X}}(13,16)
\Label\ro{\text {\Huge X}}(19,10)
\Label\u{\small\emptyset}(0,0)
\Label\u{\small\emptyset}(3,0)
\Label\u{\small\emptyset}(6,0)
\Label\u{\small\emptyset}(9,0)
\Label\u{\small\emptyset}(12,0)
\Label\u{\small\emptyset}(15,0)
\Label\u{\small\emptyset}(18,0)
\Label\u{\small\emptyset}(21,0)
\Label\u{\small\emptyset}(24,0)
\Label\u{\small\emptyset}(27,0)
\Label\ru{\small\emptyset}(30,0)
\Label\ro{\small1}(27,3)
\Label\lo{\small1}(24,3)
\Label\lo{\small1}(21,3)
\Label\lo{\small1}(18,3)
\Label\lo{\small1}(15,3)
\Label\lo{\small1}(12,3)
\Label\lo{\small1}(9,3)
\Label\lo{\small1}(6,3)
\Label\lo{\small\emptyset}(3,3)
\Label\lo{\small\emptyset}(0,3)
\Label\ro{\small11\ }(24,6)
\Label\lo{\small11\ }(21,6)
\Label\lo{\small11\ }(18,6)
\Label\lo{\small11\ }(15,6)
\Label\lo{\small11\ }(12,6)
\Label\lo{\small11\ }(9,6)
\Label\lo{\small11\ }(6,6)
\Label\lo{\small1}(3,6)
\Label\lo{\small\emptyset}(0,6)
\Label\ro{\ \small21}(21,9)
\Label\lo{\small21\ }(18,9)
\Label\lo{\small21\ }(15,9)
\Label\lo{\small21\ }(12,9)
\Label\lo{\small21\ }(9,9)
\Label\lo{\small11\ }(6,9)
\Label\lo{\small1}(3,9)
\Label\lo{\small\emptyset}(0,9)
\Label\ro{\small21\ }(18,12)
\Label\lo{\small21\ }(15,12)
\Label\lo{\small21\ }(12,12)
\Label\lo{\small21\ }(9,12)
\Label\lo{\small11\ }(6,12)
\Label\lo{\small1}(3,12)
\Label\lo{\small\emptyset}(0,12)
\Label\ro{\ \small31}(15,15)
\Label\lo{\small31\ }(12,15)
\Label\lo{\small21\ }(9,15)
\Label\lo{\small11\ }(6,15)
\Label\lo{\small1}(3,15)
\Label\lo{\small\emptyset}(0,15)
\Label\ro{\small31\ \ }(12,18)
\Label\lo{\small21\ }(9,18)
\Label\lo{\small11\ }(6,18)
\Label\lo{\small1}(3,18)
\Label\lo{\small\emptyset}(0,18)
\Label\ro{\small21\ \ }(9,21)
\Label\lo{\small11\ }(6,21)
\Label\lo{\small1}(3,21)
\Label\lo{\small\emptyset}(0,21)
\Label\ro{\small11\ }(6,24)
\Label\lo{\small1}(3,24)
\Label\lo{\small\emptyset}(0,24)
\Label\ro{\small1}(3,27)
\Label\lo{\small\emptyset}(0,27)
\Label\lo{\small\emptyset}(0,30)
\hskip12.3cm
$$
\caption{Example for the bijection between $\NCNN_{10}(r,s)$ and
  $\NCNN_{10}(s,r)$ --- backward algorithm.}
\label{fig:5b}
\end{figure}

Figure~\ref{fig:5b} shows the growth diagram that results from
conjugating the partitions along the right-down diagonal and
subsequently applying the inverse growth diagram algorithm.
The resulting arrangement of crosses corresponds to the matching
$$
\Big\{
(1,9),\ (2,10),\ (3,8),\ (4,6)
\Big\}
$$
with the (same) fixed points 5 and 7.
Indeed, this matching has a 3-nesting --- namely
$\big\{(2,10),\ (3,8),\ (4,6)\big\}$ ---, a $(3+1/2)$-nesting together
with the fixed point~5, a 2-crossing --- namely 
$\big\{(1,9),\ (2,10)\big\}$ ---, and a $(2+1/2)$-crossing together
with the fixed point~7.

This completes the proof.
\end{proof}

\bibliographystyle{abbrv}

\end{document}